\theoremstyle{plain}
\newtheorem{theorem}{Theorem}[section]
\newtheorem{lemma}[theorem]{Lemma}
\newtheorem{cor}[theorem]{Corollary}
\newtheorem{prop}[theorem]{Proposition}
\newtheorem{question}{Question}
\newtheorem{fact}{Fact}
\newtheorem{obs}[theorem]{Observation}
\theoremstyle{definition}
\newtheorem{definition}[theorem]{Definition}
\newtheorem{example}[theorem]{Example}
\theoremstyle{definition}
\newtheorem*{claim}{Claim}
\newtheorem*{ack}{Acknowledgement}
\newcommand{\upto}{\upharpoonright}
\newcommand{\fr}{\mbox{}^\smallfrown}
\newcommand{\om}{\omega}
\newcommand{\ep}{\varepsilon}
\newcommand{\qf}[1]{\bar{\sf #1}}
\newcommand{\pair}[1]{\langle #1 \rangle}
\newcommand{\comp}[1]{\langle #1 \rangle}
\title[The Arithmetical Hierarchy]{The Arithmetical Hierarchy:\\ A Realizability-Theoretic Perspective}
\author{Takayuki Kihara}
\begin{document}
\maketitle


\begin{abstract}
In this article, we investigate the arithmetical hierarchy from the perspective of realizability theory.
An experimental observation in classical computability theory is that the notion of degrees of unsolvability for {\em natural} arithmetical decision problems only plays a role in counting the number of quantifiers, jumps, or mind-changes.
In contrast, we reveal that when the realizability interpretation is combined with many-one reducibility, it becomes possible to classify natural arithmetical problems in a very nontrivial way.
\end{abstract}

\section{Introduction}\label{sec:introduction}
The main research theme of computation theory from birth to the present day is to analyze the computational complexity (degrees of computational difficulty) of various problems.
The most traditional type of problem is called a {\em decision problem}, which refers to a problem that asks to determine if a statement with one parameter is true or false for each parameter.
In computability theory, the basic tools for measuring the complexity of decision problems are reducibility notions such as many-one reducibility and Turing reducibility \cite{OdiBook}.

However, these traditional approaches are too rough to classify ``{\em natural}'' decision problems, and often only count the number of alternations of quantifiers that appear in the problem.
This experimental observation that the degree structure of natural decision problems is too simple is also suggested by outstanding problems in computability theory, such as the problem of finding a natural solution of Post's problem and the Martin conjecture \cite{Mon19}.
This ``natural-degree'' problem is often the subject of debate among computability theorists \cite{Sim07}; however, despite many years of research, no definitive method has been found for measuring computability-theoretic complexity of ``natural'' decision problems (beyond counting the number of quantifiers, jumps, or mind-changes).

One of the aims of this article is to provide a novel method of measuring the computability-theoretic complexity of natural problems that goes beyond counting.
Our approach can be summed up in a slogan as follows: ``{\em A (decision) problem is a formula, not a subset of natural numbers or strings}.''
Of course, in theory, any problem should be describable as a formula (in the sense of formal logic), so people who have never studied computation theory will easily accept this slogan, and may even think it is a self-evident truth that does not need to be stated.
The difference between formulas and subsets is that the former is intensional, while the latter is extensional, and the impact of this difference is enormous.
Introducing the notion of a decision problem as a formula makes it possible to talk about the notion of {\em witness} for the truth (e.g., an existential witness) of a formula.

\begin{definition}[see Definition \ref{def:many-one-formula-def} for the rigorous definition]\label{def:intro-many-one-1}
A formula $\varphi$ is {\em (realizability-theoretic) many-one reducible to $\psi$} if there exists a computable function $h$ such that, for any instance $x$, $\varphi(x)$ is true iff $\psi(h(x))$ is true, and moreover, any witness for the truth of $\varphi(x)$ is effectively converted into a witness for the truth of $\psi(h(x))$ and vice versa.
\end{definition}

Formally, the notion of witness is formulated using Kleene's {\em realizability interpretation} \cite{Kle45,Tro98}.
In computational complexity theory, the polytime version of Definition \ref{def:intro-many-one-1} has been introduced by Levin \cite{Lev73} (formulated as a search problem rather than a decision problem), and known as {\em Levin reducibility}.
Nevertheless, this notion has not been studied in computability theory before the author's article \cite{Kihara}.
Definition \ref{def:intro-many-one-1} can also be obtained as the realizability interpretation of many-one reducibility in constructive logic \cite{Vel09,FoFe23}.

When the notion of witness is combined with many-one reducibility, each problem now becomes one that not only requires us to determine whether it is true or false, but also to provide a witness for the truth if it is true.
Then, interestingly, it becomes possible to classify natural problems in a very nontrivial way.
For example, the author \cite{Kihara} has classified natural $\Sigma_2$-decision problems as follows:
\begin{itemize}
\item Boundedness for countable posets is $\forall^\infty$-complete.
\item Finiteness of width for countable posets is $\forall^\infty\forall$-complete.
\item Non-density for countable linear orders is $\exists\forall$-complete.
\end{itemize}

Here, ``$\forall m\exists n\geq m$ (there are infinitely many $n$ such that ...)'' is abbreviated as ``$\exists^\infty n$'', and ``$\exists m\forall n\geq m$ (for all but finitely many $n$ ...)'' is abbreviated as ``$\forall^\infty n$''.
By combining these quantifiers, one can introduce various classes of formulas (decision problems), which often have natural complete problems as above.
Also, a countable structure can be presented by some computational method (as in computable structure theory); see Section \ref{sec:countable-structure}.

The above three problems are all classically $\Sigma_2$-complete (so classically indistinguishable), but our reducibility notion can distinguish the computability-theoretic complexity of these problems.
In other words, the difficulty of searching for existential witnesses differs in these problems.
And this difference is caused by the difference in the ``quantifier-patterns (i.e., combinations and order of appearance of $\exists,\forall,\exists^\infty,\forall^\infty$, etc.)''~used to describe these problems.

By analyzing further natural decision problems, we reveal the effectiveness of classification using quantifier-patterns (where, note that the author had not yet recognized the importance of classifying decision problems using quantifier-patterns when writing \cite{Kihara}).
In order to see this, it is important to analyze a formula and its dual (see Section \ref{sec:quantifier-dual}) simultaneously.
Then the following stronger reducibility notion is useful.
\begin{definition}[see also Definition \ref{def:di-reducible}]
A formula $\varphi$ is {\em many-one di-reducible to $\psi$} if both $\varphi$ and its dual are many-one reducible to $\psi$ and its dual via a common $h$.
\end{definition}

Then one can introduce the notion of dicompleness in a straightforward manner.
Then, in fact, all of the above examples are dicomplete w.r.t.~the corresponding classes.

In this article, we mainly focus on problems that are classically $\Sigma_3$- or $\Pi_3$-complete.
For instance, this article reveals that the following ``$\Pi_3$-complete'' problems, which cannot be distinguished in traditional computability theory, actually have different complexities.
\begin{itemize}
\item {\sf Lattice}: Being lattice for countable posets is $\forall\forall^\infty$-dicomplete.
\item {\sf Atomic}: Atomicity for countable posets is $\forall\forall^\infty$-dicomplete.
\item {\sf LocFin}: Local finiteness for countable graphs is $\forall\forall^\infty\forall$-dicomplete.
\item ${\sf FinBranch}$: Being finitely branching for countable trees is $\forall\forall^\infty\forall$-dicomplete.
\item {\sf Compl}: Complementedness for countable posets is $\forall\exists\forall$-dicomplete.
\item ${\sf InfDiam}_{\sf conn}$: Unboundedness of the diameters of connected components for countable graphs is $\exists^\infty\exists\forall$-dicomplete.
\item {\sf Cauchy}: Cauchyness for rational sequences is $\forall^\downarrow\forall^\infty$-dicomplete.
\item {\sf SimpNormal}: Simple normality in base $2$ for real numbers is $\forall^\downarrow\forall^\infty$-dicomplete.
\item ${\sf Perfect}_{\sf bin}$: Perfectness for countable binary trees is $\forall(\forall\to\exists\forall)$-dicomplete.
\end{itemize}

\begin{figure}[t]
\begin{center}
\includegraphics[width=30mm]{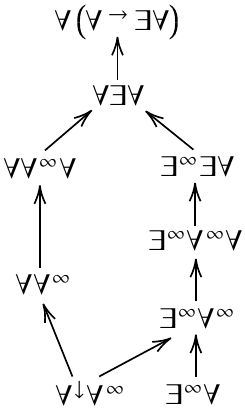}
\qquad
\qquad
\includegraphics[width=50mm]{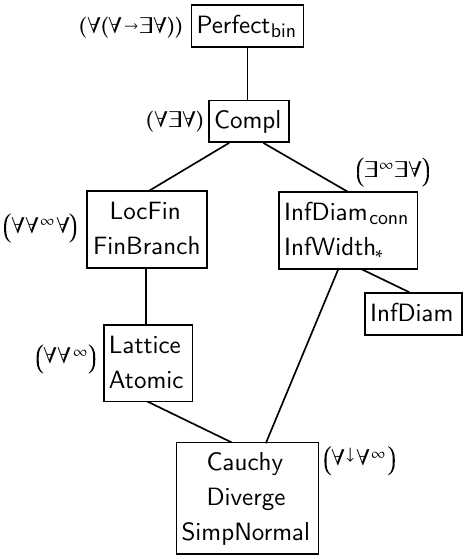}
\end{center}
\caption{Complexity of $\Pi_3$-complete problems}\label{fig:natural}
\end{figure}

The true complexity of these ``classically $\Pi_3$-complete'' problems is desplayed as in Figure \ref{fig:natural}.
Here, for quantifier-patters ${\sf P},{\sf Q}$, the arrow ${\sf P}\to {\sf Q}$ means that a ${\sf P}$-complete problem is many-one direducible to a ${\sf Q}$-complete problem.
No further arrows can be added.

Let us summarize our key idea behind this classification.
When we look at natural $\Pi_3$-complete problems, we find that they often contain quantifiers of the form $\forall^\infty$ and $\exists^\infty$, rather than just $\forall\exists\forall$.
The order in which these four quantifiers $\forall,\exists,\forall^\infty,\exists^\infty$ occur in a formula is closely related to the complexity of the corresponding problem.
Therefore, it is important to determine how many quantifier-patterns (i.e., finite strings on $\{\forall,\exists,\forall^\infty,\exists^\infty\}$) can exist at each level of the arithmetical hierarchy.
In this article, we also show the following:
\begin{itemize}
\item The number of (di-)many-one equivalence classes of $\Pi_2$ quantifier-patterns is exactly $1$.
\[\forall\exists.\]
\item The number of (di-)many-one equivalence classes of $\Sigma_2$ quantifier-patterns is exactly $3$.
\[\exists\forall,\forall^\infty\forall,\forall^\infty.\]
\item The number of many-one equivalence classes of $\Sigma_3$ quantifier-patterns is exactly $3$.
\[\exists\forall\exists,\forall^\infty\exists^\infty,\forall^\infty\exists.\]
\item The number of many-one equivalence classes of $\Pi_3$ quantifier-patterns is exactly $5$.
\[\forall\exists\forall,\exists^\infty\forall^\infty\forall,\exists^\infty\forall,\forall\forall^\infty\forall,\forall\forall^\infty.\]
\item The number of di-many-one equivalence classes of $\Pi_3$ quantifier-patterns is exactly $7$.
\[\forall\exists\forall,\exists^\infty\exists\forall,\exists^\infty\forall^\infty\forall,\exists^\infty\forall^\infty,\exists^\infty\forall,\forall\forall^\infty\forall,\forall\forall^\infty.\]
\end{itemize}


Let us comment on one more important point.
Although we have presented our results here in the context of many-one reducibility, the results of this article are all applicable to {\em Wadge reducibility} (a continuous version of many-one reducibility), a well-studied notion in descriptive set theory \cite{Wad83,Wad12}.
In fact, the study of Wadge reducibility in intuitionistic descriptive set theory \cite{VeldmanPhD,Vel90,Vel09,Vel22} is what triggered our study.

Our results are applicable to both classical and constructive logics; this article provides a new perspective and powerful techniques for research on the arithmetical hierarchy (the Borel hierarchy) in both logics.
As for the latter, there has been a growing interest in constructive logic, and in recent years, there has been a rise in the number of studies on the arithmetical hierarchy in constructive logic, e.g.~ \cite{ABHK04,Ber06,BeSt17,Bur00,Bur04,FoFe23,FuKu21,Fuku22,Fuku23,Nakata,SUZ16}.
Developments linked to these are also expected.



\section{Basic definitions}
For the basics of computability theory, see \cite{OdiBook}.
For an abstract foundation of this research, see the author's previous article \cite{Kihara}.
\subsection{Quantifier}
In this article, we consider only arithmetical quantifiers; that is, bounded variables range over the natural numbers.
However, some free variables may range over (indices of) total functions.
We also deal with quantifiers $\exists^\infty$ and $\forall^\infty$, which express ``{\em for infinitely many}'' and ``{\em for all but finitely many (for cofinitely many)},'' as well as the existential quantifier $\exists$ and the universal quantifier $\forall$.
Formally, quantifiers $\exists^\infty$ and $\forall^\infty$ are defined as follows.
\begin{align*}
\exists^\infty n.\ \varphi(n)&\equiv\forall m\exists n\geq m.\ \varphi(n),\\
\forall^\infty n.\ \varphi(n)&\equiv\exists m\forall n\geq m.\ \varphi(n).
\end{align*}

From here on, the term ``quantifier'' refers to one of $\exists$, $\forall$, $\exists^\infty$, or $\forall^\infty$.
A finite sequence $\langle{\sf Q}_0,{\sf Q}_1,\dots,{\sf Q}_\ell\rangle\in\{\exists,\forall,\exists^\infty,\forall^\infty\}^\ast$ of quantifiers is often abbreviated to ${\sf Q}_0{\sf Q}_1\dots{\sf Q}_\ell$, and called a {\em quantifier-pattern}.

\begin{definition}
Let ${\sf Q}_0{\sf Q}_1\dots{\sf Q}_\ell$ be a quantifier-pattern.
A formula of the following form is called a {\em ${\sf Q}_0{\sf Q}_1\dots{\sf Q}_\ell$-formula}.
\[{\sf Q}_0n_0{\sf Q}_1n_1\dots,{\sf Q}_\ell n_\ell\;\theta(n_0,n_1,\dots,n_\ell,x).\]

Here, $\theta$ is a bounded formula.
\end{definition}

\begin{example}
A $\exists\forall\exists$-formula is exactly a $\Sigma_3$-formula.
\end{example}

We introduce a notion for comparing quantifier-patterns.
For quantifier-patterns $\bar{\sf P}$ and $\bar{\sf R}$ and a quantifier ${\sf Q}$, consider the following rewriting rules:
\[\bar{\sf P}\exists^\infty\bar{\sf R}\to \bar{\sf P}\forall\exists\bar{\sf R};\quad\bar{\sf P}\forall^\infty\bar{\sf R}\to \bar{\sf P}\exists\forall\bar{\sf R};\quad\bar{\sf P}\exists\exists\bar{\sf R}\to\bar{\sf P}\exists\bar{\sf R};\quad\bar{\sf P}\forall\forall\bar{\sf R}\to\bar{\sf P}\forall\bar{\sf R};\quad\bar{\sf P}\bar{\sf R}\to\bar{\sf P}{\sf Q}\bar{\sf R}.\]

\begin{definition}
A quantifier-pattern $\bar{\sf Q}$ is {\em absorbable into $\bar{\sf Q}'$} if one can obtain $\bar{\sf Q}'$ from $\bar{\sf Q}$ by applying the above rewriting rules a finite number of times.
\end{definition}

In this case, we write $\bar{\sf Q}\to^\ast\bar{\sf Q}'$, or $\bar{\sf Q}\to\bar{\sf Q}'$ for brevity.

\begin{example}
$\forall\forall^\infty\forall$ is absorbable into $\forall\exists\forall$:
\[\forall\forall^\infty\forall\to\forall\exists\forall\forall\to\forall\exists\forall.\]
\end{example}

\begin{example}\label{exa:Sigma03-absorb}
All of the following are absorbable into $\exists\forall\exists$:
\begin{multline*}
\forall^\infty\exists,\exists\exists^\infty,\forall^\infty\exists^\infty,\exists\forall^\infty\exists,\forall^\infty\forall\exists,\exists\forall\exists^\infty,\exists\exists^\infty\exists,\exists\forall^\infty\exists^\infty,\forall^\infty\forall\exists^\infty,\\
\forall^\infty\exists^\infty\exists,\exists\forall^\infty\forall\exists,\exists\forall\exists^\infty\exists,\exists\forall^\infty\forall\exists^\infty,\forall^\infty\forall\exists^\infty\exists,\exists\forall^\infty\exists^\infty\exists,\exists\forall^\infty\forall\exists^\infty\exists.
\end{multline*}

In Figure \ref{fig:arith}, a solid arrows indicate an absorption relation.
A dotted arrow is a di-reducibility relation obtained by Proposition \ref{prop:ex-ex-infty-bm-reducible} below.
\end{example}

\begin{figure}[t]
\begin{center}
\includegraphics[width=60mm]{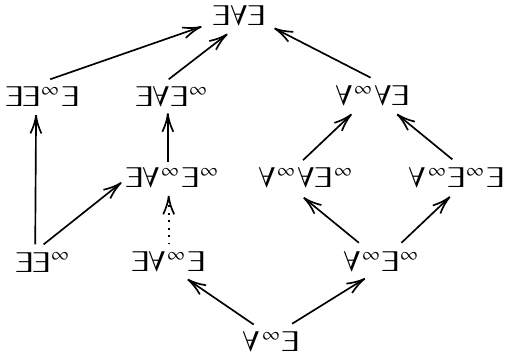}
\end{center}
\caption{Example of absorption relations for $\Sigma_3$ quantier-patterns.}\label{fig:arith}
\end{figure}

\begin{definition}[Arithmetical hierarchy]\label{def:arithmetical-hierarchy}
The classes $\Sigma_n$ and $\Pi_n$ of quantifier-patterns $\bar{\sf Q}$ are defined inductively as follows.
\begin{enumerate}
\item $\exists,\forall,\exists^\infty,\forall^\infty$ are  $\Sigma_1,\Pi_1,\Pi_2,\Sigma_2$ respectively.
\item If $\bar{\sf Q}$ is $\Sigma_n$, then $\exists\bar{\sf Q},\forall\bar{\sf Q},\exists^\infty\bar{\sf Q},\forall^\infty\bar{\sf Q}$ are $\Sigma_n,\Pi_{n+1},\Pi_{n+1},\Sigma_{n+2}$, respectively.
\item If $\bar{\sf Q}$ is $\Pi_n$, then $\exists\bar{\sf Q},\forall\bar{\sf Q},\exists^\infty\bar{\sf Q},\forall^\infty\bar{\sf Q}$ are $\Sigma_{n+1},\Pi_{n},\Pi_{n+2},\Sigma_{n+1}$, respectively.
\end{enumerate}
\end{definition}

Obviously, this definition is consistent with the classical definition of the arithmetic hierarchy.
\begin{obs}
For $\Gamma\in\{\Sigma_n,\Pi_n\}$, if a quantifier-pattern $\bar{\sf Q}$ is $\Gamma$, then a $\bar{\sf Q}$-formula is a $\Gamma$-formula (in the classical sense).
\end{obs}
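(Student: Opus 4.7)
The plan is to prove this by straightforward induction on the length of the quantifier-pattern $\bar{\sf Q}$, using the defining equivalences $\exists^\infty n.\psi \equiv \forall m\exists n\geq m.\psi$ and $\forall^\infty n.\psi \equiv \exists m\forall n\geq m.\psi$ from the preceding subsection, together with the observation that two consecutive quantifiers of the same kind can be absorbed into a single quantifier without changing the classical arithmetical class.

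For the base case ($|\bar{\sf Q}|=1$), I would just unpack each of the four single-quantifier patterns: $\exists$ and $\forall$ give directly $\Sigma_1$ and $\Pi_1$ formulas, while $\exists^\infty n.\theta(n,x)$ unfolds to $\forall m\exists n.(n\geq m\wedge\theta(n,x))$, a $\Pi_2$-formula, and symmetrically $\forall^\infty$ yields a $\Sigma_2$-formula. This matches clauses (1) of Definition \ref{def:arithmetical-hierarchy} exactly.

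For the inductive step, I would split into the two cases that $\bar{\sf Q}$ is $\Sigma_n$ or $\Pi_n$ and, within each, check all four ways of prepending a quantifier. Writing a $\Sigma_n$-formula in prenex form as $\exists y_1\forall y_2\ldots\,\theta$, prepending $\exists$ merges with the leading $\exists y_1$ (still $\Sigma_n$); prepending $\forall$ adds an alternation (giving $\Pi_{n+1}$); prepending $\exists^\infty$ unfolds to $\forall k\exists m\exists y_1\forall y_2\ldots$ so that after merging $\exists m\exists y_1$ we obtain $\Pi_{n+1}$; and prepending $\forall^\infty$ unfolds to $\exists k\forall m\exists y_1\forall y_2\ldots$, producing $\Sigma_{n+2}$. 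The $\Pi_n$ case is dual: $\exists$ gives $\Sigma_{n+1}$, $\forall$ absorbs (still $\Pi_n$), $\exists^\infty$ unfolds and leaves the leading $\forall y_1$ intact for a clean $\Pi_{n+2}$, while $\forall^\infty$ after unfolding merges $\forall m\forall y_1$ and yields $\Sigma_{n+1}$. Each case precisely matches clauses (2) and (3) of Definition \ref{def:arithmetical-hierarchy}.

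There is no real obstacle here; the statement is essentially a bookkeeping exercise verifying that the inductive clauses (2)--(3) were designed to track exactly what happens when one unfolds $\exists^\infty,\forall^\infty$ into pairs of ordinary quantifiers and then absorbs like quantifiers. The only mildly delicate point is remembering that the bounded atomic predicate $n\geq m$ appearing after the unfolding can be conjoined with the matrix $\theta$ without affecting the classical $\Sigma/\Pi$ classification, since bounded formulas remain $\Delta_0$; I would state this once and then apply it silently in the eight subcases.
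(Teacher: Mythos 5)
Your proof is correct, and it is exactly the routine verification the paper has in mind: the paper offers no proof at all (the observation is stated as an immediate consequence of Definition \ref{def:arithmetical-hierarchy}, prefaced by "Obviously, this definition is consistent with the classical definition"), and your induction on the pattern, unfolding $\exists^\infty,\forall^\infty$ and merging like quantifiers in each of the eight prepending subcases, is the intended bookkeeping that justifies it.
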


\begin{obs}\label{obs:arit-hier-preserve}
Let $\Gamma$ be either $\Sigma$ or $\Pi$, and let $\check{\Gamma}$ be the other.
If $\qf{P}$ is $\Gamma_n$ and is absorbable into $\qf{Q}$, then $\qf{Q}$ is either $\Gamma_m$ for some $m\geq n$ or $\check{\Gamma}_m$ for some $m>n$.
\end{obs}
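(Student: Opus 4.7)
The plan is to formalize the statement as a monotonicity property and prove it by induction on the number of rewriting steps. Define a relation $\preceq$ on the collection of levels $\{\Sigma_n,\Pi_n:n\geq 1\}$ by $\Gamma_n\preceq\Gamma_m$ iff $n\leq m$, and $\Gamma_n\preceq\check\Gamma_m$ iff $n<m$. A routine four-case check shows $\preceq$ is transitive. Since the recursion of Definition~\ref{def:arithmetical-hierarchy} always peels off the outermost quantifier deterministically, each nonempty pattern $\qf{Q}$ is assigned a unique canonical level $L(\qf{Q})$, and the observation becomes the statement: if $\qf{P}\to^\ast \qf{Q}$, then $L(\qf{P})\preceq L(\qf{Q})$.

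By transitivity of $\preceq$ and induction on the number of rewriting steps, it suffices to handle a single step $\qf{P}\,X\,\qf{R}\to \qf{P}\,Y\,\qf{R}$. I would split the work into two parts: \emph{(a)} checking the local inequality $L(X\qf{R})\preceq L(Y\qf{R})$, and \emph{(b)} checking that prepending the common context $\qf{P}$ is monotone with respect to $\preceq$.

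For (a), the first four rules---$\exists^\infty\to\forall\exists$, $\forall^\infty\to\exists\forall$, $\exists\exists\to\exists$, $\forall\forall\to\forall$---in fact preserve $L$ exactly, as a direct computation against the two cases $L(\qf{R})=\Sigma_n$ and $L(\qf{R})=\Pi_n$ shows (e.g., both $\exists^\infty\qf{R}$ and $\forall\exists\qf{R}$ become $\Pi_{n+1}$ when $\qf{R}$ is $\Sigma_n$). The fifth (insertion) rule $\qf{R}\to{\sf Q}\qf{R}$ satisfies $\alpha\preceq f_{\sf Q}(\alpha)$ for every level $\alpha$ and every quantifier ${\sf Q}$, where $f_{\sf Q}$ is the transformation on levels prescribed by the inductive clause for ${\sf Q}$; this is eight easy sub-cases.

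For (b), induction on $|\qf{P}|$ reduces the claim to the monotonicity of each single-quantifier map $f_{\sf Q}$ on $(\preceq)$, a systematic tabulation over sixteen sub-cases (four quantifiers times four order-types between classes $\Sigma_n\preceq\Sigma_m$, $\Sigma_n\preceq\Pi_m$, $\Pi_n\preceq\Sigma_m$, $\Pi_n\preceq\Pi_m$). Each sub-case is an immediate arithmetic comparison; for instance, $f_{\forall^\infty}(\Pi_n)=\Sigma_{n+1}$ and $f_{\forall^\infty}(\Sigma_m)=\Sigma_{m+2}$, so $\Pi_n\preceq\Sigma_m$ (meaning $n<m$) yields $\Sigma_{n+1}\preceq\Sigma_{m+2}$ via $n+1\leq m+2$. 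The only genuine obstacle is bookkeeping across the many small verifications---keeping the cases straight and avoiding off-by-one errors---rather than any conceptual subtlety.
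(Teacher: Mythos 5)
The paper records this as an unproved Observation, so there is no official argument to diverge from; your framework (the relation $\preceq$, which is just the classical inclusion order on levels, its transitivity, induction on rewriting steps, and the split into a local comparison plus monotone prepending of the context) is the natural one, and every table you invoke is correct: the first four rules preserve the canonical level $L$ exactly, insertion in front of a nonempty suffix satisfies $\alpha\preceq f_{\sf Q}(\alpha)$, and each $f_{\sf Q}$ is monotone for $\preceq$.

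There is, however, one instance of the rules that your decomposition does not reach: the insertion rule $\qf{P}\,\qf{R}\to\qf{P}\,{\sf Q}\,\qf{R}$ with $\qf{R}$ empty, i.e.\ appending ${\sf Q}$ as a new \emph{innermost} quantifier (this case is genuinely used, e.g.\ $\forall^\infty\exists\to\forall^\infty\exists\exists$ is needed for the ``vice versa'' direction of Proposition \ref{prop:Sigma3-bluteforce-list}). Here there is no level $L(\qf{R})$ to serve as the base inequality in your step (a), so step (b) has nothing to act on; the needed statement is $L(\qf{P})\preceq L(\qf{P}{\sf Q})$, which monotonicity of the $f_{\sf Q}$'s alone cannot produce. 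The repair is routine and uses only your existing machinery: write $\qf{P}=\qf{P}'{\sf P}''$ with ${\sf P}''$ the last quantifier, check the sixteen base cases $L({\sf P}'')\preceq L({\sf P}''{\sf Q})$ (all hold, e.g.\ $L(\exists^\infty)=\Pi_2\preceq\Pi_4=L(\exists^\infty\exists^\infty)$ and $L(\forall)=\Pi_1\preceq\Pi_3=L(\forall\forall^\infty)$), and then prepend $\qf{P}'$ by your part (b). For the same reason you should note, for rules one through four applied at the innermost position (again $\qf{R}$ empty), the one-line verification that the base-case assignments agree with the expansions, namely $L(\exists^\infty)=\Pi_2=L(\forall\exists)$ and $L(\forall^\infty)=\Sigma_2=L(\exists\forall)$ (duplication of $\exists$ or $\forall$ is immediate). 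With these additions the argument is complete.
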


For quantifier-patterns $\bar{\sf P},\bar{\sf Q}$, we say that $\bar{\sf P}=({\sf P}_i)_{i<k}$ is a {\em subpattern of $\bar{\sf Q}=({\sf Q}_j)_{j<\ell}$} if there exists a strictly increasing map $h\colon k\to\ell$ such that ${\sf P}_i={\sf Q}_{h(i)}$ holds for any $i<k$.

\begin{prop}\label{prop:Sigma3-bluteforce-list}
A quantifier-pattern $\bar{\sf Q}$ is $\Sigma_3$ iff there exists a pattern $\bar{\sf Q}'$ in Example \ref{exa:Sigma03-absorb} such that $\bar{\sf Q}$ is absorbable into $\bar{\sf Q}'$ and vice versa.
\end{prop}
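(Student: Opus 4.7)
The plan is to establish both directions of the biconditional, relying on the inductive structure of Definition \ref{def:arithmetical-hierarchy} together with Observation \ref{obs:arit-hier-preserve}.

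For the easy direction ($\Leftarrow$), I would first verify by direct finite computation that each of the 16 listed patterns is $\Sigma_3$: applying the clauses of Definition \ref{def:arithmetical-hierarchy} from the innermost quantifier outward classifies each bounded-length pattern in a few steps. Given mutual absorbability of $\bar{\sf Q}$ with a listed $\bar{\sf Q}'$, applying Observation \ref{obs:arit-hier-preserve} to both $\bar{\sf Q}\to^\ast\bar{\sf Q}'$ and $\bar{\sf Q}'\to^\ast\bar{\sf Q}$ forces both patterns into a common arithmetical class. Since $\bar{\sf Q}'$ is $\Sigma_3$, $\bar{\sf Q}$ must be $\Sigma_3$ as well.

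The substantive work lies in the ($\Rightarrow$) direction. The plan is to exhibit, for each $\Sigma_3$ pattern $\bar{\sf Q}$, a canonical listed representative in its mutual-absorbability class. First I would normalize $\bar{\sf Q}$ by exhaustively applying the collapse rules $\exists\exists\to\exists$ and $\forall\forall\to\forall$, obtaining a reduced pattern $\bar{\sf Q}^\flat$ with no two adjacent copies of $\exists$ (respectively $\forall$); this $\bar{\sf Q}^\flat$ is mutually absorbable with $\bar{\sf Q}$ via the collapse rules in one direction and the insertion rule in the other. Next, I would enumerate all reduced $\Sigma_3$ patterns by inducting on length. By Definition \ref{def:arithmetical-hierarchy}, a reduced $\Sigma_3$ pattern begins either with $\exists$ (followed by a reduced $\Sigma_3$ or $\Pi_2$ suffix whose leading quantifier is not $\exists$) or with $\forall^\infty$ (followed by a reduced $\Sigma_1$ or $\Pi_2$ suffix). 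Running the analogous recursion for the lower classes---where reduced $\Sigma_1$ is just $\{\exists\}$ and reduced $\Pi_2$ is $\{\exists^\infty,\forall\exists,\forall\exists^\infty,\exists^\infty\exists,\forall\exists^\infty\exists\}$---terminates quickly by length and produces exactly 17 reduced $\Sigma_3$ patterns: the 16 listed together with $\exists\forall\exists$.

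To wrap up the proof, I would verify that the extra pattern $\exists\forall\exists$ is mutually absorbable with the listed representative $\exists\forall^\infty\forall\exists$: one direction is $\exists\forall^\infty\forall\exists\to\exists\exists\forall\forall\exists\to\exists\forall\exists$ via one expansion and two collapses, while the other is $\exists\forall\exists\to\exists\forall^\infty\forall\exists$ by inserting $\forall^\infty$ between the first $\exists$ and $\forall$. Chaining the three equivalences $\bar{\sf Q}\equiv\bar{\sf Q}^\flat\equiv\bar{\sf Q}'$ yields the desired listed $\bar{\sf Q}'$. The main obstacle is the enumeration of reduced $\Sigma_3$ patterns, which requires a careful recursive case analysis through the four hierarchy classes involved; the other delicate point is identifying the outlier pattern $\exists\forall\exists$ with a listed representative via the explicit equivalence above.
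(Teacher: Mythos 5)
Your proposal is correct, and it reaches the conclusion by a genuinely different decomposition than the paper. The paper's proof of ($\Rightarrow$) works by taking the longest $\Sigma_3$ tail of $\bar{\sf Q}$ to locate one of four core subpatterns ($\exists\exists^\infty$, $\exists\forall\exists$, $\forall^\infty\exists^\infty$, $\forall^\infty\exists$), then analyzing which single-quantifier additions preserve $\Sigma_3$ and arguing that all such additions saturate inside $\exists\forall^\infty\forall\exists^\infty\exists$, so that after collapsing duplicated $\exists$'s and $\forall$'s every $\Sigma_3$-pattern is a subpattern of the saturated pattern and hence ``covered'' by the list. You instead normalize first (collapse $\exists\exists$ and $\forall\forall$), and then run a complete top-down recursion through the inductive definition of the hierarchy, using that a $\Sigma_3$-pattern must begin with $\exists$ (over a $\Sigma_3$ or $\Pi_2$ suffix) or $\forall^\infty$ (over a $\Sigma_1$ or $\Pi_2$ suffix), with reduced $\Sigma_1=\{\exists\}$ and the five reduced $\Pi_2$-patterns; this enumeration is exact and does yield precisely the sixteen listed patterns plus $\exists\forall\exists$, and your counts check out. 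What each approach buys: the paper's saturation argument is shorter and conveys the structural picture that every $\Sigma_3$-pattern lives inside $\exists\forall^\infty\forall\exists^\infty\exists$; your enumeration is more mechanical and more airtight, and in particular it explicitly surfaces the outlier $\exists\forall\exists$ --- a duplicate-free subpattern of the saturated pattern that is \emph{not} on the list --- and supplies the needed mutual absorption $\exists\forall\exists\leftrightarrow\exists\forall^\infty\forall\exists$, a step the paper's final sentence (``these are all covered'') leaves implicit. The only point you leave tacit is that the normalized pattern $\bar{\sf Q}^\flat$ is still $\Sigma_3$ so that the enumeration applies to it, but this follows at once from the mutual absorbability of $\bar{\sf Q}$ and $\bar{\sf Q}^\flat$ together with Observation \ref{obs:arit-hier-preserve}, exactly as in your ($\Leftarrow$) argument, so it is not a gap.
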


\begin{proof}
($\Leftarrow$)
It is clear that the quantifier-petterns presented in Example \ref{exa:Sigma03-absorb} are all $\Sigma_3$.
By Observation \ref{obs:arit-hier-preserve}, $\bar{\sf Q}\leftrightarrow\bar{\sf Q}'\in\Sigma_3$ implies $\bar{\sf Q}\in\Sigma_3$.

($\Rightarrow$)
Let $\bar{\sf Q}$ be a ${\Sigma_3}$-pattern.
Let ${\sf P}\bar{\sf Q}'$ be the longest tail of $\bar{\sf Q}$ (i.e., $\bar{\sf Q}$ is of the form $\qf{R}{\sf P}\bar{\sf Q}'$) which is ${\Sigma_3}$.
Looking at Definition \ref{def:arithmetical-hierarchy}, the only quantifiers that yield $\Sigma_3$ are $\exists$ and $\forall^\infty$, so ${\sf P}\in\{\exists,\forall^\infty\}$.
If ${\sf P}=\exists$, then $\bar{\sf Q}'$ is $\Pi_2$.
If ${\sf P}=\forall^\infty$ then $\bar{\sf Q}'$ is either $\Pi_2$ or $\Sigma_1$.
It is easy to see that a $\Pi_2$-pattern contains $\exists^\infty$ or $\forall\exists$ as a subpattern.
A $\Sigma_1$-pattern clearly contains $\exists$.
To summarize the above, $\bar{\sf Q}$ contains one of $\exists\exists^\infty,\exists\forall\exists,\forall^\infty\exists^\infty,\forall^\infty\exists$ as a subpattern.
Furthermore, all of these subpatterns are $\Sigma_3$.

If the addition of a quantifier to a $\Sigma_3$-pattern still maintains $\Sigma_3$, then it can only be one of the following:
Add $\exists$ after $\exists^\infty$ or before $\forall^\infty$; add $\forall$ before $\exists^\infty$ or after $\forall^\infty$; insert $\exists^\infty$ between $\forall$ and $\exists$; or insert $\forall^\infty$ between $\exists$ and $\forall$.
Regardless of which of $\exists\exists^\infty,\exists\forall\exists,\forall^\infty\exists^\infty,\forall^\infty\exists$ is the starting point, the insertion result saturates in the form of $\exists^i\forall^\infty\forall^j\exists^\infty\exists^k$.
Since the duplication of $\exists$ ($\forall$, respectively) can be mutually absorbed into a single $\exists$ ($\forall$, respectively), it reaches $\exists\forall^\infty\forall\exists^\infty\exists$.
Therefore, we only need to consider subpatterns of $\exists\forall^\infty\forall\exists^\infty\exists$, but if we eliminate the duplication of $\exists$ and $\forall$, these are all covered by the quantifier-patterns presented in Example \ref{exa:Sigma03-absorb}.
\end{proof}

%
%

\noindent
{\em Declaration:}
In this article, an (arithmetical) formula always refers to a $\bar{\sf Q}$-formula $\varphi(\bar{x})$ for a quantifier-pattern $\bar{\sf Q}$.
Here, $\varphi$ is assumed to contain no parameters other than $\bar{x}$, and $\bar{x}$ is a sequence of natural numbers or (indices of) total computable functions.
A computable function parameter is always a free variable; that is, quantification over a computable function parameter never appears in a formula, even if a computable function is identified with its index (which is a natural number).

The reason we deal with function parameters is because we analyze decision problems on countable structures, where a computable structure is coded by a total computable function (see Section \ref{sec:countable-structure}).
By the Kreisel-Lacombe-Shoenfield theorem \cite{KLS59}, there is no difference between total computability over total computable functions and total computability over their indices, so we simply consider each function parameter $x_i$ as a total computable function (or a computable element in $\om^\om$) rather than its index.

\subsection{Realizability}
We now consider the notion of witness for a formula, and also a transformation of a given witness $\alpha$ into another witness $\beta$.
As for the latter, the modern approach is to think of a witness $\alpha$ as being given as an oracle, and then transforming it into another witness $\beta$ in a computable way --- this is the approach of ``{\em topological objects, computable morphisms} \cite{Bau00}.''
Formally, we consider Kleene's functional realizability interpretation for arithmetical formulas (see \cite{Tro98}).

In the following, we use the identifications $\om\times\om^\om\simeq\om^\om$ and $(\om^\om)^\om\simeq\om^\om$ without mentioning.
So, for $\alpha\in\om^\om\simeq(\om^\om)^\om$, $\alpha(n)$ is still an element in $\om^\om$; that is, $\alpha(n)(m)$ is identified with $\alpha(\pair{n,m})$.

\begin{definition}[Kleene]
For $\alpha\in\om^\om$ and a formula $\varphi$, the binary relation $\alpha\Vdash\varphi(\bar{x})$ is inductively defined as follows:
\begin{align*}
\alpha\Vdash\theta(\bar{x})&\iff \alpha=\bar{x}\mbox{ and }\theta(\bar{x})\quad\mbox{(for bounded $\theta$)}\\
\pair{t,\alpha}\Vdash\exists n\varphi(n,\bar{x})&\iff \alpha\Vdash\varphi(t,\bar{x})\\
\alpha\Vdash\forall n\varphi(n,\bar{x})&\iff \alpha(n)\Vdash\varphi(n,\bar{x})\mbox{ for all $n\in\om$}\\
\pair{t,\alpha}\Vdash\forall^\infty n\varphi(n,\bar{x})&\iff \alpha(n)\Vdash \varphi(n,\bar{x})\mbox{ for all $n\geq t$}\\
\alpha\Vdash\exists^\infty n\varphi(n,\bar{x})&\iff \pi_1\circ\alpha(n)\Vdash\varphi(\pi_0\circ\alpha(n),\bar{x})\\
&\qquad\mbox{ and }\pi_0\circ\alpha(n)\geq n\mbox{ for all $n$.}
\end{align*}

If $\alpha\Vdash\varphi(\bar{x})$, then we say that $\alpha$ is a {\em witness for} $\varphi(\bar{x})$.
\end{definition}

We only consider $\Sigma_3$- and $\Pi_3$-formulas in this article, so the definition of functional realizability can be simplified.
This is because a witness for the inner $\Pi_2$-subformula can be restored in a computable way without explicitly giving it, so it can be omitted.
For instance, a witness for a $\Sigma_3$ formula can be replaced with a sequence of existential witnesses for the outermost block of existential quantifiers.

\begin{example}
A (simplified) witness for a $\exists\forall^\infty\exists^\infty$-formula $\exists a\forall^\infty b\exists^\infty c\varphi(a,b,c,x)$ is a pair $\pair{a,b'}$ such that the $\Pi_2$-subformula $\forall b\geq b'\exists^\infty c\varphi(a,b,c,x)$ is true.
\end{example}

\begin{example}
A (simplified) witness for a $\forall\exists^\infty\forall$-formula $\forall a\exists^\infty b\forall c\varphi(a,b,c,x)$ is a function $h$ such that, for any $a$ and $b$, $h(a,b)\geq b$ and the $\Pi_1$-subformula $\forall c\varphi(a,h(a,b),c,x)$ is true.
\end{example}

%
%
%

As mentioned in Section \ref{sec:introduction}, our key idea for distinguishing various natural decision problems is {\em not} to identify a {\em decision problem} with a {\em subset}.
In other words, we consider a formula itself to be a decision problem.
Based on this perspective, various computability-theoretic notions can be redefined as operations on formulas.

\begin{definition}[see \cite{Kihara}]\label{def:many-one-formula-def}
A $\bar{\sf Q}$-formula $\varphi(\bar{x})$ is {\em many-one reducible to} a $\bar{\sf Q}'$-formula $\psi(\bar{x})$ if there exist computable functions $\eta,r_-,r_+$ such that the following holds:
\begin{enumerate}
\item $\varphi(\bar{x})$ is true iff $\psi(\eta(\bar{x}))$ is true.
\item $\alpha\Vdash\varphi(\bar{x})$ implies $r_-(\alpha,\bar{x})\Vdash\psi(\eta(\bar{x}))$.
\item $\alpha\Vdash\psi(\eta(\bar{x}))$ implies $r_+(\alpha,x)\Vdash\varphi(\bar{x})$.
\end{enumerate}

In this case, we write $\varphi(\bar{x})\leq_{\sf m}\psi(\bar{x})$.
Often, the free variable part $\bar{x}$ is omitted, and $\varphi(\bar{x})\leq_{\sf m}\psi(\bar{x})$ is simply written as $\varphi\leq_{\sf m}\psi$.
\end{definition}

In other words, $\varphi\leq_{\sf m}\psi$ iff there exists a computable function $\eta$ such that $\varphi(\bar{x})\leftrightarrow\psi(\eta(\bar{x}))$ is realizable.
Here, $r_-$ is a realizer for the forward implication $\varphi(\bar{x})\to\psi(\eta(\bar{x}))$, and $r_+$ is a realizer for the backward implication $\varphi(\bar{x})\leftarrow\psi(\eta(\bar{x}))$.
Its polytime version has been introduced by Levin \cite{Lev73}; see also \cite{Kihara}.

\begin{definition}
An arithmetical formula $\varphi$ is {\em strictly $\qf{Q}$-complete} if $\varphi$ is a $\qf{Q}$-formula and $\psi\leq_{\sf m}\varphi$ for any $\qf{Q}$-formula $\psi$.
A formula $\varphi$ is {\em $\qf{Q}$-complete} if it is ${\sf m}$-equivalent to a strictly $\qf{Q}$-complete formula.
\end{definition}



\begin{prop}\label{prop:Q-complete}
A $\bar{\sf Q}$-complete formula exists for any quantifier-pattern $\bar{\sf Q}$.
\end{prop}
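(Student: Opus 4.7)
The plan is to build, for each quantifier-pattern $\bar{\sf Q}$, a single universal $\bar{\sf Q}$-formula into which every $\bar{\sf Q}$-formula many-one reduces, via the classical trick of factoring the bounded matrix through a universal bounded predicate.

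First I would fix, via Kleene's $T$-predicate (or any equivalent enumeration of decidable matrices), a universal bounded predicate $U_\ell(e, n_0, \ldots, n_\ell, \bar{x})$, uniform in $\ell$, with the property that for every bounded formula $\theta(n_0,\ldots,n_\ell,\bar{x})$ there is a numerical index $e_\theta$ such that $\theta(\bar{n},\bar{x}) \leftrightarrow U_\ell(e_\theta,\bar{n},\bar{x})$. The free variables $\bar{x}$ may include indices of total computable functions, which $U_\ell$ accesses as oracles in the usual way. For $\bar{\sf Q} = {\sf Q}_0\cdots{\sf Q}_\ell$, I then set
\[
\Phi_{\bar{\sf Q}}(e,\bar{x}) \;\equiv\; {\sf Q}_0 n_0\,{\sf Q}_1 n_1\cdots{\sf Q}_\ell n_\ell \; U_\ell(e,n_0,\ldots,n_\ell,\bar{x}),
\]
so that $\Phi_{\bar{\sf Q}}$ has quantifier-prefix exactly $\bar{\sf Q}$ over a bounded matrix and is itself a $\bar{\sf Q}$-formula in the free variables $(e,\bar{x})$.

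Given any $\bar{\sf Q}$-formula $\varphi(\bar{x}) = \bar{\sf Q}\bar{n}\,\theta(\bar{n},\bar{x})$, I set $\eta(\bar{x}) = \pair{e_\theta,\bar{x}}$, which is computable since $e_\theta$ is a constant depending only on $\varphi$. The classical equivalence $\varphi(\bar{x}) \leftrightarrow \Phi_{\bar{\sf Q}}(\eta(\bar{x}))$ is immediate from the choice of $e_\theta$. To exhibit the realizer translators $r_-$ and $r_+$ of Definition \ref{def:many-one-formula-def}, I would argue by induction on the length of $\bar{\sf Q}$. At the base case (empty prefix), a witness for $\theta(\bar{n},\bar{x})$ is the tuple $\pair{\bar{n},\bar{x}}$ itself, whereas a witness for $U_\ell(e_\theta,\bar{n},\bar{x})$ is $\pair{e_\theta,\bar{n},\bar{x}}$; prepending or deleting the constant $e_\theta$ yields the base-case translators in both directions.

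For the inductive step the key observation is that the shape of a Kleene realizer for a $\bar{\sf Q}$-formula depends only on the prefix $\bar{\sf Q}$ and not on the bounded matrix: $\exists$ prepends a pair $\pair{t,\cdot}$, $\forall$ wraps the realizer in an $\om$-indexed function, $\forall^\infty$ prepends a threshold-function pair, and $\exists^\infty$ wraps in a function valued in pairs subject to the usual growth condition. Thus the clauses of Kleene's definition for each outer quantifier allow the inductive lifting of $r_-$ and $r_+$ uniformly by pointwise composition (for $\forall$ and $\exists^\infty$) or by pair-preservation (for $\exists$ and $\forall^\infty$), all of which are computable. The only point requiring care — rather than a serious obstacle — is the initial choice of $U_\ell$: one must ensure it is itself of bounded syntactic complexity in the sense of Definition \ref{def:arithmetical-hierarchy}, so that $\Phi_{\bar{\sf Q}}$ genuinely qualifies as a $\bar{\sf Q}$-formula and does not inadvertently climb the hierarchy. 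Once this universal matrix is fixed, the remainder of the argument is purely mechanical bookkeeping driven by the structural induction above.
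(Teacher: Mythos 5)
There is a genuine gap, and it sits exactly at the point you set aside as "requiring care rather than a serious obstacle": the universal matrix $U_\ell$. A \emph{bounded} formula $U_\ell(e,\bar{n},\bar{x})$ that is universal for all bounded formulas $\theta(\bar{n},\bar{x})$, with $e_\theta$ a numerical index, cannot exist: $\neg U_\ell(n,n,\bar{n}',\bar{x})$ would itself be bounded, hence equivalent to $U_\ell(e_0,n,\bar{n}',\bar{x})$ for some $e_0$, and taking $n=e_0$ gives a contradiction --- the usual diagonal argument, which relativizes to the function parameters. If instead you implement $U_\ell$ through Kleene's $T$-predicate, you need an unbounded search $\exists s\,T(e,\bar{n},s)$ in the matrix, so $\Phi_{\bar{\sf Q}}$ is a $\bar{\sf Q}\exists$-formula rather than a $\bar{\sf Q}$-formula; in this paper's refined setting that is not cosmetic (the witness for the extra inner $\exists$ is part of the data, and when $\bar{\sf Q}$ ends with $\forall$ the formula even climbs the classical hierarchy: for $\bar{\sf Q}=\forall$ your $\Phi$ is essentially the $\Pi_2$-complete totality predicate, which no $\Pi_1$-formula can be ${\sf m}$-equivalent to). So as written the construction does not yield a $\bar{\sf Q}$-formula at all, and hence no strictly $\bar{\sf Q}$-complete formula.

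The repair is to move the code of $\theta$ out of a numerical index and into a function argument produced by the reduction: since $\theta$ is a fixed bounded formula, the map $\bar{x}\mapsto y$ with $y(\bar{n})=0$ iff $\theta(\bar{n},\bar{x})$ is computable, and the single formula $\comp{\bar{\sf Q}}(y)\equiv\bar{\sf Q}\bar{n}.\ y(\bar{n})=0$ is a genuine $\bar{\sf Q}$-formula to which every $\bar{\sf Q}$-formula reduces; the realizer translations are then exactly the prefix-only induction you describe, since the truth values of the two matrices agree pointwise. But note that this repaired argument is essentially the paper's proof: the paper takes the pattern applied to the trivial matrix $x(\bar{n})=0$ as $\comp{\bar{\sf Q}}$, builds it by induction on the prefix, and carries out the same four-case analysis of witness shapes to show each quantifier operator is monotone under $\leq_{\sf m}$ --- a byproduct (Lemma \ref{lem:quantifier-add-lemma}) that the paper reuses later, which your direct universal-formula formulation would not provide by itself.
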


\begin{proof}
Given a quantifier ${\sf P}$ and an arithmetic formula $\varphi$, the formula ${\sf P}\varphi$ is defined as follows.
For each $\bar{x}=(x_n)_{n\in\om}$,
\[({\sf P}\varphi)(\bar{x},\bar{y})\equiv{\sf P} n.\,\varphi(x_n,\bar{y})\]

The free variable part $\bar{y}$ is omitted below.
We show that $\varphi\leq_{\sf m}\psi$ implies ${\sf P}\varphi\leq_{\sf m}{\sf P}\psi$.
Assume $\varphi\leq_{\sf m}\psi$ via $\eta,r_-,r_+$.
Then define $\eta'(\bar{x})=(\eta(x_n))_{n\in\om}$ for $\bar{x}=(x_n)_{n\in\om}$.

(${\sf P}=\exists$):
A witness for $(\exists\varphi)(\bar{x})$ is of the form $(t,a)$.
Then $\alpha$ witnesses $\varphi(x_t)$, so $r_-(\alpha,x_t)$ witnesses $\psi(\eta(x_t))$.
Hence, $(t,r_-(\alpha,x_t))$ witnesses $(\exists\psi)(\eta'(x))$.
Therefore, $r'_-\colon(t,\alpha)\mapsto (t,r_-(\alpha,x_t))$ gives a realizer for the forward direction.
A similar argument applies to the conversion from a witness for $(\exists\psi)(\eta'(x))$ to a witness for $(\exists\varphi)(x)$.

(${\sf P}=\forall$):
If $\alpha$ witnesses $(\forall\varphi)(\bar{x})$ then $\alpha(n)$ witnesses $\varphi(x_n)$, so $r_-(\alpha(n),x_n)$ witnesses $\psi(\eta(x_n))$.
Hence, $\lambda n.r_-(\alpha(n),x_n)$ witnesses $(\forall\psi)(\eta'(\bar{x}))$.
Therefore, $r'_-\colon \alpha\mapsto \lambda n.r_-(\alpha(n),x_n)$ gives a realizer for the forward direction.
A similar argument applies to the conversion from a witness for $(\forall\psi)(\eta'(\bar{x}))$ to a witness for $(\forall\varphi)(\bar{x})$.

(${\sf P}=\forall^\infty$):
A witness for $(\forall^\infty\varphi)(\bar{x})$ is of the form $(t,\alpha)$.
For any $n\geq t$, $\alpha(n)$ witnesses $\varphi(x_n)$, so $r_-(\alpha(n),x_n)$ witnesses $\psi(\eta(x_n))$.
Hence, $(t,\lambda n.r_-(\alpha(n),x_n))$ witnesses $(\forall^\infty\psi)(\eta'(\bar{x}))$.
A similar argument applies to the conversion from a witness for $(\forall^\infty\psi)(\eta'(\bar{x}))$ to a witness for $(\forall^\infty\varphi)(\bar{x})$.

(${\sf P}=\exists^\infty$):
If $\alpha$ witnesses $(\exists^\infty\varphi)(\bar{x})$ then for $\alpha_i(n)=\pi_i(\alpha(n))$ we have $\alpha_0(n)\geq n$ and $\alpha_1(n)$ witnesses $\varphi(x_{\alpha_0(n)})$.
Therefore, $r_-(\alpha_1(n),x_{\alpha_0(n)})$ witnesses $\psi(\eta(x_{\alpha_0(n)}))$.
Hence, $\lambda n.\pair{\alpha_0(n),r_-(\alpha_1(n),x_{\alpha_0(n)})}$ witnesses $(\exists^\infty\psi)(\eta'(\bar{x}))$.
A similar argument applies to the conversion from a witness for $(\exists^\infty\psi)(\eta'(\bar{x}))$ to a witness for $(\exists^\infty\varphi)(\bar{x})$.

Now, inductively assume that a $\bar{\sf Q}$-complete formula $\comp{\qf{Q}}$ is given.
If we define $\comp{{\sf P}\bar{\sf Q}}={\sf P}\comp{\bar{\sf Q}}$, then by the above discussion, $\comp{{\sf P}\bar{\sf Q}}$ is ${\sf P}\bar{\sf Q}$-complete.
By induction, this shows that there exists a $\bar{\sf Q}$-complete  formula for any quantifier-pattern $\bar{\sf Q}$.
\end{proof}

\begin{example}
The following is an example of a $\exists^\infty\forall^\infty\forall$-formula:
\[\exists^\infty n\forall^\infty m\forall k.\;x(n,m,k)=0\]
\end{example}

We fix a $\bar{\sf Q}$-complete problem $\comp{\bar{\sf Q}}$.

\begin{obs}
If a quantifier-pattern $\bar{\sf Q}$ is absorbable into $\bar{\sf Q}'$, then $\pair{\bar{\sf Q}}\leq_{\sf m}\pair{\bar{\sf Q}'}$.
\end{obs}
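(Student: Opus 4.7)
The plan is to argue by induction on the length of a derivation $\bar{\sf Q}\to^\ast\bar{\sf Q}'$, using reflexivity of $\leq_{\sf m}$ for the base case and transitivity together with the single-rewriting-step case for the inductive step. A single step has the form $\bar{\sf P}\bar{\sf S}\bar{\sf R}\to\bar{\sf P}\bar{\sf S}'\bar{\sf R}$, where $\bar{\sf S}\to\bar{\sf S}'$ is one of the five rewriting rules. Since $\pair{{\sf P}\bar{\sf Q}}={\sf P}\pair{\bar{\sf Q}}$ by construction, and since $\varphi\mapsto{\sf P}\varphi$ is congruent under $\leq_{\sf m}$ by the arguments in the proof of Proposition~\ref{prop:Q-complete}, iterating congruence through the prefix $\bar{\sf P}$ reduces the task to establishing the ``core'' reduction $\pair{\bar{\sf S}\bar{\sf R}}\leq_{\sf m}\pair{\bar{\sf S}'\bar{\sf R}}$ for each of the five rules in isolation.

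Each of the five core reductions is routine. Rules 3 and 4 ($\exists\exists\to\exists$ and $\forall\forall\to\forall$) use Cantor pairing to repack parameters: set $\eta(\bar{x})_k=x_{\pi_0k,\pi_1k}$, and translate witnesses via the bijection between $\om\times\om$-indexed and $\om$-indexed sequences. Rule 5 (dummy insertion of ${\sf Q}$) applies the equivalence $\pair{\bar{\sf R}}(y)\equiv{\sf Q} n.\pair{\bar{\sf R}}(x_n)$ with $\eta(y)$ the constant sequence $(y,y,\dots)$, using canonical realizer packings such as $\pair{0,\alpha}$ for $\exists$ or $\forall^\infty$, $\lambda n.\alpha$ for $\forall$, and $\lambda n.\pair{n,\alpha}$ for $\exists^\infty$. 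Rules 1 and 2 rest on the definitional equivalences $\exists^\infty n.\varphi\equiv\forall m\exists n{\geq}m.\varphi$ and $\forall^\infty n.\varphi\equiv\exists m\forall n{\geq}m.\varphi$; for rule 1, a realizer $\alpha$ for $\exists^\infty n.\pair{\bar{\sf R}}(x_n)$ (with $\pi_0\alpha(m)\geq m$) already has the correct shape to serve as a realizer for $\forall m\exists n.\pair{\bar{\sf R}}(x_n)$, the side condition $n\geq m$ carrying no witness content since it is decidable. Rule 2 is dual.

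The only real obstacle is bookkeeping rather than any substantive difficulty: one must keep careful track of which components of compound realizers carry genuine witness content and which are numerical indices (especially in light of the simplified realizer format for $\Sigma_3$- and $\Pi_3$-formulas described earlier), and must verify that the realizer translations from Proposition~\ref{prop:Q-complete} compose cleanly with each of the five core translations. All functions involved are primitive recursive on codes, so no computability-theoretic subtlety arises.
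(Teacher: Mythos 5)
The paper states this Observation without proof, treating it as immediate from the definitions; your strategy --- induction on the number of rewriting steps, congruence through the prefix via the argument of Proposition~\ref{prop:Q-complete} (Lemma~\ref{lem:quantifier-add-lemma}), and an explicit witness translation for each of the five rules --- is exactly the routine verification being left implicit, and for rules 3, 4 and 5 your translations are correct as described (modulo the small slip that the packing for $\forall^\infty$ in rule 5 should be $\pair{0,\lambda n.\alpha}$ rather than $\pair{0,\alpha}$, since a $\forall^\infty$-witness is a threshold paired with a \emph{function} producing witnesses).

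The one place where your write-up does not yet deliver the claimed core reduction is rules 1 and 2, which are the heart of absorption. You need $\pair{\exists^\infty\bar{\sf R}}\leq_{\sf m}\pair{\forall\exists\bar{\sf R}}$, where the target complete problem has \emph{doubly indexed} instances $(y_{m,n})_{m,n}$, so a genuine instance transformation $\eta$ must be produced; observing that an $\exists^\infty$-realizer ``has the correct shape'' for a $\forall\exists$-realizer is not enough. Moreover, the formula you display, $\forall m\exists n.\pair{\bar{\sf R}}(x_n)$ over the \emph{same} instance $\bar{x}$, is (without the guard $n\geq m$) truth-equivalent to a plain $\exists$-statement, so clause (1) of Definition~\ref{def:many-one-formula-def} and the backward realizer $r_+$ would fail for it. The repair is short and in the spirit of what you wrote: either take $\eta(\bar{x})_{m,n}=x_{m+n}$, translating witnesses by $\alpha\mapsto\lambda m.\pair{\pi_0\alpha(m)-m,\pi_1\alpha(m)}$ and back by $\beta\mapsto\lambda m.\pair{m+\pi_0\beta(m),\pi_1\beta(m)}$, or fold the decidable guard $n\geq m$ into the bounded matrix, note that the resulting formula is literally a $\forall\exists\bar{\sf R}$-formula realizer-equivalent to $\pair{\exists^\infty\bar{\sf R}}$, and invoke strict completeness of $\pair{\forall\exists\bar{\sf R}}$; rule 2 is handled dually with the same instance map. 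With that step filled in, your proof is complete and coincides with the argument the paper intends.
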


Now, let us think about $\Sigma_2$ formulas.
Analyzing natural examples of $\Sigma_2$ problems, we find that typical examples are described by one of $\forall^\infty,\forall^\infty\forall,\exists\forall$.
If we consider $\Sigma_2$ problems to be $\Sigma_2$ sets, we will not be able to distinguish between them at all, but by directly analyzing the complexity of decision problems as formulas rather than subsets, we can understand the differences between them.
In fact, the author \cite{Kihara} has shown that the $\Sigma_2$-patterns $\forall^\infty$, $\forall^\infty\forall$ and $\exists\forall$ each yield different levels of complexity:

\begin{fact}[\cite{Kihara}]\label{fact:Sigma-2-sep}
$\comp{\forall^\infty}<_{\sf m}\comp{\forall^\infty\forall}<_{\sf m}\comp{\exists\forall}$.
\end{fact}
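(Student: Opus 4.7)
The plan. The non-strict inequalities $\comp{\forall^\infty}\leq_{\sf m}\comp{\forall^\infty\forall}\leq_{\sf m}\comp{\exists\forall}$ are immediate: $\forall^\infty$ absorbs into $\forall^\infty\forall$ by the trivial rule that inserts a universal quantifier, and $\forall^\infty\forall\to\exists\forall\forall\to\exists\forall$. By the Observation immediately preceding this fact (absorption lifts to many-one reducibility on the canonical complete problems), both inequalities follow. The substance lies in the two strict separations.

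For both non-reducibilities, I would proceed by an adversarial construction against an arbitrary candidate reduction triple $(\eta,r_-,r_+)$, exploiting the shapes of simplified witnesses. In each of the three patterns the simplified witness is a single natural number, but the verifying predicate differs: $\theta(n)$ (decidable) for $\forall^\infty n.\theta(n)$; $\forall k.\theta(n,k)$ (a $\Pi^0_1$ predicate in $n$) for $\forall^\infty n\forall k.\theta(n,k)$; and $\forall k.\theta(n,k)$ evaluated at a single row $n$ for $\exists n\forall k.\theta(n,k)$. Moreover, the set of valid witnesses is an upward-closed tail $[m^*,\infty)\subseteq\om$ for the first two patterns, but can be an arbitrary nonempty subset of $\om$ for $\exists\forall$.

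For $\comp{\forall^\infty\forall}\not\leq_{\sf m}\comp{\forall^\infty}$ the key move exploits the asymmetry between the decidable and $\Pi^0_1$ verifying predicates. Start with $x^{(0)}\equiv 0$, a true $\forall^\infty\forall$-instance with minimum witness $0$; compute a sufficiently long prefix of $\eta(x^{(0)})$ to obtain a candidate $\forall^\infty$-witness $m$ together with the finite set $U$ of coordinates of $x$ read by $\eta$ to produce $\eta(x^{(0)})\!\upto\!m$, as well as the finite set $V$ of coordinates read by $r_+$ on input $(m, x^{(0)})$. Planting a single non-zero entry $x_{n_0,k_0}=1$ at some $(n_0,k_0)\notin U\cup V$ with $n_0$ larger than $r_+(m, x^{(0)})$ keeps $\eta(x)\!\upto\!m$ and $r_+(m,x)$ unchanged, while the true minimum $\forall^\infty\forall$-witness of the perturbed $x$ is now $n_0+1>r_+(m,x)$, contradicting the defining property of $r_+$---\emph{provided} $m$ remains a valid $\forall^\infty$-witness of $\eta(x)$. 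The main technical hurdle is that the perturbation may affect $\eta(x)$ at coordinates $\geq m$ and so destroy $m$ as a witness; this is addressed by a priority-style iteration in which, whenever the candidate $m$ is destabilized, one retreats to a fresh, larger candidate while preserving the diagonal commitments made so far.

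For $\comp{\exists\forall}\not\leq_{\sf m}\comp{\forall^\infty\forall}$ I would construct an $\exists\forall$-instance $x$ whose witness set is forced to be a singleton $\{n^*\}$ chosen adversarially. Upward-closure of $\forall^\infty\forall$-witnesses then forces $r_+(m,x)=n^*$ for every sufficiently large $m$, so $n^*=\lim_m r_+(m,x)$ is limit-computable from $x$ uniformly in the reduction; a stage-wise construction reads off at step $s$ the current value $r_+(s,x)$ and, before committing $n^*$, relocates the planted all-zero row to a fresh position different from $r_+(s,x)$ by a finitary modification at a coordinate outside the current use. The principal difficulty here, as in the previous separation, is global consistency: one must arrange that the finite modifications do not inadvertently create or destroy witnesses at other rows and that the construction converges to an $x$ with exactly one $\exists\forall$-witness. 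A standard Kleene--Post-style priority bookkeeping handles this, the growing use of $\eta$ providing sufficient room to maintain both the targeted uniqueness of $n^*$ and the diagonalization.
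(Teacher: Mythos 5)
First, a framing point: this Fact is quoted from \cite{Kihara}; the present paper contains no proof of it, so your sketch can only be judged against the techniques the paper deploys for the analogous separations one level up (Lemma \ref{lem:amalgamable}, Propositions \ref{prop:separation1}--\ref{prop:separation2}, Theorem \ref{thm:separation3} and its neighbours). Your treatment of the non-strict reductions via absorption is fine. Both strict separations, however, have a genuine gap, and in both cases it sits exactly where you appeal to ``priority-style iteration'' or ``Kleene--Post-style bookkeeping.'' For $\comp{\forall^\infty\forall}\not\leq_{\sf m}\comp{\forall^\infty}$: a small slip first --- a finite prefix of $\eta(x^{(0)})$ cannot certify a $\forall^\infty$-witness; you must take $m=r_-(0,x^{(0)})$, using that $0$ witnesses the all-zero instance. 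More seriously, your iteration is not shown to terminate, and if the opponent destabilizes the candidate at every round you plant a nonzero entry in a fresh row each time; the limit instance then has infinitely many nonzero rows, i.e.\ it is a \emph{false} $\forall^\infty\forall$-instance, all three clauses in the definition of $\leq_{\sf m}$ are vacuously or trivially satisfiable on it, and the diagonalization proves nothing. The iteration is also unnecessary: once you fix the row $n_0\geq r_+(m,x^{(0)})$ you intend to spoil, you already know that $n_0+1$ will witness the perturbed instance, so \emph{before} perturbing you can compute $c=r_-(n_0+1,x^{(0)})$, then choose $s$ so large that $x^{(0)}\upto s$ determines the values $\eta(x^{(0)})(b)$ for all $b\leq\max(m,c)$ and contains the uses of $r_+(m,x^{(0)})$ and $r_-(n_0+1,x^{(0)})$, and only then place the single $1$ in row $n_0$ beyond $s$. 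Then $m$ provably remains a witness of $\eta(x')$ (coordinates in $[m,c]$ are already determined and zero; those $\geq c$ are zero because $c=r_-(n_0+1,x')$ is a genuine witness), while $r_+(m,x')=r_+(m,x^{(0)})\leq n_0$ is no longer a witness of $x'$ --- a one-shot contradiction, in the style of Theorem \ref{thm:separation3}. The second application of $r_-$ at a witness known in advance, plus persistence of the finitely many already-computed coordinates of $\eta$, is the missing idea that your ``retreat to a larger candidate'' does not supply.

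For $\comp{\exists\forall}\not\leq_{\sf m}\comp{\forall^\infty\forall}$ the movable-marker plan has the same failure mode: the values $r_+(s,x)$ you react to may have $s$ below the unknown and moving threshold $m^*(\eta(x))$, where $r_+$ is unconstrained, so the opponent can force infinitely many relocations of the all-zero row; in the limit every row is spoiled, the instance is false, and again nothing is refuted. You would also need to control injury to earlier computations $r_+(s,\cdot)$ whose uses exceed the current approximation. None of this is ``standard bookkeeping''; it is the entire content of the separation. There is a much cheaper route, and it is the one the paper itself uses one level up: witnesses of $\comp{\forall^\infty\forall}$ are upward closed, so $\kappa=\max$ shows this problem is amalgamable, whence $\comp{\lor\forall}\not\leq_{\sf m}\comp{\forall^\infty\forall}$ by Lemma \ref{lem:amalgamable}, while trivially $\comp{\lor\forall}\leq_{\sf m}\comp{\exists\forall}$; this is exactly the pattern of Propositions \ref{prop:separation1} and \ref{prop:separation2}. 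I would recast your second separation this way and repair the first as indicated above.
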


\subsection{Dual quantifier}\label{sec:quantifier-dual}

The {\em dual} ${\sf Q}^{\sf d}$ of a quantifier ${\sf Q}$ is defined as follows:
\[
\exists^{\sf d}=\forall,\quad\forall^{\sf d}=\exists,\quad(\exists^\infty)^{\sf d}=\forall^\infty,\quad(\forall^\infty)^{\sf d}=\exists^\infty
\]

The dual of a quantifer-pattern $\bar{\sf Q}={\sf Q}_0{\sf Q}_1\dots{\sf Q}_\ell$ is defined as $\bar{\sf Q}^{\sf d}={\sf Q}_0^{\sf d}{\sf Q}_1^{\sf d}\dots{\sf Q}_\ell^{\sf d}$.
The dual $\varphi^{\sf d}$ of a $\bar{\sf Q}$-formula $\varphi=\bar{\sf Q}\bar{n}\theta(\bar{n},x)$ is defined as $\bar{\sf Q}^{\sf d}\bar{n}\neg\theta(\bar{n},x)$; that is,
\[
\big({\sf Q}_0n_0{\sf Q}_1n_1\dots{\sf Q}_\ell n_\ell\ \theta(n_0,n_1,\dots,n_\ell,x)\big)^{\sf d}
={\sf Q}_0^{\sf d}n_0{\sf Q}_1^{\sf d}n_1\dots{\sf Q}_\ell^{\sf d} n_\ell\ \neg\theta(n_0,n_1,\dots,n_\ell,x),
\]
where $\theta$ is a bounded formula.
Of course, classically, the dual $\varphi^{\sf d}$ of an arithmetical formula $\varphi$ is merely the negation $\neg\varphi$.

\begin{obs}
The dual $\varphi^{\sf d}$ of a $\bar{\sf Q}$-formula $\varphi$ is a $\bar{\sf Q}^{\sf d}$-formula.
\end{obs}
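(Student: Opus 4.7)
The plan is to observe that the statement is essentially a syntactic unfolding of the definitions, so the proof amounts to matching up the shape of $\varphi^{\sf d}$ with the definition of ``$\bar{\sf Q}^{\sf d}$-formula.'' First I would write out $\varphi$ explicitly in the normal form guaranteed by the definition of $\bar{\sf Q}$-formula, namely
\[\varphi \equiv {\sf Q}_0 n_0 \, {\sf Q}_1 n_1 \dots {\sf Q}_\ell n_\ell \; \theta(n_0, n_1, \dots, n_\ell, x),\]
with $\theta$ bounded. By the definition of $\varphi^{\sf d}$ given just above the observation, we then have
\[\varphi^{\sf d} \equiv {\sf Q}_0^{\sf d} n_0 \, {\sf Q}_1^{\sf d} n_1 \dots {\sf Q}_\ell^{\sf d} n_\ell \; \neg\theta(n_0, n_1, \dots, n_\ell, x).\]

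Next I would verify the two syntactic criteria needed to classify $\varphi^{\sf d}$ as a $\bar{\sf Q}^{\sf d}$-formula. The first is that the quantifier prefix of $\varphi^{\sf d}$ is precisely $\bar{\sf Q}^{\sf d}$; this is immediate from the definition $\bar{\sf Q}^{\sf d} = {\sf Q}_0^{\sf d} {\sf Q}_1^{\sf d} \dots {\sf Q}_\ell^{\sf d}$. The second is that the matrix $\neg\theta$ is bounded. Since the class of bounded formulas is closed under Boolean connectives, and $\theta$ is bounded by assumption, $\neg\theta$ is bounded as well.

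There is no real obstacle here, as the observation is purely definitional. The one point worth highlighting is that $\exists^\infty$ and $\forall^\infty$ are not treated as abbreviations for their $\forall\exists$- and $\exists\forall$-expansions when reading off the quantifier-pattern of a formula; if they were, one would also need to check that duality commutes with the expansion, but this again reduces to the pairings $(\exists^\infty)^{\sf d} = \forall^\infty$ and $(\forall^\infty)^{\sf d} = \exists^\infty$, which were set up for exactly this reason. Hence a one-line argument suffices, and the observation can be packaged as a direct consequence of the definition of dual pattern and dual formula.
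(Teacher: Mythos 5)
Your proof is correct and matches the paper's intent: the paper leaves this Observation without proof precisely because it is the definitional unfolding you carry out, with the only substantive check being that the dual prefix is ${\sf Q}_0^{\sf d}\dots{\sf Q}_\ell^{\sf d}=\bar{\sf Q}^{\sf d}$ and that $\neg\theta$ remains a bounded formula. Nothing further is needed.
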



\begin{definition}
For pairs $(\varphi,\varphi')$ and $(\psi,\psi')$ of formulas, we say that $(\varphi,\varphi')$ is {\em many-one reducible to $(\psi,\psi')$} if $\varphi\leq_{\sf m}\psi$ and $\varphi'\leq_{\sf m}\psi'$ via a common $\eta$.
Here, realizers $r_-$ and $r_+$ can be different.
In this case, we write $(\varphi,\varphi')\leq_{\sf m}(\psi,\psi')$.
\end{definition}

\begin{definition}\label{def:di-reducible}
A formula $\varphi$ is {\em many-one di-reducible to} $\psi$ if $(\varphi,\varphi^{\sf d})\leq_{\sf m}(\psi,\psi^{\sf d})$.
In this case, we write $\varphi\leq_{\sf dm}\psi$.
A formula $\varphi$ is {\em strictly $\qf{Q}$-dicomplete} if $\varphi$ is a $\qf{Q}$-formula and $\psi\leq_{\sf dm}\varphi$ for any $\bar{\sf Q}$-formula $\psi$.
A formula $\varphi$ is {\em $\qf{Q}$-dicomplete} if it is ${\sf dm}$-equivalent to strict $\qf{Q}$-dicomplete formula.
\end{definition}

\begin{prop}\label{prop:quantifier-bi-complete}
A $\bar{\sf Q}$-dicomplete formula exists for any quantifier-pattern $\bar{\sf Q}$.
\end{prop}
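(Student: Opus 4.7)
The plan is to run essentially the same inductive construction as in Proposition \ref{prop:Q-complete}, defining $\comp{{\sf P}\bar{\sf Q}} := {\sf P}\comp{\bar{\sf Q}}$, and to observe that the whole argument descends to $\leq_{\sf dm}$ for free once one notices the identity
\[
({\sf P}\varphi)^{\sf d} = {\sf P}^{\sf d}(\varphi^{\sf d}),
\]
which is immediate from the definitions of the operator ${\sf P}(-)$ on formulas and of the dual of a quantifier-pattern. In particular, $(\comp{{\sf P}\bar{\sf Q}})^{\sf d} = {\sf P}^{\sf d} \comp{\bar{\sf Q}}^{\sf d}$, so this really is a $({\sf P}\bar{\sf Q})^{\sf d}$-formula.

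The inductive claim to be proved is then: \emph{if $\varphi \leq_{\sf dm} \psi$ via some computable $\eta$, then ${\sf P}\varphi \leq_{\sf dm} {\sf P}\psi$ via the induced reduction $\eta'(\bar{x}) = (\eta(x_n))_{n\in\om}$.} The critical observation is that in each of the four cases (${\sf P} \in \{\exists,\forall,\exists^\infty,\forall^\infty\}$) treated in the proof of Proposition \ref{prop:Q-complete}, the reduction $\eta'$ is defined by exactly this single formula in terms of $\eta$, with no dependence whatsoever on the realizers $r_-$ or $r_+$. Consequently the same $\eta'$ simultaneously witnesses ${\sf P}\varphi \leq_{\sf m} {\sf P}\psi$ and ${\sf P}^{\sf d}\varphi^{\sf d} \leq_{\sf m} {\sf P}^{\sf d}\psi^{\sf d}$: the realizers required for the first of these come from the forward/backward realizers for $\varphi \leq_{\sf m} \psi$ (constructed exactly as in Proposition \ref{prop:Q-complete}), and for the second from the forward/backward realizers for $\varphi^{\sf d} \leq_{\sf m} \psi^{\sf d}$ (constructed symmetrically, with ${\sf P}$ replaced by ${\sf P}^{\sf d}$). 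This is exactly what the definition of $\leq_{\sf dm}$ asks for, since different realizers on the two sides are allowed.

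For the base of the induction one supplies, for each single quantifier ${\sf Q} \in \{\exists,\forall,\exists^\infty,\forall^\infty\}$, a strict ${\sf Q}$-dicomplete formula; e.g.\ ${\sf Q}n.\,x(n)=0$ works, with the same translation of an arbitrary ${\sf Q}$-formula handling both the formula and its dual (which is $({\sf Q})^{\sf d}n.\,x(n)\neq 0$). Iterating the inductive step then yields $\bar{\sf Q}$-dicompleteness of $\comp{\bar{\sf Q}}$ for every quantifier-pattern $\bar{\sf Q}$. The only point requiring real attention — and thus the main obstacle, such as it is — is the verification that the $\eta'$ in each of the four quantifier cases of Proposition \ref{prop:Q-complete} truly makes no reference to the realizers, so that the same $\eta'$ can service both $\varphi$ and $\varphi^{\sf d}$ at once; inspection of the displayed formulas in that proof confirms this, and the argument then reduces to applying Proposition \ref{prop:Q-complete} in parallel to $\varphi$ and $\varphi^{\sf d}$.
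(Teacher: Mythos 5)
Your proposal is correct and follows essentially the same route as the paper: the paper's proof likewise observes that the reduction $\eta'(\bar{x})=(\eta(x_n))_{n\in\om}$ from Proposition \ref{prop:Q-complete} is independent of ${\sf P}$ (and of the realizers), so $\varphi\leq_{\sf dm}\psi$ implies ${\sf P}\varphi\leq_{\sf dm}{\sf P}\psi$ and the formula $\comp{\bar{\sf Q}}$ built there is already $\bar{\sf Q}$-dicomplete. Your explicit mention of the identity $({\sf P}\varphi)^{\sf d}={\sf P}^{\sf d}(\varphi^{\sf d})$ and of the single-quantifier base case only makes explicit what the paper leaves implicit.
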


\begin{proof}
In fact, the proof of Proposition \ref{prop:Q-complete} shows that $\varphi\leq_{\sf m}\psi$ implies ${\sf P}\varphi\leq_{\sf m}{\sf P}\psi$, but since the reduction $\eta$ is the same regardless of ${\sf P}$, so in fact the previous proof shows that $\varphi\leq_{\sf bm}\psi$ implies ${\sf P}\varphi\leq_{\sf bm}{\sf P}\psi$.
Therefore, the $\bar{\sf Q}$-complete formula $\comp{\bar{\sf Q}}$ constructed in Proposition \ref{prop:Q-complete} is actually $\bar{\sf Q}$-dicomplete.
\end{proof}

\begin{obs}\label{obs:absorb-bm-relation}
If a quantifier-pattern $\bar{\sf Q}$ is absorbable into $\bar{\sf Q}'$ then $\pair{\bar{\sf Q}}\leq_{\sf bm}\pair{\bar{\sf Q}'}$.
\end{obs}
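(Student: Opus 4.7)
The plan is to induct on the length of the absorption derivation $\bar{\sf Q} \to^\ast \bar{\sf Q}'$. The base case (zero steps) is handled by the identity reduction, which trivially works for both a formula and its dual. The inductive step rests on the transitivity of $\leq_{\sf dm}$, obtained by composing the computable $\eta$-maps and the realizers $r_\pm$ componentwise; so it suffices to verify that a single application of each of the five rewriting rules yields a di-reduction.

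To strip the common outer prefix $\bar{\sf P}$ appearing in each rule, I would invoke the key content of the proof of Proposition~\ref{prop:quantifier-bi-complete}: namely, $\varphi \leq_{\sf dm} \psi$ implies ${\sf P}\varphi \leq_{\sf dm} {\sf P}\psi$ for any single quantifier ${\sf P}$. Iterating this reduces the problem to exhibiting a ``core'' di-reduction for each rule acting at the outermost position (with the inner $\bar{\sf R}$-block bundled together with the body as a single inner formula).

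For the contraction rules $\exists\exists \to \exists$ and $\forall\forall \to \forall$, the pairing function does the job: witness structures are interconvertible via $\pair{n, \pair{m, \alpha}} \leftrightarrow \pair{\pair{n, m}, \alpha}$ on the existential side and $\alpha \leftrightarrow \lambda p.\alpha(\pi_0 p)(\pi_1 p)$ on the universal side, with the identity as parameter transformation. For the infinitary-to-finitary rules $\exists^\infty \to \forall\exists$ and $\forall^\infty \to \exists\forall$, the Kleene realizability clauses essentially coincide with the defining equivalences: a witness $\alpha$ for $\exists^\infty n.\varphi$, which by definition satisfies $\pi_0\alpha(n) \geq n$ and $\pi_1\alpha(n) \Vdash \varphi(\pi_0\alpha(n))$, directly serves as a witness for $\forall m \exists n.\,(n \geq m \wedge \varphi(n))$, the decidable $n \geq m$ constraint being absorbable into the $\bar{\sf R}$-body. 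Since the dual of each rule lies in our list (rule 1 dualizes to rule 2, rule 3 to rule 4, and rule 5 to rule 5 with the dual inserted quantifier), a common $\eta$ transfers automatically from the formula-side reduction to the dual-side reduction. For the dummy-insertion rule $\bar{\sf R} \to {\sf Q}\bar{\sf R}$, the body is unchanged: $\eta$ is the identity and trivial realizers for the dummy variable are synthesized by cases on ${\sf Q}$ (pair with $0$ for $\exists$; replicate as a constant function for $\forall$; pair each index $n$ with $(n,\text{witness})$ for $\exists^\infty$; use the threshold $0$ for $\forall^\infty$).

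The main obstacle is the bookkeeping in rules 1 and 2, where the body of the complete problem acquires an extra $n \geq m$ conjunction on the formula side and an $n < m$ disjunction on the dual side; verifying that these parallel but formally different body transformations still share a common $\eta$ requires tracing the explicit inductive construction of $\pair{\bar{\sf Q}}$ from Proposition~\ref{prop:Q-complete} at the rewrite site and checking that the $\bar{\sf R}$-portion of the parameter structure is preserved. Once this diagram commutes for each rule, transitivity closes the induction and yields $\pair{\bar{\sf Q}} \leq_{\sf dm} \pair{\bar{\sf Q}'}$.
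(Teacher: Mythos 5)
The paper gives no explicit proof here (it is stated as an unproved Observation), and your route --- induction on the rewriting derivation, stripping the outer prefix via Lemma \ref{lem:quantifier-add-lemma}, and giving a witness-interconversion for each of the five rules with a single $\eta$ serving both the formula and its dual --- is exactly the justification the paper's machinery intends, so your proposal is correct and essentially the same approach. One caution on the point you flag: for $\exists^\infty\to\forall\exists$ and $\forall^\infty\to\exists\forall$ the common $\eta$ is not ``automatic from duality,'' and the side condition $n\geq m$ cannot be absorbed into the body of the fixed complete formula $\comp{\forall\exists\bar{\sf R}}$; instead take $\eta$ to be the reindexing $y_{m,k}=x_{m+k}$, a single map for which both the formula-side and the dual-side witness conversions are immediate, which settles the bookkeeping you left open.
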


Let us extract the following useful lemma from the proof of Proposition \ref{prop:quantifier-bi-complete}.

\begin{lemma}\label{lem:quantifier-add-lemma}
For quantifier-patterns $\bar{\sf P},\bar{\sf Q},\bar{\sf Q}'$, 
$\comp{\bar{\sf Q}}\leq_{\sf m}\comp{\bar{\sf Q}'}$ implies $\comp{\bar{\sf P}\bar{\sf Q}}\leq_{\sf m}\comp{\bar{\sf P}\bar{\sf Q}'}$.
Similarly, $\comp{\bar{\sf Q}}\leq_{\sf dm}\comp{\bar{\sf Q}'}$ implies $\comp{\bar{\sf P}\bar{\sf Q}}\leq_{\sf dm}\comp{\bar{\sf P}\bar{\sf Q}'}$.
\end{lemma}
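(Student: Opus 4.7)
My plan is to prove both parts by induction on the length of $\bar{\sf P}$, reducing everything to a single-quantifier step. The base case ($\bar{\sf P}$ empty) is immediate since $\bar{\sf P}\bar{\sf Q} = \bar{\sf Q}$ and $\bar{\sf P}\bar{\sf Q}' = \bar{\sf Q}'$. For the inductive step, write $\bar{\sf P} = {\sf P}_0\bar{\sf P}'$ and recall from the proof of Proposition \ref{prop:Q-complete} that the canonical complete formulas are defined by the recursion $\comp{{\sf P}\bar{\sf Q}} = {\sf P}\comp{\bar{\sf Q}}$. So it suffices to establish the one-step statement: for any single quantifier ${\sf P}$ and any formulas $\varphi,\psi$, $\varphi \leq_{\sf m} \psi$ implies ${\sf P}\varphi \leq_{\sf m} {\sf P}\psi$, and similarly for $\leq_{\sf dm}$. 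Applying the inductive hypothesis to $\bar{\sf P}'\bar{\sf Q}$ and $\bar{\sf P}'\bar{\sf Q}'$ and then prepending ${\sf P}_0$ yields the result.

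The one-step ${\sf m}$-version is exactly what the four case analyses (${\sf P}\in\{\exists,\forall,\forall^\infty,\exists^\infty\}$) in the proof of Proposition \ref{prop:Q-complete} show: from $(\eta, r_-, r_+)$ witnessing $\varphi \leq_{\sf m} \psi$, one builds $(\eta', r'_-, r'_+)$ witnessing ${\sf P}\varphi \leq_{\sf m} {\sf P}\psi$, where $\eta'(\bar{x}) = (\eta(x_n))_{n \in \om}$. I would invoke that computation verbatim; nothing new has to be verified for the first assertion of the lemma.

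The upgrade to $\leq_{\sf dm}$ rests on a single observation, which the author also highlights in the proof of Proposition \ref{prop:quantifier-bi-complete}: the map $\eta'$ above depends only on $\eta$, not on the realizers $r_-, r_+$, and, crucially, not on which specific quantifier ${\sf P}$ is being prepended. Consequently, if the same $\eta$ serves as a common reduction for $\varphi \leq_{\sf m} \psi$ and $\varphi^{\sf d} \leq_{\sf m} \psi^{\sf d}$, then the same $\eta'$ simultaneously witnesses ${\sf P}\varphi \leq_{\sf m} {\sf P}\psi$ and $({\sf P}\varphi)^{\sf d} = {\sf P}^{\sf d}\varphi^{\sf d} \leq_{\sf m} {\sf P}^{\sf d}\psi^{\sf d} = ({\sf P}\psi)^{\sf d}$, which by definition gives ${\sf P}\varphi \leq_{\sf dm} {\sf P}\psi$.

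I do not anticipate a real obstacle: the lemma is essentially a bookkeeping statement isolating the functorial character of the single-quantifier operation ${\sf P}$ on formulas, and all four per-quantifier computations are already present in the proof of Proposition \ref{prop:Q-complete}. The only point requiring explicit attention is the independence of $\eta'$ from ${\sf P}$ and from the realizers, which is what allows a dual pair of reductions to be merged into a single $\eta'$ even though the outermost quantifier flips from ${\sf P}$ to ${\sf P}^{\sf d}$ when passing to the dual formula.
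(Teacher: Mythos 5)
Your proposal is correct and follows essentially the same route as the paper: the paper's proof of Lemma \ref{lem:quantifier-add-lemma} likewise reduces to the single-quantifier step ${\sf P}\varphi\leq_{\sf m}{\sf P}\psi$ established case-by-case in Proposition \ref{prop:Q-complete}, iterated over $\bar{\sf P}$, and obtains the $\leq_{\sf dm}$ version from the same observation you isolate in Proposition \ref{prop:quantifier-bi-complete}, namely that the induced reduction $\eta'(\bar{x})=(\eta(x_n))_{n\in\om}$ is independent of ${\sf P}$ (and of the realizers), so a common $\eta$ for $\varphi,\varphi^{\sf d}$ yields a common $\eta'$ for ${\sf P}\varphi$ and $({\sf P}\varphi)^{\sf d}={\sf P}^{\sf d}\varphi^{\sf d}$.
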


\begin{proof}
As mentioned in the proof of proposition \ref{prop:quantifier-bi-complete}, $\varphi\leq_{\sf m}\psi$ implies ${\sf P}\varphi\leq_{\sf m}{\sf P}\psi$, and $\varphi\leq_{\sf dm}\psi$ implies ${\sf P}\varphi\leq_{\sf dm}{\sf P}\psi$.
\end{proof}

Using this lemma, we can obtain some reductions that cannot be obtained immediately from the absorption relation.

\begin{prop}\label{prop:reduction-AE-E-infty}
$\pair{\forall\exists}\leq_{\sf m}\pair{\exists^\infty}$ holds; hence $\pair{\bar{\sf Q}\forall\exists}\leq_{\sf m}\pair{\bar{\sf Q}\exists^\infty}$ for any quantifier-pattern $\qf{Q}$.
\end{prop}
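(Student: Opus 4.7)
The plan is to reduce $\comp{\forall\exists}$ to $\comp{\exists^\infty}$; the ``hence'' clause then follows immediately by Lemma \ref{lem:quantifier-add-lemma} with $\bar{\sf P} = \qf{Q}$. Unwinding the complete formulas built in Proposition \ref{prop:Q-complete}, one may take $\comp{\forall\exists}(x) \equiv \forall n \exists m.\ x(n,m) = 0$ and $\comp{\exists^\infty}(y) \equiv \exists^\infty k.\ y(k) = 0$. A simplified witness for the former is a Skolem function $f$ with $x(n, f(n)) = 0$; a witness for the latter is $g$ with $g(k) \geq k$ and $y(g(k)) = 0$.

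I would define $y = \eta(x)$ by a stagewise construction computable uniformly in $x$. Maintain a set $W_{k}$ of \emph{witnessed} indices, with $W_{-1} = \emptyset$. At stage $k$, let $n_k = \min(\om \setminus W_{k-1})$. If $x(n_k, k) = 0$, call $k$ a \emph{witnessing stage}, set $y(k) = 0$, and put $W_k = W_{k-1} \cup \{n_k\}$; otherwise set $y(k) = 1$ and $W_k = W_{k-1}$. Classically, if $\forall n \exists m.\ x(n,m) = 0$, then once $n_k$ pins to a given value it is eventually witnessed and the construction moves on, yielding infinitely many witnessing stages, so $\comp{\exists^\infty}(y)$ holds; conversely, if the formula fails and $n_\ast$ is the least offender, then from some stage on $n_k = n_\ast$ forever, and only finitely many witnessing stages occur.

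For the forward realizer $r_-$, given $f$ I compute the witnessing stages inductively: the $i$th such stage is the least $m$ exceeding the $(i{-}1)$st witnessing stage with $x(i{-}1, m) = 0$, whose search can be bounded using $f(i{-}1)$, so the procedure is effective. I then set $g(k)$ to be the least witnessing stage $\geq k$. For the backward realizer $r_+$, given $g$ I simulate the construction using oracle $x$; at each $k$, stage $g(k)$ is a witnessing stage, and so I can read off the index $n_{g(k)}$ marked at that stage and verify $x(n_{g(k)}, g(k)) = 0$. Because $g(k) \to \infty$, the sequence $\{n_{g(k)}\}_{k \in \om}$ exhausts the enumeration of all indices ever witnessed by the construction, which under the hypothesis is all of $\om$. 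Hence for each $n$ I search for the least $k$ with $n_{g(k)} = n$ and set $f(n) = g(k)$.

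The main obstacle is confirming that $r_+$ recovers every $n$. This rests on the monotonicity baked into the construction --- the indices witnessed at successive witnessing stages form a strictly increasing sequence --- together with $g(k) \to \infty$, which forces $\{n_{g(k)}\}_k$ to eventually overtake any fixed $n$. Once the reduction $\comp{\forall\exists} \leq_{\sf m} \comp{\exists^\infty}$ is in hand, Lemma \ref{lem:quantifier-add-lemma} applied with $\bar{\sf P} = \qf{Q}$ upgrades it to $\pair{\qf{Q}\forall\exists} \leq_{\sf m} \pair{\qf{Q}\exists^\infty}$.
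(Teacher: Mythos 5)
Your overall strategy --- a moving pointer that advances whenever the current index is seen to be witnessed, with the advancement stages supplying the $\exists^\infty$-witnesses, and the ``hence'' clause via Lemma \ref{lem:quantifier-add-lemma} --- is exactly the paper's. But your stage-$k$ test breaks the reduction already at the level of classical truth: you check only the single value $x(n_k,k)$, i.e.\ the column equal to the current stage. Hence, once the pointer reaches an index $n$, all columns smaller than the arrival stage are never examined, and a witness for $n$ sitting at such a column is permanently missed. Concretely, take $x(n,m)=0$ iff $m=0$. Then $\forall n\exists m.\ x(n,m)=0$ holds, but your construction has exactly one witnessing stage: at stage $0$ the pointer moves from $0$ to $1$, and at every stage $k\geq 1$ the test $x(1,k)=0$ fails, so $y(k)=1$ for all $k\geq 1$ and $\comp{\exists^\infty}(y)$ is false. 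This violates condition (1) of Definition \ref{def:many-one-formula-def}. It also shows your claim that ``once $n_k$ pins to a given value it is eventually witnessed'' is false (it would need each row to contain infinitely many witnesses, not just one), and the same defect infects $r_-$: the least $m$ exceeding the previous witnessing stage with $x(i-1,m)=0$ need not exist even when row $i-1$ has a witness, so that search can fail to terminate.

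The fix is the paper's cumulative test: at stage $s$, ask whether the current index has a witness among all columns $\leq s$ (i.e.\ whether $x(n_s,m)=0$ for some $m\leq s$) and advance the pointer exactly then. With this test, if every row has a witness, the pointer at index $n$ advances at any stage past both its arrival stage and some witness for $n$, so witnessing stages occur infinitely often; and if some row has no witness, the pointer eventually freezes and there are only finitely many. Your realizers then go through essentially as written: the forward search is genuinely bounded by the Skolem value, and the backward recovery simulates the construction, notes the index held at each witnessing stage (these indices increase by one each time and so exhaust $\om$), and finds the corresponding witness for each $n$ by a bounded search over columns $\leq$ that stage.
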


\begin{proof}
For the first assertion, given $\bar{x}=(x_n)_{n\in\om}$, we construct $y=\eta(\bar{x})$ satisfying the following:
\[\forall m\exists k.\ x_m(k)\not=0\iff\exists^\infty t.\ y(t)\not=0.\]

We inductively construct $m[s]$.
First put $m[0]=0$.
Assume that $m[s]$ has already been constructed at stage $s$.
If $x_{m[s]}(k)\not=0$ for some $k\leq s$, then put $y(s)=1$ and $m[s+1]=m[s]+1$.
Otherwise, put $y(s)=0$ and $m[s+1]=m[s]$.
One can easily see that this gives the desired reduction.
Then the second assertion follows from Lemma \ref{lem:quantifier-add-lemma}.
\end{proof}

As mentioned in Fact \ref{fact:Sigma-2-sep}, the dual of the above result does not hold.
For direducibility, we need to weaken the statement as follows:

\begin{prop}\label{prop:ex-ex-infty-bm-reducible}
$\pair{\exists}\leq_{\sf dm}\pair{\exists^\infty}$ holds; hence $\pair{\bar{\sf Q}\exists}\leq_{\sf dm}\pair{\bar{\sf Q}\exists^\infty}$.
\end{prop}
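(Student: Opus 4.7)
The plan is to construct a single computable $\eta$ that simultaneously handles the formula and its dual, then invoke Lemma \ref{lem:quantifier-add-lemma} for the second assertion. Take $\pair{\exists}$ to be $\exists n.\,x(n)\neq 0$ and $\pair{\exists^\infty}$ to be $\exists^\infty n.\,y(n)\neq 0$, so that their duals are $\forall n.\,x(n)=0$ and $\forall^\infty n.\,y(n)=0$. I would define $y=\eta(x)$ by $y(n)=1$ if there exists $k\leq n$ with $x(k)\neq 0$, and $y(n)=0$ otherwise. The idea is to ``smear'' the first non-zero position of $x$ over all larger indices, so that $\exists n.\,x(n)\neq 0$ is equivalent to $y$ being eventually $1$, hence to $\exists^\infty n.\,y(n)\neq 0$; dually, $\forall n.\,x(n)=0$ is equivalent to $y\equiv 0$, hence to $\forall^\infty n.\,y(n)=0$.

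For the witness conversions on the primal side, I would send a witness $t$ with $x(t)\neq 0$ to $\alpha(n)=\max(n,t)$, which satisfies $\alpha(n)\geq n$ and $y(\alpha(n))=1$. Conversely, given $\alpha$ witnessing $\exists^\infty n.\,y(n)\neq 0$, I would search for the least $k\leq \alpha(0)$ with $x(k)\neq 0$; such $k$ must exist, since $y(\alpha(0))=1$ by the definition of $y$. On the dual side, a (trivial) witness for $\forall n.\,x(n)=0$ is sent to the pair $\langle 0,\beta\rangle$ with $\beta$ the trivial witness, since $y\equiv 0$ in that case. In the backward direction, a witness $\langle t,\beta\rangle$ for $\forall^\infty n.\,y(n)=0$ must force $x\equiv 0$, because $x(k)\neq 0$ for any $k$ would entail $y(m)=1$ for every $m\geq k$, contradicting $y(n)=0$ for $n\geq \max(t,k)$; so the trivial witness for $\forall n.\,x(n)=0$ can be returned.

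The main subtlety is keeping track of Kleene's witness-encoding conventions, but since each outer quantifier here is a single quantifier over a bounded matrix, the simplified witness forms used above are legitimate and no nested coding is needed. Once the base reduction $\pair{\exists}\leq_{\sf dm}\pair{\exists^\infty}$ is in hand, the second assertion $\pair{\bar{\sf Q}\exists}\leq_{\sf dm}\pair{\bar{\sf Q}\exists^\infty}$ follows immediately by applying Lemma \ref{lem:quantifier-add-lemma} with the prefix $\bar{\sf P}=\bar{\sf Q}$.
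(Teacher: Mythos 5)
Your proof is correct and follows essentially the same route as the paper: a single computable padding map $\eta$ making the truth values match, trivial witnesses on the $\forall$-side, explicit witness conversions on the $\exists$-side, and Lemma \ref{lem:quantifier-add-lemma} for the second assertion. The only difference is the gadget — the paper copies each value along a column via $y(t,u)=x(t)$, while you use the cumulative condition $y(n)\neq 0$ iff $x(k)\neq 0$ for some $k\leq n$ — and both serve the argument equally well.
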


\begin{proof}
For the first assertion, given $x$, we construct $y=\eta(x)$ satisfying the following:
\[\exists t.\ x(t)\not=0\iff \forall s\exists t\geq s.\ y(t)\not=0\]

To be explicit, put $y(t,u)=x(t)$.
Then obviously, the above equivalence holds, and the corresponding realizers can be easily obtained.
For the dual reduction, since the dual of the left side does not involve a witness (since it is a $\forall$-formula), it is sufficient to show that whenever $x$ satisfies the dual of the left side, we can obtain a witness for the dual of the right side.
Suppose that $x(t)=0$ for any $t$.
In this case, $y(t,u)=0$ for any $(t,u)$, so $s=0$ is a witness for the right-hand side.
Then the second assertion follows from Lemma \ref{lem:quantifier-add-lemma}.
\end{proof}

\begin{figure}[t]
\begin{center}
\includegraphics[width=60mm]{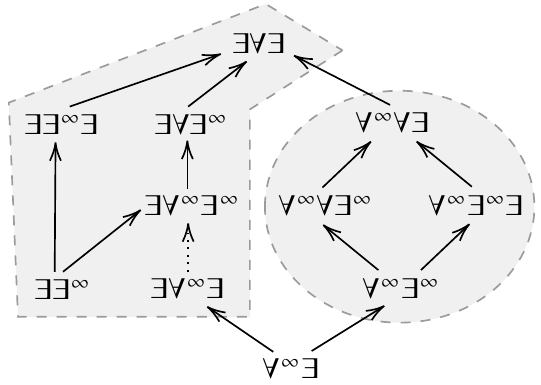}
\quad
\includegraphics[width=57mm]{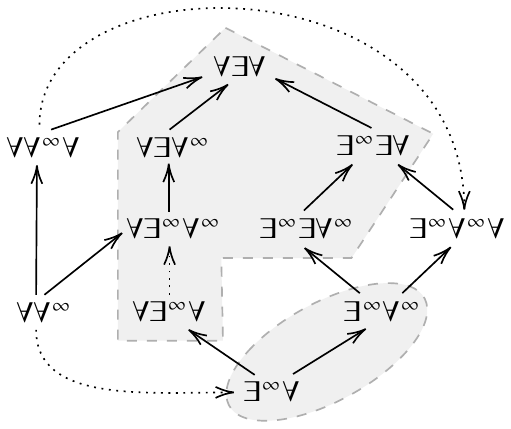}
\end{center}
\caption{The many-one classification of $\Sigma_3$- and $\Pi_3$-patterns}\label{fig:arith2}
\end{figure}

To make it easier to understand the discussion from here on, let us illustrate the relationship between $\Sigma_3$- and $\Pi_3$-patterns in advance as Figure \ref{fig:arith2}.
Here, the arrow $\bar{\sf P}\to\bar{\sf Q}$ implies $\comp{\bar{\sf P}}\leq_{\sf m}\comp{\bar{\sf Q}}$, regardless of whether it is a dotted line or a solid line.
Later we will see that all classes of quantifier-patterns that belong to a region enclosed by a polygon or ellipse are $\equiv_{\sf m}$-equivalent.

\section{Natural complete problems}\label{sec:natural-problem}

Here, we perform a detailed analysis of classical $\Sigma_3$- and $\Pi_3$-complete problems.

\subsection{Countable structure}\label{sec:countable-structure}
In this article, we often deal with countable structures such as countable partial orders and countable graphs.
Since they are all countable binary relations, we present how to handle countable binary relations.
A countable binary relation we deal with in this article is a pair $(X,R)$ where $X\subseteq\om$ and $R\subseteq X^2$.
A code for a binary relation $(X,R)$ is $(p,q)\in\om^\om\times\om^{\om\times\om}\simeq\om^\om$ such that $X=\{n\in\om:p(n)=1\}$ and $R=\{(a,b)\in\om^2:q(a,b)=1\}$.
From now on, a binary relation is always identified with its code.

The set of all finite sequences from $X$ is written as $X^{<\om}$.
A tree refers to a acyclic directed graph with a root, but in this article, a tree is treated as a subset of $\om^{<\om}$ that is closed under taking initial segment.

\subsection{$\forall\forall^\infty\forall$: Local finiteness}\label{sec:local-finiteness}

Let us take a closer look at various $\Pi_3$-complete problems.
Interestingly, the form that appears most often does not seem to be $\forall\exists\forall$.
First, let us take a look at $\forall\forall^\infty\forall$-formulas (where note that the quantifier-pattern $\forall\forall^\infty\forall$ is $\Pi_3$).
The following $\forall\forall^\infty\forall$-dicomplete formula is useful for gaining an intuition of $\forall\forall^\infty\forall$.

\begin{obs}\label{obs:all-bdd-dicomplete}
The following formula is $\forall\forall^\infty\forall$-dicomplete:
\[\forall n\exists k\forall t.\;x(n,t)\leq k.\]
\end{obs}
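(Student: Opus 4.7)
The plan is to establish $\forall\forall^\infty\forall$-dicompleteness by first recasting the stated $\forall\exists\forall$-formula as a di-equivalent $\forall\forall^\infty\forall$-formula, and then reducing a canonical $\forall\forall^\infty\forall$-complete formula to it by a stage construction.

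First, I would verify the di-equivalence
\[\forall n\exists k\forall t.\,x(n,t)\leq k\;\equiv_{\sf dm}\;\forall n\forall^\infty m\forall t.\,x(n,t)\leq m\]
using the identity reduction $\eta=\mathrm{id}$: the threshold for the $\forall^\infty$ is precisely a bound for $x(n,\cdot)$ and vice versa, so the positive realizers are the identity on the simplified witness function $n\mapsto k_n$. On the dual side, $\exists n\forall k\exists t.\,x(n,t)>k$ and $\exists n\exists^\infty m\exists t.\,x(n,t)>m$ are both $\Sigma_3$-formulas whose simplified witnesses are in each case just an index $n$ witnessing unboundedness of $x(n,\cdot)$, so again the dual realizer is the identity. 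This reduces the problem to showing that the $\forall\forall^\infty\forall$-formula on the right is dicomplete.

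For dicompleteness, by Proposition \ref{prop:quantifier-bi-complete} it suffices to construct a many-one di-reduction from the canonical complete formula $\comp{\forall\forall^\infty\forall}(\bar z)\equiv\forall n\forall^\infty m\forall k.\,\theta(z_{n,m,k})$, where $\theta$ is a complete bounded formula. Given $\bar z$, I would define $y=\eta(\bar z)$ in stages: set $y(n,t)$ to be the largest $m\leq t$ admitting some $k\leq t$ with $\neg\theta(z_{n,m,k})$, and $0$ if none exists. Letting $A_n=\{m:\exists k.\,\neg\theta(z_{n,m,k})\}$, the construction ensures $\sup_t y(n,t)=\sup A_n$, so $y(n,\cdot)$ is bounded iff $A_n$ is finite iff $\forall^\infty m\forall k.\,\theta(z_{n,m,k})$; this gives the classical equivalence $\comp{\forall\forall^\infty\forall}(\bar z)\leftrightarrow\forall n\exists k\forall t.\,y(n,t)\leq k$.

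The realizers are then routine. In the positive direction, a witness $n\mapsto m_n$ for $\comp{\forall\forall^\infty\forall}$ (meaning $\forall m\geq m_n\forall k.\,\theta(z_{n,m,k})$) forces $A_n\subseteq[0,m_n)$, so the same function $n\mapsto m_n$ is a bound for $y(n,\cdot)$; conversely a bound function $n\mapsto k_n$ for $y(n,\cdot)$ forces $A_n\subseteq[0,k_n]$, so $n\mapsto k_n+1$ is a threshold for the complete formula. In the dual direction, a simplified witness $n$ for $\exists n\exists^\infty m\exists k.\,\neg\theta$ makes $A_n$ infinite, hence $y(n,\cdot)$ unbounded, and the same $n$ serves as a dual witness for the target; the converse is symmetric. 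The main point to watch is to keep the stage construction uniform in $n$ and to check that extracting a stage $t$ with $y(n,t)>k$ from ``$A_n$ infinite'' can be done by bounded search once $(m,k')$ have been located; no deeper obstacle arises, as the argument is essentially the $\Pi_3$-lift of the classical $\Sigma_2$-boundedness argument underlying Fact \ref{fact:Sigma-2-sep}.
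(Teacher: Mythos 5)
Your proof is correct, but it takes a different route from the paper. The paper's argument is modular and very short: it observes that the stated formula is exactly $\forall{\sf Bdd}$, cites from \cite{Kihara} that ${\sf Bdd}\equiv\exists k\forall t.\,x(t)\leq k$ is $\forall^\infty\forall$-complete, notes that dicompleteness of ${\sf Bdd}$ is automatic because a witness for its $\forall\exists$-dual is computable, and then invokes the prefixing principle of Lemma \ref{lem:quantifier-add-lemma} to transfer ${\sf Bdd}\equiv_{\sf dm}\comp{\forall^\infty\forall}$ to $\forall{\sf Bdd}\equiv_{\sf dm}\comp{\forall\forall^\infty\forall}$. You instead give a self-contained construction: after the (correct) identity di-equivalence between the $\forall\exists\forall$-form and its $\forall\forall^\infty\forall$-rewriting, you reduce the canonical $\comp{\forall\forall^\infty\forall}$ directly to $\forall{\sf Bdd}$ by the stage construction $y(n,t)=\max\{m\leq t:\exists k\leq t\,\neg\theta(z_{n,m,k})\}$, which is precisely the uniform-in-$n$ lift of the $\Sigma_2$ boundedness argument; your realizer verifications in both directions and on the dual side are sound, and using a single $\eta$ for formula and dual is exactly what $\leq_{\sf dm}$ requires. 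What the paper's route buys is brevity and reuse of the general machinery (the quoted ${\sf Bdd}$ completeness from \cite{Kihara} and the quantifier-add lemma, which recur throughout the article); what your route buys is independence from the external citation and fully explicit realizers, at the cost of redoing, uniformly in $n$, the completeness argument for ${\sf Bdd}$ that the paper treats as known.
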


\begin{proof}
The boundedness for a sequence, ${\sf Bdd}\equiv\exists k\forall t.\;x(t)\leq k$, is $\forall^\infty\forall$-complete \cite{Kihara}.
For the dual, a witness for a $\forall\exists$-formula is always computable, so the dicompleteness follows automatically.
That is, ${\sf Bdd}\equiv_{\sf bm}\pair{\forall^\infty\forall}$.
By the same discussion as in Lemma \ref{lem:quantifier-add-lemma}, we get $\forall{\sf Bdd}\equiv_{\sf bm}\pair{\forall\forall^\infty\forall}$.
The formula in the assertion is nothing other than $\forall{\sf Bdd}$, so the proof is complete.
\end{proof}

The $\forall\forall^\infty\forall$-dicomplete formula described above, in a nutshell, expresses the property ``being bounded everywhere.''
Thus, typical examples of $\forall\forall^\infty\forall$-formulas are those related to {\em local finiteness}.

A partial order is locally finite if every interval contains finitely many elements.
A graph is locally finite if every vertex has finite degree.
A locally finite tree is often called a finitely branching tree.
To be precise, we consider the following properties for a partial order $P$, a graph $(V,E)$ and a tree $T$:
\begin{align*}
{\sf LocFin}_{\sf PO}:&\quad\forall a,b\in P\ \exists k\ |\{c\in P:a<_Pc<_Pb\}|\leq k.\\
{\sf LocFin}_{\sf G}:&\quad\forall a\in V\ \exists k\ |\{b\in V:(a,b)\in E\}|\leq k.\\
{\sf FinBranch}:&\quad\forall\sigma\in T\ \exists k\ |\{n\in\om:\sigma\fr n\in T\}|\leq k.
\end{align*}

Here, for a set $A$, $|A|\leq k$ denotes that the cardinality of $A$ is at most $k$.
That is, $|A|\leq k$ is an abbreviation of the following formula:
\[\forall c_0,c_1,\dots,c_k\in A\ \exists i,j\leq k\ (i\not=j\ \land\ c_i=c_j).\]

The above three formulas are all just $\Pi_3$ formulas in the classical sense.
Note that the $\exists k$ in these definitions can be replaced with $\forall^\infty k$, so they can be considered $\forall\forall^\infty\forall$-formulas.

\begin{prop}\label{prop:complete-locfin-PO}
${\sf LocFin}_{\sf PO}$ is $\forall\forall^\infty\forall$-dicomplete.
\end{prop}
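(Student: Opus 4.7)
The plan is to exhibit ${\sf LocFin}_{\sf PO}$ as directly witnessed by the dicomplete formula $\varphi(x)\equiv\forall n\exists k\forall t\,x(n,t)\leq k$ from Observation \ref{obs:all-bdd-dicomplete}. The easy direction ${\sf LocFin}_{\sf PO}\leq_{\sf dm}\comp{\forall\forall^\infty\forall}$ is automatic because, unfolding $|A|\leq k$ as the $\Pi_1$-formula $\forall c_0,\dots,c_k\,\exists i\neq j\,(c_i=c_j)$ and replacing $\exists k$ by $\forall^\infty k$, ${\sf LocFin}_{\sf PO}$ is literally a $\forall\forall^\infty\forall$-formula and so direduces to the dicomplete representative. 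All the work is in the opposite reduction $\comp{\forall\forall^\infty\forall}\leq_{\sf dm}{\sf LocFin}_{\sf PO}$.

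For the hard direction, given $x$ I would construct a computable partial order $\eta(x)=P$ consisting of one independent ``diamond'' per $n$: for each $n$ take fresh elements $a_n,b_n$ with $a_n<_P b_n$, and arrange $\sup_t x(n,t)$ incomparable middle elements between them. To keep the membership and order predicates decidable, I code middle elements as triples $c_{n,i,t}$ and put $c_{n,i,t}\in P$ precisely when $t$ is the first stage witnessing $x(n,t)>i$, i.e.\ when $i<x(n,t)\land\forall t'<t\,(i\geq x(n,t'))$; whenever $c_{n,i,t}\in P$ we declare $a_n<_Pc_{n,i,t}<_Pb_n$. This condition is $\Delta_0$, so $P$ is presented by a total computable code. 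Since each $i$ with $i<\sup_tx(n,t)$ contributes exactly one such $t$, the interval strictly between $a_n$ and $b_n$ in $P$ has cardinality $\sup_t x(n,t)$, and by construction $(a_n,b_n)$ are the only pairs in $P$ whose strictly-between interval is non-empty.

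The four realizers are then routine. In the positive direction, a witness $n\mapsto k_n$ for $\varphi(x)$ yields bounds $\sup_t x(n,t)\leq k_n$, so the function $(a_n,b_n)\mapsto k_n$ (with all other pairs sent to $0$) witnesses ${\sf LocFin}_{\sf PO}(P)$; conversely, the bound $k_{a_n,b_n}$ extracted from any witness for ${\sf LocFin}_{\sf PO}(P)$ serves as $k_n$. In the dual direction, a witness $(n,(k_m,t_m)_m)$ for $\exists n$ with $x(n,\cdot)$ unbounded is converted into $(a_n,b_n,\beta)$ by locating, for each $m$, the triples $c_{n,0,t^*_0},\dots,c_{n,k_m,t^*_{k_m}}$ (the $t^*_i$ found by bounded search up to $t_m$); conversely, a dual witness $(a,b,\beta)$ for ${\sf LocFin}_{\sf PO}(P)$ forces $(a,b)=(a_n,b_n)$ for some $n$ decodable from the codes (since only these intervals are non-empty), and taking the maximum index $i^*$ among the $c_j$'s appearing in $\beta(m)$, together with its recorded stage $t^*$, gives $x(n,t^*)>i^*\geq k_m$ as required.

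The main obstacle is the choice of coding: the naive ``put $c_{n,i}\in P$ iff $\sup_t x(n,t)>i$'' makes membership merely $\Sigma_1$ and so does not yield a total computable code. The first-witness-stage tag built into the element name $c_{n,i,t}$ turns membership into a decidable condition while preserving the cardinality of each non-trivial interval exactly, which is what makes the reduction go through.
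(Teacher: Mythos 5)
Your proposal is correct and follows essentially the same route as the paper: reduce the $\forall{\sf Bdd}$-form of $\comp{\forall\forall^\infty\forall}$ to ${\sf LocFin}_{\sf PO}$ by building a poset whose only nontrivial intervals are antichains of size $\sup_t x(n,t)$, with each middle element tagged by the first witnessing stage so that the presentation is decidable, and then converting witnesses (and dual witnesses) exactly as you describe. The only cosmetic difference is that the paper glues all the gadgets to a single bottom element $\bot$ with tops $a_n$ instead of using disjoint diamonds $a_n<_Pb_n$, which changes nothing in the argument.
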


\begin{proof}
As in the above argument, one can easily see ${\sf LocBdd}_{\sf PO}\leq_{\sf dm}\comp{\forall\forall^\infty\forall}$.
For $\forall\forall^\infty\forall$-dicompleteness, by Observation \ref{obs:all-bdd-dicomplete}, it remains to show $\forall{\sf Bdd}\leq_{\sf dm}{\sf LocFin}_{\sf PO}$.
In order to show this, given $x=(x_n)_{n\in\om}$, we construct a partial order $P=\eta(x)$ such that
\[
\forall n\exists k\forall t.\ x_n(t)\leq k\iff P\mbox{ is locally finite.}
\]

The construction proceeds as follows.
First, put the bottom element $\bot$ and infinitely many pairwise incomparable elements $\{a_n\}_{n\in\om}$ in $P$.
Moreover, for any $n$, if $x_n(t)\geq k$ for some $t$, then insert $c^n_{k,t}$ between $\bot$ and $a_n$ for the least such $t$.
Here, if $\langle k,t\rangle\not=\langle \ell,s\rangle$ then $c^n_{k,t}$ is incomparable with $c^n_{\ell,s}$.
Formally, $\bot,a_n,c^n_{k,t}$ can be coded as $\pair{0,0,0,0},\pair{1,n,0,0},\pair{2,n,k,t}$, respectively, for example.
This construction gives a reduction $\eta$ for $\forall\forall^\infty\forall$-dicompleteness.

Let us analyze this construction.
The only comparable elements in $P$ are $\bot<_Pc^n_{k,t}<_Pa_n$, and all others are incomparable.
Moreover, there is nothing in the intervals $[\bot,c^n_{k,t}]$ and $[c^n_{k,t},a_n]$ except for the endpoints, so the only intervals that can contain multiple elements are $[\bot,a_n]$.
Thus, given $(p,q)$ is of the form $(\bot,a_n)$, only elements of the form $c^n_{k,t}$ are enumerated in the interval $[\bot,a_n]$, but at most one such element is enumerated for each $k$.
Therefore, that $k$ is an upper bound for $x_n$ is equivalent to that the cardinality of the interval $(\bot,a_n)$ is less than or equal to $k$.

In order to show $\forall\forall^\infty\forall$-completeness, let $n\mapsto k_n$ be a witness for $\forall{\sf Bdd}(x)$.
We need to find a witness for local finiteness of $P=\eta(x)$.
Given $p,q\in P$, if $(p,q)$ is not of the form $(\bot,a_n)$, then there is nothing in this interval as discussed above, so $u_{p,q}=0$ is an upper bound for the cardinality of this interval.
If $(p,q)$ is of the form $(\bot,a_n)$, the given upper bound $k_n$ for $x_n$ is also an upper bound for the cardinality of the interval $(\bot,a_n)$ as discussed above.
In this case, put $u_{p,q}=k_n$, and then $(p,q)\mapsto u_{p,q}$ is a witness for ${\sf LocFin}_{\sf PO}(P)$.

Conversely, let $(p,q)\mapsto u_{p,q}$ be a witness for ${\sf LocFin}_{\sf PO}(P)$.
In particular, for any $n$, $u_{\bot,a_n}$ is an upper bound for the cardinality of the interval $(\bot,a_n)$, which is also an upper bound for $x_n$ by the above argument.
Hence, $n\mapsto u_{\bot,a_n}$ is a witness for $\forall{\sf Bdd}(x)$.

Next we show the $\exists\exists^\infty\exists$-completeness of the dual $({\sf LocFin}_{\sf PO})^{\sf d}$.
Let $n$ be a witness for $(\forall{\sf Bdd})^{\sf d}(x)$.
This means that $x_n$ has no upper bound, so the interval $[\bot,a_n]$ has infinitely many elements; hence $(\bot,a_n)$ is an witness for $({\sf LocFin}_{\sf PO})^{\sf d}(P)$.

Conversely, let $(a,b)$ be a witness for $({\sf LocFin}_{\sf PO})^{\sf d}(P)$.
Then this must be of the form $(\bot,a_n)$ and such $n$ is a witness for $(\forall{\sf Bdd})^{\sf d}(x)$ by the above argument.
\end{proof}

\begin{prop}
${\sf LocFin}_{\sf G}$ is $\forall\forall^\infty\forall$-dicomplete.
\end{prop}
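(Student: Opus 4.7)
The plan is to mimic the proof of Proposition \ref{prop:complete-locfin-PO} very closely, replacing the partial-order construction with a bouquet-of-stars graph. First, rewriting the degree bound $\exists k\, |\{b:(a,b)\in E\}|\leq k$ as $\forall^\infty k\, |\{b:(a,b)\in E\}|\leq k$ (since any upper bound can be enlarged), we see that ${\sf LocFin}_{\sf G}$ has the shape $\forall a\, \forall^\infty k\, \forall \bar{b}\, \theta$ with $\theta$ bounded, so it is a $\forall\forall^\infty\forall$-formula; hence ${\sf LocFin}_{\sf G}\leq_{\sf dm}\pair{\forall\forall^\infty\forall}$. By Observation \ref{obs:all-bdd-dicomplete}, the remaining task is to prove $\forall{\sf Bdd}\leq_{\sf dm}{\sf LocFin}_{\sf G}$.

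Given $\bar{x}=(x_n)_{n\in\om}$, I would define a graph $G=(V,E)=\eta(\bar{x})$ as follows. The vertex set contains a ``central'' vertex $a_n$ for every $n\in\om$ and a ``pendant'' vertex $b^n_{k,t}$ for every triple $(n,k,t)$, coded, say, as $\langle 0,n,0,0\rangle$ and $\langle 1,n,k,t\rangle$. The edge set is generated by enumerating stages: at stage $s$, for each $n,k\le s$, if there is a least $t\le s$ with $x_n(t)\ge k$ and no edge incident to $a_n$ of the form $(a_n,b^n_{k,\cdot})$ has yet been declared, add the single edge $(a_n,b^n_{k,t})$. The resulting graph is computable in $\bar{x}$, each pendant $b^n_{k,t}$ has degree exactly $1$, and the degree of $a_n$ equals $|\{k\ge 1:\exists t\ x_n(t)\ge k\}|$, which is finite if and only if $x_n$ is bounded. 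Thus $G$ is locally finite iff $\forall n\exists k\forall t\ x_n(t)\le k$.

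For the forward realizer, given a witness $n\mapsto k_n$ for $\forall{\sf Bdd}(\bar{x})$, produce a witness for ${\sf LocFin}_{\sf G}(G)$ by sending $a_n\mapsto k_n$ and every pendant to $1$. For the backward realizer, given a witness $v\mapsto \ell_v$ for ${\sf LocFin}_{\sf G}(G)$, extract $n\mapsto \ell_{a_n}$ as the bound on $x_n$; by the key property above this is indeed a witness for $\forall{\sf Bdd}(\bar{x})$. For the dual reduction ($\exists\exists^\infty\exists$), a witness $n$ for $(\forall{\sf Bdd})^{\sf d}(\bar{x})$ (i.e.\ $x_n$ unbounded) produces the witness $a_n$ for $({\sf LocFin}_{\sf G})^{\sf d}(G)$, while conversely a witness $v$ for the latter must have infinite degree, so must be of the form $a_n$ (since every pendant has degree $1$), and this $n$ witnesses $(\forall{\sf Bdd})^{\sf d}(\bar{x})$.

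There is no serious obstacle; the only thing to be careful about is the combinatorics of the vertex set so that the pendants genuinely have degree $1$ (no accidental sharing) and so that $\deg(a_n)$ faithfully tracks the ``height reached'' by $x_n$. Enumerating one pendant per $(n,k)$ at the least witnessing $t$ achieves exactly this, in direct analogy with the chain element $c^n_{k,t}$ in Proposition \ref{prop:complete-locfin-PO}.
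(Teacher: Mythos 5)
Your proposal is correct and is essentially the paper's own proof: the paper likewise ports the ${\sf LocFin}_{\sf PO}$ construction to a hub-and-pendant graph, adding for each $n,k$ a pendant vertex $v^n_{k,t}$ (adjacent only to a hub) at the least $t$ with $x_n(t)\geq k$, and transfers witnesses in both directions exactly as you do, citing "the same as before" for the dicompleteness argument. The only divergence is that the paper literally writes a single special hub $u$, which read at face value would fail (with infinitely many nonzero but bounded $x_n$ that hub acquires infinite degree, and a dual witness would not determine $n$); your per-$n$ hubs $a_n$ are clearly the intended reading, so your version is if anything the more carefully stated one.
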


\begin{proof}
As before, ${\sf LocFin}_{\sf G}\leq_{\sf m}\comp{\forall\forall^\infty\forall}$.
In order to show $\forall{\sf Bdd}\leq_{\sf dm}{\sf LocFin}_{\sf G}$, given $x=(x_n)_{n\in\om}$, we construct a graph $G=\eta(x)$ such that
\[
\forall n\exists k\forall t.\ x_n(t)\leq k\iff G\mbox{ is locally finite.}
\]

The construction proceeds as follows.
First put a special vertex $u$ in $G=(V,E)$.
Moreover, for any $n$, if $x_n(t)\geq k$ for some $t$, then put a vertex $v^n_{k,t}$ which is adjacent only to $u$ for the least such $t$.
That is, $(u,v^n_{k,t})\in E$ for such $v^n_{k,t}$.
Formally, $u,v^n_{k,t}$ can be coded as $\pair{0,0,0,0},\pair{1,n,k,t}$, respectively, for example.
This construction gives a reduction $\eta$ for $\forall\forall^\infty\forall$-dicompleteness.
The proof of the dicompleteness is the same as before.
\end{proof}

In exactly the same way, one can also show the following.

\begin{prop}
${\sf FinBranch}$ is $\forall\forall^\infty\forall$-dicomplete.
\end{prop}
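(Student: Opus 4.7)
The plan is to follow the same template as Proposition \ref{prop:complete-locfin-PO} and the ${\sf LocFin}_{\sf G}$ case, using the dicompleteness of $\forall{\sf Bdd}$ from Observation \ref{obs:all-bdd-dicomplete}. First I would verify that ${\sf FinBranch}$ is indeed a $\forall\forall^\infty\forall$-formula: the inner $\exists k$ is monotone in $k$, so it can be rewritten as $\forall^\infty k$, giving the $\forall\forall^\infty\forall$-shape, and therefore ${\sf FinBranch}\leq_{\sf dm}\comp{\forall\forall^\infty\forall}$. It then remains to prove $\forall{\sf Bdd}\leq_{\sf dm}{\sf FinBranch}$, where $\forall{\sf Bdd}\equiv\forall n\exists k\forall t.\;x_n(t)\leq k$.

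For the reduction $\eta$, given $x=(x_n)_{n\in\om}$ I would construct a tree $T=\eta(x)\subseteq\om^{<\om}$ equipped with an all-zero spine. Put $0^n\in T$ for every $n\geq 0$. For each pair $(n,k)$ with $k\geq 1$ such that there exists $t$ with $x_n(t)\geq k$, let $t_{n,k}$ be the least such $t$ and add the leaf $0^n\fr(\pair{k,t_{n,k}}+1)$ to $T$; the shift by $1$ just prevents a clash with the spine extension $0^n\fr 0$. Membership in $T$ is decidable uniformly in $x$, so $T$ is computable from $x$. By construction every non-spine node is a leaf, and the spine node $0^n$ has exactly $1+|\{k\geq 1:\exists t.\;x_n(t)\geq k\}|$ children (the ``$+1$'' accounting for the spine extension); this quantity is finite iff $x_n$ is bounded. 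Hence $T$ is finitely branching iff every $x_n$ is bounded, which establishes the ``iff'' part of the reduction.

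The four realizers are then routine and mimic the ${\sf LocFin}_{\sf PO}$ argument. On the positive side, a bound sequence $(k_n)_n$ witnessing $\forall{\sf Bdd}(x)$ yields the simplified ${\sf FinBranch}(T)$-witness $\sigma\mapsto b_\sigma$ defined by $b_{0^n}=k_n+1$ and $b_\sigma=0$ off the spine; conversely, any witness $(b_\sigma)_\sigma$ of ${\sf FinBranch}(T)$ gives back the bound sequence $(b_{0^n})_n$ for $\forall{\sf Bdd}(x)$. On the dual side, a witness $n$ for $(\forall{\sf Bdd})^{\sf d}(x)$ (an index with $x_n$ unbounded) is sent to the node $0^n$, which has infinitely many children and so witnesses $({\sf FinBranch})^{\sf d}(T)$; conversely, any node $\sigma$ witnessing $({\sf FinBranch})^{\sf d}(T)$ must lie on the spine (every off-spine node is a leaf with no children), so $\sigma=0^n$ for some $n$ with $x_n$ unbounded, and that $n$ witnesses $(\forall{\sf Bdd})^{\sf d}(x)$. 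All four maps are uniform in $x$, so the same $\eta$ delivers $\forall{\sf Bdd}\leq_{\sf dm}{\sf FinBranch}$.

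The only genuine subtlety, exactly as in the ${\sf LocFin}_{\sf G}$ case, is the ``root problem'': if one naively attaches all the data about $(x_n)_n$ to a single node, that node ends up infinitely branching even when every $x_n$ is individually bounded, because the bounds may grow with $n$. The spine construction resolves this by allocating a distinct ancestor $0^n$ to each $x_n$, so that a failure of finite branching at some node isolates a single unbounded $x_n$. I would expect this to be the one design decision worth explaining carefully; everything that follows is a verbatim analogue of the preceding two propositions.
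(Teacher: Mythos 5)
Your proposal is correct and is essentially the paper's intended argument: the paper proves this proposition by saying it goes ``in exactly the same way'' as ${\sf LocFin}_{\sf PO}$ and ${\sf LocFin}_{\sf G}$, i.e.\ reduce $\forall{\sf Bdd}$ via a tree in which each $x_n$ gets its own branching site whose children are leaves coded by the pairs $(k,t)$ witnessing $x_n(t)\geq k$, exactly your spine construction, with the same witness translations in both directions and for the duals. Your remark on the ``root problem'' is well taken --- attaching everything to a single node would break the forward direction, which is precisely why a separate node per $n$ (your $0^n$, the paper's $a_n$) is needed --- so your write-up matches the paper's template with that design point made explicit.
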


\subsection{$\forall\forall^\infty$: Local finiteness for codes}\label{sec:locally-finite-AAinfty}
Local finiteness discussed in Section \ref{sec:local-finiteness} can also be expressed in a different way.
Consider the following properties for a partial order $P$, a graph $(V,E)$, and a tree $T$:
\begin{align*}
{\sf LocCFin}_{\sf PO}:&\quad\forall a,b\in P\ \forall^\infty c\in P\ (a<_Pb\to \neg (a<_Pc<_Pb)).\\
{\sf LocCFin}_{\sf G}:&\quad\forall a\in V\ \forall^\infty b\in V.\ (a,b)\not\in E.\\
{\sf CFinBranch}:&\quad\forall\sigma\in T\ \forall^\infty n.\ \sigma\fr n\not\in T.
\end{align*}

The difference from Section \ref{sec:local-finiteness} is that a witness is not an upper bound for the cardinality, but an upper bound for the code.
The existence of an upper bound for the cardinality of a set $A$ is classically equivalent to the existence of an upper bound for the codes $\{\dot{a}\in\om:a\in A\}$ of the elements of $A$ (where $\dot{a}$ is a code of $a$), but the difficulty of finding them can be different.
Of course, the coded versions are a little unnatural, but let us analyze these notions as well.

\begin{prop}
${\sf LocCFin}_{\sf PO}$ is $\forall\forall^\infty$-dicomplete.
\end{prop}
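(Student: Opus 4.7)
The plan has two parts: an easy upper bound (${\sf LocCFin}_{\sf PO} \leq_{\sf dm} \comp{\forall\forall^\infty}$), and the $\forall\forall^\infty$-hardness, which is where the work goes. For the upper bound, the formula ${\sf LocCFin}_{\sf PO}$ is syntactically $\forall a,b \in P\ \forall^\infty c \in P\ \theta(a,b,c,P)$ with $\theta$ bounded. Combining $\forall a\forall b$ via a computable pairing, and treating ``$c \in P$'' as a decidable predicate absorbed into the inner bounded formula, this is a $\forall\forall^\infty$-formula, so one reduction direction is immediate.

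For the hardness, I would reduce from the $\forall\forall^\infty$-dicomplete formula obtained via Lemma \ref{lem:quantifier-add-lemma} applied to the $\forall^\infty$-dicomplete formula $\forall^\infty t\ x(t)=0$; namely
\[
\comp{\forall\forall^\infty} \equiv_{\sf dm} \forall n\forall^\infty t\ x_n(t)=0,
\]
where the input is a sequence $\bar{x}=(x_n)_{n\in\om}$. Given $\bar{x}$, construct $P=\eta(\bar{x})$ by:
\begin{itemize}
\item a bottom element $\bot$ coded as $\pair{0,0,0}$;
\item a top element $a_n$ for each $n$, coded as $\pair{1,n,0}$;
\item for each pair $(n,t)$ with $x_n(t)\neq 0$, an intermediate element $c^n_t$ coded as $\pair{2,n,t}$, with $\bot <_P c^n_t <_P a_n$;
\item all other pairs of distinct elements are incomparable.
\end{itemize}
The crucial design feature is that the code $\pair{2,n,t}$ is strictly increasing in $t$, so the codes of elements of the open interval $(\bot,a_n)$ are bounded iff $\{t : x_n(t)\neq 0\}$ is bounded, iff $x_n$ is eventually $0$. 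Also, the only interval $(p,q)$ that can contain any element at all is of the form $(\bot, a_n)$.

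Next I would verify the four realizer translations. For the forward $\leq_{\sf m}$ direction: from a witness $n\mapsto T_n$ for $\forall n\forall^\infty t\ x_n(t)=0$, output $m_{\bot,a_n}=\pair{2,n,T_n}+1$ and $m_{p,q}=0$ for all other pairs. For the backward $\leq_{\sf m}$ direction: from a witness $(p,q)\mapsto m_{p,q}$ for ${\sf LocCFin}_{\sf PO}(P)$, set $T_n$ to be the least $t$ with $\pair{2,n,t}\geq m_{\bot,a_n}$; strict monotonicity of the code in $t$ guarantees $x_n(t')=0$ for all $t'\geq T_n$. For the dual direction, a witness for $\exists n\exists^\infty t\ x_n(t)\neq 0$ is a pair $(n,f)$ with $f(s)\geq s$ and $x_n(f(s))\neq 0$; it converts to the witness $(\bot,a_n)$ together with the map $s\mapsto c^n_{f(s')}$ for $s'$ chosen so that $\pair{2,n,f(s')}\geq s$, using the fact that the code is unbounded in $t$. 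Conversely, from a witness for the dual of the target (i.e., some interval $(p,q)$ and an enumeration of infinitely many distinct elements in it with unbounded codes), we first decode $(p,q)$ as $(\bot,a_n)$ for a uniquely determined $n$, then read off the $t$'s from the codes of the enumerated $c^n_t$'s.

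The main (minor) obstacle is the bookkeeping in the dual realizer that converts an $\exists^\infty$-witness in the source to an $\exists^\infty$-witness at the target: one must ensure that the enumeration produced on the target side has the required ``code at least $s$'' property at stage $s$, which requires composing the source enumeration with a search for a sufficiently large index. Since codes of $c^n_t$ grow with $t$, this search is primitive recursive, and correctness follows from the coding choice. Everything else is a routine unfolding of the definitions, and the construction parallels (in a simplified form) the one used in Proposition \ref{prop:complete-locfin-PO}.
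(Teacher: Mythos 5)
Your proposal is correct and follows essentially the same route as the paper: reduce $\forall n\forall^\infty t\,x_n(t)=0$ to ${\sf LocCFin}_{\sf PO}$ via exactly the same poset $\bot <_P c^n_t <_P a_n$ with the same codes $\pair{0,0,0},\pair{1,n,0},\pair{2,n,t}$, exploiting monotonicity of the code in $t$ for the witness translations in both directions and for the dual. Your extra bookkeeping for the $\exists^\infty$-realizers is just a more explicit rendering of what the paper leaves implicit (it suffices under the paper's simplified-witness convention), so there is no substantive difference.
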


\begin{proof}
Clearly, ${\sf LocCFin}_{\sf PO}$ is a $\forall\forall^\infty$-formula.
Hence, it suffices to show $\comp{\forall\forall^\infty}\leq_{\sf dm}{\sf LocCFin}_{\sf PO}$.
Given $x=(x_n)_{n\in\om}$, we construct a partial order $P=\eta(x)$:
\[
\forall n\exists s\forall t\geq s.\ x_n(t)=0\iff P\mbox{ is locally finite.}
\]

The construction is almost the same as in Proposition \ref{prop:complete-locfin-PO}.
First, put the bottom element $\bot$ and infinitely many pairwise incomparable elements $\{a_n\}_{n\in\om}$ in $P$.
Moreover, for any $n$, if $x_n(k)\not=0$, then insert $c^n_{k}$ between $\bot$ and $a_n$.
Now, the codes are important.
Formally, we assume that $\bot,a_n,c^n_{k}$ are coded as $\pair{0,0,0},\pair{1,n,0},\pair{2,n,k}$, respectively.
This construction gives a reduction $\eta$ for $\forall\forall^\infty\forall$-dicompleteness.

In order to show $\forall\forall^\infty$-completeness, let $n\mapsto s_n$ be a witness for $x\in\comp{\forall\forall^\infty}$; that is, $x_n(t)=0$ for any $t\geq s_n$.
Then take the largest $t<s_n$ such that $x_n(t)\not=0$.
Given $p,q\in P$, if $(p,q)$ is not of the form $(\bot,a_n)$, then there is nothing in this interval, so take any number $c_{p,q}=0$.
If $(p,q)$ is of the form $(\bot,a_n)$, the construction enumerate nothing into the interval after $c^n_t$, so let $c_{p,q}$ be the code of $c^n_t$; that is, $c_{p,q}=\pair{2,n,t}$.
Then $(p,q)\mapsto c_{p,q}$ is a witness for ${\sf LocCFin}_{\sf PO}(P)$.

Conversely, let $(p,q)\mapsto c_{p,q}$ be a witness for ${\sf LocCFin}_{\sf PO}(P)$.
This means that if $a\geq c_{p,q}$ then $a$ is not contained in the interval $(p,q)$.
For each $n$, a sufficiently large $s_n$ satisfies $c_{\bot,a_n}\leq\langle 2,n,s_n\rangle$.
This is a code of $c^n_{s_n}$, so $\bot<_Pc^n_{s_n}<_P a_n$ fails.
By definition, this implies that $x_n(t)\not=0$ for any $t\geq s_n$.
Hence, $n\mapsto s_n$ is a witness for $\comp{\forall\forall^\infty}(x)$.

Next we show the $\exists\exists^\infty$-completeness of the dual $({\sf LocCFin}_{\sf PO})^{\sf d}$.
Let $n$ be a witness for $\comp{\exists\exists^\infty}(x)$, which means that $x_n(t)\not=0$ for infinitely many $t$.
Then the interval $[\bot,a_n]$ contains infinitely many elements, so $(\bot,a_n)$ is a witness for $({\sf LocCFin}_{\sf PO})^{\sf d}(P)$.

Conversely, let $(p,q)$ be a witness for $({\sf LocCFin}_{\sf PO})^{\sf d}(P)$.
Then this must be of the form $(\bot,a_n)$ and such $n$ is a witness for $x\in\comp{\exists\exists^\infty}$ as in Proposition \ref{prop:complete-locfin-PO}.
\end{proof}

\begin{prop}
${\sf LocFinite}({\sf G})$ is $\forall\forall^\infty$-dicomplete.
\end{prop}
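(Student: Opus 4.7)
The plan is to mimic the argument for ${\sf LocCFin}_{\sf PO}$ almost verbatim, adapting only the geometry of the gadget from a two-layer interval $\bot < c^n_k < a_n$ to the simpler incidence $u_n \sim v^n_k$ in a graph. Since ${\sf LocCFin}_{\sf G}$ is manifestly a $\forall\forall^\infty$-formula (the inner $(a,b)\notin E$ is bounded), the easy direction ${\sf LocCFin}_{\sf G} \leq_{\sf dm} \comp{\forall\forall^\infty}$ is immediate, so the whole work is to establish $\comp{\forall\forall^\infty} \leq_{\sf dm} {\sf LocCFin}_{\sf G}$.

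For this, given $x = (x_n)_{n\in\om}$, I would define a graph $G = \eta(x) = (V, E)$ by placing a ``hub'' vertex $u_n$ for every $n \in \om$ and, whenever $x_n(k) \neq 0$, a ``leaf'' vertex $v^n_k$ whose only edge is $(u_n, v^n_k)$. The codes must be chosen with care so that code-bounds translate to eventual-zero bounds: specifically, take $u_n = \pair{0,n,0}$ and $v^n_k = \pair{1,n,k}$. Then the neighbors of $u_n$ are exactly $\{v^n_k : x_n(k) \neq 0\}$, and since the coding map $k \mapsto \pair{1,n,k}$ is monotone in $k$, the set of codes of neighbors of $u_n$ is bounded iff $\{k : x_n(k) \neq 0\}$ is bounded, iff $x_n$ is eventually $0$. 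Every other vertex (a leaf $v^n_k$) has a single neighbor $u_n$ and is trivially code-bounded.

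The realizers then fall out cleanly. For the forward direction of the reduction: given a witness $n \mapsto s_n$ for $\comp{\forall\forall^\infty}(x)$ (so $x_n(t) = 0$ for all $t \geq s_n$), put $c_{u_n} = \pair{1,n,s_n} + 1$ and $c_{v^n_k} = \pair{0,n,0} + 1$; this is a witness for ${\sf LocCFin}_{\sf G}(G)$. Conversely, given a witness $a \mapsto c_a$ for ${\sf LocCFin}_{\sf G}(G)$, define $s_n$ to be least with $\pair{1,n,s_n} \geq c_{u_n}$: then for every $k \geq s_n$, $\pair{1,n,k} \geq c_{u_n}$, so $v^n_k$ cannot be a neighbor of $u_n$, which by construction forces $x_n(k) = 0$. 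For the dual $\exists\exists^\infty$-reduction: a witness $n$ for $\comp{\exists\exists^\infty}(x)$ makes $u_n$ a vertex with infinitely many neighbors (one $v^n_k$ per $k$ with $x_n(k)\neq 0$), hence a witness for $({\sf LocCFin}_{\sf G})^{\sf d}(G)$; conversely, any vertex with infinitely many neighbors in $G$ must be of the form $u_n$, and the corresponding $n$ witnesses $\comp{\exists\exists^\infty}(x)$.

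There is no real obstacle beyond bookkeeping: the one subtle point, which is where the whole distinction $\forall\forall^\infty$ versus $\forall\forall^\infty\forall$ lives, is that we must pass through \emph{codes} of vertices rather than counting cardinalities, so the monotonicity of the pairing function in the last coordinate of $v^n_k = \pair{1,n,k}$ is the crux that makes ``bound on codes of neighbors of $u_n$'' equivalent to ``upper bound on $\{k : x_n(k)\neq 0\}$.'' Once the coding is fixed this way, the rest is a transcription of the ${\sf LocCFin}_{\sf PO}$ argument.
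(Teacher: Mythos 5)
Your proof is correct and follows essentially the same route as the paper: transcribe the ${\sf LocCFin}_{\sf PO}$ argument to a star-shaped graph gadget whose leaf codes $\pair{1,n,k}$ make a code-bound on the neighbours of a hub equivalent to eventual vanishing of $x_n$, with the realizers (in both directions and for the dual) exactly as you describe. The only divergence is that the paper's text attaches every leaf $v^n_k$ to one single special vertex $u$, whereas you use a separate hub $u_n$ for each $n$; your per-$n$ variant is the reading that actually makes the classical equivalence go through (a single shared hub would demand one bound covering all the $x_n$ at once rather than a bound for each $n$ separately, exactly as in the poset case where the intervals $(\bot,a_n)$ are kept disjoint), so this deviation is harmless and indeed the correct way to carry out the construction.
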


\begin{proof}
As before, ${\sf LocCFin}_{\sf G}\leq_{\sf m}\comp{\forall\forall^\infty\forall}$.
In order to show $\comp{\forall\forall^\infty}\leq_{\sf dm}{\sf LocCFin}_{\sf G}$, given $x=(x_n)_{n\in\om}$, we construct a graph $G=\eta(x)$ such that
\[
\forall n\exists s\forall t\geq s.\ x_n(t)=0\iff G\mbox{ is locally finite.}
\]

The construction proceeds as follows.
First put a special vertex $u$ in $G=(V,E)$.
Moreover, for any $n$, if $x_n(k)\not=0$, then put a vertex $v^n_{k}$ which is adjacent only to $u$.
That is, $(u,v^n_{k})\in E$ for such $v^n_{k}$.
Formally, we assume that $u,v^n_{k}$ are coded as $\pair{0,0,0},\pair{1,n,k}$, respectively.
This construction gives a reduction $\eta$ for $\forall\forall^\infty$-dicompleteness.
The proof of the dicompleteness is the same as before.
\end{proof}

In exactly the same way, one can also show the following.

\begin{prop}
${\sf FinBranch}$ is $\forall\forall^\infty$-dicomplete.
\end{prop}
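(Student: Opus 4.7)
The plan is to mirror the template used for ${\sf LocCFin}_{\sf PO}$ and ${\sf LocCFin}_{\sf G}$, transplanting the construction into the setting of trees $T \subseteq \om^{<\om}$. First, observe that ${\sf CFinBranch}$ is syntactically a $\forall\forall^\infty$-formula (the inner matrix $\sigma\fr n\notin T$ is $\Pi_1$ in the tree's code, but the ``$\sigma\in T$'' premise is bounded once one quantifies over $\sigma\in\om^{<\om}$ and tests membership), so the upper bound ${\sf CFinBranch}\leq_{\sf m}\comp{\forall\forall^\infty}$ is immediate.

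For the hard direction $\comp{\forall\forall^\infty}\leq_{\sf dm}{\sf CFinBranch}$, given $x=(x_n)_{n\in\om}$ I would build a ``spine with leaf decorations'' tree $T=\eta(x)$. Write $0^n$ for the constant-$0$ sequence of length $n$. Put $0^n\in T$ for every $n\geq 0$, and in addition put $0^n\fr(k+1)\in T$ whenever $x_n(k)\neq 0$. Closure under initial segments is clear since every $0^n$ is an initial segment of the spine and every leaf sits directly on the spine. Then the immediate successors of a spine node $0^n$ form the set $\{0\}\cup\{k+1:x_n(k)\neq 0\}$, which is bounded in $\om$ iff $x_n$ has bounded support iff $x_n$ is eventually zero; the nodes $0^n\fr(k+1)$ are leaves and so vacuously satisfy $\forall^\infty m.\ \sigma\fr m\notin T$. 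Hence
\[\comp{\forall\forall^\infty}(x)\;\Longleftrightarrow\;{\sf CFinBranch}(T),\]
which gives the underlying many-one reduction $\eta$.

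For the realizers of the $\leq_{\sf m}$-part: a witness $n\mapsto s_n$ for $\comp{\forall\forall^\infty}(x)$ is sent to the witness which assigns $0^n\mapsto s_n+1$ and every leaf $\mapsto 0$; conversely a witness $\sigma\mapsto m_\sigma$ for ${\sf CFinBranch}(T)$ is sent to $n\mapsto m_{0^n}$ (with a harmless $-1$ shift to account for the $k\mapsto k+1$ coding). Both conversions are visibly computable uniformly in the inputs, and use the same $\eta$.

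For the dual, $(\comp{\forall\forall^\infty})^{\sf d}\equiv\comp{\exists\exists^\infty}$ and $({\sf CFinBranch})^{\sf d}$ asserts $\exists\sigma\in T\,\exists^\infty n.\ \sigma\fr n\in T$. A witness $(n,\beta)$ for $\comp{\exists\exists^\infty}(x)$, with $\beta$ providing arbitrarily large $t$ such that $x_n(t)\neq 0$, converts to the witness $(0^n,\alpha)$ where $\alpha(l)=\pair{k_l+1,0}$ and $k_l\geq l$ is obtained from $\beta$; conversely, a witness $(\sigma,\alpha)$ for $({\sf CFinBranch})^{\sf d}(T)$ must have $\sigma=0^n$ for some $n$ (leaves are terminal in $T$), and reading off the first coordinate of $\alpha$ yields arbitrarily large $k$ with $x_n(k)\neq 0$, hence a witness for $\comp{\exists\exists^\infty}(x)$ with the same first coordinate $n$.

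There is no genuine obstacle beyond bookkeeping; the only point that requires a moment of attention is the offset $k\mapsto k+1$ in the leaf coordinate, needed to prevent the spine continuation $0^n\fr 0=0^{n+1}$ from being confused with a leaf $0^n\fr(k+1)$, and to ensure that boundedness of the immediate-successor set in $\om$ matches eventual-zeroness of $x_n$ rather than something weaker.
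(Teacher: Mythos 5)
Your proof is correct and follows essentially the same route the paper intends when it declares that the tree case goes ``in exactly the same way'' as ${\sf LocCFin}_{\sf PO}$ and ${\sf LocCFin}_{\sf G}$: you reduce $\comp{\forall\forall^\infty}$ by arranging, for each $n$, a designated node whose set of immediate-successor indices is (up to a shift) exactly $\{k:x_n(k)\not=0\}$, so the $\forall^\infty$ bound at that node is computably interconvertible with the eventual-zero bound for $x_n$, with the same $\eta$ serving the dual. Your one adaptation --- stringing the per-$n$ nodes along the spine $0^n$ instead of attaching everything to a single hub --- is precisely the adjustment the tree setting forces (a single hub would itself branch infinitely), and your witness translations in both directions and for the dual are computable and correct.
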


\subsection{$\forall\exists!\forall$: Universality}
As an example of a $\forall\forall^\infty$-complete formula, in Section \ref{sec:locally-finite-AAinfty}, we have given an alternative presentation of local finiteness, but it is somewhat unnatural, as it relies on how to code a structure (i.e., it uses the order on the codes).
Therefore, here we give another natural example of a $\forall\forall^\infty$-complete formula.
What we present here is a formula related to some kind of universality.
However, this does not appear in the form of a $\forall\forall^\infty$-formula, but rather in the form of a $\forall\exists!\forall$-formula, where $\exists!$ expresses the unique existence.
Let us look at a concrete example.

A partial order $P$ is a lattice if any two elements $a,b\in P$ have the infimum $a\land b$ and the supremum $a\lor b$.
Formally, this is the following formula:
\begin{align*}
{\sf Lattice}:\ \forall a,b\in P\exists c,d\in P\forall e\in P&\;[c\leq_Pa,b\leq_Pd\;\land\;\\
&(e\leq_P a,b\to e\leq_P c)\;\land\;(a,b\leq_P e\to d\leq_P e)].
\end{align*}

The elements $c$ and $d$ in the above formula represent the infimum $a\land b$ and the supremum $a\lor b$, respectively.
The key point of this formula is that the infimum and supremum are unique if they exist.
Thus, the existential quantifier $\exists$ in the above formula can be replaced with the unique existential quantifier $\exists!$.
In other words, the formula expressing a lattice is not only of the $\forall\exists\forall$-type, but also of the $\forall\exists!\forall$-type.

\begin{definition}
A $\forall\exists\forall$-formula $\forall a\exists b\forall c\,\theta(a,b,c,x)$ fulfills the $\forall\exists!\forall$-condition if the following holds:
\[\forall a\exists b\forall c\,\theta(a,b,c,x)\leftrightarrow\forall a\exists! b\forall c\,\theta(a,b,c,x)\]
\end{definition}

%
%
%
%

\begin{lemma}\label{lem:unique-witness-property}
If a $\forall\exists\forall$-formula $\varphi$ fulfills the $\forall\exists!\forall$-condition, then $\varphi\leq_{\sf bm}\comp{\forall\forall^\infty}$.
\end{lemma}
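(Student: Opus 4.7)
The plan is to encode a finite mind-change search for the (unique) inner witness, and use the stabilization stage of that search as a $\forall\forall^\infty$-witness. Given $\varphi(x)\equiv\forall a\,\exists b\,\forall c\,\theta(a,b,c,x)$, I will define $y=\eta(x)$ by a uniform bounded computation: let $C_t(a)=\{b\leq t:\forall c\leq t,\ \theta(a,b,c,x)\}$ and $\beta_a(t)=\min C_t(a)$ when this set is nonempty, and put $y(a,t)=0$ exactly when $C_t(a)\neq\emptyset$ and $\beta_a(t)=\beta_a(t+1)$. The target formula is the canonical $\comp{\forall\forall^\infty}$-instance $\forall a\,\forall^\infty t.\ y(a,t)=0$.

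Logical equivalence drops out because $\beta_a$ stabilizes iff the inner quantifier has a witness. If $\varphi(x)$ holds then for each $a$ the unique witness $b(a)$ sits in every $C_t(a)$ with $t\geq b(a)$, while each $b'<b(a)$ is eventually removed because the uniqueness hypothesis forces $\forall c\,\theta(a,b',c,x)$ to fail; hence $\beta_a$ stabilizes at $b(a)$ and $y(a,\cdot)$ is eventually $0$. Conversely, if $y(a,t)=0$ for all $t\geq s(a)$, then $\beta_a$ is constant past $s(a)$ at some value $b(a)$, and the definition of $C_t(a)$ forces $\theta(a,b(a),c,x)$ for every $c$.

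For di-reducibility I will then spell out four realizers, all sharing the same map $\eta$. The forward realizer, given $b(a)$, uses decidability of $\theta$ to search for the least refuter $c_{b'}$ of each $b'<b(a)$ and returns $s(a)=\max\{b(a),c_0,\dots,c_{b(a)-1}\}+1$; the backward realizer reads $b(a):=\beta_a(s(a))$. For the duals, a realizer $\langle a,h\rangle$ of $\varphi^{\sf d}$ lets one, for each input $s$, either output $g(s)=s$ when $C_s(a)=\emptyset$ or else track the current guess $b_s=\beta_a(s)$ up to stage $T=h(b_s)$, at which $b_s\notin C_T(a)$, forcing $\beta_a$ to change somewhere in $[s,T]$ and yielding $g(s)\in[s,T-1]$ with $y(a,g(s))\neq 0$; conversely, any witness $a$ to $\exists a\,\exists^\infty t.\ y(a,t)\neq 0$ forces $\varphi$ to fail at $a$ (by the already-established forward direction), so the map $h(b)=\mu c.\,\neg\theta(a,b,c,x)$ is total and computable.

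The only real obstacle is the forward realizer: producing $s(a)$ from a bare witness $b(a)$ requires a refuter $c_{b'}$ of every smaller $b'$, and without the $\forall\exists!\forall$-condition such a refuter need not exist, since some $b'\neq b(a)$ could also satisfy $\forall c\,\theta(a,b',c,x)$. This is the single place where uniqueness is essential; the remaining verifications are book-keeping on a decidable predicate. Since $\eta$ itself never inspects a realizer, the same map witnesses both $\varphi\leq_{\sf m}\comp{\forall\forall^\infty}$ and $\varphi^{\sf d}\leq_{\sf m}\comp{\forall\forall^\infty}^{\sf d}$, giving $\varphi\leq_{\sf bm}\comp{\forall\forall^\infty}$ as asserted.
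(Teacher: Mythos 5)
Your construction is essentially the paper's: you encode the search for the least surviving inner candidate and make $y(a,t)$ nonzero exactly when the current guess changes (or no candidate survives), using the $\forall\exists!\forall$-condition only so the forward realizer can compute refuters of all candidates below the given witness and hence a stabilization stage; the paper does the same with a single incrementing guess instead of $\min C_t(a)$. One small remark: in the dual backward direction the fact you need is the per-$a$ statement that $\exists b\,\forall c\,\theta(a,b,c,x)$ forces $\beta_a$ to stabilize (immediate, since $\beta_a$ is non-decreasing and bounded by $b$, with no appeal to uniqueness), rather than the globally quantified forward direction you cite, but this follows at once from your construction.
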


\begin{proof}
Assume that $\varphi(x)\equiv\forall n\exists k\forall \ell\,\theta(n,k,\ell,x)$ fulfills the $\forall\exists!\forall$-condition.
To show $\varphi\leq_{\sf bm}\comp{\forall\forall^\infty}$, given $x$, we construct $y=\eta(x)$ such that
\begin{align*}
\forall n\exists k\forall \ell.\ \theta(n,k,\ell,x)\iff\forall n\forall^\infty t.\ y_n(t)=0.
\end{align*}

The construction proceeds as follows.
First, $y_n$ has a unique guess $k_s$ for $k$ for each stage $s$.
We first set $k_0=0$.
We inductively assume that $k_s$ has been constructed.
If the guess is found to be wrong at stage $s$, that is, if there exists $\ell\leq s$ such that $\theta(n,k_s,\ell,x)$ is false, then put $y_n(s)=1$ and $k_{s+1}=k_s+1$.
If the guess is still correct, then put $y_n(s)=0$ and $k_{s+1}=k_s$.

Let $n\mapsto k(n)$ be a witness for $\varphi(x)$.
By the $\forall\exists!\forall$-condition, $k(n)$ is the only one that makes $\theta(n,k,\ell,x)$ true for any $\ell$.
Since other guesses are incorrect, we will eventually reach $k_{s_n}=k(n)$.
For the least such $s_n$, the guess $k_{s_n}$ cannot be refuted after this stage, so we have $y_n(t)=0$ for any $t\geq s$.
Therefore, $n\mapsto s_n$ is a witness for $\comp{\forall\forall^\infty}(y)$.

Conversely, let $n\mapsto s_n$ be a witness for $\comp{\forall\forall^\infty}(y)$; that is, $y_n(t)=0$ for any $t\geq s_n$.
This means that the guess $k_{s_n}$ is not refuted after stage $s_n$, so $\theta(n,k_s,\ell,x)$ holds for any $\ell$.
Therefore, $n\mapsto k_{s_n}$ is a witness for $\varphi(x)$.

The same argument applies to the dual.
\end{proof}

\begin{theorem}
${\sf Lattice}$ is $\forall\forall^\infty$-dicomplete.
\end{theorem}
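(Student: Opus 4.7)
The plan is to establish both directions of the dicompleteness. For the upper bound ${\sf Lattice} \leq_{\sf dm} \comp{\forall\forall^\infty}$, I would simply observe that infima and suprema in a poset are unique whenever they exist, so the $\forall\exists\forall$-formula defining ${\sf Lattice}$ fulfills the $\forall\exists!\forall$-condition, and Lemma \ref{lem:unique-witness-property} then gives the reduction immediately.

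For the lower bound $\comp{\forall\forall^\infty} \leq_{\sf dm} {\sf Lattice}$, I plan to construct a poset $P = \eta(x)$ from the input $x = (x_n)_{n \in \om}$ in which the only potential lattice defects occur at a distinguished pair $(a_n, b_n)$ for each index $n$, with a defect at $n$ arising exactly when $x_n$ is not eventually zero. Concretely, $P$ would contain a bottom $\bot$, a top $\top$, and for each $n$ a gadget consisting of $a_n, b_n$ (incomparable above $\bot$) together with an element $c_n^t$ for every $t$ with $x_n(t) \neq 0$, each $c_n^t$ placed strictly between $\{a_n, b_n\}$ and $\top$. The crucial design choice is to make the $c_n^t$'s form a \emph{descending} chain as $t$ grows: $c_n^t <_P c_n^{t'}$ iff $t > t'$. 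Then the upper bounds of $\{a_n, b_n\}$ in $P$ are $\top$ together with those $c_n^t$'s that are present, so a least upper bound exists iff only finitely many $c_n^t$ are present iff $x_n$ is eventually zero.

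The realizers should fall out of the picture directly. Given a witness $(s_n)$ for $\comp{\forall\forall^\infty}(x)$, one computes the supremum of $(a_n, b_n)$ by searching for the largest $t < s_n$ with $x_n(t) \neq 0$ and returning $c_n^t$ (or $\top$ if no such $t$ exists), while the infimum is always $\bot$; all other pair types are handled directly from the order. Conversely, from a witness for ${\sf Lattice}(P)$, the value of the supremum of $(a_n, b_n)$ is either $\top$ (so take $s_n := 0$) or some $c_n^t$ (so take $s_n := t+1$, since otherwise a smaller $c_n^{t'}$ would appear in $P$). For the dual direction, any $n$ witnessing $\comp{\exists\exists^\infty}(x)$ supplies the pair $(a_n, b_n)$ as a dual witness for ${\sf Lattice}(P)$, and conversely any pair in $P$ lacking a supremum or infimum must be of the form $(a_n, b_n)$, exposing the desired $n$.

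The main obstacle is verifying that lattice defects arise \emph{only} at the pairs $(a_n, b_n)$. One has to check cross-gadget pairs such as $(a_n, a_m)$, $(a_n, c_m^t)$, and $(c_n^t, c_m^{t'})$ for $n \neq m$ (whose sup and inf should be $\top$ and $\bot$ respectively), as well as same-gadget pairs involving the chain, where sup and inf are chain endpoints. The descending-chain encoding is the key idea that converts the classical property ``$x_n$ is eventually zero'' into ``$\{a_n, b_n\}$ admits a least upper bound in $P$''; once this design is set up, the rest of the verification is routine.
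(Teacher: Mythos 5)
Your proposal is correct and takes essentially the same route as the paper: the upper bound is obtained exactly as in the paper by noting that infima and suprema are unique, so the $\forall\exists!\forall$-condition and Lemma \ref{lem:unique-witness-property} apply, and the lower bound uses the same per-index chain gadget with $c$-elements present exactly when $x_n(t)\neq 0$, the only difference being that you hang a descending chain above $\{a_n,b_n\}$ so that the supremum fails, whereas the paper inserts an increasing chain below $\{a_n,b_n\}$ so that the infimum fails --- an order-dual variant of the same construction, with witness translations matching the paper's.
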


\begin{proof}
As seen above, ${\sf Lattice}$ fulfills the $\forall\exists!\forall$-condition.
Thus, by Lemma \ref{lem:unique-witness-property}, we get ${\sf Lattice}\leq_{\sf dm}\comp{\forall\forall^\infty}$.
It remains to show the converse direction $\comp{\forall\forall^\infty}\leq_{\sf dm}{\sf Lattice}$.
Given $x=(x_n)_{n\in\om}$ we construct a partial order $P=\eta(x)$ such that
\[
\forall n\exists s\forall t\geq s.\ x_n(t)=0\iff P\mbox{ is a lattice.}
\]

The construction proceeds as follows.
First put the bottom element $\bot$, the top element $\top$ and infinitely many pairwise incomparable elements $\{a_n,b_n\}_{n\in\om}$ in $P$.
Moreover, for each $n$, insert an increasing sequence $C_n=\{c^n_k:x_n(k)\not=0\}$ between $\bot$ and $\{a_n,b_n\}$; that is,
 if  $c^n_k,c^n_\ell$ are defined for $k<\ell$, then
\[\bot<_Pc^n_k<_Pc^n_\ell<_Pa_n,b_n.\]

This construction gives a reduction $\eta$ for $\forall\forall^\infty$-dicompleteness.
Let us analyze this construction.
We call $\{a_n,b_n\}\cup C_n$ the $n$th block.
For a pair $\{u,v\}$ of elements in $P$, if $u$ and $v$ belong to different blocks, then $u\land v=\bot$ and $u\lor v=\top$, respectively.
Assume that $u$ and $v$ both belong to the $n$th block.
If $\{u,v\}\not=\{a_n,b_n\}$, then $u$ and $v$ are comparable, and one can check which one, $u\leq_P v$ or $v\leq_P u$, holds true.
If the former holds, $u\land v=u$ and $u\lor v=v$, and if the latter holds, $u\land v=v$ and $u\lor v=u$.
If $\{u,v\}=\{a_n,b_n\}$, then $a_n\lor b_n=\top$, so only $a_n\land b_n$ depends on $x$.
For the infimum $a_n\land b_n$, only the increasing sequence $C_n$ is inserted between $\bot$ and $\{a_n,b_n\}$, so $a_n\land b_n$ is the maximum element of $C_n$.
If $C_n$ is an infinite set, then there is no maximum element, so $a_n\land b_n$ also do not exist.
Therefore, that $P$ is a lattice is equivalent to that $\{a_n,b_n\}$ has a infimum for any $n$, which is equivalent to that $C_n$ is finite for any $n$.

First, we show the $\forall\forall^\infty$-completeness of ${\sf Lattice}$.
Let $n\mapsto s_n$ be a witness for $\comp{\forall\forall^\infty}(x)$; that is, $x_n(t)=0$ for any $t\geq s_n$.
By checking the values of $x_n(t)$ for $t\leq s_n$, we can compute the smallest witness $s_n'\leq s_n$.
In this case, the largest element in $C_n$ is $c^n_{s_n'-1}$.
Therefore, the infimum of $\{a_n,b_n\}$ is $c^n_{s_n'-1}$.
Conversely, if a witness for $P$ being a lattice is given, in particular, we have $(a_n,b_n)\mapsto a_n\land b_n$.
As discussed above, such an $a_n\land b_n$ is in the form of $c^n_{k(n)}$, and $c^n_{k(n)}$ is the maximum element of $C_n$, so $n\mapsto k(n)$ gives a witness for $\comp{\forall\forall^\infty}(x)$.

Next, we show that the $\exists\exists^\infty$-completeness of the dual ${\sf Lattice}^{\sf d}$.
Let $n$ be a witness for $x\in\comp{\exists\exists^\infty}$; that is, there are infinitely many $t$ such that $x_n(t)\not=0$.
In this case, $C_n$ is an infinite set, so there is no infimum of $\{p_n,q_n\}$.
Therefore, $\{p_n,q_n\}$ is a witness for ${\sf Lattice}^{\sf d}(P)$.
Conversely, if a witness for ${\sf Lattice}^{\sf d}(P)$ is given, it is always in the form of $\{p_n,q_n\}$.
Recall that $\{p_n,q_n\}$ is a witness for ${\sf Lattice}^{\sf d}(P)$ iff $C_n$ is an infinite set, which means that there are infinitely many $t$ such that $x_n(t)\not=0$.
Therefore, $n$ is witness for $\comp{\exists\exists^\infty}(x)$.
\end{proof}

\subsection{$\forall\forall^\infty$: Verifiability}
Next, let us look at an example that does not look like a $\forall\forall^\infty$-type at first glance, but actually is.

A partial order $P$ is atomic if every non-bottom element $a\in P$ bounds a minimal element.
Formally, this is the following formula:
\[{\sf Atomic}:\quad\forall a\exists b\forall c\;[a>_P\bot\to (\bot<_Pb\leq_P a\ \land\ \neg(\bot<_Pc<_Pb)].\]

Of course, the part $\exists b\forall c$ in the above formula cannot be replaced with either $\exists b\forall c\geq b$ or $\exists! b\forall c$, so at first glance, this is neither the $\forall\forall^\infty$-type nor the $\forall\exists!\forall$-type.
Now the key observation is that this problem has the property of verifiability:
If we have at least one witness $a\mapsto b_a$ for ${\sf Atomic}(x)$, then for any pair $(a',b')$, one can verify whether or not $b'$ is a witness at $a'$ (that is, $b'$ is a minimal element below $a'$).

\begin{definition}
A $\forall\exists\forall$-formula $\varphi\equiv\forall a\exists b\forall c\ \theta(a,b,c,x)$ is {\em verifiable} if there exists a partial computable function ${\sf verify}$ such that for any witness $w$ for $x\in\varphi$ the following holds for any $a$ and $b$:
\[
{\sf verify}(w,a,b,x)=
\begin{cases}
1&\mbox{ if }\forall c\ \theta(a,b,c,x),\\
0&\mbox{ otherwise.}
\end{cases}
\]
\end{definition}

\begin{example}\label{exa:atomic-verifiable}
${\sf Atomic}$ is verifiable.
To see this, let $w$ be a witness for ${\sf Atomic}(x)$.
If $b$ is minimal, $w(b)=b$.
If $b$ is not minimal, then either $b=\bot$ or $w(b)<b$.
Therefore, $b$ is a witness at $a$ (that is, $b$ is a minimal element below $a$) iff either $a=\bot$ or $w(a)=a=b$ or $\bot<_Pw(b)=b<_Pa$.
This decision is clearly computable.
\end{example}

\begin{lemma}\label{lem:verifiable-reducible}
If a $\forall\exists\forall$-formula $\varphi$ is verifiable, then $\varphi\leq_{\sf dm}\comp{\forall\forall^\infty}$.
\end{lemma}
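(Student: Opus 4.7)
The plan is to adapt the proof of Lemma \ref{lem:unique-witness-property} almost verbatim, writing $\varphi(x) \equiv \forall n \exists k \forall \ell\ \theta(n,k,\ell,x)$. I would use the same reduction $\eta$: for each $n$, $y_n$ maintains a guess $k_s$ with $k_0 = 0$; at stage $s$, if some $\ell \leq s$ satisfies $\neg \theta(n, k_s, \ell, x)$, set $y_n(s) = 1$ and $k_{s+1} = k_s + 1$, otherwise set $y_n(s) = 0$ and $k_{s+1} = k_s$. The backward realizer $r_+$ sending $n \mapsto s_n$ to $n \mapsto k_{s_n}$ is unchanged and correct for the same reason as before: no refutation after stage $s_n$ forces $k_{s_n}$ to be a valid $k$-witness at $n$, which also gives the classical implication $\comp{\forall\forall^\infty}(y) \to \varphi(x)$.

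The only place the uniqueness hypothesis was used in Lemma \ref{lem:unique-witness-property} is in constructing the forward realizer $r_-$, where, given a witness $n \mapsto k(n)$, one reads off $s_n$ as the first stage with $k_s = k(n)$. Here, verifiability substitutes for uniqueness: given a witness $w$ for $\varphi(x)$, I would simulate the construction stage by stage and evaluate ${\sf verify}(w, n, k_s, x)$ at each stage, outputting $s_n$ as the first $s$ at which this returns $1$. The loop terminates because $\varphi(x)$ guarantees a least valid $k^*(n)$; the guess is incremented by $1$ at a time and a valid value is never refuted, so the guess reaches $k^*(n)$ in finitely many stages, at which point verify returns $1$. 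For correctness, once verify returns $1$ at stage $s$, $k_s$ is never refuted thereafter, so $y_n(t) = 0$ for all $t \geq s$.

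For the dual side, the classical implication $\neg \varphi(x) \to \comp{\exists\exists^\infty}(y)$ is immediate: if some $n^*$ admits no valid $k$, the guess at $n^*$ is refuted infinitely often. For $r_-^{\sf d}$, a witness for $\varphi^{\sf d}(x) \equiv \exists n \forall k \exists \ell\ \neg \theta(n,k,\ell,x)$ supplies an $n^*$ together with, for each $k$, a counterexample $\ell_k$ satisfying $\neg \theta(n^*, k, \ell_k, x)$; given any $m$, simulate the construction through stage $T = \max(m, \ell_{k_m})$, and observe that either the guess was incremented at some stage in $(m, T]$, signalling a refutation at the preceding stage, or else $k_T = k_m$ and $\ell_{k_m} \leq T$ triggers $y_{n^*}(T) = 1$; in either case a stage $t \in [m, T]$ with $y_{n^*}(t) \neq 0$ is located. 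The main obstacle is purely conceptual: once one recognizes that verifiability is precisely what lets the stage-by-stage simulation of $r_-$ succeed without uniqueness, every remaining step is a routine transcription from Lemma \ref{lem:unique-witness-property}.
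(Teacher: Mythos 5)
Your proposal is correct and follows essentially the same route as the paper: the same guess-incrementing reduction $\eta$ as in Lemma \ref{lem:unique-witness-property}, with ${\sf verify}$ replacing uniqueness to build the forward realizer (the paper first searches for the least $m'$ with ${\sf verify}(\bar m,n',m',x)=1$ and then simulates $\eta$ until the guess reaches it, which yields the same stage as your interleaved simulation), and the same $n\mapsto k_{s_n}$ backward realizer. The only piece left implicit is the dual-side realizer $r_+$ (from a witness for $\comp{\exists\exists^\infty}(\eta(x))$ extract $n$ and note that infinitely many refutations force every $k$ to fail at $n$), but this is routine and is exactly what the paper's one-sentence treatment of the dual asserts.
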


\begin{proof}
Let $\varphi$ be of the form $\forall n\exists m\forall k. x(n,m,k)=0$. 
Slightly modifying the reduction $\eta$ in the proof of Proposition \ref{prop:reduction-AE-E-infty}, given $x$, one can construct $y=\eta(x)$ such that
\[\forall n\exists m\forall k.\ x(n,m,k)=0\iff\forall n\forall^\infty t.\ y_n(t)=0.\]

Recall that this reduction is defined by the process that, given $n$, searches for the least $m$ such that $x(n,m,k)=0$ for any $k$.
Hence, if $(s_n)_{n\in\om}$ is a witness for $\comp{\forall\forall^\infty}(y)$, then the $m$ seen at the $s_n$th stage of the search actually provides the least witness at $n$.
Conversely, if $\bar{m}=(m_n)_{n\in\om}$ is a witness for $x\in \varphi$, then for each $n'$, search for the least $m'$ such that ${\sf verify}(\bar{m},n',m',x)=1$.
Then simulate the construction of $y=\eta(x)$ until the stage $s_{n'}$ at which the search process arrives at $m'$.
Then $n'\mapsto s_{n'}$ is a witness for $\comp{\forall\forall^\infty}(y)$.

For the dual, we only need to consider the $n$ part, but since the reduction is a parallel process for each $n$, it is easy to verify that $n$ is a witness for $\varphi^{\sf d}(x)$ iff it is a witness for $\comp{\exists\exists^\infty}(\eta(x))$.
\end{proof}

\begin{theorem}
${\sf Atomic}$ is $\forall\forall^\infty$-dicomplete.
\end{theorem}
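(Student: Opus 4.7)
The plan is to split ${\sf Atomic}\equiv_{\sf dm}\comp{\forall\forall^\infty}$ into its two halves. The upper bound ${\sf Atomic}\leq_{\sf dm}\comp{\forall\forall^\infty}$ is immediate from Example \ref{exa:atomic-verifiable}, which shows ${\sf Atomic}$ is verifiable, together with Lemma \ref{lem:verifiable-reducible}. The real work lies in the converse $\comp{\forall\forall^\infty}\leq_{\sf dm}{\sf Atomic}$, which I would handle by a poset coding in the spirit of Proposition \ref{prop:complete-locfin-PO} and the lattice theorem.

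Given $x=(x_n)_{n\in\om}$, I would build $P=\eta(x)$ by including a bottom $\bot$, distinct vertices $\{a_n\}_{n\in\om}$ strictly above $\bot$ and pairwise incomparable except through $\bot$, and for each pair $(n,k)$ with $x_n(k)\neq 0$ a vertex $c^n_k$ with $\bot<_Pc^n_k<_Pa_n$ and $c^n_k<_Pc^n_{k'}$ whenever $k'<k$ and $x_n(k')\neq 0$. Coding the three kinds of elements into disjoint ranges of naturals, the relation is decidable uniformly in $x$. The effect is that below each $a_n$ sits a \emph{descending} chain $a_n>_Pc^n_{k_0}>_Pc^n_{k_1}>_P\ldots$ enumerated along the nonzero positions $k_0<k_1<\ldots$ of $x_n$. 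Hence there is a minimal element in $(\bot,a_n]$ iff this chain terminates iff $x_n$ is eventually zero, and the same holds for any $c^n_k$ since only the tail of the chain sits below it. So $P$ is atomic iff $\forall n\forall^\infty k.\ x_n(k)=0$.

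Witness translation is then routine. Forward: given $n\mapsto s_n$ witnessing $\comp{\forall\forall^\infty}(x)$, compute for each $a\in P$ above $\bot$ the largest $k<s_n$ (for the index $n$ attached to $a$) with $x_n(k)\neq 0$ by bounded search, returning the corresponding $c^n_k$, or $a$ itself if no such $k$ exists. Backward: given $a\mapsto b_a$ witnessing ${\sf Atomic}(P)$, read off $s_n=k+1$ from $b_{a_n}=c^n_k$, or $s_n=0$ if $b_{a_n}=a_n$; minimality of $b_{a_n}$ forces $x_n$ to vanish beyond that index. For the dual $\comp{\exists\exists^\infty}\leq_{\sf m}{\sf Atomic}^{\sf d}$, an $n$ witnessing $\comp{\exists\exists^\infty}(x)$ maps to the element $a_n$ together with the computable function that, for each $b\in(\bot,a_n]$, searches for any $c^n_{k'}\in(\bot,b)$ (which exists since $x_n$ is nonzero infinitely often); and any witness for ${\sf Atomic}^{\sf d}(P)$ is of the form $a_n$ or $c^n_k$, whose index $n$ directly witnesses $\comp{\exists\exists^\infty}(x)$. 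Since the same $\eta$ serves both reductions, the di-reduction follows.

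The main subtlety I anticipate is the reversal of order: the $c^n_k$ are enumerated as $k$ grows while their positions in $\leq_P$ strictly decrease with $k$, so one must check carefully that the three defining clauses together yield a genuine (transitive, antisymmetric) partial order and that the terminal element of a finite chain is really minimal in $(\bot,a]$ rather than being dominated by leftover material from another block. Once this bookkeeping is handled, the computation of witnesses falls out by bounded search, exactly as in Proposition \ref{prop:complete-locfin-PO} and the lattice theorem.
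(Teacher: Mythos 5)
Your proof is correct, and it follows the paper's overall decomposition: the upper bound ${\sf Atomic}\leq_{\sf dm}\comp{\forall\forall^\infty}$ via verifiability (Example \ref{exa:atomic-verifiable} plus Lemma \ref{lem:verifiable-reducible}) is exactly the paper's argument, and the lower bound is an explicit poset coding. Where you genuinely diverge is in the gadget for $\comp{\forall\forall^\infty}\leq_{\sf dm}{\sf Atomic}$. The paper builds, below $\bot$'s successors, a forest of elements $p_{n,s_1,t_1,\dots,s_\ell,t_\ell,s}$ indexed by alternating guess sequences with $t_i\in P_{n,s_i}=\{t\geq s_i:x_n(t)\neq 0\}$, ordered by sequence extension (reversed); there a minimal element carries in its last coordinate $s$ a direct certificate that $x_n(t)=0$ for all $t\geq s$, and non-atomicity is realized by infinite descending branches through the forest. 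You instead use one inverted chain per block: $c^n_k$ for each nonzero position $k$, with later positions placed lower, so the unique atom of a finite block is the last nonzero position $c^n_{k_{\max}}$ and one reads off $s_n=k_{\max}+1$; an infinite block is an infinite descending chain with no atom. Your witness translations are all bounded or unbounded searches that are clearly computable (note that the fallback ``return $a$ itself'' in the forward direction only ever applies to $a_n$ with an empty block, since any existing $c^n_j$ guarantees a nonzero position $j<s_n$), the order-reversal bookkeeping you flag is indeed harmless because comparability is decided by index comparison within a block, and the same $\eta$ serves both the reduction and its dual. So your construction is a leaner alternative to the paper's sequence-indexed poset, buying a shorter verification at no loss of correctness; the paper's more elaborate forest has the mild advantage that the atom's code itself names a threshold $s$ rather than the last counterexample, but both yield the required realizers.
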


\begin{proof}
By Example \ref{exa:atomic-verifiable} and Lemma \ref{lem:verifiable-reducible}, we have ${\sf Atomic}\leq_{\sf dm}\pair{\forall\forall^\infty}$.
In order to show $\pair{\forall\forall^\infty}\leq_{\sf dm}{\sf Atomic}$, given $x=(x_n)_{n\in\om}$, we construct a partial order $P=\eta(x)$ such that
\[
\forall n\exists s\forall t\geq s.\ x_n(t)=0\iff P\mbox{ is atomic.}
\]

First put the bottom element $\bot$ in $P$.
For each $n,s$, define $P_{n,s}=\{t\geq s:x_n(t)\not=0\}$.
Then add the following elements to $P$.
\[\{p_{n,s_1,t_1,\dots,s_\ell,t_\ell,s}:n,s\in\om\mbox{ and }t_i\in P_{n,s_i}\mbox{ for each }i\leq\ell\}.\]

Here, if $\sigma$ and $\tau$ are incomparable, then $p_\sigma$ and $p_\tau$ are also incomparable, and if $\tau$ properly extends $\sigma$, we put $\bot<_Pp_\tau<_Pp_\sigma$.

Now, let $n\mapsto s_n$ be a witness for $\comp{\forall\forall^\infty}(x)$; that is, $x_n(t)=0$ for any $t\geq s_n$.
To find a witness for $P$ being atomic, let $p=p_{n,\sigma,s}\in P$ be given.
If $s\geq s_n$ then $P_{n,s}=\emptyset$, so $a(p):=p_{n,\sigma,s}$ is a minimal element.
If $s<s_n$ then check if $x_n(t)=0$ for any $s\leq t<s_n$.
If so, we know $P_{n,s}=\emptyset$; hence $a(p):=p_{n,\sigma,s}$ is a minimal element.
Otherwise, take $t$ such that $s\leq t<s_n$ and $x_n(t)\not=0$.
The latter means $t\in P_{n,s}$, so we get $\bot<_Pp_{n,\sigma,s,t,s_n}<_Pp_{n,\sigma,s}$, and $p_{n,\sigma,s,t,s_n}$ is a minimal element as in the above argument.
Then put $a(p):=p_{n,\sigma,s,t,s_n}$.
In any case, $\bot<_Pa(p)\leq_Pp$ and $a(p)$ is a minimal element.
Hence, $p\mapsto a(p)$ witnesses that $P$ is atomic.

Conversely, let $p\mapsto a(p)$ witness that $P$ is atomic.
Then, given $n$, compute $a(p_{n,0})$.
If $a(p_{n,0})=p_{n,0}$ then $p_{n,0}$ is minimal, which means $P_{n,0}=\emptyset$, so $x_n(t)=0$ for any $t$.
Then put $s_n=0$.
Otherwise, $a(p_{n,0})$ is of the form $p_{n,\sigma,s}$, which must be minimal, so this means $P_{n,s}=\emptyset$; that is, $x_n(t)=0$ for any $t\geq s$.
Then put $s_n=s$.
In any case, $n\mapsto s_n$ is a witness for $\comp{\forall\forall^\infty}(x)$.

For the dual, let $n$ be a witness for $\comp{\exists\exists^\infty}(x)$; that is, $x_n(t)\not=0$ for infinitely many $t$.
In particular, $P_{n,s}\not=\emptyset$ for any $s$.
In particular, there is no minimal element below $p_{n,0}$.
Hence $p_{n,0}$ is a witness for ${\sf Atomic}^{\sf d}(P)$.

Conversely, let $p_{n,\sigma,s}$ be a witness for ${\sf Atomic}^{\sf d}(P)$.
Then there is no minimal element below $p_{n,\sigma,s}$.
In particular, some $t\in P_{n,s}$ exists.
Then $p_{n,\sigma,s,t,s'}<_Pp_{n,\sigma,s}$ for any $s'$.
By the assumption, this is not minimal, too, so $P_{n,s'}\not=\emptyset$.
Therefore, for any $s'$ there exists $t\geq s'$ such that $x_n(t)\not=0$.
This means that $n$ is a witness for $\comp{\exists\exists^\infty}(x)$.
\end{proof}


\subsection{$\forall\exists\forall$-dicomplete}
All of the $\Pi_3$-problems discussed so far in Section \ref{sec:natural-problem} are actually $\Pi_3$-complete in the classical sense, whereas they are not $\forall\exists\forall$-complete under $\leq_{\sf m}$.
Therefore, it would be interesting to see whether there are natural $\forall\exists\forall$-(di)complete problems. 

As an example of a $\forall\exists\forall$-dicomplete problem, we here propose the complementedness problem for partial orders.
We assume that a poset $P$ is bounded; that is, it has the top element $\top$ and the bottom element $\bot$.
A complement of an element $a\in P$ is an element $b\not=a$ such that $a\lor b=\top$ and $a\land b=\bot$.
To be precise, there is no $c$ such that either $a,b\leq_Pc<_P\top$ or $\bot<_Pc\leq_Pa,b$ holds.
Such an element $b$ is not necessarily unique.
A bounded poset $P$ is complemented if every $a\in P$ has a complement.
Formally,
\[{\sf Compl}:\quad\forall a\in P\,\exists b\in P\,\forall c\in P\;\neg [(a,b\leq_Pc<_P\top)\lor(\bot<_Pc\leq_Pa,b)].\]

\begin{theorem}
${\sf Compl}$ is $\forall\exists\forall$-dicomplete.
\end{theorem}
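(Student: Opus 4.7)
The easy direction ${\sf Compl}\leq_{\sf dm}\comp{\forall\exists\forall}$ is immediate from the syntactic form of ${\sf Compl}$ as a $\forall\exists\forall$-formula, so the substance is the converse $\comp{\forall\exists\forall}\leq_{\sf dm}{\sf Compl}$. Given $x=(x(n,m,k))\in\om^\om$, I would build a bounded poset $P=\eta(x)$, realised as a sub-poset of $(2^\om,\subseteq)$ with labelled elements using pairing-function tags $0,1,2$ to keep the labels disjoint:
\[
\bot=\emptyset,\quad \top=\om,\quad a_n=\{\pair{0,n}\},\quad b'_{n,m}=\{\pair{0,n},\pair{2,n,m}\}\cup F_{n,m},\quad b_{n,m}=\om\setminus b'_{n,m},
\]
where $F_{n,m}=\{\pair{1,n,m,k}:x(n,m,k)\ne 0\}$, together with $c_{n,m,k}=\om\setminus\{\pair{1,n,m,k}\}$ and $c'_{n,m,k}=\{\pair{1,n,m,k}\}$ for each $(n,m,k)$ with $x(n,m,k)\ne 0$. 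Membership of each label in $P$ is decidable from $x$, so the code of $P$ is computable.

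The verification rests on three pillars. First, a canonical $P$-complement always exists for every label other than the $a_n$'s: $b_{n,m}$ and $b'_{n,m}$ are set-theoretic complements in $\om$ with no $P$-element strictly between them once the tag bookkeeping is done, and the same holds for $(c_{n,m,k},c'_{n,m,k})$ whenever those exist. Second, $a_n$ is $\subseteq$-comparable in $P$ with every $b_{n',m'}$ for $n'\ne n$, with every $b'_{n,m}$, and with every $c_{n',m',k'}$, so the only remaining $P$-complement candidates of $a_n$ lie in $\{a_{n'}:n'\ne n\}\cup\{b'_{n',m}:n'\ne n\}\cup\{c'_{n',m',k'}\}\cup\{b_{n,m}:m\in\om\}$. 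Third, each candidate other than the $b_{n,m}$'s admits a proper common upper bound with $a_n$ in $P$; for instance any $b_{n'',m''}$ with $n''\notin\{n,n'\}$ serves both for $(a_n,a_{n'})$ and for $(a_n,b'_{n',m})$, while $b'_{n,m}$ itself serves for $(a_n,c'_{n,m,k})$, and the analogous bookkeeping covers the remaining subcases. After the third pillar, the only possible complement pair involving $a_n$ is $(a_n,b_{n,m})$; this pair has $\bot$ as its only common lower bound because $a_n\cap b_{n,m}=\emptyset$ as sets, and a proper common upper bound in $P$ exists iff some $c_{n,m,k}$ exists, iff $\exists k\,x(n,m,k)\ne 0$. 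Combining the three pillars, $P$ is complemented iff $\forall n\exists m\forall k\,x(n,m,k)=0$.

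The realizers for the di-reduction then read off mechanically: a witness $n\mapsto m_n$ of $\comp{\forall\exists\forall}(x)$ yields the complementation map on $P$ sending $a_n\mapsto b_{n,m_n}$ and exchanging $b_{n,m}\leftrightarrow b'_{n,m}$, $c_{n,m,k}\leftrightarrow c'_{n,m,k}$ on the remainder, while any complementation map of $P$ must send $a_n$ to some $b_{n,m_n}$, from which the witness for $x$ is recovered; dually, a witness $n$ of $\comp{\exists\forall\exists}(x)$ directly provides $a_n$ as a witness of ${\sf Compl}^{\sf d}(P)$, and the first pillar forces every ${\sf Compl}^{\sf d}(P)$-witness to be some $a_n$. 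The main obstacle throughout is the third pillar: in any naive construction one of $a_{n'}$, $b'_{n',m}$, or $c'_{n',m',k'}$ would already be a $P$-complement of $a_n$ ``for free'', trivializing the reduction; uniformly blocking every such spurious complement is precisely what forces the tagged coding and the specific choice of $b_{n,m},b'_{n,m}$ as set-theoretic complements overlapping each $a$-element nontrivially.
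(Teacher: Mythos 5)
Your proof is correct, and its skeleton matches the paper's strategy---build a bounded poset in which all auxiliary elements have complements ``for free'', a designated element has a complement exactly when $\exists m\forall k\,x(n,m,k)=0$, the identity of that complement encodes $m$, and complementation is spoiled precisely by the appearance of a proper common upper bound tied to $\exists k\,x(n,m,k)\neq0$---but the gadget realizing it is genuinely different. The paper takes $P=(\{q\}\times Q)\cup{\sf Ref}\cup\{\top\}$ with $Q$ the finite subsets of $\om$ under inclusion and refuter elements $(\ep^a_b,D^a_b)$: the free complements come from the refuter bookkeeping (the complement of $(\ep^a_b,D^a_b)$ is $(\ep^c_d,\emptyset^c_d)$ for $(c,d)\neq(a,b)$), the possibly non-complemented elements are the $(q,D)$, and the spoiler is the fact that a union of refuters is a refuter, so $(\ep^a_b,(D\cup E)^a_b)<_P\top$ bounds both. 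Your version instead embeds everything into $(2^\om,\subseteq)$: the free complements are literal set-theoretic complements (so your first pillar is immediate), the non-complemented candidates are exactly the singletons $a_n$---one per $n$, so the dual-witness extraction is just reading off $n$, whereas the paper must allow arbitrary $(q,D)$ and take $\min D$---and the spoilers are the explicit elements $c_{n,m,k}$, present iff $x(n,m,k)\neq0$. I checked the inclusions your argument needs (e.g.\ $b_{n,m}\subseteq c_{n',m',k'}$ iff $(n,m)=(n',m')$, $a_n\subseteq b_{n',m'}$ iff $n\neq n'$); each is decidable from finitely many values of $x$ even though $b'_{n,m}$ may be infinite, all labels denote pairwise distinct subsets (so $\subseteq$ is antisymmetric and $P$ is an honestly coded bounded poset), and the subcase $(a_n,c'_{n',m',k'})$ with $n'\neq n$ that you left to ``analogous bookkeeping'' is indeed settled by any $b_{n'',m''}$ with $n''\notin\{n,n'\}$. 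One point worth stating explicitly: $\bot$ and $\top$ are mutual complements, which is needed so that every witness for ${\sf Compl}^{\sf d}(P)$ is forced to be some $a_n$. What the paper's construction buys is a cleaner conceptual picture (a controlled perturbation of the lattice $S\times Q$); what yours buys is a shorter verification, since set-theoretic complementation disposes of all auxiliary elements at once and the complement analysis localizes to the pairs $(a_n,b_{n,m})$.
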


\begin{proof}
Before starting the proof, let us explain the basic idea of our construction.
We first put the order $q<_Sp_0,p_1$ on the set $S=\{p_0,p_1,q\}$ of three symbols.
Let $Q$ be the set of all finite subsets of $\om$ ordered by inclusion.
Then, let us analyze the structure of the product order $S\times Q$.  
\[
(u,D)\leq_{S\times Q}(v,E)\iff u\leq_Sv\mbox{ and }D\subseteq E.
\]

This order has the least element $(q,\emptyset)$, which is written as $\bot$.
We write the poset obtained by adding the top element $\top$ to $S\times Q$ as $P$.

Note that the complement of $(p_i,D)$ is $(p_{1-i},\emptyset)$.
This is because $p_i$ and $p_{1-i}$ are incomparable, so the common upper bound of $(p_i,D)$ and $(p_{1-i},\emptyset)$ is only $\top$, and moreover, $(p_{1-i},\emptyset)$ only bounds $\bot=(q,\emptyset)$.
If $D\not=\emptyset$, $(q,D)$ does not have a complement.
This is because, for any $(u,E)$, the non-top element $(u,D\cup E)<_P\top$ bounds $(q,D)$ and $(u,E)$.
Indeed, this is the least upper bound, so this construction always gives a lattice.

Now,  given $x$, we construct $\eta(x)=P$ such that
\[\forall a\exists b\forall t.\ x(a,b,t)=0\iff P\mbox{ is complemented.}\]

As a modification of the above construction, we prepare for a new symbol $\ep^a_b$ for each $a,b$.
Here, we assume that $\ep^a_b$ is coded in a way that allows $a,b\in\om$ to be extracted.
We say that a finite set $E\subseteq\om$ is an {\em $(a,b)$-refuter} if either $a\not\in E$ or $x(a,b,t)\not=0$ for some $t$.
In this case, let $t_{a,b}$ be the least $t$ such that $x(a,b,t)\not=0$, and put $E^a_b=\pair{E,t_{a,b}}$.
Here, if $a\not\in E$ then $E^a_b=\pair{E,0}$.
Note that if both $D$ and $E$ are $(a,b)$-refuters then any subset of $D\cup E$ is an $(a,b)$-refuter.
In particular, the empty set $\emptyset$ is always an $(a,b)$-refuter.
Then we consider the following set:
\[{\sf Ref}=\{(\ep^a_b,D^a_b):D\mbox{ is an $(a,b)$-refuter}\}.\]

The decision of whether $E$ is an $(a,b)$-refuter or not is not necessarily $x$-computable, but the decision of $(u,\pair{E,t})\in{\sf Ref}$ is $x$-computable (using the information on $t$).
Now consider $S=\{q\}\cup\{\ep^a_b:a,b\in\om\}$.
Here, $q$ is the bottom element of $S$, and other elements of $S$ are pairwise incomparable.
A rough idea our proof strategy is to consider $S\times Q$ in the same way as before, but with a few corrections.  
\[P=(\{q\}\times Q)\cup{\sf Ref}\cup\{\top\}\]

The order on $\{q\}\times Q$ is the product order.
The order on the remaining parts are generated by the following relations:
\begin{align*}
&D\subseteq E\implies(q,D)\leq_P(\ep^a_b,D^a_b)\leq_{P}(\ep^a_b,E^a_b),\\
&u\in P\implies u\leq_P\top.
\end{align*}

Here, $D$ and $E$ in the first line are $(a,b)$-refuters.
Then let us analyze a complement of each element of $P$.
If $(\ep^a_b,D^a_b)\in{\sf Ref}$, then its complement is $(\ep^c_d,\emptyset^c_d)$ for $(c,d)\not=(a,b)$.
Here, as noted above, $\emptyset$ is always a $(c,d)$-refuter, so $(\ep^c_d,\emptyset^c_d)\in{\sf Ref}\subseteq P$.
Note that $\ep^a_b$ and $\ep^c_d$ are incomparable for $(a,b)\not=(c,d)$, so the upper bound of $(\ep^a_b,D^a_b)$ and $(\ep^c_d,\emptyset^c_d)$ is only $\top$, and the lower bound is only $\bot=(q,\emptyset)$.

Next consider $(q,D)$ for $D\not=\emptyset$.
First, an element of the form $(q,E)$ cannot be a complement of $(q,D)$, since they have an upper bound $(q,D\cup E)$.
Therefore, we only need to discuss whether an element of the form $(\ep^a_b,E^a_b)\in{\sf Ref}$ is a complement.

If $D$ is not an $(a,b)$-refuter, then we must have $a\in D$.
Also, any $C\supseteq D$ is not an $(a,b)$-refuter, so in particular, $(\ep^a_b,C^a_b)$ is undefined.
Therefore, the upper bound of $(q,D)$ and $(\ep^a_b,E^a_b)$ is only $\top$.
The infimum is $(q,D\cap E)$.
Thus, the infimum of $(q,D)$ and $(\ep^a_b,\emptyset^a_b)$ is $\bot=(q,\emptyset)$, where  we have $(\ep^a_b,\emptyset^a_b)\in{\sf Ref}$ since $\emptyset$ is always an $(a,b)$-refuter as noted above.
Hence, $(\ep^a_b,\emptyset^a_b)$ is a complement of $(q,D)$.

If $D$ is an $(a,b)$-refuter, then $(q,D)$ and $(\ep^a_b,E^a_b)$ have an upper bound $(\ep^a_b,(D\cup E)^a_b)<_P\top$, since the union of two refuters is again a refuter as noted above.
Hence, $(\ep^a_b,E^a_b)$ is not a complement of $(q,D)$.
Consequently, $(q,D)$ has no complement iff $D$ is an $(a,b)$-refuter for any $a,b$ (where we only need to consider $a\in D$).

Let $a\mapsto b_a$ be a witness for $\comp{\forall\exists\forall}(x)$; that is, $x(a,b_a,t)=0$ for any $a,t$.
To find a witness for $P=\eta(x)$ being complemented, it suffices to find a complement of each element of the form $(q,D)\in P$.
For each $a\in D$, $D$ is not an $(a,b_a)$-refuter, so we can take $(\ep^a_{b_a},\emptyset^a_{b_a})$ as a complement of $(q,D)$ as discussed above.

Conversely, assume that a witness for ${\sf Compl}(P)$ is given.
In particular, we have information on a complement of each $(q,\{a\})$.
As discussed above, $(q,\{a\})$ has a complement only if $\{a\}$ is not an $(a,b)$-refuter for some $b$.
Moreover, its complement is necessarily of the form $(\ep^a_b,E^a_b)$, and $(a,b)$ can be extracted from $\ep^a_b$.
Since $\{a\}$ is not an $(a,b)$-refuter, we have $x(a,b,t)=0$ for any $t$.
Putting $b_a=b$, the map $a\mapsto b_a$ gives a witness for $\comp{\forall\exists\forall}(x)$.

For the dual, let $a$ be a witness for $\comp{\exists\forall\exists}(x)$.
Then for any $b$ there is $t$ such that $x(a,b,t)\not=0$, so in particular, $\{a\}$ is an $(a,b)$-refuter for any $b$.
Hence, $(q,\{a\})$ has no complement in $P$.

Conversely, assume that a witness for $P$ not being complemented is given.
Such a witness must be of the form $(q,D)$ for some $D\not=\emptyset$.
By the above argument, $(q,D)$ has no complement only if $D$ is an $(a,b)$-refuter for any $(a,b)$.
This means that if $a\in D$ then, for any $b$, there is $t$ such that $x(a,b,t)\not=0$.
Therefore, for instance, $a=\min D$ is a witness for $\comp{\exists\forall\exists}(x)$.
\end{proof}

\subsection{$\forall^{\downarrow}\forall^\infty$: Asymptotic behavior}

Interestingly, there is also an example of a complete problem which is not dicomplete.
As a concrete example, let us consider the divergence problem ${\sf Diverge}$.
This is a problem that asks whether a given sequence of natural numbers $(x_n)_{n\in\om}$ diverges to infinity.
\[{\sf Diverge}:\quad\forall n\exists s\forall t\geq s.\ x_t\geq n.\]

The divergence problem is $\forall\forall^\infty$-complete, but the dual problem is not $\exists\exists^\infty$-complete, but $\forall^\infty\exists^\infty$-complete.

\begin{prop}\label{prop:diverge-AAinfty}
${\sf Diverge}$ is $\forall\forall^\infty$-complete.
\end{prop}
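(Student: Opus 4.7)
The plan is first to observe that ${\sf Diverge}$ is literally a $\forall\forall^\infty$-formula: rewriting the trailing $\exists s\forall t\geq s$ as $\forall^\infty t$, one reads off ${\sf Diverge}\equiv\forall n\forall^\infty t.\ x_t\geq n$, so ${\sf Diverge}\leq_{\sf m}\comp{\forall\forall^\infty}$ is immediate from the universality of the complete problem. The real work is the opposite reduction. Given a code $x=(x_n)_{n\in\om}$, I would define $y=\eta(x)$ by the sliding-window rule
\[
y(s)\;=\;\max\{k\in\om : \forall n\leq k,\ \forall t\text{ with }s-k<t\leq s,\ x_n(t)=0\}.
\]
The maximum is well-defined because $k=0$ always satisfies the condition, and it is computable from $x$ by bounded search since, as soon as some $x_{n_0}(t_0)\neq 0$ is observed with $t_0\leq s$, the admissible $k$ are capped. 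Hence $\eta$ is a computable function on codes.

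Both realizers then drop out from the geometry of the window. Suppose $n\mapsto s_n$ witnesses $\comp{\forall\forall^\infty}(x)$, so $x_n(t)=0$ for $t\geq s_n$. Setting $S_K:=K+\max_{n\leq K}s_n$, any $s\geq S_K$ makes the window $(s-K,s]$ lie entirely above each $s_n$ with $n\leq K$, forcing $y(s)\geq K$; this gives the forward realizer $r_-\colon(n\mapsto s_n)\mapsto(K\mapsto S_K)$. Conversely, if $K\mapsto S_K$ witnesses ${\sf Diverge}(y)$, then applying this at $K=n$ and unfolding the definition of $y(s)\geq n$ yields $x_n(t)=0$ for $t\in(s-n,s]$ whenever $s\geq S_n$; letting $s$ sweep $[S_n,\infty)$, these intervals cover $(S_n-n,\infty)$, so $s_n:=\max(0,S_n-n+1)$ serves as the desired witness and defines $r_+$. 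Classical equivalence is an automatic byproduct: any single $t_0$ with $x_{n_0}(t_0)\neq 0$ forces $y(t_0)\leq n_0$, so infinitely many nonzeros of any $x_{n_0}$ prevent $y$ from diverging.

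The only real design choice is the definition of $y$; once that is in hand, the verification is routine bounded arithmetic, and no obstacle of substance remains. I would also flag explicitly that this reduction is tailored to $\leq_{\sf m}$ and deliberately does \emph{not} lift to $\leq_{\sf dm}$, in agreement with the remark following the proposition that $({\sf Diverge})^{\sf d}$ is $\forall^\infty\exists^\infty$-complete rather than $\exists\exists^\infty$-complete: the dual direction is where the true subtlety lies, and the one-directional sliding window above is exactly what makes this asymmetry possible.
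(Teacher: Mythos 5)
Your realizer computations and the classical equivalence check are fine, but the construction of $\eta$ itself has a genuine defect: the maximum defining $y(s)$ need not exist. Every $k$ satisfies your window condition whenever $x_n(t)=0$ for all $n$ and all $t\leq s$; for instance on the input $x\equiv 0$, which is a positive instance of $\comp{\forall\forall^\infty}$, the set of admissible $k$ is all of $\om$ at every stage $s$, so $y(s)$ is undefined. Your justification only shows that the admissible set is nonempty ($k=0$ works) and that it is capped \emph{once} some nonzero value $x_{n_0}(t_0)\neq 0$ with $t_0\leq s$ is observed; but no such value need ever appear, and the search for one ranges over unboundedly many rows $x_n$, so the ``bounded search'' is not bounded and $\eta$ as written is a partial rather than total computable map. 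Since many-one reducibility requires a total computable $\eta$, this breaks the reduction $\comp{\forall\forall^\infty}\leq_{\sf m}{\sf Diverge}$ exactly on the cleanest positive instances.

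The repair is immediate: cap the window size by the stage, i.e.\ set $y(s)=\max\{k\leq s:\ \forall n\leq k\ \forall t\ (s-k<t\leq s\to x_n(t)=0)\}$. Then $y$ is total and computable by a genuinely bounded search; your forward realizer still works because $S_K=K+\max_{n\leq K}s_n\geq K$ ensures $k=K$ is an allowed candidate at every $s\geq S_K$; the admissible set is downward closed, so $y(s)\geq n$ still gives that $k=n$ is admissible; and the backward realizer $n\mapsto\max(0,S_n-n+1)$ is unchanged. With this fix your argument is correct, and it is essentially a sliding-window variant of the paper's construction, which takes $y(s)=\min\{n\leq s:x_n(s)\neq 0\}\cup\{s\}$ (in effect a window of length one, also capped by $s$) and uses the same shift-by-maximum realizers; the paper's cap by $s$ is precisely what avoids the totality issue you ran into.
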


\begin{proof}
Clearly, ${\sf Diverge}$ is a $\forall\forall^\infty$-formula.
To show $\forall\forall^\infty$-completeness, given $x=(x_n)_{n\in\om}$, we construct a sequence $y=\eta(x)$ of natural numbers such that
\begin{align*}
\forall n\forall^\infty t.\ x_n(t)=0&\iff y=(y(s))_{s\in\om}\mbox{ diverges.}
\end{align*}

We construct $y$ as follows:
\[y(s)=\min\{n\leq s:x_n(s)\not=0\}\cup\{s\}.\]

Let $n\mapsto s_n$ be a witness for $\comp{\forall\forall^\infty}(x)$; that is, $x(t)=0$ for any $t\geq s_n$.
Then, for $s_n'=\max_{m<n}s_m$, we have $y(t)\geq n$ for any $t\geq s_n'$.
Therefore, $n\mapsto s_n'$ is a witness for divergence of $y$.

Conversely, let $n\mapsto t_n$ be a witness for divergence of $y$; that is, $y(s)\geq n$ for any $s\geq t_n$.
This means that $x_m(s)=0$ for any $m<n$.
In particular, for any $s\geq t_{n+1}$ we have $x_n(s)=0$, so $n\mapsto t_{n+1}$ is a witness for $\comp{\forall\forall^\infty}(x)$.
\end{proof}

\begin{prop}\label{prop:div-A8E8-comp}
${\sf Diverge}^{\sf d}$ is $\forall^\infty\exists^\infty$-complete.
\end{prop}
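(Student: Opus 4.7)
The plan is to split the proof into two parts: the upper bound ${\sf Diverge}^{\sf d}\leq_{\sf m}\comp{\forall^\infty\exists^\infty}$ and the lower bound $\comp{\forall^\infty\exists^\infty}\leq_{\sf m}{\sf Diverge}^{\sf d}$.

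For the upper bound, I would exploit that ${\sf Diverge}^{\sf d}(x)\equiv\exists n\exists^\infty t.\ x(t)<n$ has the inner predicate $x(t)<n$ monotone in $n$, which makes it many-one equivalent to the $\forall^\infty\exists^\infty$-formula $\forall^\infty n\exists^\infty t.\ x(t)<n$. Concretely, set $\eta(x)=y$ with $y(n,t)=1$ iff $x(t)<n$ and $0$ otherwise; a witness $(n_0,\alpha)$ of the left side yields $(n_0,\lambda n.\alpha)$ on the right (since $n\geq n_0$ gives $x(\alpha(k))<n_0\leq n$), and conversely $(n_0,\beta)$ yields $(n_0,\beta(n_0))$. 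Composing with the defining reduction to $\comp{\forall^\infty\exists^\infty}$ completes this half.

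For the lower bound, given a $\forall^\infty\exists^\infty$-formula $\varphi(x)\equiv\forall^\infty n\exists^\infty t.\ x(n,t)\neq 0$, I plan to construct $y=\eta(x)$ such that ${\sf Diverge}^{\sf d}(y)\iff\varphi(x)$; equivalently, $y$ diverges iff $\exists^\infty n\forall^\infty t.\ x(n,t)=0$. The idea is to define $y(s)$ at each stage $s$ as some count of indices $n$ that look \emph{eventually zero} from the data observed by stage $s$, for instance indices with a sufficiently long trailing zero streak in $x(n,\cdot)$. Two design choices will be crucial: restricting $n$ to a slowly growing range such as $n\leq\lfloor\log_2 s\rfloor$ so as to prevent spurious contributions from large indices not yet observed to be nonzero; and calibrating the trailing window so that genuinely eventually-zero indices reliably enter the count while non-eventually-zero indices cannot remain in it.

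The main technical obstacle is the hard direction of the lower bound, namely producing a bounded-subsequence witness for $y$ from a witness of $\varphi(x)$. The converse direction should be routine: a witness enumerating infinitely many eventually-zero indices together with stabilization stages allows one to compute, for each target $m$, a stage past which $y(s)\geq m$, giving a divergence witness for $y$. For the hard direction, given a witness $(N,\beta)$ of $\varphi(x)$ with $\beta(n)$ enumerating nonzero times for $n\geq N$, one must effectively produce infinitely many stages $s$ with $y(s)<N$. The difficulty is that the nonzeros enumerated by $\beta$ can be arbitrarily sparse, so the trailing window has to be calibrated so that, using $\beta$, one can always locate stages at which every $n$ in the counting range simultaneously exhibits a recent nonzero. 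Nailing down the precise form of this calibration is the technical heart of the proof.
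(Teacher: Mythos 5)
Your upper bound is correct and is essentially the paper's own argument (the monotonicity of $x(t)<n$ in $n$). The gap is in the lower bound $\comp{\forall^\infty\exists^\infty}\leq_{\sf m}{\sf Diverge}^{\sf d}$, which is the entire content of the proposition: you leave the key construction ("the precise form of this calibration") unresolved, and the shape you commit to cannot be calibrated to work. Any per-row criterion "row $n$ looks eventually zero at stage $s$" that depends only on the observed prefix of $x(n,\cdot)$ and that eventually accepts, permanently, every row that really is eventually zero (which you need so that genuine eventually-zero rows "reliably enter the count") is defeated by staggering: place the nonzeros of the rows round-robin, each new nonzero so late that every previously touched row has long since re-entered the accepted state. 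Then every row has infinitely many nonzeros, so $\varphi(x)$ holds, yet at each stage all but at most one of the rows ever touched (plus those past their burn-in) are counted, so your $y(s)\to\infty$ and ${\sf Diverge}^{\sf d}(\eta(x))$ fails --- already clause (1) of Definition \ref{def:many-one-formula-def} breaks. (Calibrating the window so long that old nonzeros are never forgiven instead kills the opposite direction.) Your proposed escape --- using $\beta$ to "locate stages at which every $n$ in the counting range simultaneously exhibits a recent nonzero" --- conflates the reduction with the realizer: $\eta$ never sees $\beta$, and the existence of infinitely many stages with $y(s)<N$ is a property of $\eta(x)$ alone, so no realizer can repair it. A secondary point: the conversion you call routine goes from a witness of $\neg\varphi$ to a divergence witness, which plain $\leq_{\sf m}$ does not require; what is required, and left unaddressed, is converting a non-divergence bound for $\eta(x)$ back into a threshold witnessing $\varphi(x)$.

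The paper's construction avoids recency altogether and uses adaptive cumulative evidence of nonzeros. Each $n$ carries a counter $c_n[s]$ with $c_n[0]=n$; at stage $s$ one lets $y(s)$ be the least $n\leq s$ such that every row $m$ with $n\leq m\leq c_n[s]$ has at least $c_n[s]$ nonzeros among $t\leq s$, bumping $c_m[s+1]=c_m[s]+1$ for $m\geq n$ on success (and $y(s)=s$ if no $n$ succeeds). Since the evidence is cumulative and the threshold moves up only upon success, the test for a fixed $n$ succeeds infinitely often exactly when every row $m\geq n$ has infinitely many nonzeros; this yields both directions of the classical equivalence and the threshold-to-threshold witness maps $n\mapsto n+1$ and $n\mapsto n$, including the backward conversion you omitted. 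To salvage your plan you would essentially have to replace "long trailing zero streak" by such a moving cumulative-count criterion, i.e., reproduce this construction.
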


\begin{proof}
For each sequence $x=(x(s))_{s\in\om}$ of natural numbers, we have the following equivalences:
\[x=(x(s))_{s\in\om}\mbox{ diverges}\iff\forall n\forall^\infty s.\ x(s)\geq n\iff \exists^\infty n\forall^\infty s.\ x(s)\geq n.\]

This means ${\sf Diverge}\leq_{\sf m}\comp{\exists^\infty\forall^\infty}$, and thus ${\sf Diverge}^{\sf d}\leq_{\sf m}\comp{\forall^\infty\exists^\infty}$.
To show $\comp{\forall^\infty\exists^\infty}\leq_{\sf m}{\sf Diverge}^{\sf d}$, given $x=(x_n)_{n\in\om}$, we construct a sequence $y=\eta(x)$ of natural numbers such that
\begin{align*}
\forall^\infty n\exists^\infty t.\ x_n(t)\not=0&\iff y=(y(s))_{s\in\om}\mbox{ does not diverge.}
\end{align*}

For each $n,s$, we inductively define a parameter $c_n[s]$.
Put $c_n[0]=n$, and inductively assume that $c_n[s]$ is defined for each $n\leq s$.
Then check if the following holds for $c=c_n[s]$.
\begin{align}\label{equ:diverge}
\forall m\;(n\leq m\leq c\ \to\ |\{t\leq s:x_m(t)\not=0\}|\geq c).
\end{align}

Choose the least $n\leq s$ satisfying (\ref{equ:diverge}), and put $y(s)=n$, and $c_m[s+1]=c_m[s]+1$ for $n\leq m\leq s+1$.
If there is no such $n$, put $y(s)=s$.
For $m\leq s+1$, put $c_m[s+1]=c_m[s]$ if this has not yet been defined.
Clearly, $(n,s)\mapsto c_n[s]$ is computable.

To see $\forall^\infty\exists^\infty$-completeness of ${\sf Diverge}^{\sf d}$, let $n$ be a witness for $\comp{\forall^\infty\exists^\infty}(x)$; that is, for any $m\geq n$, there are infinitely many $t$ such that $x_n(t)\not=0$.
Hence, for any fixed $c$, the property (\ref{equ:diverge}) holds for almost all $s$.
Also, since the value of $c=c_n[s]$ does not change until this holds, (\ref{equ:diverge}) must hold.
Then, our action ensures $y(s)\leq n$.
This occurs for all $c$, which means that $y(s)\leq n$ for infinitely many $s$.
Therefore, $n+1$ is a witness for non-divergence of $y$.

Conversely, let $n$ be a witness for non-divergence of $y$; that is, $y(s)\leq n$ for infinitely many $s$.
This means that (\ref{equ:diverge}) holds infinitely many times for $n$.
Since the value of $c=c_n[s]$ increases each time (\ref{equ:diverge}) holds for $n$, for this to be the case, there must be infinitely many $t$ such that $x_m(t)\not=0$ for any $m\geq n$.
Hence, $n$ is a witness for $\comp{\forall^\infty\exists^\infty}(x)$.
\end{proof}

Although we need to use some separation results proven later, let us present the following results in advance.

\begin{cor}
${\sf Diverge}$ is not $\forall\forall^\infty$-dicomplete, and ${\sf Diverge}^{\sf d}$ is not $\forall^\infty\exists^\infty$-dicomplete.
\end{cor}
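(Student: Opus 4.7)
The plan is to reduce both non-dicompleteness statements to separation claims at the $\Sigma_3$- and $\Pi_3$-level of the classification announced in Section \ref{sec:introduction}. The key observation is that if $\psi$ is $\qf{Q}$-dicomplete then by Definition \ref{def:di-reducible} the dual side of the reduction also holds, namely $\comp{\qf{Q}^{\sf d}}\leq_{\sf m}\psi^{\sf d}$, so dicompleteness transfers completeness information to the dual.

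For the first assertion, suppose toward a contradiction that ${\sf Diverge}$ is $\forall\forall^\infty$-dicomplete. Then $\comp{\forall\forall^\infty}\leq_{\sf dm}{\sf Diverge}$, whose dual side yields $\comp{\exists\exists^\infty}\leq_{\sf m}{\sf Diverge}^{\sf d}$. By Proposition \ref{prop:div-A8E8-comp}, ${\sf Diverge}^{\sf d}\equiv_{\sf m}\comp{\forall^\infty\exists^\infty}$, and hence $\comp{\exists\exists^\infty}\leq_{\sf m}\comp{\forall^\infty\exists^\infty}$. Using Proposition \ref{prop:reduction-AE-E-infty} together with Lemma \ref{lem:quantifier-add-lemma} one checks that $\comp{\exists\exists^\infty}\equiv_{\sf m}\comp{\exists\forall\exists}$, so such a reduction would collapse two of the three $\Sigma_3$ many-one classes $\exists\forall\exists,\forall^\infty\exists^\infty,\forall^\infty\exists$ announced in the introduction, contradicting the $\Sigma_3$-classification theorem. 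The second assertion is symmetric: if ${\sf Diverge}^{\sf d}$ were $\forall^\infty\exists^\infty$-dicomplete, then the dual side of $\comp{\forall^\infty\exists^\infty}\leq_{\sf dm}{\sf Diverge}^{\sf d}$ gives $\comp{\exists^\infty\forall^\infty}\leq_{\sf m}({\sf Diverge}^{\sf d})^{\sf d}={\sf Diverge}$; by Proposition \ref{prop:diverge-AAinfty} this becomes $\comp{\exists^\infty\forall^\infty}\leq_{\sf m}\comp{\forall\forall^\infty}$, collapsing two of the five announced $\Pi_3$ many-one classes.

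The main obstacle is therefore not these short deductions but the underlying separations $\comp{\exists\exists^\infty}\not\leq_{\sf m}\comp{\forall^\infty\exists^\infty}$ and $\comp{\exists^\infty\forall^\infty}\not\leq_{\sf m}\comp{\forall\forall^\infty}$, which lie within the main classification results promised in Section \ref{sec:introduction} and displayed in Figure \ref{fig:arith2}. The intuition behind these separations is that a witness for $\exists\exists^\infty$ carries a single distinguished index $k$ together with an infinitary cofinal process above $k$, and this cannot be repackaged by a computable $\eta$ into a $\forall^\infty\exists^\infty$-witness, which demands a uniform cofinite threshold valid across all rows---a fundamentally different kind of data; the dual separation carries the dual intuition. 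Once these are in hand, the corollary follows purely formally from the argument above.
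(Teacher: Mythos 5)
Your proposal is correct and takes essentially the same route as the paper: assume dicompleteness, pass to the dual to conclude that ${\sf Diverge}^{\sf d}$ (resp.\ ${\sf Diverge}$) would be $\comp{\exists\exists^\infty}$-complete (resp.\ $\comp{\exists^\infty\forall^\infty}$-complete), then invoke Propositions \ref{prop:diverge-AAinfty} and \ref{prop:div-A8E8-comp} together with the separations $\comp{\exists\exists^\infty}\not\leq_{\sf m}\comp{\forall^\infty\exists^\infty}$ and $\comp{\exists^\infty\forall^\infty}\not\leq_{\sf m}\comp{\forall\forall^\infty}$, which is exactly how the paper argues via Theorem \ref{thm:formula-class-collapse} and the separation results of Section \ref{sec:separation}. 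The only cosmetic difference is that you cite these separations through the announced classification rather than the specific statements (Proposition \ref{prop:separation1}, Proposition \ref{prop:separation2} and the remarks following them), which the paper itself also uses only as forward references here.
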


\begin{proof}
By Theorem \ref{thm:formula-class-collapse} and Theorem \ref{thm:separation3}, we have $\comp{\forall^\infty\exists^\infty}<_{\sf m}\comp{\exists\exists^\infty}$, so by Proposition \ref{prop:div-A8E8-comp}, ${\sf Diverge}^{\sf d}$ cannot be $\exists\exists^\infty$-complete, which verifies the latter assertion.
By Theorem \ref{thm:formula-class-collapse} and Proposition \ref{prop:separation1}, we have $\comp{\forall\forall^\infty}<_{\sf m}\comp{\exists^\infty\forall^\infty}$, so Proposition \ref{prop:diverge-AAinfty}, ${\sf Diverge}$ cannot be $\exists^\infty\forall^\infty$-complete, which verifies the former assertion.
\end{proof}

The question, therefore, is whether ${\sf Diverge}$ can be positioned as a dicomplete problem for some class of formulas.
The answer is, in a sense, affirmative, by viewing ${\sf Diverge}$ as the intersection of a descending sequences of $\forall^\infty$-formulas.

\begin{definition}
A formula $\psi(a,x)$ is {\em descending} if the following holds:
\[a'\leq a\mbox{ and }\psi(a,x)\implies \psi(a',x).\]

A $\forall^{\downarrow}\bar{\sf Q}$-formula is a formula of the form $\forall a\psi(a,x)$ for some descending $\bar{\sf Q}$-formula $\psi(a,x)$.
The dual of a $\forall^{\downarrow}\bar{\sf Q}$-formula is called a $\exists^{\uparrow}\bar{\sf Q}$-formula.
\end{definition}

\begin{theorem}
${\sf Diverge}$ is $\forall^{\downarrow}\forall^\infty$-dicomplete.
\end{theorem}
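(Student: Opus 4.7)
The plan has two parts. First, I would verify that ${\sf Diverge}$ is itself a $\forall^\downarrow\forall^\infty$-formula: writing ${\sf Diverge}(x)\equiv\forall n\,\psi(n,x)$ with $\psi(n,x)\equiv\forall^\infty t.\,x_t\geq n$, the inner $\psi$ is descending since any bound $s$ witnessing $\psi(n,x)$ is automatically a bound witnessing $\psi(n',x)$ for every $n'\leq n$. This yields ${\sf Diverge}\leq_{\sf dm}\comp{\forall^\downarrow\forall^\infty}$.

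For the converse, fix an arbitrary $\forall^\downarrow\forall^\infty$-formula $\varphi(x)\equiv\forall a\,\psi(a,x)$ with $\psi(a,x)\equiv\forall^\infty t.\,\theta(a,t,x)$ descending, and use exactly the reduction from Proposition \ref{prop:diverge-AAinfty}: define $\eta(x)=y$ by
\[
y(s)=\min\{a\leq s:\neg\theta(a,s,x)\}\cup\{s\}.
\]
The two primal directions and the dual forward direction transfer essentially verbatim from the proof of Proposition \ref{prop:diverge-AAinfty}: a witness $a\mapsto s_a$ for $\varphi$ converts to $n\mapsto t_n=\max_{a<n}s_a$ for divergence of $y$, inverted by $s_a=t_{a+1}$; and a witness $(a,\gamma)$ for $\varphi^{\sf d}$ converts to the non-divergence witness $(a+1,\,j\mapsto\gamma(\max(j,a)))$, since then $\neg\theta(a,\delta(j),x)$ at $\delta(j):=\gamma(\max(j,a))\geq a$ forces $y(\delta(j))\leq a$.

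The crux is the dual backward direction, which is precisely where the analogous claim for plain $\forall\forall^\infty$ fails (see the preceding corollary). Given $(n,\delta)$ with $y(\delta(j))<n$, each $j$ with $\delta(j)\geq n$ yields $a_j=y(\delta(j))<n$ with $\neg\theta(a_j,\delta(j),x)$; but the $a_j$'s may vary, so pinning down a fixed $a$ by naive pigeonhole is non-computable. The descending property of $\psi$ is exactly what one must use to rescue the argument. The plan is to exploit this by invoking the canonical $\forall^\downarrow\forall^\infty$-dicomplete presentation in which $\theta$ itself is \emph{pointwise} descending in $a$, i.e., $\theta(a,t,x)\Rightarrow\theta(a',t,x)$ whenever $a'\leq a$ (as ${\sf Diverge}$ itself has, since $x_t\geq a$ immediately yields $x_t\geq a'$ for $a'\leq a$). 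Then, contrapositively, $\neg\theta(a_j,\delta(j),x)$ propagates upward to $\neg\theta(a,\delta(j),x)$ for every $a\geq a_j$, in particular for $a=n$, and one can output the single witness $(a,\gamma)=(n,\,j\mapsto\delta(\max(j,n)))$ for $\varphi^{\sf d}(x)$. The main technical obstacle is justifying this reduction to pointwise-descending $\theta$ for arbitrary $\forall^\downarrow\forall^\infty$-formulas: either by showing the canonical dicomplete formula can be taken with this shape, or by replacing $\theta$ by its pointwise closure $\theta'(a,t,x)\equiv\forall a'\leq a.\,\theta(a',t,x)$ and verifying that the resulting reformulation is dm-equivalent to $\varphi$ under the descending hypothesis.
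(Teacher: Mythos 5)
Your reduction is the right one (it is exactly the paper's: the same $y(s)=\min\{a\leq s:\neg\theta(a,s,x)\}\cup\{s\}$, with the primal directions and the dual forward direction handled as in Proposition \ref{prop:diverge-AAinfty}), but the proposal is incomplete precisely at the step you yourself flag as the crux. Your plan for the dual backward direction relies on replacing the given formula by one whose matrix is \emph{pointwise} descending in $a$ (i.e., $\theta(a,t,x)\Rightarrow\theta(a',t,x)$ for $a'\leq a$), and you leave the justification of that replacement as an unresolved ``main technical obstacle.'' This is a genuine gap: the definition of a $\forall^\downarrow\forall^\infty$-formula only demands that the subformula $\psi(a,x)\equiv\forall^\infty t.\,\theta(a,t,x)$ be descending, there is no canonical $\forall^\downarrow\forall^\infty$-complete formula in the paper that you could normalize to, and dicompleteness requires handling \emph{every} such formula. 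Moreover, if you try to justify the detour via the pointwise closure $\theta'(a,t,x)\equiv\forall a'\leq a\,\theta(a',t,x)$, then showing that a witness for the dual of the closed formula yields a witness for $\varphi^{\sf d}$ runs into exactly the same pigeonhole issue you were trying to avoid, so the detour does not by itself buy anything.

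The missing idea is that the pigeonhole step is harmless because it is only used to establish \emph{truth}, not to compute anything. If $n$ witnesses non-divergence of $y$, then $y(t)<n$ for infinitely many $t$, so classically some $m<n$ has $\neg\theta(m,t,x)$ infinitely often, i.e.\ $\neg\psi(m,x)$; the contrapositive of the descending condition (at the level of $\psi$, which is all you are given) immediately yields $\neg\psi(n,x)$. The witness you output for $\varphi^{\sf d}(x)$ is then the \emph{same} $n$ — you never need to identify the pigeonhole index $m$. The inner $\exists^\infty$ realizer is no obstacle either: since $\theta$ is a bounded formula decidable from the oracle $x$, and $\neg\theta(n,t,x)$ holds for infinitely many $t$, the map $j\mapsto\mu t\geq j.\,\neg\theta(n,t,x)$ is total computable in $x$ and realizes the inner part (equivalently, under the paper's simplified witness convention the inner $\Pi_2$ witness may be omitted altogether). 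With this observation your argument closes without any normal-form reduction, which is exactly how the paper's proof proceeds. As a minor point, your phrase ``${\sf Diverge}\leq_{\sf dm}\comp{\forall^\downarrow\forall^\infty}$'' presupposes a canonical complete formula for this class that the paper does not construct; for dicompleteness it suffices that ${\sf Diverge}$ is itself a $\forall^\downarrow\forall^\infty$-formula, which you do verify.
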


\begin{proof}
Clearly, ${\sf Diverge}$ is a $\forall^{\downarrow}\forall^\infty$-formula.
Let $\varphi(x)$ be a $\forall^{\downarrow}\forall^\infty$-formula, presented as $\forall n\forall^\infty t.x(n,t)=0$ satisfying the following descending condition:
\[n'\leq n\mbox{ and }\forall^\infty t.\ x(n,t)=0\implies\forall^\infty t.\ x(n',t)=0.\]

To show the $\forall^{\downarrow}\forall^\infty$-completeness of ${\sf Diverge}$, we can directly adopt the proof of Proposition \ref{prop:diverge-AAinfty} (since $\varphi$ is also a $\forall\forall^\infty$-formula); that is, consider the following $y$:
\[y(s)=\min\{n\leq s:x(n,s)\not=0\}\cup\{s\}.\]

For the dual, let $n$ be a witness for $\varphi^{\sf d}(x)$.
Then we have $x(n,t)\not=0$ for infinitely many $t$, so $y(t)\leq n$ for such $t$.
In particular, $n+1$ is a witness for non-divergence of $y$.

Conversely, let $n$ be a witness for non-divergence of $y$.
Then $y(t)<n$ for infinitely many $t$, so for such $t$, we get $x(m,t)\not=0$ for some $m<n$.
By the pigeon hole principle, there is $m<n$ such that $x(m,t)\not=0$ for infinitely many $t$.
By the contrapositive of the descending condition, we must have $x(n,t)\not=0$ for infinitely many $t$.
This means that $n$ is a witness for $\varphi^{\sf d}(x)$.
\end{proof}

In fact, there are many natural problems which are $\forall^{\downarrow}\forall^\infty$-dicomplete.
Most people first encounter $\Pi_3$ formulas in elementary analysis; for instance, formulas describing convergence, Cauchy sequences, etc.
Analyzing these formulas carefully, we realize that they are not just $\Pi_3$ formulas, but $\forall^{\downarrow}\forall^\infty$-formulas.
To be explicit, we consider the formula describing Cauchyness of a sequence $(x_n)_{n\in\om}$ of rational numbers:
\[
{\sf Cauchy}\colon \forall k\exists N\forall n,m\geq N.\ |x_n-x_m|\leq\frac{1}{k+1}
\]

Another example is related to the frequency of occurrence of $0$ and $1$ in a binary sequence.
For a binary string $\sigma\in 2^{<\om}$, the frequency of occurrence of $1$ in $\sigma$ is written as ${\sf Freq}_1(\sigma)$.
\[{\sf Freq}_1(\sigma)=\frac{\#\{k<n:\sigma(k)=1\}}{n}.\]

An infinite binary seqeunce $x\in 2^\om$ is simply normal in base $2$ if the frequencies of occurrence of $0$ and $1$ in $x\upto n$ converge to $\frac{1}{2}$; that is, $\lim_{n\to\infty}{\sf Freq}_1(x\upto n)=\frac{1}{2}$.
The asymptotic density of a set $A\subseteq \om$ is defined as follows:
\[\delta(A)=\lim_{n\to\infty}{\sf Freq}_1(\chi_A\upto n)=\lim_{n\to\infty}\frac{\#\{k<n:k\in A\}}{n}\]

For example, the formula expressing that the asymptotic density is $0$ is also $\Pi_3$.
These notions are formally expressed as follows:
\begin{align*}
{\sf SimpNormal}\colon &\quad\forall k\exists s\forall t\geq s.\ \left|{\sf Freq}_1(x\upto t)-\frac{1}{2}\right|\leq\frac{1}{k+1},\\
{\sf AsympDen}_0\colon &\quad\forall k\exists s\forall t\geq s.\ {\sf Freq}_1(x\upto t)\leq\frac{1}{k+1}.
\end{align*}

\begin{theorem}
${\sf Cauchy}$, ${\sf SimpNormal}$, ${\sf AsympDen}_0$ are all $\forall^{\downarrow}\forall^\infty$-dicomplete.
\end{theorem}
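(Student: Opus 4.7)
The plan is to establish, for each of ${\sf Cauchy}$, ${\sf SimpNormal}$, and ${\sf AsympDen}_0$, that the formula is ${\sf dm}$-equivalent to ${\sf Diverge}$; since ${\sf Diverge}$ is $\forall^\downarrow\forall^\infty$-dicomplete by the previous theorem, this yields the claim.

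Each of the three formulas has the shape $\forall k\,\psi_k$, where $\psi_k$ asserts that a running quantity (the gap $|x_n-x_N|$ for ${\sf Cauchy}$, after the standard reformulation $\forall k\exists N\forall n\geq N.\ |x_n-x_N|\leq 1/(2(k+1))$; the deviation $|{\sf Freq}_1(y\upto n)-\tfrac{1}{2}|$ for ${\sf SimpNormal}$; or the frequency ${\sf Freq}_1(y\upto n)$ for ${\sf AsympDen}_0$) is eventually bounded by $1/(k+1)$. Each $\psi_k$ is essentially a $\forall^\infty$-formula (its realizability witness is a single threshold), and $\psi_k$ is descending in $k$ because tightening the bound $1/(k+1)$ implies every looser bound $1/(k'+1)$ for $k'\leq k$. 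Hence each formula fits the $\forall^\downarrow\forall^\infty$ pattern and thus $\leq_{\sf dm}{\sf Diverge}$ by the previous theorem.

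For ${\sf Diverge}\leq_{\sf dm}{\sf Cauchy}$, given $x=(x_t)_{t\in\om}$ define the rational sequence $y$ by $y_{2t}=0$ and $y_{2t+1}=1/(x_t+1)$; then $x$ diverges iff $y\to 0$ iff $y$ is Cauchy. A ${\sf Diverge}$-witness $(s_n)_n$ converts to the Cauchy threshold $N_k=2s_{k+1}$, since every $y_i$ with $i\geq N_k$ lies in $[0,1/(k+2)]$ and hence $|y_i-y_j|\leq 1/(k+2)\leq 1/(k+1)$. Dually, a ${\sf Diverge}^{\sf d}$-witness $n$ (so $x_t<n$ for infinitely many $t$) converts to the dual Cauchy witness $k=n$, using $|y_{2t+1}-y_{2t}|=1/(x_t+1)\geq 1/n>1/(n+1)$ at each such $t$.

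For ${\sf Diverge}\leq_{\sf dm}{\sf SimpNormal}$ and ${\sf Diverge}\leq_{\sf dm}{\sf AsympDen}_0$, we use an amplified lane construction. Fix the disjoint lanes $L_n=\{2^n(2j+1):j\geq 0\}$, each of density $2^{-(n+1)}$ in $\N$. Whenever a bad event $x_t<n$ is observed at time $t$, mark every lane-$n$ position lying in $[t,2t]$. Set the default bits of $y$ to the alternating pattern (for ${\sf SimpNormal}$) or to $0$ (for ${\sf AsympDen}_0$), and flip the default at each marked position. If $x$ diverges, then each level $n$ has a last bad time $T_n^{*}<\infty$, and since $T_n^{*}$ is non-decreasing in $n$ a counting argument bounds the total marks up to position $N$ by $n^{*}\cdot N/2^{n^{*}+1}$ where $n^{*}=\max\{n:2T_n^{*}\leq N\}\to\infty$, so the induced density $n^{*}/2^{n^{*}+1}\to 0$ and the cumulative frequency converges to the intended target. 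If $x$ fails to diverge at some level $n_0$, the intervals $[t,2t]$ over the infinitely many bad times $t$ have union of density at least $1/2$ in $\N$, so lane $L_{n_0}$ accumulates marks of density at least $2^{-(n_0+2)}$, giving a persistent deviation witnessed by any $k\geq 2^{n_0+2}$. The main obstacle is securing this density lower bound when bad times are sparse, which is precisely what the amplification $t\mapsto[t,2t]$ achieves.
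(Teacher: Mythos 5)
Your overall route is viable and genuinely different from the paper's (which chains ${\sf Diverge}\leq_{\sf dm}{\sf AsympDen}_0$ via a factorial-block construction and then ${\sf AsympDen}_0\leq_{\sf dm}{\sf SimpNormal}$ by flipping every other $0$), and the easy half — each formula is (essentially) a descending $\forall^\downarrow\forall^\infty$-formula, hence $\leq_{\sf dm}{\sf Diverge}$ — is fine. But there are two genuine gaps. First, Definition \ref{def:many-one-formula-def} demands realizers in \emph{both} directions ($r_-$ and $r_+$), and di-reducibility demands the same for the duals; you only ever convert a ${\sf Diverge}$-witness (resp.\ ${\sf Diverge}^{\sf d}$-witness) into a witness for the target, and never convert a ${\sf Cauchy}$-, ${\sf AsympDen}_0$- or ${\sf SimpNormal}$-witness (or dual witness) back into one for ${\sf Diverge}$. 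In this realizability setting that backward conversion is precisely the nontrivial content; it does exist for your constructions (for ${\sf Cauchy}$ the interleaved zeros let a threshold $N_k$ be read back as a divergence modulus, since $|y_{2t+1}-y_{2t'}|\leq 1/(k+1)$ forces $x_t\geq k$; in the lane construction a bad event $x_t<n$ with $2t$ past a density threshold would force ${\sf Freq}_1(y\upto 2t)\gtrsim 2^{-(n+2)}$, so a modulus for $y$ computably yields one for $x$, and similarly for the duals), but none of this is in your proof.

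Second, the counting bound in the forward direction of the lane construction is false as stated: a single bad time $t$ with $x_t=0$ marks every lane $n\geq 1$ inside $[t,2t]$, i.e.\ essentially every even position there, so at $N=2t$ the mark density is about $1/4$ no matter how large $n^{*}=\max\{n:2T^{*}_n\leq N\}$ is; hence ``total marks up to $N$ is at most $n^{*}N/2^{n^{*}+1}$'' is wrong, and a realizer built from that rate would output thresholds that are too small. The construction itself survives, but the witness for ${\sf AsympDen}_0(y)$ (and ${\sf SimpNormal}$) must be computed by a two-stage estimate: given the target precision $1/(k+1)$, choose a lane cutoff $n_0$ with $2^{-n_0}$ well below $1/(k+1)$, bound the total contribution of lanes $n<n_0$ by roughly $\sum_{n<n_0}s_n/2^{n}$ using the ${\sf Diverge}$-witness $(s_n)$, and then take the threshold large compared with that finite quantity. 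With the missing realizers supplied and this count repaired, your argument goes through and is arguably more uniform than the paper's; as written, it is incomplete.
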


\begin{proof}
It is clear that all of these formulas are $\forall^\downarrow\forall^\infty$.

${\sf Diverge}\leq_{\sf dm}{\sf Cauchy}$:
Given $x=(x_n)_{n\in\om}$, we construct a sequence $y=\eta(x)$ of natural numbers such that
\begin{align*}
x=(x_t)_{t\in\om}\mbox{ diverges}\iff y=(y_t)_{t\in\om}\mbox{ is a Cauchy sequence.}
\end{align*}

Given $x=(x_t)\in\om^\om$, for each $t$, if the same value as $x_t$ has already appeared in $(x_s)_{s<t}$ an even number of times, then put $y_t=\frac{1}{2x_t+1}$, and if it has appeared an odd number of times, then $y_t=\frac{1}{2x_t+2}$.
In other words, we make a distinction between cases where $|\{s<t:x_s=x_t\}|$ is even or odd.
The values of the sequence $(y_t)_{t\in\om}$ are of the form $\frac{1}{2n+i}$ for $i\in\{1,2\}$, so if this converges, it is of the form $0$ or $\frac{1}{2n+i}$.
In order for the sequence to converge to $\frac{1}{2n+i}$, almost all of $y_t$ must be this value, but in this case, almost all of $x_t$ be the value $n$.
However, if $x_t$ stabilizes to $n$, then $y_t$ returns the values $\frac{1}{2n+1}$ and $\frac{1}{2n+2}$ alternately, so it does not have a limit.
Therefore, the sequence $(y_t)_{t\in\om}$ cannot converge to a value other than $0$.

Let $n\mapsto s_n$ be a witness for ${\sf Diverge}(x)$; that is, $x_t\geq n$ for any $t\geq s_n$.
In this case, we have $y_t\leq \frac{1}{2n+1}$ for any $t\geq s_n$.
Thus, $n\mapsto s_n$ is also a witness for ${\sf Cauchy}(y)$.
Conversely, let $n\mapsto s_n$ be a witness for ${\sf Cauchy}(y)$; that is, for any $k,\ell\geq s_n$ we have $|y_k-y_\ell|\leq \frac{1}{n}$.
In particular, $y$ has a limit, but as discussed above, in this case, $y$ must converge to $0$, so we get $y_t\leq \frac{2}{n}$ for any $t\geq s_n$.
Considering $t\geq s_{4(n+1)}$, since $\frac{1}{2x_t+2}\leq y_t\leq \frac{1}{2n+2}$, we get $x_t\geq n$.
Hence, $n\mapsto s_{4(n+1)}$ is a witness for ${\sf Diverge}(x)$.

For the dual, let $n$ be a witness for ${\sf Diverge}^{\sf d}(x)$; that is, $x_t<n$ for infinitely many $t$.
By the pigeon hole principle, there is $m<n$ such that $x_t=m$ for infinitely many $t$.
In this case, $(y_t)_{t\in\om}$ takes $\frac{1}{2m+1}$ and $\frac{1}{2m+2}$ for infinitely many $t$.
In particular, for any $N$, there exist $k,\ell\geq N$ such that
\[
|y_k-y_\ell|=\frac{1}{2m+1}-\frac{1}{2m+2}=\frac{1}{(2m+1)(2m+2)}>\frac{1}{(2n+1)(2n+2)}.
\]

Hence, $(2n+1)(2n+2)$ is a witness for ${\sf Cauchy}^{\sf d}(y)$.
Conversely, let $n$ be a witness for ${\sf Cauchy}^{\sf d}(y)$.
In this case, for $m$ with $n\leq 2m+1$, for any $N$, there exist $k,\ell\geq N$ such that $|y_k-y_\ell|>\frac{1}{n}\geq\frac{1}{2m+1}$.
If $x_k,x_\ell\geq m$ then our construction ensures $y_k,y_\ell\leq\frac{1}{2m+1}$, and in particular, $|y_k-y_\ell|\leq\frac{1}{2m+1}$, but this is impossible.
Therefore, either $x_k<m$ or $x_\ell<m$ holds.
Since $N$ is arbitrary, there are infinitely many such $k,\ell$; hence, $m$ is a witness for ${\sf Diverge}^{\sf d}(x)$.

${\sf Diverge}\leq_{\sf dm}{\sf AsympDen}_0$:
Given $x=(x_n)_{n\in\om}$, we construct a sequence $y=\eta(x)$ of natural numbers such that
\begin{align*}
x=(x_t)_{t\in\om}\mbox{ diverges}\iff \mbox{the asymptotic density of $\{n:y(n)=1\}$ is $0$.}
\end{align*}

Put $u(0)=1$ and $u(s+1)=(s!+1)\cdot u(s)$.
At step $0$, put $y(n)=0$ for any $n<u(2)$.
For $s\geq 1$, inductively assume that $y\upto u(s)$ has already been defined at the beginning of step $s$.
For $k=\min\{x_s+2,s\}$, note that $\frac{s!}{k}$ is a natural number since $k\leq s$.
Then, consider the extension $y\upto u(s+1)$ of $y\upto u(s)$ where the last $\frac{s!}{k}u(s)$ bits of $y\upto u(s+1)$ are all set to $1$ and the rest are set to $0$; that is,
\[
y(t)=
\begin{cases}
0&\mbox{ if }u(s)\leq t<\left(1+s!-\frac{s!}{k}\right)u(s)\\
1&\mbox{ if }\left(1+s!-\frac{s!}{k}\right)u(s)\leq t<(1+s!)u(s)
\end{cases}
\]

Now, let us calculate the number of occurrences of $1$ in $y\upto u(s+1)$.
In the above construction, exactly $\frac{s!}{k}u(s)$ many new $1$'s are added, and the number of $1$'s in $y\upto u(s)$ is at most $u(s)$, so we get the following inequality:
\[
\frac{s!}{k}u(s)\leq\#\{t<u(s+1):y(t)=1\}\leq u(s)+\frac{s!}{k}u(s)
\]

Therefore, the frequency of occurrence of $1$ is the result of dividing this by $u(s+1)=(s!+1)u(s)$.
Calculating this value, we first obtain the following for the left-hand side:
\[
\frac{s!}{k}u(s)\cdot\frac{1}{u(s+1)}=\frac{s!}{k(s!+1)}=\frac{s!}{ks!+k}>\frac{s!}{ks!+s!}=\frac{1}{k+1}.
\]

Next, for $k>1$ we get the following for the right-hand side:
\[
\left(u(s)+\frac{s!}{k}u(s)\right)\cdot\frac{1}{u(s+1)}
=\frac{k+s!}{k(s!+1)}
=\frac{(k-1)s!+(k-1)k}{(k-1)k(s!+1)}
<\frac{1}{k-1}.
\]

Here, note $(k-1)s!+(k-1)k\leq (k-1)s!+s!=ks!$ for the last inequality.
Summarizing the above, we obtain the following inequality:
\[\frac{1}{k+1}<{\sf Freq}_1(y\upto u(s+1))<\frac{1}{k-1}.\]

Also, in the construction of $y\upto u(s+1)$, $1$'s are added as a tail.
Hence, if $u(s)<t\leq u(s+1)$, then we get ${\sf Freq}_1(y\upto t)\leq{\sf Freq}_1(y\upto u(s+1))<\frac{1}{k-1}$.

Let $n\mapsto s_n$ be a witness for ${\sf Diverge}(x)$; that is, $x_s\geq n$ for any $s\geq s_n$.
Put $t_n=\max\{s_n,n+1\}$.
If $s\geq t_n$ then $k=\min\{x_s+2,s\}\geq n+1$.
By the above argument, if $u(s)<t\leq u(s+1)$ then we get ${\sf Freq}_1(y\upto t)<\frac{1}{k-1}\leq\frac{1}{n}$.
This holds for any $t>u(t_n)$.
Hence, $n\mapsto u(t_n)+1$ is a witness for ${\sf AsympDen}_0(y)$.

Let $n\mapsto v_n$ be a witness for ${\sf AsympDen}_0(y)$; that is, ${\sf Freq}_1(y\upto t)\leq\frac{1}{n}$ for any $t\geq v_n$.
Calculate $s_n$ such that $u(s_n)\geq v_n$.
In particular, we have ${\sf Freq}_1(y\upto u(s+1))\leq\frac{1}{n}$ for any $s\geq s_n$.
For $k=\min\{x_s+2,s\}$, by the above argument, we have $\frac{1}{k+1}<{\sf Freq}_1(y\upto u(s+1))\leq\frac{1}{n}$, so we get $n<k+1\leq x_s+3$.
This means $x_s\geq n-2$ for any $s\geq s_n$.
Hence, $n\mapsto s_{n+2}$ is a witness for ${\sf Diverge}(x)$.

Let $n$ be a witness for $x\in{\sf Diverge}^{\sf d}$; that is, $x_s<n$ for infinitely many $s$.
For such a large $s>n$, we have $k=\min\{x_s+2,s\}=n+2$.
By the above argument, we have ${\sf Freq}_1(y\upto u(s+1))<\frac{1}{k-1}=\frac{1}{n+1}$.
Hence, $n+1$ is a witness for ${\sf AsympDen}_0^{\sf d}(x)$.

Let $n$ be a witness for ${\sf AsympDen}_0^{\sf d}(x)$; that is, ${\sf Freq}_1(y\upto t)\geq\frac{1}{n}$ for infinitely many $t$.
For such $t$, take $s$ such that $u(s)<t\leq u(s+1)$.
By the above argument, we get ${\sf Freq}_1(y\upto t)\geq{\sf Freq}_1(y\upto u(s+1))$.
Also, for $k=\min\{x_s+2,s\}$, we have ${\sf Freq}_1(y\upto u(s+1))<\frac{1}{k-1}$.
Summarizing the above, we get $\frac{1}{n}\leq {\sf Freq}_1(y\upto u(s+1))<\frac{1}{k-1}$, so $k<n+1$.
If $t$ is sufficiently large, $s\geq n+1$.
In this case, $k\not=s$, so $k=x_s+2$ and thus, $x_s<n-1$.
Hence, $n-1$ is a witness for ${\sf Diverge}^{\sf d}(x)$.

${\sf AsympDen}_0\leq_{\sf dm}{\sf SimpNormal}$:
Given $x\in 2^\om$, consider the sequence $y=\eta(x)$ obtained by replacing the $(2n+1)$st occurrence of $0$ in $x$ with $1$.
If $a_t$ is the number of $1$'s in $x\upto t$, then the number of $1$'s in $y$ is approximately $\frac{t-a_t}{2}+a_t=\frac{t+a_t}{2}$.
To be precise, this value is not necessarily a natural number, but it only differs from the actual number by at most $\frac{1}{2}$.
Therefore, calculating the error between ${\sf Freq}_1(y\upto t)$ and $\frac{1}{2}$, we get the following:
\[
{\sf Freq}_1(y\upto t)-\frac{1}{2}\approx\frac{t+a_t}{2t}-\frac{1}{2}=\frac{a_t}{2t}=\frac{{\sf Freq}_1(x\upto t)}{2}.
\]

Here, $\approx$ is the equivalence with errors at most $\frac{1}{2t}$.
Using the above equation, it is easy to construct a transformation of witnesses for $\leq_{\sf dm}$.
\end{proof}

It may be useful to keep in mind that all of these $\forall^\downarrow\forall^\infty$-dicomplete problems are related to limits.


\subsection{$\forall^\infty\forall\exists$: Global boundedness}\label{sec:example-global-boundedness}
Next, let us look for a $\forall^\infty\forall\exists$-complete problem.
As a candidate, let us consider the problem ${\sf FinDiam}$ of determining whether the diameter of a graph is finite.
Here, the distance $d(u,v)$ between two vertices $u,v$ in a graph $G$ is the length of a shortest path connecting $u$ and $v$.
If there is no path connecting $u$ and $v$, then $d(u,v)$ is defined to be $\infty$.
Then the diameter of a graph $G=(V,E)$ is defined as $\max\{d(u,v):u,v\in V\}$.
The finite-diameter problem ${\sf FinDiam}$ is defined as follows:
\[{\sf FinDiam}:\quad\exists r\forall u,v\in V\exists\gamma\;[(\mbox{$\gamma$ connects $u$ and $v$})\land |\gamma|\leq r].\]

Here, $|\gamma|$ is the length of a path $\gamma$.

\begin{prop}\label{prop:findiam-reduction-complete}
${\sf FinDiam}$ is $\forall^\infty\forall\exists$-complete.
\end{prop}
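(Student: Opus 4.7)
The plan is to establish two things: first, that ${\sf FinDiam}$ is (up to rewriting) a $\forall^\infty\forall\exists$-formula; second, that the canonical $\pair{\forall^\infty\forall\exists}$-formula many-one-reduces to it, with realizers mediating the witness transformations.

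For the upper bound, I would note that the predicate defining ${\sf FinDiam}$, namely $\exists r\,\forall u,v\in V\,\exists\gamma[\gamma\text{ connects }u,v\text{ and }|\gamma|\leq r]$, is upward-closed in $r$: if a bound $r_0$ works then so does every $r\geq r_0$. Hence $\exists r$ is interchangeable with $\forall^\infty r$ (with the same simplified witness, in the sense of the paper's convention for $\Sigma_3$-formulas), exhibiting ${\sf FinDiam}$ in the form $\forall^\infty r\,\forall u,v\,\exists\gamma[\dots]$, i.e.\ a $\forall^\infty\forall\exists$-formula whose simplified witness is just the bound $r$.

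For the hardness direction, I fix the canonical complete formula $\pair{\forall^\infty\forall\exists}(x)\equiv \forall^\infty n\,\forall m\,\exists k.\ x(n,m,k)\neq 0$, whose simplified witness is a threshold $N$ with $\forall n\geq N\,\forall m\,\exists k$. Given $x$, I would construct $G=\eta(x)$ with a root $r$, query vertices $q_{n,m}$ for each $(n,m)$, and witness vertices $w_{n,m,k}$ for each $(n,m,k)$. The computable edge relation puts $(r,w_{n,m,k})$ always, $(w_{n,m,k},q_{n,m})$ exactly when $x(n,m,k)\neq 0$, and adds a carefully designed backbone guaranteeing that each $q_{n,m}$ reaches $r$ through a route whose length is tied to $n$ whenever $(n,m)$ has no witness. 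The target behaviour is $d(r,q_{n,m})=2$ when $\exists k.\ x(n,m,k)\neq 0$ (via the two-hop $q_{n,m}$–$w_{n,m,k}$–$r$ shortcut) and $d(r,q_{n,m})\asymp n$ otherwise.

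Assuming this property, the verification is straightforward. If $N$ witnesses the formula, then every $(n,m)$ with $n\geq N$ is good, so $d(r,q_{n,m})\leq 2$, while for $n<N$ one has $d(r,q_{n,m})\leq n+O(1)\leq N+O(1)$; hence the diameter is at most $2N+O(1)$, computable from $N$. Conversely, if $r$ is a diameter bound then every $n$ beyond $r$ must be good (otherwise some bad $(n,m)$ would force $d(r,q_{n,m})\geq n>r$), yielding the threshold $N\approx r$ as a witness. These transformations supply the required computable $\eta$, $r_-$, and $r_+$.

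The main obstacle will be arranging the backbone so that the length-$n$ paths needed for bad $(n,m)$ do not leak unbounded distances into the rest of the graph through their own intermediate vertices. A naive dedicated path $q_{n,m}$–$p^{n,m}_1$–$\dots$–$p^{n,m}_{n-1}$–$r$ forces $d(r,p^{n,m}_i)\asymp n/2$ uniformly in $n$, collapsing the reduction. My plan is to avoid this by having no extra path vertices at all: the only vertices of $G$ are $r$, the $q_{n,m}$, and the $w_{n,m,k}$; the "distance $n$" for bad $(n,m)$ is realised through a chain built out of the $q_{\cdot,m}$'s themselves (say $q_{n,m}$–$q_{n-1,m}$ and $q_{0,m}$–$r$), so auxiliary vertices never appear. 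The remaining delicate point, and where I expect the real work, is preventing the chain from allowing a "good" $q_{n',m}$ near $n$ from bypassing a bad $(n,m)$ via a short $w$-shortcut, which would let the graph have bounded diameter even when the formula fails. I plan to fix this by making the relevant chain edges themselves conditional on $x$ (so that a bad column in the formula really does cut the chain), and checking by cases that under this refinement the diameter is finite precisely when only finitely many $n$ fail $\forall m\exists k$.
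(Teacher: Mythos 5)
Your upper-bound step (replacing $\exists r$ by $\forall^\infty r$ and taking the bound $r$ as the simplified witness) matches the paper. The gap is in the hardness construction, and it sits exactly where you yourself flag ``the real work.'' With the backbone built out of the query vertices themselves (edges $q_{n,m}$--$q_{n-1,m}$ and $q_{0,m}$--$r$), the backward direction of the reduction fails: consider an input $x$ in which, for every $n$, only the pair $(n,n)$ is bad and all other pairs $(n,m)$, $m\neq n$, are good. Then $\comp{\forall^\infty\forall\exists}(x)$ is false, yet each bad $q_{n,n}$ is adjacent along the chain to $q_{n-1,n}$ and $q_{n+1,n}$, which are good and hence within distance $2$ of the root via their $w$-shortcuts; so every vertex lies within a fixed constant distance of $r$ and the diameter is finite. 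Failure of the formula only supplies infinitely many bad pairs, not long runs of consecutive bad pairs inside a single column, which is what your chain would need. The repair you gesture at --- making the chain edges ``conditional on $x$'' so that a bad column cuts the chain --- cannot be implemented computably for a fixed edge: goodness of $(n',m)$ is a $\Sigma_1$ fact and badness a $\Pi_1$ fact, neither decidable from finitely much of $x$, whereas the edge relation of $\eta(x)$ must be computed from finite information. The standard way out is to index the conditional objects by the finite witness $t$ with $x_{n,m}(t)\neq 0$, but that forces you to reintroduce auxiliary vertices, and then your original objection about intermediate vertices leaking distance returns.

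The paper answers that objection with a device your proposal is missing: keep a dedicated path $\ep\to a^{n,m}_0\to\cdots\to a^{n,m}_n$ of length $n+1$ for each pair $(n,m)$, but when some $x_{n,m}(t)\neq 0$ is observed, add a hub vertex $b^{n,m}_t$ adjacent to $\ep$ \emph{and to every vertex $a^{n,m}_s$, $s\leq n$, of that path}. A good pair's whole block then collapses to radius $2$ around $\ep$, so its intermediate vertices never contribute large distances; under a threshold witness $r$ the bad blocks all have $n<r$ and radius at most $n+1\leq r$, giving diameter at most $2r$; and when the formula fails, a bad pair with large $n$ leaves $a^{n,m}_n$ at distance exactly $n+1$ from $\ep$, giving infinite diameter, with the witness translations in both directions read off computably. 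Without this ``cone over the entire path'' (or an equivalent per-pair device), the vertex set $\{r\}\cup\{q_{n,m}\}\cup\{w_{n,m,k}\}$ you restrict yourself to does not appear to support a correct reduction, so as it stands the hardness half of your argument is not salvageable without adopting essentially the paper's construction.
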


\begin{proof}
First, in the definition of ${\sf FinDiam}$, the quantification $\exists r$ can be replaced with ${\forall^\infty r}$; thus, ${\sf FinDiam}\leq_{\sf dm}\comp{\forall^\infty\forall\exists}$.
Therefore, it remains to show that $\comp{\forall^\infty\forall\exists}\leq_{\sf m}{\sf FinDiam}$.
Given $x=(x_{n,m})_{n,m\in\om}$, we construct a graph $\eta(x)=(V,E)$ such that
\[
\exists r\forall n\geq r\forall m\exists t.\ x_{n,m}(t)\not=0\iff \mbox{the diameter of $(V,E)$ is finite.}
\]

The graph has vertices $\ep\in V$ and $a^{n,m}_s\in V$ for any $n,m\in\om$ and $s\leq n$.
For each $s<n$, put $(\ep,a^{n,m}_0),(a^{n,m}_s,a^{n,m}_{s+1})\in E$, which yields infinitely many paths of length $n+1$ connecting $\ep$ and $a^{n,m}_n$.
Moreover, if $x_{n,m}(t)\not=0$ for some $t$, then put $b^{n,m}_t\in V$ and $(\ep,b^{n,m}_t),(a^{n,m}_s,b^{n,m}_t)\in E$ for each $s\leq n$.

Let $W=\{(n,m):\exists t.\ x_{n,m}(t)\not=0\}$.
If $(n,m)\not\in W$ then the distance between $\ep$ and $a^{n,m}_n$ is $n+1$.
If $(n,m)\in W$ then the distance is $2$ via $b^{n,m}_t$ for some $t$, since $(\ep,b^{n,m}_t),(b^{n,m}_t,a^{n,m}_n)\in E$.
Indeed, the distance between any two vertices in $S_{n,m}:=\{\ep,a^{n,m}_n,b^{n,m}_t:x_{n,m}(t)\not=0\}$ is at most $2$.
Therefore, if $(n,m),(n',m')\in W$, the distance between any two vertices in $S_{n,m}\cup S_{n',m'}$ is at most $4$.

Let $r$ be a witness for $\comp{\forall^\infty\forall\exists}(x)$.
Then, for any $n\geq r$ and $m$, we have $(n,m)\in W$.
In this case, for any $n\geq r$, the distance between any two vertices in $S_{n,m}$ is at most $2$, and for $n<r$, the distance between any two vertices in $S_{n,m}$ is at most $r$.
Therefore, the distance between any two vertices in this graph is at most $2r$.
Hence, $2r$ is a witness for ${\sf FinDiam}(V,E)$.

Conversely, if $r$ is a witness for ${\sf FinDiam}(V,E)$, then the distance between any two vertices in this graph is at most $r$.
Thus, for any $n>r$ and $m$, we must have $(n,m)\in W$.
This implies that $r+1$ is a witness for $\comp{\forall^\infty\forall\exists}(x)$.
\end{proof}

However, there are difficulties with the completeness of the dual of ${\sf FinDiam}$.
For this reason, let us consider a slightly modified problem.
Note that, even if the diameter is infinite, there can be an upper bound of the diameter of any connected component.

The set of all paths of a graph $G=(V,E)$ is denote by ${\rm Path}_G$, and for a path $\gamma\in{\rm Path}_G$, its start point is denoted by $\gamma_{\rm start}$, and its end point is denoted by $\gamma_{\rm end}$.
The bounded-diameter problem ${\sf FinDiam}_{\sf conn}$ for connected components is defined as follows:
\[{\sf FinDiam}_{\sf conn}\colon\quad\exists r\forall\gamma\in{\rm Path}_G\exists\delta\in{\rm Path}_G\;[\{\gamma_{\rm start},\gamma_{\rm end}\}=\{\delta_{\rm start},\delta_{\rm end}\}\;\land\;|\delta|\leq r].\]

This formula expresses that if two vertices $a,b\in V$ belong to the same connected component (that is, connected by a finite path $\gamma$) then the distance between $a$ and $b$ is at most $r$ (that is, connected by a path $\delta$ of length at most $r$).

\begin{theorem}\label{Fin-diam-conn-complete}
${\sf FinDiam}_{\sf conn}$ is $\forall^\infty\forall\exists$-dicomplete.
\end{theorem}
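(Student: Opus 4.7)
The plan is to show ${\sf FinDiam}_{\sf conn}\equiv_{\sf dm}\comp{\forall^\infty\forall\exists}$ by modifying the construction from Proposition \ref{prop:findiam-reduction-complete}: instead of a connected graph with a common basepoint, I will use a disjoint union of small components, one per $(n,m)$, so that a dual witness --- a path whose endpoints admit no shorter connection --- can be decoded from a single component. The easy direction ${\sf FinDiam}_{\sf conn}\leq_{\sf dm}\comp{\forall^\infty\forall\exists}$ is immediate: the leading $\exists r$ in ${\sf FinDiam}_{\sf conn}$ may be replaced by $\forall^\infty r$ by monotonicity, so ${\sf FinDiam}_{\sf conn}$ is a $\forall^\infty\forall\exists$-formula and its dual an $\exists^\infty\exists\forall$-formula.

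For the converse $\comp{\forall^\infty\forall\exists}\leq_{\sf dm}{\sf FinDiam}_{\sf conn}$, given $x=(x_{n,m})_{n,m\in\om}$ I build $G=\eta(x)$ as the disjoint union of components $C_{n,m}$. Each $C_{n,m}$ contains a base vertex $\ep_{n,m}$, a path of length $n+1$ from $\ep_{n,m}$ to $a^{n,m}_n$ through intermediate vertices $a^{n,m}_0,\dots,a^{n,m}_{n-1}$, and whenever $x_{n,m}(t)\neq 0$ is observed, a shortcut vertex $b^{n,m}_t$ adjacent to $\ep_{n,m}$ and to every $a^{n,m}_s$; vertex codes are chosen so that $(n,m)$ is decodable from any vertex. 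Writing $W=\{(n,m):\exists t.\,x_{n,m}(t)\neq 0\}$, the diameter of $C_{n,m}$ is $2$ if $(n,m)\in W$ and exactly $n+1$ otherwise, so classically ${\sf FinDiam}_{\sf conn}(G)$ is equivalent to $\exists r\forall n\geq r\forall m\exists t.\,x_{n,m}(t)\neq 0$, which is $\comp{\forall^\infty\forall\exists}(x)$.

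For the realizers, from a $\comp{\forall^\infty\forall\exists}(x)$-witness $r$ I return $(\max(r,2),f)$, where $f$ takes any path $\gamma$ with endpoints in $C_{n,m}$ to a shortest connecting path: if $n\geq r$ then $(n,m)\in W$ by assumption, so a search locates some $b^{n,m}_t$ and yields a path of length $\leq 2$; if $n<r$, the direct path in the finite $C_{n,m}$ suffices, with length $\leq n+1\leq r$. Conversely, a ${\sf FinDiam}_{\sf conn}(G)$-witness $(r',f)$ gives $r=r'$: otherwise some $(n,m)\notin W$ with $n\geq r'$ would put $\ep_{n,m}$ and $a^{n,m}_n$ at distance $n+1>r'$, contradicting $f$. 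For the dual, a $\comp{\exists^\infty\exists\forall}(x)$-witness $r\mapsto(n_r,m_r)$ with $n_r\geq r$ and $(n_r,m_r)\notin W$ yields the direct path $\gamma_r$ from $\ep_{n_r,m_r}$ to $a^{n_r,m_r}_{n_r}$, of length $n_r+1>r$, admitting no shortcut since no $b^{n_r,m_r}_t$ is ever enumerated; conversely, from a ${\sf FinDiam}_{\sf conn}^{\sf d}(G)$-witness $r\mapsto\gamma_r$ (padding $r$ up to $2$ if needed) one reads off the unique $(n,m)$ with $\gamma_r\subseteq C_{n,m}$ from the vertex codes, and distance in $C_{n,m}$ exceeding $r$ forces $(n,m)\notin W$ and $n\geq r$.

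The main obstacle I expect is arranging $f$ as a genuinely uniform procedure: within a ``bad'' component one cannot wait forever for a shortcut, while within a ``good'' one the shortcut may be enumerated arbitrarily late. The $\comp{\forall^\infty\forall\exists}(x)$-witness $r$ is precisely what licenses the correct choice --- search for a shortcut when $n\geq r$, take the direct path otherwise --- so the two strategies can be merged into a single effective procedure. Handling small values of $r$ (e.g.\ $r<2$) in both directions only requires obvious padding.
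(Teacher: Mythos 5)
Your proposal is correct and follows essentially the same route as the paper: the easy direction by replacing $\exists r$ with $\forall^\infty r$, and for the hard direction a disjoint union of per-$(n,m)$ components, each a path (of length $n$ in the paper, $n+1$ in your version with the extra basepoint) that gets collapsed to diameter $2$ by shortcut vertices $b^{n,m}_t$ once $x_{n,m}(t)\neq 0$ appears, with the same witness conversions in both directions and for the dual. The only differences are cosmetic (the basepoint vertex, the explicit shortest-path realizer $f$, and the explicit padding for small $r$, which the paper handles implicitly).
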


\begin{proof}
First, in the definition of ${\sf FinDiam}_{\sf conn}$, the quantification $\exists r$ can be replaced with $\forall^\infty r$; thus, ${\sf FinDiam}_{\sf conn}\leq_{\sf bm}\comp{\forall^\infty\forall\exists}$.
Therefore, it remains to show that $\comp{\forall^\infty\forall\exists}\leq_{\sf dm}{\sf FinDiam}_{\sf conn}$.
Given $x=(x_{n,m})_{n,m\in\om}$, construct a graph $\eta(x)=(V,E)$ as follows:
\[
\exists r\forall n\geq r\forall m\exists t.\ x_{n,m}(t)\not=0\iff (V,E)\in{\sf FinDiam}_{\sf conn}.
\]

The graph has vertices $a^{n,m}_\ell$ for any $n,m\in\om$ and $\ell\leq n$.
For each $\ell<n$, put $(a^{n,m}_\ell,a^{n,m}_{\ell+1})\in E$, which yields infinitely many paths of length $n$ connecting $a^{n,m}_0$ and $a^{n,m}_n$.
Moreover, if $x_{n,m}(t)\not=0$ for some $t$, then put $b^{n,m}_t\in V$ and $(a^{n,m}_\ell,b^{n,m}_t)\in E$ for each $\ell\leq n$.

Let $W=\{(n,m):\exists t.\ x_{n,m}(t)\not=0\}$.
If $(n,m)\not\in W$ then the distance between $a^{n,m}_0$ and $a^{n,m}_n$ is $n$.
If $(n,m)\in W$ then the distance between any two vertices in $S_{n,m}:=\{a^{n,m}_n,b^{n,m}_t:x_{n,m}(t)\not=0\}$ is at most $2$ via $b^{n,m}_t$.

Let $r$ be a witness for $\comp{\forall^\infty\forall\exists}(x)$.
Then, for any $n\geq r$ and $m$, we have $(n,m)\in W$.
In this case, for any $n\geq r$, the diameter of the connected component of $a_0^{n,m}$ is at most $2$, and for $n<r$, the diameter is at most $r$.
Hence, $r$ is a witness for ${\sf FinDiam}_{\sf conn}(V,E)$.

Conversely, if $r$ is a witness for ${\sf FinDiam}_{\sf conn}(V,E)$, then the diameter of any connected component is at most $r$.
Thus, for any $n>r$ and $m$, we must have $(n,m)\in W$.
This implies that $r+1$ is a witness for $\comp{\forall^\infty\forall\exists}(x)$.

For the dual, let $r\mapsto (n_r,m_r)$ be a witness for $\comp{\exists^\infty\exists\forall}(x)$; that is, $x_{n_r,m_r}(t)=0$ for any $t$.
In this case, we have $(n_r,m_r)\not\in W$, so the distance between $a^{n_r,m_r}_0$ and $a^{n_r,m_r}_{n_r}$ is $n_r\geq r$.
Take any path $\gamma_r$ connecting $a^{n_r,m_r}_0$ and $a^{n_r,m_r}_{n_r}$.
Then $r\mapsto\gamma_r$ is a witness for ${\sf FinDiam}_{\sf conn}^{\sf d}(V,E)$.

Conversely, $r\mapsto\gamma_r$ is a witness for ${\sf FinDiam}_{\sf conn}^{\sf d}(V,E)$; that is, the distance between two end points of $\gamma_r$ is at least $r$.
The connected component containing the path $\gamma_r$ has a vertex of the form $a_0^{n(r),m(r)}$.
For $r\geq 3$, we must have $(n(r),m(r))\not\in W$.
Then the end points of $\gamma_r$ are of the forms $a_i^{n(r),m(r)}$ and $a_j^{n(r),m(r)}$, whose distance is at least $r$.
Therefore, the distance between $a_0^{n(r),m(r)}$ and $a_{n(r)}^{n(r),m(r)}$ is $n(r)\geq r$.
Then $r\mapsto (n(r),m(r))$ is a witness for $\comp{\exists^\infty\exists\forall}(x)$.
\end{proof}

The exact complexity of ${\sf FinDiam}^{\sf d}$ has not been determined yet, but we give a lower bound here.
From now on, ${\sf FinDiam}^{\sf d}$ is denoted by ${\sf InfDiam}$.

\begin{prop}\label{prop:AAinftyA-vs-FinDiam}
$\comp{\forall\forall^\infty\forall}\leq_{\sf m}{\sf InfDiam}$.
\end{prop}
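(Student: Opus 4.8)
The plan is to reduce first to a concrete complete problem. By Observation~\ref{obs:all-bdd-dicomplete} the formula $\forall{\sf Bdd}(\bar x)\equiv\forall n\exists k\forall t.\ x_n(t)\leq k$ (reading $\exists k$ as $\forall^\infty k$) is $\forall\forall^\infty\forall$-dicomplete, hence ${\sf m}$-equivalent to $\comp{\forall\forall^\infty\forall}$, so it is enough to exhibit $\eta,r_-,r_+$ witnessing $\forall{\sf Bdd}\leq_{\sf m}{\sf InfDiam}$. Given $\bar x=(x_n)_{n\in\om}$ I would let $G=\eta(\bar x)$ consist of an infinite \emph{backbone} $a_0-a_1-a_2-\cdots$ together with, for every $n$, every $k\geq 1$, and every $t$ with $x_n(t)\geq k$, an auxiliary vertex $b^n_{k,t}$ adjacent exactly to $a_n$ and $a_{n+k}$; thus $x_n(t)\geq k$ for some $t$ yields a length-$2$ shortcut $a_n-b^n_{k,t}-a_{n+k}$. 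Coding vertices so that their indices are recoverable (e.g.\ $a_i\mapsto\pair{0,i}$, $b^n_{k,t}\mapsto\pair{1,n,k,t}$) keeps the vertex- and edge-relations of $G$ decidable from $\bar x$, so $\eta$ is computable; $G$ is always connected along its backbone, so ${\sf InfDiam}(G)$ holds exactly when $G$ has infinite diameter.

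For the classical equivalence: if some $x_n$ is unbounded, then for every $k\geq 1$ some $b^n_{k,t}$ exists, so every $a_m$ with $m>n$ lies within distance $2$ of $a_n$; together with $d(a_0,a_n)\leq n$ this bounds the eccentricity of $a_n$, so $G$ has finite diameter. If instead every $x_n$ is bounded, fix a witness $n\mapsto k_n$: from a vertex all of whose incident backbone indices are $\leq N$ (where $b^n_{k,t}$ is counted with indices $n$ and $n+k$) a single edge reaches a vertex all of whose incident backbone indices are $\leq N+1+\max\{k_m:m\leq N\}$, so, iterating, the ball of radius $\ell$ about $a_0$ meets no $a_j$ with $j>N'_\ell$, where $N'_0=0$ and $N'_{\ell+1}=N'_\ell+1+\max\{k_m:m\leq N'_\ell\}$. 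Hence $d(a_0,a_{N'_{\ell}+1})>\ell$ for all $\ell$ and $G$ has infinite diameter; note that $N'$ is computable from $(k_n)_n$.

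The last computation yields $r_-$: from $(k_n)_n$ compute $N'$ and return, for each $\rho$, the triple $(\rho,a_0,a_{N'_{\rho}+1})$; since $\rho\geq\rho$ and no path of length $\leq\rho$ joins $a_0$ and $a_{N'_{\rho}+1}$, this realizes ${\sf InfDiam}(G)$ (the residual $\forall\gamma$-block is $\Pi_1$ and is restored automatically). For $r_+$, let $\alpha$ realize ${\sf InfDiam}(G)$, so for each $\rho$ it provides $r_\rho\geq\rho$ and vertices $u_\rho,v_\rho$ of $G$ with $d(u_\rho,v_\rho)>r_\rho$. Given $n$, I would put $\rho=2n+6$, read from the codes of $u_\rho,v_\rho$ their largest incident backbone indices $m^u,m^v$ (so $d(a_0,a_{m^u})\geq d(a_0,u_\rho)-1$ and likewise for $v$), and set $k_n:=\max\{m^u,m^v\}$. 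Then $d(a_0,a_{m^u})+d(a_0,a_{m^v})\geq d(a_0,u_\rho)+d(a_0,v_\rho)-2\geq d(u_\rho,v_\rho)-2>2n+4$, so some $m^*\in\{m^u,m^v\}$ has $d(a_0,a_{m^*})>n+2$, whence $m^*\geq d(a_0,a_{m^*})>n+2$. If $x_n(t)\geq m^*-n$ for some $t$, the shortcut $a_n-b^n_{m^*-n,t}-a_{m^*}$ would give $d(a_0,a_{m^*})\leq d(a_0,a_n)+2\leq n+2$, a contradiction; hence $x_n(t)<m^*-n\leq k_n$ for all $t$, and $n\mapsto k_n$ (computable from $\alpha$ and $\bar x$) realizes $\forall{\sf Bdd}(\bar x)$.

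I expect the backward transformation $r_+$ to be the delicate point: one must pass from a far-apart pair to a backbone vertex far from $a_0$ while not knowing which of $u_\rho,v_\rho$ is the far one (handled above by maximizing over both at the cost of a factor $2$ in the choice of $\rho$), and one must check that the only short connections between distant backbone vertices are the designed shortcuts, so that $d(a_0,a_j)$ genuinely grows with $j$. The index bookkeeping for the auxiliary vertices, and the coding that makes $G$ a genuinely computable graph, are routine but should be written out carefully.
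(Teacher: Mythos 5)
Your reduction is correct, and it takes a genuinely different route from the paper's. The paper also passes through $\forall{\sf Bdd}$ via Observation~\ref{obs:all-bdd-dicomplete}, but its graph is a hub-and-spoke family of gadgets: for every pair $(n,m)$ it attaches to a hub $\ep$ a path of length $n+1$ ending in $a^{n,m}_n$, together with collapsing vertices $b^{n,m}_{k,t}$ that shrink that gadget to diameter at most $2$ as soon as $x_k(t)>m$ for some $k\leq n$ and $t$. There the forward realizer is immediate (from a bound function $h$ take $m(n)=\max_{k\leq n}h(k)$ and output the pair $(\ep,a^{n,m(n)}_n)$ at distance exactly $n+1$), while the backward realizer is the delicate part: it looks at the witness pair at scale $2r$, reads off the gadget indices of both endpoints, and argues via a case analysis (using distance at least $5$ and the shared hub) that one of the two gadgets is both long and uncollapsed, which yields a bound for all $x_k$ with $k\leq r$. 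Your single-backbone construction, with shortcuts $a_n-b^n_{k,t}-a_{n+k}$ whenever $x_n(t)\geq k$, redistributes the work: the backward extraction becomes a clean triangle-inequality computation from the largest backbone index of the far pair at scale $2n+6$, whereas the forward direction now needs the iterated-ball estimate (your $N'_\ell$ recursion) to show that $d(a_0,a_j)$ grows at a rate computable from the bounds $(k_n)$ — exactly the point you flag, and your argument there is sound, since any neighbour of a vertex whose incident backbone indices are $\leq N$ has incident indices $\leq N+1+\max\{k_m:m\leq N\}$, and one unbounded $x_n$ gives every $a_m$ with $m>n$ distance $\leq 2$ from $a_n$, hence finite diameter. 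Both proofs establish the same classical equivalence "everywhere bounded iff infinite diameter"; yours trades the paper's combinatorial case analysis on gadget membership for a quantitative growth estimate, and neither is more general than the other.
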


\begin{proof}
By Observation \ref{obs:all-bdd-dicomplete}, $\comp{\forall\forall^\infty\forall}\equiv_{\sf m}\forall{\sf Bdd}$, so it suffices to show $\forall{\sf Bdd}\leq_{\sf m}{\sf InfDiam}$.
By a construction similar to that of the reduction $\eta$ in Proposition \ref{prop:findiam-reduction-complete}, given $x=(x_n)_{n\in\om}$ we can guarantee that $\eta(x)=(V,E)$ satisfies the following:
\[
\forall n\exists m\forall t.\;x_n(t)\leq m\iff \mbox{the diameter of $(V,E)$ is infinite.}
\]

The graph has vertices $\ep$ and $a^{n,m}_\ell$ for any $n,m\in\om$ and $\ell\leq n$.
For each $\ell<n$, put $(\ep,a^{n,m}_0),(a^{n,m}_\ell,a^{n,m}_{\ell+1})\in E$, which yields infinitely many paths of length $n+1$ connecting $\ep$ and $a^{n,m}_n$.
Moreover, for any $k\leq n$ and $t$, if $x_{k}(t)>m$ then the add a new vertex $b^{n,m}_{k,t}\in V$, and put $(\ep,b^{n,m}_{k,t}),(a^{n,m}_\ell,b^{n,m}_{k,t})\in E$ for each $\ell\leq n$.
Then consider $W=\{(n,m):\exists k\leq n\exists t.\ x_k(t)>m\}$ and $S_{n,m}=\{\ep,a^{n,m}_n,b^{n,m}_{k,t}:k\leq n\mbox{ and }x_{k}(t)>m\}$ as before.
If $(n,m)\not\in W$ then the distance between $\ep$ and $a^{n,m}_n$ is $n+1$.
If $(n,m)\in W$ then the distance between any two vertices in $S_{n,m}$ is at most $2$ via some $b^{n,m}_{k,t}$.

Let $h$ be a witness for $\forall{\sf Bdd}(x)$; that is, $x_n(t)\leq h(n)$ for any $t$.
Putting $m(n)=\max_{k\leq n}h(k)$, we have $x_k(t)\leq m(n)$ for any $k\leq n$ and $t$.
Hence, $(n,m(n))\not\in W$, so the distance between $\ep$ and $a^{n,m(n)}_{n}$ is $n+1$.
Therefore, $n\mapsto(\ep,a^{n,m(n)}_n)$ is a witness for ${\sf InfDiam}(V,E)$.

Conversely, let $n\mapsto (u_n,v_n)$ be a witness for ${\sf InfDiam}(V,E)$; that is, the distance between $u_n$ and $v_n$ is at least $n$.
For $r=\max\{n,5\}$, compute indices $a(r),b(r),c(r),d(r)$ such that $u_r\in S_{a(r),b(r)}$ and $v_r\in S_{c(r),d(r)}$.
These indices are uniquely determined for vertices other than $\ep$, and if it is $\ep$, choose any indices.
Also, since $u_r\not=v_r$, one of them is not $\ep$.
Since the distance between $u_r$ and $v_r$ are at least $5$ and $\ep\in S_{a(r),b(r)}\cap S_{c(r),d(r)}$, the diameter of either $S_{a(r),b(r)}$ or $S_{c(r),d(r)}$ is at least $3$.
If the diameter of $S_{a(r),b(r)}$ is at least $3$, then we have $(a(r),b(r))\not\in W$; that is, for any $n\leq a(r)$, $x_n(t)\leq b(r)$ for any $t$.

Now, the distance between $u_{2r}$ and $v_{2r}$ is at least $2r$, so the diameter of either $S_{a(2r),b(2r)}$ or $S_{c(2r),d(2r)}$ is at least $r$.
This implies that either $a(2r)\geq r$ and $(a(2r),b(2r))\not\in W$ or $c(2r)\geq r$ and $(c(2r),d(2r))\not\in W$.

Then put $h(n)=\max\{b(2r),c(2r)\}$.
If $a(2r)\geq r$ and $(a(2r),b(2r))\not\in W$ then, since $n\leq r\leq a(2r)$, we get $x_n(t)\leq b(2r)$ for any $t$.
Similarly, if $c(2r)\geq r$ and $(c(2r),d(2r))\not\in W$ then we get $x_n(t)\leq c(2r)$ for any $t$.
In any case, we obtain $x_n(t)\leq h(n)$ for any $t$.
Hence, $h$ is a witness for $\forall{\sf Bdd}(x)$.
\end{proof}

Another lower bound is the disconnectedness problem ${\sf DisConn}$ for graphs, which is the $\exists\forall$-problem of determining whether a graph $G=(V,E)$ is disconnected or not.
\[{\sf DisConn}\colon\quad\exists u,v\in V\forall \gamma\in{\rm Path}_G.\ \{\gamma_{\rm start},\gamma_{\rm end}\}\not=\{u,v\}.\]

In \cite{Kihara}, it has been shown that $\comp{\forall^\infty\forall}<_{\sf m}{\sf DisConn}<_{\sf m}\comp{\exists\forall}$ holds.
As we will see later, ${\sf DisConn}$ and $\comp{\forall\forall^\infty\forall}$ are incomparable.

\begin{prop}\label{prop:DisConn-vs-InfDiam}
${\sf DisConn}\leq_{\sf m}{\sf InfDiam}$.
\end{prop}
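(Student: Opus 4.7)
The plan is to construct a graph $G'=\eta(G)$ from $G=(V,E)$ whose connected components are in bijection with those of $G$, but in which every connected component has small bounded diameter. With such a $G'$, ${\sf DisConn}(G)$ (existence of two vertices in different components) will be equivalent to ${\sf InfDiam}(G')$ (infinite diameter, which in a graph with bounded-diameter components must come from disconnection).

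Concretely, I let $V' \supseteq V$ also contain ``shortcut'' vertices $s_{u,v,n}$ for each $u,v \in V$ and $n \in \om$, coded so that $V'$ is decidable. The edge set $E'$ extends $E$; it always contains the edge $(u, s_{u,v,n})$, and it contains the edge $(v, s_{u,v,n})$ if and only if there is a path in $G$ from $u$ to $v$ of length at most $n$ using only vertex labels $\leq n$. This last condition is a bounded search and hence decidable, so $G'$ is uniformly computable from $G$. Any path in $G'$ translates to a path in $G$ between the same original endpoints, because the only non-$G$ edges $(v, s_{u,v,n})$ are added only when $u, v$ are already connected in $G$; thus $u,v \in V$ lie in the same component of $G'$ iff they do in $G$. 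Conversely, if $u,v \in V$ are in the same component of $G$, then for sufficiently large $n$ both $(u, s_{u,v,n})$ and $(v, s_{u,v,n})$ belong to $E'$, so $d_{G'}(u,v) \leq 2$. A short case analysis then shows $d_{G'}(x,y) \leq 4$ for any two $x, y$ in the same component of $G'$ (each shortcut vertex is at distance $1$ from an original vertex, and original vertices in a common component are within distance $2$).

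For the witness conversion, the forward realizer is trivial: given a witness $(u,v)$ for ${\sf DisConn}(G)$, the constant function $r \mapsto (u,v)$ witnesses ${\sf InfDiam}(G')$, since $u, v$ lie in different components of $G'$ and hence $d_{G'}(u,v) = \infty$. For the backward realizer, given a witness $r \mapsto (u_r, v_r)$ for ${\sf InfDiam}(G')$, I evaluate at $r = 4$ to obtain a pair $(u^*, v^*)$ with $d_{G'}(u^*, v^*) \geq 5 > 4$. By the diameter bound, $u^*$ and $v^*$ must lie in different components of $G'$. Replacing each shortcut vertex $s_{a,b,n}$ by its attached original vertex $a \in V$ (and leaving original vertices alone) yields a pair $(u', v') \in V^2$ in different components of $G$, which witnesses ${\sf DisConn}(G)$.

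The main obstacle is computability of $G'$: the most natural construction would add a shortcut edge exactly when ``$u$ and $v$ are connected in $G$'', but this is merely $\Sigma_1$, not decidable. I circumvent this by stratifying the shortcuts with a stage parameter $n$ and bounding both the path length and the vertex labels to $\leq n$, making the per-edge membership test a bounded search. Once this approximation is in place, the analysis of components, the diameter bound, and the effective conversions of witnesses are all elementary.
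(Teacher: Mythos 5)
Your construction is essentially the paper's: the paper forms the ``transitive closure'' $G^\ast$ by adding, for each finite path $\gamma$ from $u$ to $v$, a shortcut vertex $a_\gamma$ adjacent to $u$ and $v$ (decidable because $\gamma$ is a finite object), which plays exactly the role of your stage-indexed shortcuts $s_{u,v,n}$; the forward realizer (a constant witness) and the backward realizer (evaluate the ${\sf InfDiam}$ witness at a fixed threshold and project shortcut vertices back to an attached original vertex) are the same in both proofs. The one slip is an off-by-one in the backward direction: evaluating the witness at $r=4$ only guarantees $d_{G'}(u^*,v^*)\geq 4$, which does not exceed your intra-component diameter bound of $4$ and so does not force $u^*,v^*$ into different components; evaluate at $r=5$ (as the paper does, where the bound is likewise $4$), and the argument goes through.
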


\begin{proof}
Given a graph $G=(V,E)$, consider its transitive closure $G^\ast$; that is, for any $\gamma$ connecting vertices $u,v\in V$, add a vertex $a_\gamma$ and edges $(u,a_\gamma),(a_\gamma,v)$ to $G^\ast$.

If $G$ is connected then the diameter of $G^\ast$ is at most $2$, and if $G$ is disconnected then the diameter of $G^\ast$ is infinite.
Therefore, any witness $(u,v)$ for disconnectedness of $G$ is a witness for that the diameter of $G^\ast$ is at least $r$ for any $r$.

Conversely, let $(u_r,v_r)_{r\in\om}$ be a witness for ${\sf InfDiam}(G^\ast)$; that is, the distance between $u_r$ and $v_r$ is at least $r$.
In particular, the distance between $(u_5,v_5)$ is at least $5$.
If $u_5$ is a vertex of $G$ then put $u_5'=u_5$, and if $u_5$ is of the form $a_\gamma$ then let $u_5'$ be one of the end points of the path $\gamma$.
Note that the distance between $(u_5,u_5')$ in $G^\ast$ is at most $1$.
In a similar manner, we also define $v_5'$.
Then $u_5'$ and $v_5'$ are vertices of $G$.
If $u_5'$ and $v_5'$ belong to the same connected component in $G$, then the distance between $(u_5',v_5')$ is at most $2$ in $G^\ast$.
Therefore, the distance between $(u_5,v_5)$ must be at most $4$, which is impossible.
Consequently, $(u_5',v_5')$ is a witness for disconnectedness of $G$.
\end{proof}

In the later Section \ref{sec:separation-concrete-problem}, we will show that ${\sf InfDiam}$ is not $\exists^\infty\exists\forall$-complete, and in particular, ${\sf FinDiam}$ is not $\forall^\infty\forall\exists$-dicomplete.

Let us look at another example of the $\forall^\infty\forall\exists$-dicomplete problem.
The following example is about a preorder $\leq_R$ generated by a binary relation $R$.
In other words, $\leq_R$ is the reflexive transitive closure of $R$, or to be precise, the smallest binary relation that satisfies the following condition:
\begin{align*}
(a,b)\in R\implies a\leq_Ra,\ b\leq_R b,\ a\leq_Rb;
& &
a\leq_Rb\leq_Rc\implies a\leq_Rc.
\end{align*}

If we identify a binary relation $R$ with a directed graph, then $a\leq_Rb$ means that there exists a directed path from $a$ to $b$ in $R$.
The width of a preorder $P$ is the cardinality of a maximal antichain of $P$.
If no such value exists, the width is assumed to be infinite.
The following is a problem that asks whether the width of a preorder $R^\ast$ generated by a given binary relation $R$ is finite.
\[{\sf FinWidth}_\ast\colon\exists r\forall a_1,\dots,a_r\in R\exists j<k\leq r\;(a_j\leq_R a_k\mbox{ or }a_k\leq_R a_j).\]

Note that there is one existential quantifier hidden inside this formula.
In other words, the above formula can be rewritten as follows.
\[{\sf FinWidth}_\ast\colon\exists r\forall a_1,\dots,a_r\in R\exists\gamma\in{\sf Path}_R\exists i<j\;[\{\gamma_{\rm start},\gamma_{\rm end}\}=\{a_i,a_j\}].\]

\begin{theorem}\label{Fin-Width-complete}
${\sf FinWidth}_\ast$ is $\forall^\infty\forall\exists$-dicomplete.
\end{theorem}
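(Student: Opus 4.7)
The plan is to prove both directions of the dicompleteness claim. First, ${\sf FinWidth}_\ast\leq_{\sf dm}\comp{\forall^\infty\forall\exists}$ is immediate: the outer $\exists r$ in the definition of ${\sf FinWidth}_\ast$ can equivalently be written as $\forall^\infty r$, since enlarging $r$ monotonically preserves both the truth of the inner $\forall\exists$-statement and its witness (given a comparable pair among $a_1,\dots,a_r$, the same pair is among any superset).

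For the opposite direction $\comp{\forall^\infty\forall\exists}\leq_{\sf dm}{\sf FinWidth}_\ast$, given $x=(x_{n,m})_{n,m\in\om}$ I will construct a binary relation $R=\eta(x)$ on a block-structured vertex set. Enumerate the pairs $(n,m)$ as blocks $B_0,B_1,B_2,\ldots$ via a pairing function, and introduce a spine $c_0,c_1,c_2,\ldots$ with edges $(c_i,c_{i+1})$. For each block $B_i=(n_i,m_i)$, introduce $n_i+1$ vertices $a^{B_i}_0,\ldots,a^{B_i}_{n_i}$ together with the anchoring edges $(c_i,a^{B_i}_\ell)$ and $(a^{B_i}_\ell,c_{i+1})$ for each $\ell\leq n_i$. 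Finally, whenever $x_{n_i,m_i}(t)\neq 0$ is observed for some $t$, add the internal collapsing edges $(a^{B_i}_\ell,a^{B_i}_{\ell+1})$ for $\ell<n_i$, turning the block into a chain.

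The preorder $R^\ast$ then has the key property that any two elements in distinct blocks are comparable via the spine, while within block $B_i$ the elements form a chain if $(n_i,m_i)\in W:=\{(n,m):\exists t.\;x_{n,m}(t)\neq 0\}$ and an antichain of size $n_i+1$ otherwise. Hence every antichain of $R^\ast$ lies entirely inside some single block, so the width of $R^\ast$ equals $\sup\{n+1:\exists m.\;(n,m)\not\in W\}$, which is finite iff there exists $r$ with $\forall n\geq r\forall m\exists t.\;x_{n,m}(t)\neq 0$, i.e., iff the $\forall^\infty\forall\exists$-formula holds. Witness translations are then routine: a formula witness $r$ (with an auxiliary $\exists t$-witness function) bounds the width by $r$ and furnishes a computable procedure to locate two comparable elements in any $(r{+}1)$-tuple (either two lying in distinct blocks or two in the same good block); a width bound $r$ returns $r-1$ as a formula witness directly. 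Dually, a $\comp{\exists^\infty\exists\forall}$-witness $r\mapsto(n_r,m_r)$ — with $n_r\geq r$ and $(n_r,m_r)\not\in W$ — picks $r$ elements out of the $n_r+1$-antichain inside block $B_{i_r}$, and conversely any antichain of size $r+1$ in $R^\ast$ pinpoints a bad block $(n,m)$ with $n\geq r$, providing the dual witness at $r$ (the mild off-by-one is absorbed by composing with the trivial monotone shift on witness functions).

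The main obstacle is computability: whether an internal collapsing edge is included depends on the $\Sigma_1$ condition $(n,m)\in W$, so a naive $R$ is only c.e. I will resolve this by the standard stage-indexing device — for each potential edge $(a^{B_i}_\ell,a^{B_i}_{\ell+1})$ and each stage $t$, introduce an auxiliary vertex $\tilde a^{B_i,t}_{\ell+1}$ made preorder-equivalent to $a^{B_i}_{\ell+1}$ by two-way edges included unconditionally, and include the single edge $(a^{B_i}_\ell,\tilde a^{B_i,t}_{\ell+1})$ in $R$ exactly when $x_{n_i,m_i}(t)\neq 0$, a decidable condition. The resulting $R$ is a total computable binary relation whose generated preorder $R^\ast$ has the structure described above on equivalence classes, so the analysis carries over unchanged.
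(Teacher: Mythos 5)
Your proposal is correct and takes essentially the same route as the paper's proof: a computable binary relation whose generated preorder makes any two elements from different blocks comparable, while the block attached to $(n,m)$ is an antichain of size roughly $n$ exactly when $x_{n,m}=0^\infty$ and collapses to a chain once some $x_{n,m}(t)\not=0$, so the width is finite iff the $\forall^\infty\forall\exists$-formula holds and witnesses translate in both directions (including the dual via antichains located inside a single bad block). The only difference is implementational: the paper imposes cross-block comparability directly (rather than via a spine) and keeps the code decidable by inserting interpolating elements $b^{n,m}_{i,t}$ only for the least $t$ with $x_{n,m}(t)\not=0$, whereas you use stage-indexed auxiliary vertices equivalent to $a^{B_i}_{\ell+1}$ — both devices resolve the same computability issue.
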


\begin{proof}
First, in the definition of ${\sf FinWidth}_\ast$, the quantification $\exists r$ can be replaced with $\forall^\infty r$; thus, ${\sf FinWidth}_\ast\leq_{\sf dm}\comp{\forall^\infty\forall\exists}$.
Therefore, it remains to show $\comp{\exists^\infty\exists\forall}\leq_{\sf dm}{\sf FinWidth}_\ast^{\sf d}$.
Given $x=(x_{n,m})_{n,m\in\om}$, construct a binary relation $\eta(x)=R$ as follows:
\[
\forall r\exists n\geq r\exists m.\ x_{n,m}=0^\infty\iff \mbox{the width of the preorder $\leq_R$ is infinite.}
\]

For each $n,m$, prepare $(a^{n,m}_i)_{i<n}$.
If $\pair{n,m}<\pair{n',m'}$ with respect to the standard ordering of (pairs of) natural numbers, put $a^{n,m}_i<_Ra^{n',m'}_j$ for any $i<n$ and $j<n'$.
For each $n,m$, for the first $t$ such that $x_{n,m}(t)\not=0$, add $(b^{n,m}_{i,t})_{i<n-1}$ so that $a^{n,m}_i<_Rb^{n,m}_{i,t}<_Ra^{n,m}_{i+1}$.
Note that $b^{n,m}_{i,t}$ is comparable with any other element since $b^{n,m}_{i,t}<_Ra^{n,m}_{n-1}<_Ra^{n',m'}_j$ if $\pair{n,m}<\pair{n',m'}$ and $a^{n',m'}_j<_Ra^{n,m}_{0}<_Rb^{n,m}_{i,t}$ if $\pair{n',m'}<\pair{n,m}$.

Let $(n_r,m_r)_{r\in\om}$ be a witness for $\comp{\exists^\infty\exists\forall}(x)$, i.e., $x_{n_r,m_r}=0^\infty$.
Then there is no relation on $(a^{n_r,m_r}_i)_{i<r}$; that is, $(a^{n_r,m_r}_i)_{i<r}$ is an antichain, so this is a witness for that the width of the preorder $\leq_R$ is at least $r$.

Conversely, let $r\mapsto(c^r_i)_{i<r}$ be a witness for ${\sf FinWidth}_\ast^{\sf d}$; that is, $(c^r_i)_{i<r}$ is antichain in $\leq_R$.
Consider $r\geq 2$.
Since $(c^r_i)_{i<r}$ is an antichain and each $b^{n,m}_{k,t}$ is comparable with any other element, each $c^r_i$ must be of the form $a^{n(i),m(i)}_{k(i)}$.
If $\pair{n(i),m(i)}\not=\pair{n(j),m(j)}$ for some $i,j<r$ then $c^r_i$ is comparable with $c^r_j$ by our construction.
Hence, there exist $n(r)$ and $m(r)$ such that $c^r_i=a^{n(r),m(r)}_i$ for any $i$.
Since $(c^r_i)_{i<r}$ is an antichain, this means $x_{n(r),m(r)}=0^\infty$.
Therefore, $r\mapsto n(r),m(r)$ is a witness for $\comp{\exists^\infty\exists\forall}(x)$.

A similar argument applies to the dual.
\end{proof}

Incidentally, as we will see later, the $\exists^\infty\exists\forall$-completeness and the $\forall\exists\forall$-completeness coincide.
Thus, some of the duals of the examples given in Section \ref{sec:example-global-boundedness} are $\forall\exists\forall$-complete.
However, interestingly, assuming the later results, one can see that these are $\forall\exists\forall$-complete but not $\forall\exists\forall$-dicomplete, by showing that $\exists^\infty\exists\forall$-dicompleteness and the $\forall\exists\forall$-dicompleteness do not coincide.

\begin{prop}
${\sf FinDiam}_{\sf conn}^{\sf d}$ and ${\sf FinWidth}_\ast^{\sf d}$ are $\forall\exists\forall$-complete, but not $\forall\exists\forall$-dicomplete.
\end{prop}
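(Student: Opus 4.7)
The plan is to bootstrap on Theorems \ref{Fin-diam-conn-complete} and \ref{Fin-Width-complete} together with two facts that the paper establishes later: the many-one collapse $\pair{\exists^\infty\exists\forall}\equiv_{\sf m}\pair{\forall\exists\forall}$ and the corresponding failure $\pair{\exists^\infty\exists\forall}<_{\sf dm}\pair{\forall\exists\forall}$ at the dicomplete level. Once these are granted, the proposition reduces to a routine transfer argument.

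First, I would observe that direducibility is preserved under taking duals, since $(\varphi^{\sf d})^{\sf d}=\varphi$ and $\bar{\sf Q}^{\sf dd}=\bar{\sf Q}$. Thus Theorem \ref{Fin-diam-conn-complete} gives ${\sf FinDiam}_{\sf conn}^{\sf d}\equiv_{\sf dm}\pair{\exists^\infty\exists\forall}$, and Theorem \ref{Fin-Width-complete} gives ${\sf FinWidth}_\ast^{\sf d}\equiv_{\sf dm}\pair{\exists^\infty\exists\forall}$; in particular both are $\exists^\infty\exists\forall$-complete. Applying the many-one collapse $\pair{\exists^\infty\exists\forall}\equiv_{\sf m}\pair{\forall\exists\forall}$ (a later result), any $\exists^\infty\exists\forall$-complete formula is automatically $\forall\exists\forall$-complete, yielding the first half of the proposition.

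For the negative half, I would argue by contradiction. Suppose ${\sf FinDiam}_{\sf conn}^{\sf d}$ were $\forall\exists\forall$-dicomplete. Since $\exists^\infty\exists\forall$ is absorbable into $\forall\exists\exists\forall$, which collapses to $\forall\exists\forall$, Observation \ref{obs:absorb-bm-relation} gives $\pair{\exists^\infty\exists\forall}\leq_{\sf dm}\pair{\forall\exists\forall}$. The assumed dicompleteness would yield the reverse inequality
\[
\pair{\forall\exists\forall}\leq_{\sf dm}{\sf FinDiam}_{\sf conn}^{\sf d}\equiv_{\sf dm}\pair{\exists^\infty\exists\forall},
\]
and hence $\pair{\forall\exists\forall}\equiv_{\sf dm}\pair{\exists^\infty\exists\forall}$, contradicting the later separation. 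The same argument, mutatis mutandis, disposes of ${\sf FinWidth}_\ast^{\sf d}$.

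The expected main obstacle is not in this proposition itself, which is essentially an immediate consequence of the preceding dicompleteness theorems and the ambient classification of $\Pi_3$ quantifier-patterns, but rather in establishing the two \emph{later} inputs: the many-one equivalence $\pair{\exists^\infty\exists\forall}\equiv_{\sf m}\pair{\forall\exists\forall}$ and, more subtly, the strict separation $\pair{\exists^\infty\exists\forall}<_{\sf dm}\pair{\forall\exists\forall}$ — the latter being precisely the kind of phenomenon that distinguishes the five-class many-one picture for $\Pi_3$ from the seven-class di-many-one picture announced in the introduction.
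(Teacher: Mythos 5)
Your proposal is correct and takes essentially the same route as the paper: the positive half (dicompleteness of the duals of the two $\forall^\infty\forall\exists$-dicomplete problems combined with the later collapse $\comp{\forall\exists\forall}\equiv_{\sf m}\comp{\exists^\infty\exists\forall}$ from Theorem \ref{thm:formula-class-collapse}(4)) is identical to the paper's argument. For the negative half you invoke the strict separation $\comp{\exists^\infty\exists\forall}<_{\sf dm}\comp{\forall\exists\forall}$ as a black box, whereas the paper dualizes and applies Proposition \ref{prop:separation1} directly (using $\comp{\lor\forall}\leq_{\sf m}\comp{\exists\forall\exists}$ together with $\comp{\lor\forall}\not\leq_{\sf m}\comp{\forall^\infty\forall\exists}$, which bounds ${\sf FinDiam}_{\sf conn}$ and ${\sf FinWidth}_\ast$ from above); since that is exactly how the separation you cite is established later, the two arguments coincide in substance.
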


\begin{proof}
Let ${\sf F}$ be either ${\sf FinDiam}_{\sf conn}^{\sf d}$ or ${\sf FinWidth}_\ast^{\sf d}$.
By Theorem \ref{Fin-diam-conn-complete} and Theorem \ref{Fin-Width-complete}, ${\sf F}$ is $\exists^\infty\exists\forall$-dicomplete.
By Theorem \ref{thm:formula-class-collapse} (4) proven later, we get $\comp{\forall\exists\forall}\equiv_{\sf m}\comp{\exists^\infty\exists\forall}$, so ${\sf F}$ is $\forall\exists\forall$-complete.
For the dual, we have ${\sf F}^{\sf d}\leq_{\sf m}\comp{\forall^\infty\forall\exists}$, and by Proposition \ref{prop:separation1} proven later, we get $\comp{\lor\forall}\not\leq_{\sf m}\comp{\forall^\infty\forall\exists}$.
However, we also have $\comp{\lor\forall}\leq_{\sf m}\comp{\exists\forall\exists}$; hence we get $\comp{\exists\forall\exists}\not\leq_{\sf m}{\sf F}^{\sf d}$.
Therefore, ${\sf F}$ cannot be $\forall\exists\forall$-dicomplete.
\end{proof}

\subsection{$\forall\exists\forall$: Independent alternation of quantifiers}
Although there are many known classical $\Pi_3$-complete problems, of those we have already seen, the only one that is $\forall\exists\forall$-dicomplete is the complementedness problem ${\sf Compl}$.
Thus, it seems that examples of $\forall\exists\forall$-dicomplete problems about a single structure are somewhat rare.
However, there is a simple way to find a $\forall\exists\forall$-dicomplete problem.
This is to consider a sequence of structures rather than a single structure.
In Proposition \ref{prop:Q-complete}, we have introduced the addition ${\sf P}\varphi$ of a quantifier ${\sf P}$ to an arithmetical formula $\varphi$.
In particular, the formula $\forall\varphi$ is described as follows:
\[(\forall\varphi)(\pair{x_n}_{n\in\om})\quad\equiv\quad\forall n\varphi(x_n).\]

\begin{obs}\label{obs:creating-AEA-complete}
If a formula $\varphi$ is $\exists\forall$-complete, then $\forall\varphi$ is $\forall\exists\forall$-dicomplete.
\end{obs}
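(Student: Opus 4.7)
The plan is to combine three ingredients: a standard encoding of every $\forall\exists\forall$-formula in the canonical form $\forall\chi$, the observation that $\exists\forall$-completeness under $\leq_{\sf m}$ automatically upgrades to $\leq_{\sf dm}$-dicompleteness, and the fact that prepending a quantifier preserves $\leq_{\sf dm}$ (extracted in the proofs of Proposition~\ref{prop:quantifier-bi-complete} and Lemma~\ref{lem:quantifier-add-lemma}).

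First I would observe that $\forall\varphi$ is itself a $\forall\exists\forall$-formula, so only $\forall\exists\forall$-dicompleteness needs to be verified. Let $\psi(x)\equiv\forall n.\,\psi'(n,x)$ be an arbitrary $\forall\exists\forall$-formula, where $\psi'$ is $\exists\forall$. Define the $\exists\forall$-formula $\chi(z)\equiv\psi'(\pi_0 z,\pi_1 z)$ and set $\eta(x)=(\langle n,x\rangle)_{n\in\om}$. Then $(\forall\chi)(\eta(x))$ unfolds to $\forall n.\,\psi'(n,x)\equiv\psi(x)$, and witnesses on the two sides are interchanged by a trivial re-indexing, so $\psi\leq_{\sf dm}\forall\chi$.

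Next I would promote the assumption $\chi\leq_{\sf m}\varphi$ (which holds by $\exists\forall$-completeness of $\varphi$) to $\chi\leq_{\sf dm}\varphi$. The key is that $\chi^{\sf d}$ and $\varphi^{\sf d}$ are $\forall\exists$-formulas, so, as already used in Observation~\ref{obs:all-bdd-dicomplete}, for any true such formula a realizer can be produced uniformly from the parameters by $\mu$-search; this allows the classical equivalence $\chi^{\sf d}(x)\leftrightarrow\varphi^{\sf d}(\eta(x))$ (automatic from $\chi\leq_{\sf m}\varphi$) to be realized in both directions along the same $\eta$. Finally, applying the monotonicity principle $\chi\leq_{\sf dm}\varphi\Rightarrow\forall\chi\leq_{\sf dm}\forall\varphi$ and chaining $\psi\leq_{\sf dm}\forall\chi\leq_{\sf dm}\forall\varphi$ concludes the argument.

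The only delicate step is the $\leq_{\sf m}$-to-$\leq_{\sf dm}$ promotion for $\exists\forall$-formulas; this is essentially the same observation already invoked for the $\forall^\infty\forall$ case in Observation~\ref{obs:all-bdd-dicomplete} (``a witness for a $\forall\exists$-formula is always computable''), so with that in hand the whole argument reduces to routine bookkeeping with tools already developed.
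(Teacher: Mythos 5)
Your proposal is correct and follows essentially the same route as the paper's proof: the two key ingredients in both are that an $\leq_{\sf m}$-reduction into an $\exists\forall$-complete formula upgrades automatically to $\leq_{\sf dm}$ because witnesses for true $\forall\exists$-formulas are uniformly computable, and that prepending $\forall$ preserves $\leq_{\sf dm}$ (the content of Proposition~\ref{prop:quantifier-bi-complete} / Lemma~\ref{lem:quantifier-add-lemma}). The only cosmetic difference is that you reduce an arbitrary $\forall\exists\forall$-formula to the canonical form $\forall\chi$ by hand, where the paper simply invokes the fixed dicomplete formula $\comp{\forall\exists\forall}=\forall\comp{\exists\forall}$.
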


\begin{proof}
As seen in the proof of Proposition \ref{prop:quantifier-bi-complete}, if $\varphi\leq_{\sf dm}\psi$ then ${\sf P}\varphi\leq_{\sf dm}{\sf P}\psi$.
In particular, $\comp{\exists\forall}\leq_{\sf dm}\psi$ implies $\comp{\forall\exists\forall}\leq_{\sf dm}\forall\psi$.
By our assumption, since $\varphi$ is $\exists\forall$-complete, we have $\pair{\exists\forall}\leq_{\sf m}\varphi$ via some $\eta$; that is, $\pair{\forall\exists}(x)$ holds if and only if $\varphi^{\sf d}(\eta(x))$ holds.
For a $\forall\exists$-formula $\psi$, one can always compute its witness for $\psi(x)$, so we automatically obtain $\pair{\forall\exists}\leq_{\sf m}\varphi^{\sf d}$.
Therefore, we get $\pair{\exists\forall}\leq_{\sf dm}\varphi$, which implies $\pair{\forall\exists\forall}\leq_{\sf dm}\forall\varphi$ as mentioned above.
\end{proof}

In order to obtain a $\forall\exists\forall$-dicomplete problem, consider the following formula concerning density of a linear order $L$.
\[{\sf Dense}:\quad\forall a,b\in L\,\exists c\in L\;(a<_Lb\to a<_Lc<_Lb).\]

Clearly, ${\sf Dense}$ is a $\forall\exists$-formula, so its dual ${\sf Dense}^{\sf d}$ is a $\exists\forall$-formula.
Therefore, $\forall({\sf Dense}^{\sf d})$ is a $\forall\exists\forall$-formula.
The decision problem $\forall({\sf Dense}^{\sf d})$ is the problem of determining whether or not ``all $L_n$ are not dense'' for a sequence $L=(L_n)_{n\in\om}$ of linear orders.
\[\forall({\sf Dense}^{\sf d})(L)\iff\mbox{$L_n$ is not dense for any $n$.}\]

\begin{cor}
$\forall({\sf Dense}^{\sf d})$ is $\forall\exists\forall$-dicomplete.
\end{cor}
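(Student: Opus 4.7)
The plan is to invoke Observation \ref{obs:creating-AEA-complete}, which states that whenever a formula $\varphi$ is $\exists\forall$-complete, the formula $\forall\varphi$ is automatically $\forall\exists\forall$-dicomplete. Applied with $\varphi={\sf Dense}^{\sf d}$, the corollary reduces to a single fact: that the non-density problem ${\sf Dense}^{\sf d}$ for countable linear orders is $\exists\forall$-complete under $\leq_{\sf m}$.

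First I would note that ${\sf Dense}$ is a $\forall\exists$-formula in the explicit form $\forall a,b\in L\,\exists c\in L\,(a<_Lb\to a<_Lc<_Lb)$, so its dual ${\sf Dense}^{\sf d}$ is genuinely a $\exists\forall$-formula, and consequently $\forall({\sf Dense}^{\sf d})$ is a $\forall\exists\forall$-formula, placing the claim in the correct class. The required $\exists\forall$-completeness of ${\sf Dense}^{\sf d}$ is precisely the result quoted in the introduction and proved in the author's earlier article \cite{Kihara}, namely that non-density for countable linear orders is $\exists\forall$-(di)complete. Combining this with Observation \ref{obs:creating-AEA-complete} immediately yields $\comp{\forall\exists\forall}\leq_{\sf dm}\forall({\sf Dense}^{\sf d})$, which together with membership in $\forall\exists\forall$ gives the dicompleteness.

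There is essentially no obstacle: the entire content is absorbed by Observation \ref{obs:creating-AEA-complete} and the cited result from \cite{Kihara}. The only subtlety worth flagging is the asymmetry that drives the argument inside Observation \ref{obs:creating-AEA-complete}, namely that witnesses for the $\forall\exists$-side (that is, for ${\sf Dense}$ itself, where one must produce a midpoint between two comparable points) are computable from the given linear order in a trivial way. This is exactly what allows the bare many-one completeness of ${\sf Dense}^{\sf d}$ to upgrade for free to di-completeness, so that the $\forall$-prepending step of Lemma \ref{lem:quantifier-add-lemma} applies in the strong (di-)form and delivers the conclusion.
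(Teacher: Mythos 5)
Your proposal is correct and follows essentially the same route as the paper: quote the $\exists\forall$-completeness of ${\sf Dense}^{\sf d}$ from \cite{Kihara} and apply Observation \ref{obs:creating-AEA-complete}. The extra remark about computable witnesses for the $\forall\exists$-side is exactly the mechanism used inside that observation, so nothing is missing.
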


\begin{proof}
It is shown in \cite{Kihara} that ${\sf Dense}^{\sf d}$ is $\exists\forall$-complete.
Hence, by Observation \ref{obs:creating-AEA-complete}, $\forall({\sf Dense}^{\sf d})$ is $\forall\exists\forall$-complete.
\end{proof}


\subsection{$\forall^\to\exists\forall$: Restricting range of quantification}\label{sec:example-restrict-quantify}
In everyday mathematics, the range of quantification is often specified.
For example, we usually consider expressions such as $\forall x\in A\varphi(x)$ and $\exists x\in A\varphi(x)$.
These two quantifications are abbreviations for the following expressions, respectively.
\begin{align*}
\forall x\;(x\in A\to\varphi(x)),
& &
\exists x\;(x\in A\land\varphi(x)).
\end{align*}

Of course, we fix the domain of discourse (i.e., the set over which $x$ runs) to the natural numbers.
However, even if we are considering a specific domain of discourse, we do not necessarily quantify over the entire domain; that is, the quantification range is often limited by some condition $\gamma$.
To be explicit, we often consider formulas of the following form.
\begin{align*}
\forall x\;(\gamma(x)\to\varphi(x,y)),
& &
\exists x\;(\gamma(x)\land\varphi(x,y)).
\end{align*}

Some of the decision problems we have dealt with so far also have such partial quantifications if we formalize them directly.
However, in any of the previous examples, such $\gamma$ is a bounded formula, so we can move it to the innermost part of the formula, which allows us to formalize it in a way that excludes partial quantification.
Of course, this is not always possible.
Often, $\gamma$ is a complicated formula, and in such cases, the quantification over $\gamma$ cannot be removed.
Let us take up such an example.

A tree $T\subseteq 2^{<\om}$ is perfect if $T$ has no isolated infinite path.
We say that a node $\sigma\in T$ is extendible if there is an infinite path through $T$ extending $\sigma$.
Classically, a tree is perfect iff any extendible node of $T$ can be extended to two incomparable extendible nodes of $T$.
This involves the quantification over extendible nodes, and there is no need to consider other nodes.
If $T$ is a binary tree, under the assumption of weak K\"onig's lemma, the extendibility of a node $\sigma\in T$ is equivalent to the following:
\[{\sf Ext}(\sigma,T)\equiv\forall\ell\,\exists\tau\in 2^\ell.\ \tau\in T\]

Note that $\tau$ ranges over a finite set, so the inner existential formula is a bounded formula.
Thus, the extendibility of a node is described by a $\forall$-formula.
Then, the perfectness of a binary tree can be described as follows:
\[{\sf Perfect}_{\sf bin}:\quad\forall\sigma\in T\ [{\sf Ext}(\sigma,T)\to \exists\tau_0,\tau_1\in T\;(\tau_0\bot\tau_1\ \land\ \forall i<2.\ {\sf Ext}(\tau_i,T))].\]

Here, $\tau_0\bot\tau_1$ denotes that nodes $\tau_0,\tau_1$ are incomparable.
In the definition of the perfectness of a binary tree, the quantification range of $\sigma$ is restricted by a $\forall$-formula.
That is, it is of the type $\forall(\forall\to\exists\forall)$ in the following sense.
\[\forall a[\forall b\gamma(a,b,x)\to\exists c\forall d\theta(a,c,d,x)].\]

Here, $\gamma,\theta$ are bounded formulas.
We call the formula of this form a $\forall^\to\exists\forall$-formula, and its dual (i.e., a formula of the following form) a $\exists^\land\forall\exists$-formula.
\[\exists a[\forall b\gamma(a,b,x)\land\forall c\exists d\theta(a,c,d,x)].\]

\begin{theorem}\label{thm:perfect-dicomplete}
${\sf Perfect}_{\sf bin}$ is $\forall^\to\exists\forall$-dicomplete.
\end{theorem}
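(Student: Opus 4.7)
That ${\sf Perfect}_{\sf bin}$ is a $\forall^\to\exists\forall$-formula is immediate: the WKL-reformulation of ${\sf Ext}(\sigma,T)$ is a $\forall$-formula with bounded matrix, and the consequent is of the shape $\exists\tau_0,\tau_1\,\forall i<2\,[\ldots]$, so ${\sf Perfect}_{\sf bin}$ has the shape $\forall\sigma\,[\,\forall\cdots\to\exists\tau_0,\tau_1\,\forall\cdots\,]$. What remains is to show that an arbitrary $\forall^\to\exists\forall$-formula $\varphi(x)\equiv\forall a\,[\forall b\,\gamma(a,b,x)\to\exists c\,\forall d\,\theta(a,c,d,x)]$ (with $\gamma,\theta$ bounded) satisfies $\varphi\leq_{\sf dm}{\sf Perfect}_{\sf bin}$.

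The plan is to build $T=\eta(x)\subseteq 2^{<\om}$ as a parameter-indexed disjoint sum. Fix a marker $\sigma_a:=1^a 0$ for each $a\in\om$ and keep the trunk $\{1^a:a\in\om\}$ in $T$ permanently; above each $\sigma_a$ plant a gadget $U_a$ whose contribution to perfectness captures the implication for $a$. At $\sigma_a$, $U_a$ splits into a \emph{premise branch} $\sigma_a 0$, extended one level at stage $s$ only while $\gamma(a,b,x)$ continues to hold for all $b\leq s$, and a \emph{conclusion branch} $\sigma_a 1$, above which a dovetailed family of candidate paths is built, the $c$-th candidate being extended one further level at stage $s$ only while both the premise and $\theta(a,c,d,x)$ still hold for all $d\leq s$. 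Consequently, $\sigma_a$ is extendible in $T$ iff $\forall b\,\gamma(a,b,x)$, and, conditional on this, $\sigma_a$ admits two incomparable extendible extensions iff some candidate is infinite, i.e., iff $\exists c\,\forall d\,\theta(a,c,d,x)$. Thus the perfectness requirement at $\sigma_a$ expresses exactly the implication for $a$. To propagate this biconditional to every other extendible node of $U_a$---so that perfectness of $T$ overall is equivalent to $\varphi(x)$---I would replicate the same premise-versus-candidate split at every level of the premise branch and at every level of each surviving candidate, so that perfectness throughout $U_a$ reduces, by induction on depth, to the root biconditional at $\sigma_a$.

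Witness transfer then follows the design. Forward: a realizer $a\mapsto c(a)$ of $\varphi(x)$ returns, at any queried extendible $\sigma\in T$, two incomparable extendible extensions by pairing the (infinite) premise branch of $\sigma$'s enclosing copy with the $c(a)$-candidate. Backward: a perfectness realizer applied at $\sigma_a$ returns two extendible extensions whose second one traces a surviving candidate, from which the index $c$ is read. The dual transfers go symmetrically: a realizer for $\varphi^{\sf d}(x)$ produces an $a$ satisfying the premise together with a computable $\ell$ defeating every candidate, whereupon $\sigma_a$ becomes an extendible node with only its premise branch remaining infinite---an isolated extendible continuation witnessing $({\sf Perfect}_{\sf bin})^{\sf d}(T)$---while the reverse reads $a$ from the location of such a counterexample and $\ell$ from the stages at which the candidates die.

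The principal obstacle will be making the replication strategy yield exactly the intended biconditional, rather than accidentally strengthening the encoded condition into something like $\forall c\,\forall d\,\theta$ (as would happen if the candidate family were laid out as independent parallel paths). My plan is to keep every non-marker split of $U_a$ structurally identical to the split at $\sigma_a$---in particular, arranging the dovetailing of the conclusion branch so that each of its nodes participates symmetrically in the same premise-versus-candidate biconditional---so that a straightforward induction on depth completes the argument throughout $U_a$ and hence throughout $T$.
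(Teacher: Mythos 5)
Your skeleton is close in spirit to the paper's construction (a trunk with one gadget per index $a$, a premise spine whose infinitude encodes $\forall b\,\gamma(a,b,x)$, and candidate branches for the $c$'s), and your proposed ``replicate the premise-versus-candidate split at every level of the premise branch'' is essentially what the paper does by dovetailing, for each $m$, infinitely many candidate branches along the spine. But as specified the reduction fails, for a concrete reason you do not address: the permanent trunk $\{1^a\}$ has nothing always-alive hanging off it. If only finitely many $a$ satisfy the premise $\forall b\,\gamma(a,b,x)$ (in particular when $\varphi(x)$ is vacuously true), then every sufficiently late trunk node $1^a$ is extendible only via $1^\infty$, so it has no two incomparable extendible extensions and $T=\eta(x)$ is not perfect even though $\varphi(x)$ holds; dually, such a trunk node is a witness for non-perfectness from which no witness for $\varphi^{\sf d}(x)$ can be read off. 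The paper's construction avoids exactly this by including the background part $T_0=\{0^n,\,0^n10\sigma\}$: a full binary tree is attached at $0^n10$, so trunk nodes and the nodes $0^n1$ always split, and only the gadget at $0^n11$ carries any logical content. Your plan needs such an always-perfect component added off the trunk.

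A second, smaller gap is that your candidate branches are bare paths and the repair is only sketched: an isolated surviving candidate (or the bare premise path before replication) would by itself destroy perfectness, and the inductive ``replication'' argument is precisely the part you flag as the principal obstacle without verifying it. The paper sidesteps this entirely: each candidate is a full binary subtree $0^n110^{\pair{m,s}}1\sigma$ ($\sigma\in 2^{<\om}$), grown level by level while $p_n$ and $x^n_m$ still look like $0^\infty$, and each $m$ recurs at infinitely many positions $\pair{m,s}$ along the premise spine; then every extendible gadget node splits iff some $x^n_m=0^\infty$, with no induction on depth, and the witness transfers (including the dual, whose witness is just the index $a$ --- your auxiliary ``computable $\ell$ defeating every candidate'' is neither available nor needed) read off directly from node addresses. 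If you add the background perfect component and replace candidate paths by such dovetailed full binary subtrees, your argument becomes the paper's proof.
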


\begin{proof}
Given $(p,x)=(p_n,x^n_m)_{n,m\in\om}$, construct a binary tree $T=\eta(p,x)\subseteq 2^{<\om}$ as follows:
\[\forall n\ (p_n=0^\infty\to\exists m.\ x^n_m=0^\infty)\iff T\mbox{ is perfect.}\]

Here, note that the left-hand side expresses a $\forall^\to\exists\forall$-complete formula $\pair{\forall^\to\exists\forall}(p,x)$.
Its witness is a partial function $n\mapsto m$.

First put $T_0=\{0^n,0^n10\sigma:n\in\om,\sigma\in 2^{<\om}\}\subseteq T$.
Next, for each $n$, put $0^n110^s\in T$ for any $s\in\om$ until we see $p_n\not=0^\infty$.
For each $m$, put $0^n110^{\pair{m,s}}1\sigma$ for any $s\in\om$ and $\sigma\in 2^{<\om}$ until we see $p_n\not=0^\infty$ or $x^n_m\not=0^\infty$.
Note that, for each $\sigma\in T_0$, the extensions $\sigma 0$ and $\sigma 1$ are extendible since $0^n10\alpha$ is an infinite path through $T_0$ for any $n\in\om$ and $\alpha\in 2^\om$.
Therefore, let us focus on the behavior of $\sigma\not\in T_0$.

Let $n\mapsto m_n$ be a witness for $\comp{\forall^\to\exists\forall}(p,x)$; that is, if $p_n=0^\infty$ then $x^n_{m_n}=0^\infty$.
In this case, we show that each extendible node can be extended to two incomparable extendible nodes.
As discussed above, we only need to consider $\sigma\not\in T_0$.
Note that if $p_n\not=0^\infty$ then $0^n11$ is not extendible in $T$.
If $\sigma\in T$ is a node of the form $\sigma=0^n110^s$ and if it is extendible, then we must have $p_n=0^\infty$; hence, $x^n_{m_n}=0^\infty$.
In this case, take a sufficiently large $k$ such that $t=\pair{m_n,k}\geq s$.
Then $\sigma=0^n110^s$ is extended to $0^n110^t$, which is extended to $0^n110^t0^\infty$ and $0^n110^t1^\infty$.
Then consider $\gamma(\sigma)=\pair{0^n110^t0,0^n110^t1}$.
If $\sigma\in T$ is a node of the form $\sigma=0^n110^{\pair{m,k}}1\tau$ and if it is extendible, we must have $p_n=0^\infty$ and $x^n_m=0^\infty$.
In this case, $\sigma$ is clearly extendible to $\sigma 0^\infty=0^n110^{\pair{m,k}}1\tau 0^\infty$ and $\sigma 1^\infty=0^n110^{\pair{m,k}}1\tau 1^\infty$.
Then consider $\gamma(\sigma)=\pair{\sigma 0,\sigma 1}$.
Then, $\sigma\mapsto\gamma(\sigma)$ gives a witness for ${\sf Perfect}_{\sf bin}(T)$.

Conversely, let $\sigma\mapsto \tau^\sigma_0,\tau^\sigma_1$ be a witness for ${\sf Perfect}_{\sf bin}(T)$.
Given $n$, consider $\sigma=0^n11$.
If $p_n=0^\infty$ then $\sigma=0^n11$ is extendible, so $\tau^\sigma_0$ and $\tau^\sigma_1$ are defined.
Since $\tau^\sigma_0$ and $\tau^\sigma_1$ are incomparable extensions of $\sigma$, either $\tau^\sigma_0$ or $\tau^\sigma_1$ extend $0^n110^{\pair{m,k}}1$ for some $m,k$.
Since $\tau^\sigma_0$ and $\tau^\sigma_1$ are extendible, this implies $x^n_m=0^\infty$.
Thus, $n\mapsto m$ witnesses $\comp{\forall^\to\exists\forall}(p,x)$.

For the dual, let $n$ be a witness for $\comp{\exists^\land\forall\exists}(p,x)$; that is, $p_n=0^\infty$ and $x^n_m\not=0^\infty$ for any $m\in\om$.
As $p_n=0^\infty$, the node $0^n11$ is extendible to $0^n110^\infty$.
For each $m$, since $x^n_m\not=0^\infty$, the node $0^n110^{\pair{m,k}}1$ is not extendible.
Hence, the only infinite path that extends $0^n11$ is $0^n110^\infty$.
Therefore, $0^n11$ is a witness for ${\sf Perfect}^{\sf d}(T)$.

Conversely, let $\sigma$ be a witness for ${\sf Perfect}^{\sf d}(T)$; that is, there is exactly one infinite path extending $\sigma$.
As mentioned above, $\sigma\not\in T_0$.
If $\sigma$ is of the form $0^n110^s$, since $\sigma$ is extendible, we have $p_n=0^\infty$.
Moreover, in this case, $\sigma$ is extendible to $0^n110^\infty$.
Since $\sigma$ is a witness for $T$ being non-perfect, there is no other infinite path through $T$ extending $\sigma$; hence, $0^n110^{\pair{m,k}}1$ is not extendible for any $\pair{m,k}\geq s$.
In particular, for any $m$, $0^n110^{\pair{m,k}}1$ is not extendible for a sufficiently large $k$, which implies $x^n_m\not=0^\infty$ for any $m$.
If $\sigma$ is of the form $\sigma=0^n110^{\pair{m,k}}1\tau$, since $\sigma$ is extendible, we must have $p_n=0^\infty$ and $x^n_m=0^\infty$; however, in this case, for any $\alpha\in 2^\om$, $\sigma=0^n110^{\pair{m,k}}1\tau\alpha$ is an infinite path through $T$, which contradicts our assumption that $\sigma$ is a witness for $T$ being non-perfect.
Consequently, if $\sigma$ is a witness for ${\sf Perfect}^{\sf d}(T)$, then it is of the form $0^n110^s$, and in this case, $n$ is a witness for $\comp{\exists^\land\forall\exists}(p,x)$.
\end{proof}

In Section \ref{sec:separation-perfectness}, we show $\comp{\forall\exists\forall}<_{\sf m}\comp{\forall^\to\exists\forall}$.

\section{The structure of quantifier-patterns}

\subsection{Equivalence}

Example \ref{exa:Sigma03-absorb} presents sixteen $\Sigma_3$ quantifier-patterns.
However, not all of these are different, and we can expect that many of them are equivalent.
What we want to know is exactly how many $\Sigma_3$- and $\Pi_3$-patterns there are (modulo the many-one equivalence).

A quantifier-pattern $\qf{Q}$ is ${\sf m}$-equivalent to $\qf{Q}'$ if $\comp{\qf{Q}}\equiv_{\sf m}\comp{\qf{Q}'}$.
Similarly, a quantifier-pattern $\qf{Q}$ is ${\sf dm}$-equivalent to $\qf{Q}'$ if $\comp{\qf{Q}}\equiv_{\sf dm}\comp{\qf{Q}'}$.
Note that ${\sf dm}$-reducibility for $\Sigma_3$- and $\Pi_3$-patterns are completely the same, so it is sufficient to consider only one of them.
Our main result in this section is the following:

\begin{theorem}\label{thm:sigma3-quantifier-classification}
Any $\Sigma_3$ quantifier-pattern is ${\sf m}$-equivalent to one of the following $3$ patterns: 
\[\exists\forall\exists,\forall^\infty\exists^\infty,\forall^\infty\exists.\]

Any $\Pi_3$ quantifier-pattern is ${\sf m}$-equivalent to one of the following $5$ patterns:
\[\forall\exists\forall,\exists^\infty\forall^\infty\forall,\exists^\infty\forall,\forall\forall^\infty\forall,\forall\forall^\infty.\]

Any $\Pi_3$ quantifier-pattern is ${\sf dm}$-equivalent to one of the following $7$ patterns:
\[\forall\exists\forall,\exists^\infty\exists\forall,\exists^\infty\forall^\infty\forall,\exists^\infty\forall^\infty,\exists^\infty\forall,\forall\forall^\infty\forall,\forall\forall^\infty.\]
\end{theorem}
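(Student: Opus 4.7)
The plan is to obtain each of the three classifications in three stages: first enumerate a finite list of representative quantifier-patterns, then collapse them onto the claimed normal forms, and finally import the separation results proved later in the paper to show that no further collapses occur.

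For the enumeration I would invoke Proposition \ref{prop:Sigma3-bluteforce-list} directly for $\Sigma_3$, and reprove its exact dual for $\Pi_3$ by the same saturation argument: every $\Pi_3$-pattern is absorbable into a subpattern of $\forall\exists^\infty\exists\forall^\infty\forall$ (the dual of the maximal saturated $\Sigma_3$-pattern $\exists\forall^\infty\forall\exists^\infty\exists$), which yields a finite $\Pi_3$-list analogous to Example \ref{exa:Sigma03-absorb}. By Observation \ref{obs:absorb-bm-relation}, absorption already implies $\leq_{\sf dm}$, so the classification reduces to a case analysis over these finitely many saturated patterns.

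For the collapses I would use three kinds of reductions, each lifted through a prefix via Lemma \ref{lem:quantifier-add-lemma}: absorption itself; Proposition \ref{prop:ex-ex-infty-bm-reducible}, which yields dm-identifications such as $\comp{\exists\exists^\infty}\equiv_{\sf dm}\comp{\exists}$; and Proposition \ref{prop:reduction-AE-E-infty}, which yields m-collapses that are unavailable in the dm-setting, such as $\comp{\forall^\infty\forall\exists}\equiv_{\sf m}\comp{\forall^\infty\exists^\infty}$. A systematic case check then folds the sixteen $\Sigma_3$-patterns into the three claimed m-classes, and folds the $\Pi_3$-patterns into at most five m-classes and at most seven dm-classes. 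The two extra dm-classes appearing in the $\Pi_3$-list arise precisely because Proposition \ref{prop:reduction-AE-E-infty} has no dm-analogue, as Fact \ref{fact:Sigma-2-sep} already warned in the $\Sigma_2$-case.

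For the separations I would appeal to the later results: Theorem \ref{thm:formula-class-collapse} supplies the key m-identifications (such as $\comp{\exists^\infty\exists\forall}\equiv_{\sf m}\comp{\forall\exists\forall}$) that merge seven dm-classes into five m-classes, while the separation statements (such as Proposition \ref{prop:separation1} and Theorem \ref{thm:separation3}, both already invoked in the corollary after Proposition \ref{prop:div-A8E8-comp}) certify that no further collapses are possible. The main obstacle is not any individual reduction but the orchestration of the separations: one must ensure that the later lower-bound arguments are strong enough to distinguish every pair on the normal-form lists, in particular to separate $\exists^\infty\exists\forall$, $\exists^\infty\forall^\infty\forall$, and $\exists^\infty\forall^\infty$ as distinct dm-classes and the corresponding five m-classes in the $\Pi_3$ case. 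Granting those separations, Theorem \ref{thm:sigma3-quantifier-classification} reduces to the bookkeeping exercise of matching each saturated pattern with its normal form.
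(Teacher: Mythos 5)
Your overall architecture (reduce to the finite list of Proposition \ref{prop:Sigma3-bluteforce-list} via absorption and duality, then collapse) matches the paper, but your stage-two claim contains a genuine gap: absorption, Proposition \ref{prop:ex-ex-infty-bm-reducible} and Proposition \ref{prop:reduction-AE-E-infty}, lifted through prefixes by Lemma \ref{lem:quantifier-add-lemma}, are \emph{not} enough to fold the patterns onto the claimed normal forms. Those tools only give reductions in the directions $\comp{\qf{Q}\exists}\leq_{\sf dm}\comp{\qf{Q}\exists^\infty}$ and $\comp{\qf{Q}\forall\exists}\leq_{\sf m}\comp{\qf{Q}\exists^\infty}$, plus absorption; they do not yield the upward identifications that the classification actually hinges on, namely $\comp{\exists\forall\exists}\leq_{\sf dm}\comp{\exists\forall^\infty\exists}$ (needed to place $\exists\forall^\infty\exists$ and its relatives in the top $\Sigma_3$ class, and dually $\forall\exists^\infty\forall$ in the top $\Pi_3$ class), $\comp{\forall\exists\forall}\leq_{\sf m}\comp{\exists^\infty\exists\forall}$, and $\comp{\exists^\infty\forall^\infty}\leq_{\sf m}\comp{\exists^\infty\forall}$ (needed for the five-class $\Pi_3$ m-statement). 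These are exactly items (1), (4), (5) of Theorem \ref{thm:formula-class-collapse}, proved in the paper by dedicated constructions (Propositions \ref{prop:collapse1}, \ref{prop:collapse2}, \ref{prop:collapse3}; the last uses a nontrivial least-witness encoding). You do cite Theorem \ref{thm:formula-class-collapse}, but only as supplying the $\Pi_3$ merge of dm-classes into m-classes, so under your accounting these collapses are never established. Relatedly, the sample ``dm-identification'' $\comp{\exists\exists^\infty}\equiv_{\sf dm}\comp{\exists}$ is false ($\exists\exists^\infty$ is m-equivalent to the top $\Sigma_3$ class by Theorem \ref{thm:formula-class-collapse}(2), while $\exists$ is $\Sigma_1$); Proposition \ref{prop:ex-ex-infty-bm-reducible} only gives one direction of such comparisons.

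Two smaller points. First, the theorem as stated asserts only that every pattern is equivalent to \emph{one of} the listed ones, so the separation results of Section \ref{sec:separation} are not needed here at all; your assessment that ``the main obstacle is the orchestration of the separations'' misplaces the real content of the proof, which is precisely the collapse constructions of Theorem \ref{thm:formula-class-collapse}. Second, your dualized saturation argument for the $\Pi_3$ enumeration is fine, but unnecessary: since di-reducibility is invariant under taking duals, the paper simply reuses the $\Sigma_3$ list and passes to duals, which is the cleaner route.
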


\begin{figure}[t]
\begin{center}
\includegraphics[width=90mm]{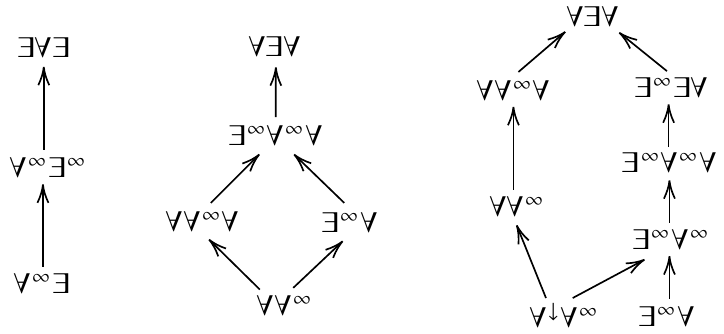}
\end{center}
\caption{(left) The $\equiv_{\sf m}$-degrees of $\Sigma_3$-patterns; (center) The $\equiv_{\sf m}$-degrees of $\Pi_3$-patterns; (right) The $\equiv_{\sf dm}$-degrees of $\Pi_3$-patterns.}\label{fig:quantifier-class}
\end{figure}

Figure \ref{fig:quantifier-class} shows, from left to right, the $\equiv_{\sf m}$-degrees of $\Sigma_3$ quantifier-patterns, the $\equiv_{\sf m}$ quantifier-degrees of $\Pi_3$-patterns, and the $\equiv_{\sf dm}$-degrees of $\Pi_3$ quantifier-patterns.
In Section \ref{sec:separation}, we will show that Figure \ref{fig:quantifier-class} is complete (that is, no further arrow is added).

In order to prove Theorem \ref{thm:sigma3-quantifier-classification}, it is necessary to check which quantifier-patterns are $\equiv_{\sf m}$-equivalent.

\begin{theorem}\label{thm:formula-class-collapse}~
\begin{enumerate}
\item $\comp{\exists\forall\exists}\equiv_{\sf dm}\comp{\exists\forall\exists^\infty}\equiv_{\sf dm}\comp{\exists\forall^\infty\exists^\infty}\equiv_{\sf dm}\comp{\exists\forall^\infty\exists}$.
\item $\comp{\exists\forall\exists}\equiv_{\sf m}\comp{\exists\exists^\infty\exists}\equiv_{\sf m}\comp{\exists\exists^\infty}$.
\item $\comp{\forall^\infty\forall\exists}\equiv_{\sf m}\comp{\forall^\infty\forall\exists^\infty}\equiv_{\sf m}\comp{\forall^\infty\exists^\infty\exists}\equiv_{\sf m}\comp{\forall^\infty\exists^\infty}$.
\item $\comp{\forall\exists\forall}\equiv_{\sf m}\comp{\exists^\infty\exists\forall}\equiv_{\sf m}\comp{\exists^\infty\exists\forall^\infty}$.
\item $\comp{\exists^\infty\forall^\infty}\equiv_{\sf m}\comp{\exists^\infty\forall}$.
\end{enumerate}
\end{theorem}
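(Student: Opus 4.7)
My plan is to treat each of (1)--(5) as a chain of equivalences and split each link into an ``absorption direction'' and a direct construction. For the absorption direction, each right-hand pattern in the chain can be rewritten into its pivot ($\exists\forall\exists$ in (1) and (2), $\forall^\infty\forall\exists$ in (3), $\forall\exists\forall$ in (4), $\exists^\infty\forall$ in (5)) by applying $\exists^\infty\to\forall\exists$, $\forall^\infty\to\exists\forall$, and $\exists\exists\to\exists$, $\forall\forall\to\forall$; Observation \ref{obs:absorb-bm-relation} combined with Lemma \ref{lem:quantifier-add-lemma} then converts these rewrites into the relevant (di-)reductions. The converse directions reduce to a handful of base constructions propagated along the pattern via Lemma \ref{lem:quantifier-add-lemma}.

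The first base construction is $\comp{\forall}\leq_{\sf dm}\comp{\forall^\infty}$ via the monotone sequence $y(n)=1$ iff $x(k)\neq 0$ for some $k\leq n$: monotonicity forces ``$y$ eventually zero'' to coincide with ``$y$ everywhere zero,'' so both primary and dual realizer transforms are straightforward bounded searches. Combined with Proposition \ref{prop:ex-ex-infty-bm-reducible} and Lemma \ref{lem:quantifier-add-lemma}, this handles every tail substitution $\forall\leftrightarrow\forall^\infty$ and $\exists\leftrightarrow\exists^\infty$ needed throughout. The second is the duplication $y_{\langle n,k\rangle}=x_n$, which proves $\comp{\forall\exists}\leq_{\sf dm}\comp{\forall^\infty\exists}$ (and its $\exists^\infty$-variant), because each $x_n$ appears on a cofinal set of indices $\{\langle n,k\rangle:k\in\omega\}$, so both primary and dual equivalences hold with routine realizers; prepending $\exists$ via Lemma \ref{lem:quantifier-add-lemma} then closes chain (1).

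Chains (2) and (3) close via Proposition \ref{prop:reduction-AE-E-infty} prepended with $\exists$ and $\forall^\infty$ respectively, together with the absorptions above. For the hard direction of chain (4) I would use vector packaging: set $y(n,(b_0,\dots,b_n),t)=0$ iff $x(a,b_a,t)=0$ for every $a\leq n$. The derived property $P_y(n)\equiv\exists b\forall t.\,y(n,b,t)=0$ is downward-closed in $n$, so $\pair{\exists^\infty\exists\forall}(y)$ is equivalent to $\pair{\forall}P_y$ and hence to $\pair{\forall\exists\forall}(x)$; a witness $(n_M,c_M)$ for the $y$-problem at stage $M=a$ hands back $b_a$ as the $a$-th coordinate of $c_M$.

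The main obstacle is the hard half of (5), $\comp{\exists^\infty\forall^\infty}\leq_{\sf m}\comp{\exists^\infty\forall}$: the naive tail-shift $y_{\langle n,s\rangle}(t)=x_n(s+t)$ fails because a single eventually-zero row of $x$ already furnishes $\pair{\exists^\infty\forall}(y)$, whereas $\pair{\exists^\infty\forall^\infty}(x)$ demands infinitely many such rows. I would fix this by canonicalization: put $y_{\langle n,s\rangle}(t):=x_n(s+t)$ only when $s=0$ or $x_n(s-1)\neq 0$, and $y_{\langle n,s\rangle}\equiv 1$ otherwise. Each eventually-zero $x_n$ then contributes exactly one valid pair $(n,s^*)$ with $y_{\langle n,s^*\rangle}\equiv 0$, where $s^*$ is one past the last non-zero of $x_n$; so the set of $m$ with $\forall t.\,y_m(t)=0$ is in bijection with the set of eventually-zero rows of $x$. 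The forward realizer reads $s^*$ off a given $(n_M,s_M)$ by a backward search through $x_{n_M}\upto s_M$. The backward realizer uses pigeonhole: since each $n$ occurs as $\pi_0$ of at most one valid $m$, the stream $(m_M)$ produced by $\pair{\exists^\infty\forall}(y)$ contains only finitely many elements with $\pi_0(m_{M'})<M$, so some $M'\geq M$ satisfies $\pi_0(m_{M'})\geq M$, from which $(n_M,s_M):=(\pi_0(m_{M'}),\pi_1(m_{M'}))$ is the desired witness for $\pair{\exists^\infty\forall^\infty}(x)$. The converse $\comp{\exists^\infty\forall}\leq_{\sf m}\comp{\exists^\infty\forall^\infty}$ is immediate from the row-wise monotone trick $y_n(t)=1$ iff $\exists s\leq t.\,x_n(s)\neq 0$, which makes ``$y_n$ eventually zero'' and ``$y_n$ everywhere zero'' coincide.
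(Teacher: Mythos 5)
Your argument is correct, and its skeleton is the same as the paper's: absorption plus Lemma \ref{lem:quantifier-add-lemma} for the easy directions, the duplication trick for (1) (this is exactly Proposition \ref{prop:collapse1}, just phrased one level down and then prefixed with $\exists$), and your vector packaging for (4) is literally the construction of Proposition \ref{prop:collapse2}, including the truncation/coordinate-extraction realizers. Where you genuinely diverge is in the two remaining places. For (2) and (3) the paper does not route through Proposition \ref{prop:reduction-AE-E-infty}; it simply notes that witnesses of $\forall\exists$-type formulas are computable, so all the $\Pi_2$ tails $\forall\exists,\forall\exists^\infty,\exists^\infty\exists,\exists^\infty$ are ${\sf m}$-equivalent outright, and then prefixes — your route is equivalent but slightly longer. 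For the hard half of (5), the paper's Proposition \ref{prop:collapse3} also canonicalizes by least thresholds, but it indexes $y$ by pairs (increasing tuple of rows, tuple of claimed least witnesses) and runs a stagewise construction that spoils $y_{\sigma,\tau}$ when minimality is refuted, then argues that infinitely many row numbers must appear among the $\sigma_i$'s; your version, indexing by single pairs $\langle n,s\rangle$ with the decidable canonicality test ``$s=0$ or $x_n(s-1)\neq 0$'' and a direct pigeonhole on rows, achieves the same one-valid-index-per-row property with a static definition and is arguably cleaner. Two cosmetic inaccuracies, neither of which creates a gap since you supply the needed directions explicitly: the tail substitution $\forall^\infty\to\forall$ is of course not available (your ``$\forall\leftrightarrow\forall^\infty$'' wording overstates what the base construction gives), and $\exists^\infty\forall^\infty$ is not absorbable into the pivot $\exists^\infty\forall$ by the rewriting rules — in (5) both directions must come, as you in fact arrange, from your explicit reductions. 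Finally, in the forward realizer for (5) remember that an $\exists^\infty$-witness at stage $M$ must output an index $\geq M$; this is fine because $\langle n_M,s^*\rangle\geq n_M\geq M$ under the standard pairing, but it deserves a sentence.
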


We first give a proof of the relatively trivial parts that follows from the observations so far.

\begin{proof}[Proof of Theorem \ref{thm:formula-class-collapse} (2), (3)]
A witness for a $\forall\exists$-formula is always computable, so we get $\comp{\forall\exists}\equiv_{\sf m}\comp{\forall\exists^\infty}\equiv_{\sf m}\comp{\exists^\infty\exists}\equiv_{\sf m}\comp{\exists^\infty}$.
Then, by adding the quantifiers $\exists$ and $\forall^\infty$ to the prefix, we get (2) and (3) by Lemma \ref{lem:quantifier-add-lemma}.
\end{proof}

The proof of this theorem is divided into several propositions.

\begin{prop}\label{prop:collapse1}
$\comp{\exists\forall\exists}\leq_{\sf dm}\comp{\exists\forall^\infty\exists}$.
\end{prop}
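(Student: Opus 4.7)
The plan is to exhibit a single computable reduction $\eta$ witnessing both $\comp{\exists\forall\exists}\leq_{\sf m}\comp{\exists\forall^\infty\exists}$ and (on the duals) $\comp{\forall\exists\forall}\leq_{\sf m}\comp{\forall\exists^\infty\forall}$ simultaneously. The key idea is to ``inflate'' the middle quantifier by duplicating each of its values infinitely many times via a pairing, so that $\forall b$ effectively becomes $\forall^\infty b$ and dually $\exists b$ becomes $\exists^\infty b$, without changing the effective content.

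Concretely, I would fix a computable pairing $\pair{\cdot,\cdot}$ on $\om$ with first projection $\pi_0$, together with a computable function $\sigma(b',n)$ returning the least $b\geq n$ with $\pi_0(b)=b'$ (such $\sigma$ exists for any standard pairing since every fiber of $\pi_0$ is infinite). Given $x$, define $y=\eta(x)$ by $y(a,b,c)=x(a,\pi_0(b),c)$. For the primal reduction, a simplified witness $a$ for $\comp{\exists\forall\exists}(x)$ is sent to $(a,0)$ for $\comp{\exists\forall^\infty\exists}(y)$, since $\forall b\exists c\,y(a,b,c)=0$ follows from $\forall b'\exists c\,x(a,b',c)=0$ by taking $b'=\pi_0(b)$; conversely, a simplified witness $(a,b^*)$ for $\comp{\exists\forall^\infty\exists}(y)$ is sent back to $a$, because for each $b'$ one can take $b=\sigma(b',b^*)\geq b^*$ and obtain $\exists c\,x(a,b',c)=\exists c\,y(a,b,c)=0$.

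For the dual reduction, a simplified witness $g$ for $\comp{\forall\exists\forall}(x)$ (so that $\forall c\,x(a,g(a),c)=0$) is sent to the witness $g'(a,n)=\sigma(g(a),n)$ for $\comp{\forall\exists^\infty\forall}(y)$, which satisfies $g'(a,n)\geq n$ and $\forall c\,y(a,g'(a,n),c)=\forall c\,x(a,g(a),c)=0$; conversely, a witness $g'$ for $\comp{\forall\exists^\infty\forall}(y)$ is sent back to $g(a)=\pi_0(g'(a,0))$, yielding $\forall c\,x(a,g(a),c)=\forall c\,y(a,g'(a,0),c)=0$. All four conversions are uniformly computable, and since the same $\eta$ is used for both the primal and the dual, this establishes $\comp{\exists\forall\exists}\leq_{\sf dm}\comp{\exists\forall^\infty\exists}$. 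There is no substantive obstacle: the content of the proposition is precisely that duplication via $\pi_0$ is flexible enough to absorb the freedom of the $\forall^\infty$ (resp.~$\exists^\infty$) bound while letting witnesses be translated in both directions at once, which is what upgrades the reduction from $\leq_{\sf m}$ to $\leq_{\sf dm}$.
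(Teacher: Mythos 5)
Your proposal is correct and is essentially the paper's own argument: the paper defines $y_n(\pair{t,u})=x_n(t)$, which is exactly your $y(a,b,c)=x(a,\pi_0(b),c)$, and converts witnesses in both directions (threshold $0$ forward, projection/least-fiber-element backward, and the analogous maps for the duals) via the same single $\eta$. The only cosmetic difference is your choice of matrix sign in the dual, which does not affect the argument.
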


\begin{proof}
Given $x=(x_n)_{n\in\om}$, we construct $y=\eta(x)$ such that
\[\exists n\forall t.\ x_n(t)\not=0^\infty\iff\exists n\forall^\infty t.\ y_n(t)\not=0^\infty.\]

The left-hand side means that $\comp{\exists\forall\exists}(x)$ is true, and the right-hand side means that $\comp{\exists\forall^\infty\exists}(y)$ is true.
For $t,u$, define $y_n(\pair{t,u})=x_n(t)$.
Then, the property $\exists t\,x_n(t)=0^\infty$ is classically equivalent to $\exists^\infty t\,y_n(t)=0^\infty$, so $n$ is a witness for the former iff it is a witness for the latter.
This implies $\comp{\exists\forall\exists}\leq_{\sf m}\comp{\exists\forall^\infty\exists}$.

For the dual $\comp{\forall\exists\forall}\leq_{\sf m}\comp{\forall\exists^\infty\forall}$, let $n\mapsto t_n$ be a witness for $\comp{\forall\exists\forall}(x)$, i.e., $x_n(t_n)=0^\infty$.
Then $y_n(\pair{t_n,u})=0^\infty$ for any $u$ by definition, so $n\mapsto (\pair{t_n,u})_{u\in\om}$ is a witness for $\comp{\forall\exists^\infty\forall}(y)$.
Conversely, let $n\mapsto(t^n_i,u^n_i)_{i\in\om}$ be a witness for $\comp{\forall\exists^\infty\forall}(y)$.
Then $y_n(\pair{t^n_i,u^n_i})=0^\infty$; thus, in particular, we get $x_n(t^n_0)=0^\infty$.
Hence, $n\mapsto t^n_0$ gives a witness for $\comp{\forall\exists\forall}(x)$.
\end{proof}

\begin{proof}[Proof of Theorem \ref{thm:formula-class-collapse} (1)]
First we have the absorption relation $\exists\forall^\infty\exists^\infty\to\exists\exists\forall\exists^\infty\to\exists\forall\exists^\infty\to\exists\forall\forall\exists\to\exists\forall\exists$.
Therefore, by Observation \ref{obs:absorb-bm-relation}, we get $\comp{\exists\forall^\infty\exists^\infty}\leq_{\sf dm}\comp{\exists\forall\exists^\infty}\leq_{\sf dm}\comp{\exists\forall\exists}$.
Moreover, by Proposition \ref{prop:ex-ex-infty-bm-reducible}, $\comp{\bar{\sf Q}\exists}\leq_{\sf bm}\comp{\bar{\sf Q}\exists^\infty}$, and thus, $\comp{\exists\forall^\infty\exists}\leq_{\sf bm}\comp{\exists\forall^\infty\exists^\infty}$.
Then, by Proposition \ref{prop:collapse1}, we get $\comp{\exists\forall\exists}\leq_{\sf bm}\comp{\exists\forall^\infty\exists}$.
This completes the proof.
\end{proof}

\begin{prop}\label{prop:collapse2}
$\comp{\forall\exists\forall}\leq_{\sf m}\comp{\exists^\infty\exists\forall}$.
\end{prop}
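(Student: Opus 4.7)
The plan is to simulate the outer $\forall n$ on the source side by packaging growing initial segments of witnesses inside the single existential $M$, and then walking through larger and larger $N$'s via $\exists^\infty N$ on the target side. Concretely, given $x$, I would define $y=\eta(x)$ by
\[
y(N, M, K) = 0 \quad\Longleftrightarrow\quad \forall n<N\,\forall k\leq K\ \ x(n, (M)_n, k) = 0,
\]
where $(M)_n$ denotes the $n$-th projection of $M$ under the standard coding $\om\simeq\om^\om$. This is uniformly computable in $x$ because for fixed $N,M,K$ we only query $x$ at finitely many points. The point of the definition is that $\forall K.\,y(N,M,K)=0$ asserts exactly that $M$ encodes a tuple $((M)_0,\dots,(M)_{N-1})$ of witnesses for the first $N$ instances $n<N$ of the inner $\exists m\forall k$-problem on $x$.

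Classical equivalence of $\comp{\forall\exists\forall}(x)$ and $\comp{\exists^\infty\exists\forall}(y)$ is then easy: from a witness $h$ for the left-hand side, the fixed $M^\ast=\pair{h(0),h(1),h(2),\dots}$ satisfies $\forall K.\,y(N,M^\ast,K)=0$ for \emph{every} $N$, which a fortiori gives $\exists^\infty N$; conversely, any $N>n$ with a witnessing $M$ supplies $(M)_n$ as a witness at the instance $n$.

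The realizer transformations mirror this. For $r_-$, given a witness $h\colon n\mapsto m_n$ for $\comp{\forall\exists\forall}(x)$, output the sequence $i\mapsto (i,M^\ast)$; this meets the $N_i\geq i$ requirement of the $\exists^\infty$ realizer and makes the inner $\forall K$ body hold. For $r_+$, given a realizer $\alpha$ with $\alpha(i)=(N_i,M_i)$, $N_i\geq i$, and $\forall K.\,y(N_i,M_i,K)=0$, output the function $n\mapsto (M_{n+1})_n$; since $N_{n+1}\geq n+1>n$, the inner clause ensures $\forall k.\,x(n,(M_{n+1})_n,k)=0$, as needed. Both $r_-$ and $r_+$ are plainly computable uniformly in $x$.

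I do not foresee a real obstacle: all the work is in picking the right definition of $y$, and once $(M)_n$ is set up as a uniformly extractable projection the rest is bookkeeping. The only subtle conceptual point is recognizing that $\exists^\infty N$ is strong enough to replace $\forall n$ once one places the entire witness sequence into the single $\exists M$ slot, so that every sufficiently large $N$ already encodes a witness for any previously chosen $n$.
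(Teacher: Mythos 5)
Your construction is in substance the paper's own: there the inner existential witness at level $n$ is a finite tuple $\sigma$ of length $n+1$ with $y_n(\sigma,s)=\max_{i\leq n}x_i(\sigma(i),s)$, which is exactly your $y(N,M,K)$ with $M$ coding the first $N$ candidate witnesses, and the backward realizer extracts the $n$-th coordinate from any level $N>n$, just as your $r_+$ does. One step, however, fails as literally written: the forward realizer cannot use a single fixed $M^\ast=\pair{h(0),h(1),h(2),\dots}$, because $\exists M$ is an arithmetical quantifier ranging over $\om$ and there is no coding $\om\simeq\om^\om$ of arbitrary infinite sequences --- the witness $h$ for $\comp{\forall\exists\forall}(x)$ is an arbitrary function, so its full graph is not a natural number. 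The repair is the one your opening sentence already announces (``growing initial segments''): since $y(N,M,K)$ only consults $(M)_n$ for $n<N$, let $r_-$ send $h$ to $i\mapsto\bigl(i,\langle h(0),\dots,h(i-1)\rangle\bigr)$, the code of a finite initial segment, and use the same finite codes to verify the classical forward implication. With that change the rest of your argument --- the computability of $\eta$, the classical equivalence, and the verification that $N_{n+1}>n$ gives $\forall k.\ x(n,(M_{n+1})_n,k)=0$ --- is correct and coincides with the paper's proof.
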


\begin{proof}
Given $x=(x_n)_{n\in\om}$, we construct $y=\eta(x)$ such that
\[\forall n\exists m\forall s.\ x_n(m,s)=0\iff\exists^\infty n\exists \sigma\forall s.\ y_n(\sigma,s)=0.\]

The left-hand side means that $\comp{\forall\exists\forall}(x)$ is true, and the right-hand side means that $\comp{\exists^\infty\exists\forall}(y)$ is true.
Given $s$ and a sequence $\sigma$ of length $n+1$, define $y_n(\sigma,s)=\max_{i\leq n}x_i(\sigma(i),s)$.
Now let $n\mapsto m_n$ be a witness for $\comp{\forall\exists\forall}(x)$; that is, $x_n(m_n,s)=0$ for any $s$.
For each $n$, consider $\sigma_n=(m_i)_{i\leq n}$.
Then $y_n(\sigma_n,s)=\max_{i\leq n}x_i(m_i,s)=0$ for any $s$.
Hence, $n\mapsto(n,\sigma_n)$ is a witness for $\comp{\exists^\infty\exists\forall}(y)$.

Conversely, let $n\mapsto(a(n),\sigma_n)$ be a witness for $\comp{\exists^\infty\exists\forall}(y)$; that is, $y_{a(n)}(\sigma_n,s)=0$ for any $s$.
Then, for any $m$, there exists $n_m$ such that $m\leq a(n_m)$.
By definition, $x_m(\sigma_{n_m}(m),s)\leq y_{a(n_m)}(\sigma_{n_m},s)=0$ for any $s$.
Hence, $m\mapsto\sigma_{n_m}$ is a witness for $\comp{\forall\exists\forall}(x)$.
\end{proof}

\begin{prop}\label{prop:collapse3}
$\comp{\exists^\infty\forall^\infty}\leq_{\sf m}\comp{\exists^\infty\forall}$.
\end{prop}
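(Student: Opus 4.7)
The plan is to construct $y=\eta(x)$ so that the condition $\forall m.\,y_{\pair{n,t}}(m)=0$ characterises $t$ as precisely the minimum threshold $t_n^\ast$ witnessing $\forall^\infty m.\,x_n(m)=0$. This will yield a bijection $n\mapsto\pair{n,t_n^\ast}$ between the sets $A_x:=\{n:\forall^\infty m.\,x_n(m)=0\}$ and $B_y:=\{k:\forall m.\,y_k(m)=0\}$, which immediately gives the truth-equivalence $\comp{\exists^\infty\forall^\infty}(x)\iff\comp{\exists^\infty\forall}(y)$.

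Concretely, I would define $y_{\pair{n,t}}(m)=\max\{x_n(t+m),\,\delta_{n,t}\}$, where $\delta_{n,t}=1$ when $t>0$ and $x_n(t-1)=0$, and $\delta_{n,t}=0$ otherwise. The flag $\delta_{n,t}$ kills every code $\pair{n,t}$ for which $t$ is not the minimum valid threshold for $x_n$, while the main term $x_n(t+m)$ records the eventual-zero condition starting from $t$. A brief case analysis (splitting on whether $t=0$ or not) confirms that $B_y=\{\pair{n,t_n^\ast}:n\in A_x\}$, with the map $n\mapsto\pair{n,t_n^\ast}$ a bijection.

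The forward witness transformation is then routine: from a witness $(a_n,b_n)$ with $a_n\geq n$ and $\forall m\geq b_n.\,x_{a_n}(m)=0$, I compute $t_{a_n}^\ast\leq b_n$ by the finite search $t_{a_n}^\ast=1+\max\{m<b_n:x_{a_n}(m)\neq 0\}$ (or $0$ if the set is empty), and output $f(n)=\pair{a_n,t_{a_n}^\ast}$; by standard Cantor pairing $f(n)\geq a_n\geq n$, and by construction $\forall m.\,y_{f(n)}(m)=0$.

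The main obstacle is the backward transformation. Given a witness $f$ with $f(k)\geq k$ and $f(k)\in B_y$, the naive decoding $f(k)=\pair{a,b}\mapsto(a,b)$ fails because $\pair{a,b}\geq k$ does not force $a\geq k$ (the growth can be absorbed into the second coordinate). I would instead search $k'=k,k+1,\dots$, evaluating $f(k')$ until the first coordinate of its Cantor decomposition exceeds $k$, and then output that decomposition as $g(k)$. This terminates via the bijection $A_x\leftrightarrow B_y$: only finitely many elements of $B_y$ have first coordinate below $k$, so they lie below some bound $M_k$; once $k'>M_k$, the constraint $f(k')\geq k'>M_k$ forces the first coordinate of $f(k')$ to be at least $k$.
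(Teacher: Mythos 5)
Your proposal is correct and rests on the same key idea as the paper's proof: make a row of $y$ identically zero exactly when its code records the \emph{least} threshold witnessing $\forall^\infty t.\,x_n(t)=0$, so that uniqueness of that least witness makes valid codes scarce, the truth values match, and the backward realizer can be an unbounded search that is guaranteed to terminate (the paper argues this via "infinitely many indices must appear among the valid codes", you via a finite bound $M_k$ on the valid codes with small first coordinate together with $f(k')\geq k'$). The only difference is bookkeeping: you index $y$ by single pairs $\pair{n,t}$ with a minimality flag, while the paper indexes by finite increasing sequences of indices paired with their least thresholds; your version is a mild simplification of the same construction.
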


\begin{proof}
Given $x=(x_n)_{n\in\om}$, we construct $y=\eta(x)$ such that
\[\exists^\infty n\exists s\forall t\geq s.\ x_n(t)=0\iff\exists^\infty n.\ y_n=0^\infty.\]

The left-hand side means that $\comp{\exists^\infty\forall^\infty}(x)$ is true, and the right-hand side means that $\comp{\exists^\infty\forall}(y)$ is true.
For an increasing sequence $\sigma=\pair{n_0,\dots,n_\ell}$ and a sequence $\tau=\pair{s_0,\dots,s_\ell}$, we construct an infinite sequence $y_{\sigma,\tau}$.
We want to ensure that $y_{\sigma,\tau}=0^\infty$ iff, for any $i<|\tau|$, $s_i$ is the least witness for $\comp{\forall^\infty}(x_{n_i})$.
The construction procedure of $y_{\sigma,\tau}$ checks if $\tau$ is the least witness for $\sigma$ in the above sense.
To be precise, until the minimality is refuted, inductively assume that $y_{\sigma,\tau}$ is the initial segment of $0^\infty$ of length $s$, at each stage $s$.
If the minimality is refuted; that is, there exists $i\leq\ell$ and $t<s_i$ such that $x_{n_i}(t)=0$ for any $u\in[t,s_i)$, then the construction ensures $y_{\sigma,\tau}\not=0^\infty$ by putting $y_{\sigma,\tau}(u)\not=0$ for $u\geq s$.
Otherwise, $\tau$ is still the least witness for $\sigma$, so $y_{\sigma,\tau}$ is now the initial segment of $0^\infty$ of length $s+1$.

Now let $(n_i,s_i)_{i\in\om}$ be a witness for $\comp{\exists^\infty\forall^\infty}(x)$; that is, $n_i\geq i$ and $x_{n_i}(t)=0$ for any $t\geq s_i$.
By taking a subsequence, without loss of generality, we may assume that $(n_i)_{i\in\om}$ is a strictly increasing sequence.
Moreover, from the information on $(s_i)_{i\in\om}$ one can obtain the least witnesses $(s_i')_{i\in\om}$ by taking $s_i'=\min\{s:\forall t\in[s,s_i].\ x_{n_i}(t)=0\}$.
By our construction, we have $y_{(n_i)_{i\leq\ell},(s_i')_{i\leq\ell}}=0^\infty$ for any $\ell$.
Hence, $\ell\mapsto((n_i)_{i\leq\ell},(s_i')_{i\leq\ell})$ is a witness for $\comp{\exists^\infty\forall}(y)$.

Conversely, let $(\sigma_i,\tau_i)_{i\in\om}$ be a witness for $\comp{\exists^\infty\forall}(y)$, i.e., $y_{\sigma_i,\tau_i}=0^\infty$.
Then, note that there are infinitely many numbers that appear in $(\sigma_i)_{i\in\om}$.
Otherwise, $\{\sigma_i(k):i\in\om,k<|\sigma_i|\}$ is finite, and each $\sigma_i$ corresponds to its finite subset, so $\{\sigma_i:i\in\om\}$ is finite.
However, for each $\sigma_i$, $\tau_i$ is the corresponding least witness, so it is unique by minimality.
In other words, if $\sigma_i=\sigma_j$, then $\tau_i=\tau_j$, but this means that $\{\sigma_i,\tau_i:i\in\om\}$ is finite.
However, this is a witness for $\exists^\infty$, so this cannot be the case.
Hence, for any $n$, search for $i,k$ such that $\sigma_{i}(k)>n$ and put $m_n=\sigma_i(k)$.
By definition, we have $x_{m_n}(t)=0$ for any $t\geq \tau_i(k)$, so put $s_n=\tau_i(k)$.
Hence, $n\mapsto(m_n,s_n)$ is a witness for $\comp{\exists^\infty\forall^\infty}(x)$; that is, $m_n\geq n$ and $x_{m_n}(t)=0$ for any $t\geq s_n$.
\end{proof}

\begin{proof}[Proof of Theorem \ref{thm:formula-class-collapse} (4), (5)]
First we have the absorption relation $\exists^\infty\exists\forall^\infty\to\exists^\infty\exists\exists\forall\to\exists^\infty\exists\forall\to\forall\exists\exists\forall\to\forall\exists\forall$.
Therefore, by Observation \ref{obs:absorb-bm-relation}, we get $\comp{\exists^\infty\exists\forall^\infty}\leq_{\sf dm}\comp{\exists^\infty\exists\forall}\leq_{\sf dm}\comp{\forall\exists\forall}$.
Moreover, by Proposition \ref{prop:ex-ex-infty-bm-reducible}, $\comp{\bar{\sf Q}\forall}\leq_{\sf dm}\comp{\bar{\sf Q}\forall^\infty}$, and thus, $\comp{\exists^\infty\exists\forall}\leq_{\sf dm}\comp{\exists^\infty\exists\forall^\infty}$.
Then,  by Proposition \ref{prop:collapse2}, we get $\comp{\forall\exists\forall}\leq_{\sf m}\comp{\exists^\infty\exists\forall}$.
This verifies the item (4).

Similarly, by Proposition \ref{prop:ex-ex-infty-bm-reducible}, we have $\comp{\exists^\infty\forall}\leq_{\sf dm}\comp{\exists^\infty\forall^\infty}$ and by Proposition \ref{prop:collapse3} we get $\comp{\exists^\infty\forall^\infty}\leq_{\sf m}\comp{\exists^\infty\forall}$.
This verifies the item (5).
\end{proof}

There are several other reducibility results, so let us mention them here.

\begin{prop}
$\comp{\forall\forall^\infty\forall}\leq_{\sf m}\comp{\exists^\infty\forall^\infty\forall}$.
\end{prop}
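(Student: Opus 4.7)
The plan is to exhibit the reduction by a cumulative-maximum construction, entirely in the spirit of the earlier proofs in this paper. Given a presentation $x=(x_j)_{j\in\om}$ of an instance of $\comp{\forall\forall^\infty\forall}$, I set $y=\eta(x)$ by
\[
y(n,m,k)\;=\;\max_{j\leq n} x(j,m,k).
\]
Because a finite pointwise maximum of nonnegative integer functions is eventually zero (in $m$, uniformly in $k$) if and only if each of its summands is, the inner $\forall^\infty\forall$-assertion ``$\forall^\infty m\forall k.\ y(n,m,k)=0$'' is equivalent to ``$\forall j\leq n\ \forall^\infty m\forall k.\ x(j,m,k)=0$''.

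From this observation the classical equivalence
\[
\forall n\forall^\infty m\forall k.\ x(n,m,k)=0 \iff \exists^\infty n\forall^\infty m\forall k.\ y(n,m,k)=0
\]
falls out immediately: if every $x_j$ is eventually zero, then \emph{every} $n$ makes $y_n$ eventually zero, so certainly infinitely many $n$ do; conversely, given any target index $j$, any single good $n\geq j$ on the right-hand side yields the pointwise estimate $x(j,\cdot,\cdot)\leq y(n,\cdot,\cdot)$ and hence that $x_j$ is eventually zero.

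The two realizers are then obtained routinely. From a witness $(s_j)_{j\in\om}$ for the LHS, I would let $r_-$ output the sequence $i\mapsto(i,\max_{j\leq i}s_j)$; this witnesses the RHS because its first coordinate $i$ plainly satisfies the required constraint $a_i\geq i$. Conversely, from a witness $(a_i,b_i)_{i\in\om}$ for the RHS, I would let $r_+$ output the sequence $j\mapsto b_j$; using $a_j\geq j$ one concludes $x(j,m,k)\leq y(a_j,m,k)=0$ for all $m\geq b_j$ and all $k$. I do not foresee a serious obstacle here: the only point worth emphasising is that the clause $a_i\geq i$ built into any $\exists^\infty$-witness is exactly what is needed to cover every target index $j$ in the backward realizer, which is what makes the ``at most $n$'' truncation in the running maximum sufficient.
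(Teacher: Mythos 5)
Your proposal is correct and follows essentially the same route as the paper: the key step is the cumulative maximum over the outer index ($y(n,\cdot,\cdot)=\max_{j\leq n}x(j,\cdot,\cdot)$), with $r_-$ taking maxima of thresholds and $r_+$ exploiting the constraint $a_i\geq i$ built into $\exists^\infty$-witnesses. The paper merely phrases the same construction through the boundedness problems $\forall{\sf Bdd}\leq_{\sf m}\exists^\infty{\sf Bdd}$ (via Observation \ref{obs:all-bdd-dicomplete}) instead of the eventually-zero form of the complete formulas, which is an inessential difference.
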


\begin{proof}
As in Observation \ref{obs:all-bdd-dicomplete}, we have $\comp{\forall\forall^\infty\forall}\equiv_{\sf dm}\forall{\sf Bdd}$ and $\comp{\exists^\infty\forall^\infty\forall}\equiv_{\sf dm}\exists^\infty{\sf Bdd}$.
Hence, it suffices to show $\forall{\sf Bdd}\leq_{\sf m}\exists^\infty{\sf Bdd}$.
Given $x=(x_n)_{n\in\om}$, we construct $y=\eta(x)$ such that
\[\forall n\exists b\forall t.\ x_n(t)\leq b\iff\exists^\infty n\exists b.\ y_n(t)\leq b.\]

The left-hand side means that ${\forall{\sf Bdd}}(x)$ is true, and the right-hand side means that ${\exists^\infty{\sf Bdd}}(y)$ is true.
Define $y_n(t)=\max_{i\leq n}x_i(t)$.
Let $b=(b_n)_{n\in\om}$ be a witness for ${\forall{\sf Bdd}}(x)$; that is, $b_n$ is an upper bound for $x_n$.
Clearly, the maximum $c_n=\max_{i\leq n}b_i$ of upper bounds of $(x_i)_{i\leq n}$ is an upper bound of $y_n$.
Conversely, if we have upper bounds for $y_n$ for infinitely many $n$; that is, we have $k$ such that $k(n)\geq n$ and $c_n$ is an upper bound for $y_{k(n)}$ for any $n$.
Then $c_n$ is also an upper bound for $x_n$.
\end{proof}

\begin{prop}
$\comp{\forall\forall^\infty}\leq_{\sf m}\comp{\exists^\infty\forall^\infty}$.
\end{prop}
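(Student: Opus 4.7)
The plan is to mirror the construction of the previous proposition, dropping the innermost $\forall$-block. Given $x=(x_n)_{n\in\om}$, I would set
\[y_n(t)=\max_{i\leq n}x_i(t),\]
which is a computable operation, and take $\eta(x)=y$. The key structural fact driving the equivalence is monotonicity in $n$: if $m\leq n$ then $x_m(t)\leq y_n(t)$, so any eventual-zero behavior of $y_n$ forces eventual-zero behavior of $x_m$; conversely, eventual-zero behavior of $x_0,\dots,x_n$ combines into eventual-zero behavior of $y_n$ by taking the maximum of the stabilization stages.

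Concretely, I would verify the extensional equivalence
\[\forall n\forall^\infty t.\ x_n(t)=0\iff\exists^\infty n\forall^\infty t.\ y_n(t)=0,\]
as follows. For ($\Rightarrow$), if each $x_i$ is eventually $0$, then for every $n$ the function $y_n$ is eventually $0$ (using the finite maximum of the stabilization stages), so in particular \emph{all} $n$ witness the right-hand side. For ($\Leftarrow$), given any $m$, pick $n\geq m$ such that $y_n$ is eventually $0$; since $x_m\leq y_n$ pointwise, $x_m$ is eventually $0$ too.

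For the forward realizer $r_-$: given a witness $n\mapsto s_n$ for the left-hand side, I would define $s'_n=\max_{i\leq n}s_i$, and output the witness $n\mapsto\pair{n,s'_n}$ for the right-hand side (the index component $n\geq n$ trivially meets the $\exists^\infty$-requirement). For the backward realizer $r_+$: given a witness $n\mapsto\pair{k_n,s_n}$ for the right-hand side, with $k_n\geq n$ and $y_{k_n}(t)=0$ for all $t\geq s_n$, I would output the witness $m\mapsto s_m$; this works because $m\leq k_m$ and $x_m(t)\leq y_{k_m}(t)=0$ for $t\geq s_m$.

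There is no real obstacle here: the construction is a straightforward simplification of the proof of the preceding proposition, and the only thing to keep track of is that $\leq_{\sf m}$ (not $\leq_{\sf dm}$) is being claimed, so we need not worry about the dual direction — which is good, because the backward assignment $n\mapsto n$ for $\exists^\infty$ would not suffice to recover a dual witness.
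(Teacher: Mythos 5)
Your proposal is correct and matches the paper's proof essentially verbatim: the same reduction $y_n(t)=\max_{i\leq n}x_i(t)$, the same forward realizer $n\mapsto\pair{n,\max_{i\leq n}s_i}$, and the same backward argument using $x_m(t)\leq y_{k}(t)$ for $k\geq m$ (the paper phrases the backward witness via an index $n_m$ with $m\leq a(n_m)$, which you simply instantiate as $n_m=m$). No gaps.
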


\begin{proof}
Given $x=(x_n)_{n\in\om}$, we construct $y=\eta(x)$ such that
\[\forall n\forall^\infty t.\ x_n(t)=0\iff\exists^\infty n\forall^\infty t.\ y_n(t)=0.\]

Again, we define $y_n(t)=\max_{i\leq n}x_i(t)$.
Let $n\mapsto t_n$ be a witness for $\comp{\forall\forall^\infty}(x)$.
Then one can easily see that $n\mapsto(n,\max_{i\leq n}t_n)$ is a witness for $\comp{\exists^\infty\forall^\infty}(y)$.
Conversely, let $n\mapsto (a(n),s_n)$ be a witness for $y\in\comp{\exists^\infty\forall^\infty}$; that is, $y_{a(n)}(t)=0$ for any $t\geq s_n$, and for any $m$ there is $n_m$ such that $m\leq a(n_m)$.
By definition, we have $x_m(t)\leq y_{a(n_m)}(t)=0$.
In particular, $x_m(t)=0$ for any $t\geq s_{n_m}$.
Hence, $m\mapsto s_{n_m}$ is a witness for $\comp{\forall\forall^\infty}(x)$.
\end{proof}

Using the above, we can now see that all of the $\Sigma_3$- and $\Pi_3$-patterns have already been presented in Figure \ref{fig:quantifier-class}.

\begin{proof}[Proof of Theorem \ref{thm:sigma3-quantifier-classification}]
Let $\mathcal{E}$ be the set of all quantifier-patterns presented in Example \ref{exa:Sigma03-absorb}.
By Proposition \ref{prop:Sigma3-bluteforce-list}, any $\Sigma_3$-pattern $\bar{\sf Q}$ is absorbable into some $\bar{\sf P}\in\mathcal{E}$ and vice versa, so by Observation \ref{obs:absorb-bm-relation} we have $\bar{\sf Q}\equiv_{\sf dm}\bar{\sf P}$.
Now let us see that any quantifier-pattern $\bar{\sf Q}\in\mathcal{E}$ of length $4$ or more is ${\sf dm}$-equivalent to a quantifier-pattern of length at most $3$.
Any such pattern $\bar{\sf Q}$, except for $\forall^\infty\forall\exists^\infty\exists$, contains either $\exists\forall\exists$ or $\exists\forall^\infty\exists^\infty$ as a subpattern, so by Theorem \ref{thm:formula-class-collapse} (1), we get $\comp{\exists\forall\exists}\leq_{\sf dm}\comp{\bar{\sf Q}}$.
If $\bar{\sf Q}$ is a $\Sigma_3$-pattern, then it is clearly absorbable into $\exists\forall\exists$, so by Observation \ref{obs:absorb-bm-relation}, we get $\comp{\bar{\sf Q}}\equiv_{\sf dm}\comp{\exists\forall\exists}$.
For $\bar{\sf Q}=\forall^\infty\forall\exists^\infty\exists$, we have the absorption relations $\forall^\infty\forall\exists^\infty\exists\to\forall^\infty\forall\forall\exists\exists\to\forall^\infty\forall\exists$ and $\forall^\infty\forall\exists\to\forall^\infty\forall\exists^\infty\exists$, so by Observation \ref{obs:absorb-bm-relation} we get $\comp{\forall^\infty\forall\exists}\equiv_{\sf dm}\comp{\forall^\infty\forall\exists^\infty\exists}$.

Thus, every $\Sigma_3$-pattern is ${\sf dm}$-equivalent to a pattern $\bar{\sf P}\in\mathcal{E}$ of length at most $3$, and every $\Pi_3$-pattern is ${\sf dm}$-equivalent to the $\bar{\sf P}^{\sf d}$ of a pattern $\bar{\sf P}\in\mathcal{E}$ of length at most $3$.
Any pattern $\bar{\sf P}\in\mathcal{E}$ of length at most $3$ has already been presented in the left diagram of Figure \ref{fig:arith2}.
First, by Theorem \ref{thm:formula-class-collapse} (1),(2), all the patterns belonging to the polygon in the left diagram of Figure \ref{fig:arith2} are ${\sf m}$-equivalent.
Moreover, by Theorem \ref{thm:formula-class-collapse} (3), all the patterns belonging to the ellipse in the left diagram of Figure \ref{fig:arith2} are ${\sf m}$-equivalent.
Similarly, any pattern which is the dual of some $\bar{\sf P}\in\mathcal{E}$ of length at most $3$ has already been presented in the right diagram in Figure \ref{fig:arith2}.
By Theorem \ref{thm:formula-class-collapse} (1),(4), all the patterns belonging to the polygon in the right diagram of Figure \ref{fig:arith2} are ${\sf m}$-equivalent.
Finally, by Theorem \ref{thm:formula-class-collapse} (5), all the patterns belonging to the ellipse in the right diagram of Figure \ref{fig:arith2} are ${\sf m}$-equivalent.
Therefore, the structures of the $\Sigma_3$- and $\Pi_3$-patterns in Figure \ref{fig:arith2} collapse as shown in Figure \ref{fig:quantifier-class}, and only the three $\equiv_{\sf m}$-classes of $\Sigma_3$-patterns and five $\equiv_{\sf m}$-classes of $\Pi_3$-patterns survive.

Now, consider the number of ${\sf dm}$-equivalence classes of $\Pi_3$-patterns.
For the same reason as above, we only need to consider the duals of quantifier-patterns of length at most $3$ that belong to $\mathcal{E}$.
By Theorem \ref{thm:formula-class-collapse}, all the patterns in the common part of the polygonal regions in the left and right diagrams of Figure \ref{fig:arith2} are $\equiv_{\sf dm}$-equivalent.
Also, the proof of Theorem \ref{thm:formula-class-collapse} (4) actually shows $\comp{\exists^\infty\exists\forall}\equiv_{\sf dm}\comp{\exists^\infty\exists\forall^\infty}$.
Therefore, the polygonal region in the right diagram of Figure \ref{fig:arith2} is divided into at most two $\equiv_{\sf dm}$-equivalence classes.
Together with the other five quantifier-patterns, this gives at most seven $\equiv_{\sf dm}$-equivalence classes.
\end{proof}

\subsection{Separation}\label{sec:separation}

In order to argue that the classification of formulas based on quantifier-patterns is meaningful, we should show that quantifier-patterns provide a separation of many-one complexity.
Below the disjunction $\lor$ is always expressed by the quantifier $\exists i\in\{0,1\}$.
Then, the $\lor\forall$-complete problem $\comp{\lor\forall}$ is given as follows:
\[x_0=0^\infty\ \lor\ x_1=0^\infty.\]

\begin{definition}[\cite{Kihara}]
A formula $\varphi$ is {\em amalgamable} if there exists a partial computable function $\kappa$ such that if at least one of $a_0,\dots,a_\ell$ is a witness for $\varphi(x)$, then $\kappa(a_0,\dots,a_\ell,x)$ is a witness for $\varphi(x)$.
\end{definition}

\begin{lemma}[\cite{Kihara}]\label{lem:amalgamable}
If $\varphi$ is amalgamable, then $\comp{\lor\forall}\not\leq_{\sf m}\varphi$.
\end{lemma}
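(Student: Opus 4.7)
The plan is a standard diagonalization. I will suppose for contradiction that $\comp{\lor\forall}\leq_{\sf m}\varphi$ via some computable $\eta,r_-,r_+$, and let $\kappa$ realize the amalgamability of $\varphi$. Note that a simplified witness for $\comp{\lor\forall}(x_0,x_1)$ is effectively just an index $i\in\{0,1\}$ with $x_i=0^\infty$, since the inner $\Pi_1$-subformula carries no further realizability content.

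The first step is to manufacture from the reduction an effective selector. Writing $\bar{x}=(x_0,x_1)$, I would define
\[
P(\bar{x})\;:=\;r_+\bigl(\kappa(r_-(0,\bar{x}),\,r_-(1,\bar{x}),\,\eta(\bar{x})),\,\bar{x}\bigr),
\]
and check that whenever $\comp{\lor\forall}(\bar{x})$ holds, $P(\bar{x})$ returns an index $i\in\{0,1\}$ with $x_i=0^\infty$. Indeed, at least one of $r_-(0,\bar{x}),\,r_-(1,\bar{x})$ is a witness for $\varphi(\eta(\bar{x}))$; amalgamability then promotes $\kappa(\ldots)$ to a witness, and $r_+$ pulls it back. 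Thus $P$ is a total computable functional on $\{\bar{x}:\comp{\lor\forall}(\bar{x})\}$ that correctly names a zero coordinate.

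The second step is the diagonalization. I would run $P$ on $(0^\infty,0^\infty)$ and let $i\in\{0,1\}$ be its output; by continuity of computable functionals on total oracles, this computation consults only finitely many values from each argument, bounded by some position $n$. Then I set $x_{1-i}=0^\infty$ and let $x_i$ agree with $0^\infty$ at positions $\leq n$ but satisfy $x_i(n+1)=1$ (continuing arbitrarily but computably). Both $x_0,x_1$ are total computable, $\comp{\lor\forall}(x_0,x_1)$ still holds because $x_{1-i}=0^\infty$, and the run of $P$ on $(x_0,x_1)$ makes exactly the same queries with the same answers as the reference run, so $P(x_0,x_1)=i$. But $x_i\neq 0^\infty$, so $i$ is not a valid witness, contradicting the correctness of $P$.

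The main obstacle I foresee is not conceptual but expository: cleanly invoking the finite-use (continuity) property of computable functionals in the formalism of the paper, so that modifying an oracle past the use of a reference computation leaves the output unchanged. Once this is in place, the rest of the argument is routine bookkeeping.
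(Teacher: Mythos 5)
Your overall strategy is the natural one for this lemma (the paper itself gives no proof and only cites the author's earlier article): amalgamate the two pushed-forward realizers $r_-(0,\bar{x})$ and $r_-(1,\bar{x})$ over the all-zero instance, pull the result back through $r_+$ to get an index $i$, and then use finite use to spoil coordinate $i$ beyond the use. However, there is a genuine gap in your Step 1 claim that $P$ is ``a total computable functional on $\{\bar{x}:\comp{\lor\forall}(\bar{x})\}$ that correctly names a zero coordinate''. In this framework realizers are partial computable and are guaranteed to converge, let alone be correct, only on genuine witnesses; the paper's own separation proofs are explicit about this (they first check that the argument is a witness and only then conclude ``so $r_-(g,x)$ is defined''), and $\kappa$ is by definition a partial computable function whose guarantee is stated for tuples of elements of $\om^\om$ at least one of which is a witness. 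On your diagonal instance $\bar{x}'$ the index $i$ is not a witness for $\comp{\lor\forall}(\bar{x}')$, so $r_-(i,\bar{x}')$ need not be defined or total; hence you cannot invoke amalgamability for $\kappa(r_-(0,\bar{x}'),r_-(1,\bar{x}'),\eta(\bar{x}'))$, and the ``correctness of $P$'' at $\bar{x}'$ --- exactly what your contradiction rests on --- is not established. (Continuity does give that the computation converges on $\bar{x}'$ and outputs $i$; what is missing is the reason why that output ought to be a correct witness.)

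The gap is repairable, and the repair is the step your sketch omits: for the modified instance, do not feed $\kappa$ the possibly undefined object $r_-(i,\bar{x}')$, but a total stand-in agreeing with the reference run on its use --- the simplest choice is the reference object $r_-(i,(0^\infty,0^\infty))$ itself --- paired with the genuine witness $r_-(1-i,\bar{x}')$, which (choosing the modification point beyond all the relevant uses of $\eta$, $r_-$, $\kappa$, $r_+$) agrees with $r_-(1-i,(0^\infty,0^\infty))$ on the prefix that $\kappa$ actually reads. Amalgamability now applies, so $r_+$ applied to $\kappa$'s output and $\bar{x}'$ must name $1-i$, while continuity of the composed computation forces its index output to be $i$; that is the contradiction. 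So the fix is to make the use explicit for each of $\eta$, $r_-$, $\kappa$, $r_+$ and to run the correctness argument with these total substitutes, rather than treating $P(\bar{x}')$ as a black box whose correctness follows from Step 1.
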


\begin{prop}\label{prop:separation1}
$\comp{\lor\forall}\not\leq_{\sf m}\comp{\forall^\infty\forall\exists}$.
\end{prop}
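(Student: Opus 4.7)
My plan is to apply Lemma \ref{lem:amalgamable} by showing that $\comp{\forall^\infty\forall\exists}$ is amalgamable. Recall that $\comp{\forall^\infty\forall\exists}$ can be written as $\exists s\forall n\geq s\forall m\exists t.\ x(n,m,t)=0$, whose Kleene realizer is naturally presented as a pair $\pair{s,\beta}$ consisting of the threshold $s$ and a function $\beta$ such that $x(n,m,\beta(n,m))=0$ for all $n\geq s$ and all $m$.

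To amalgamate given candidates $\alpha_0,\ldots,\alpha_\ell$ with $\alpha_i=\pair{s_i,\beta_i}$, I would define $\kappa(\alpha_0,\ldots,\alpha_\ell,x)=\pair{s,\beta}$ by setting $s=\max_{i\leq\ell}s_i$ and letting $\beta(n,m)$ be the least $t$ found by brute-force search satisfying $x(n,m,t)=0$. Note that we do not need to use the given inner parts $\beta_i$ at all: only the thresholds $s_i$ matter, and the inner $\Sigma_1$-witnesses are recovered by unbounded search, in the same spirit as the simplification described for $\Sigma_3$-witnesses earlier in the paper.

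The verification is essentially immediate. Suppose at least one input, say $\alpha_j$, actually realizes $\comp{\forall^\infty\forall\exists}(x)$. Then for every $n\geq s_j$ and every $m$ we have $\exists t.\ x(n,m,t)=0$. Since $s=\max_{i\leq\ell}s_i\geq s_j$, the same conclusion holds for every $n\geq s$ and every $m$, so the search defining $\beta(n,m)$ terminates for all $(n,m)$ with $n\geq s$. Thus $\kappa(\alpha_0,\ldots,\alpha_\ell,x)=\pair{s,\beta}$ is a genuine Kleene realizer of $\comp{\forall^\infty\forall\exists}(x)$, establishing amalgamability.

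Combining this with Lemma \ref{lem:amalgamable} then yields $\comp{\lor\forall}\not\leq_{\sf m}\comp{\forall^\infty\forall\exists}$. There is no real obstacle in this argument; the whole point is that the only nontrivial data in a $\forall^\infty\forall\exists$-realizer is a single natural number (the threshold), and natural numbers admit the trivial amalgamator $\max$. The contrast with $\comp{\lor\forall}$ — where one must genuinely commit to a disjunct $i\in\{0,1\}$ and has no canonical way to choose — is exactly what Lemma \ref{lem:amalgamable} is designed to exploit.
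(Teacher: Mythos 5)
Your proposal is correct and follows essentially the same route as the paper: invoke Lemma \ref{lem:amalgamable}, observe that the only essential content of a $\forall^\infty\forall\exists$-witness is the threshold (the inner $\forall\exists$-part being computably recoverable), and amalgamate by taking the maximum of the thresholds. The only cosmetic caveat is that your brute-force $\beta(n,m)$ may diverge for $n$ below the amalgamated threshold, so one should either default those values to $0$ or simply work with the simplified witnesses (just the threshold), as the paper does.
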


\begin{proof}
By Lemma \ref{lem:amalgamable}, it suffices to show that $\comp{\forall^\infty\forall\exists}$ is amalgamable.
Note that a $\forall^\infty\forall\exists$-complete problem is of the following form:
\[\exists n\forall m\geq n\forall k\exists\ell.\ x(m,k,\ell)\not=0\]

Note that a witness for the $\forall\exists$-part can be computably recovered, so the essential information in a witness is only the $\exists n$ part.
Also, if $n$ is a witness for the above formula, then so is any $n'\geq n$.
To show amalgamability, assume that at least one of $a_0,\dots,a_\ell$ is a witness for the above formula.
Then, $\max_{i\leq\ell}a_i$ must also be a witness.
Hence, by considering $\kappa(a_0,\dots,a_\ell,x)=\max_{i\leq\ell}a_i$, conclude that $\comp{\forall^\infty\forall\exists}$ is amalgamable.
\end{proof}

This shows $\comp{\forall^\infty\forall\exists}\equiv_{\sf m}\comp{\forall^\infty\exists^\infty}<_{\sf m}\comp{\exists\forall\exists}$ since clearly $\comp{\lor\forall}<_{\sf m}\comp{\exists\forall\exists}$.

\begin{prop}\label{prop:separation2}
$\comp{\lor\forall}\not\leq_{\sf m}\comp{\forall\forall^\infty\forall}$.
\end{prop}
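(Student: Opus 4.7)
The plan is to follow the strategy of Proposition \ref{prop:separation1}: invoke Lemma \ref{lem:amalgamable} and reduce the task to showing that $\comp{\forall\forall^\infty\forall}$ is amalgamable. By Observation \ref{obs:all-bdd-dicomplete}, I can represent the complete problem by $\forall{\sf Bdd}$, namely $\forall n\exists k\forall t.\ x(n,t)\leq k$, whose simplified witness is a function $n\mapsto k_n$ assigning to each $n$ an upper bound of the sequence $t\mapsto x(n,t)$. (Equivalently, for the general $\forall\forall^\infty\forall$-formula $\forall n\forall^\infty k\forall\ell.\,\theta(n,k,\ell,x)$, a simplified witness is a function $n\mapsto m_n$ such that $\theta(n,k,\ell,x)$ holds for all $k\geq m_n$ and all $\ell$.)

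Given candidate witnesses $a_0,\dots,a_p\colon\om\to\om$ with the promise that at least one is genuine, I would define the amalgamation
\[\kappa(a_0,\dots,a_p,x)(n)=\max_{i\leq p}a_i(n),\]
which is computable uniformly in the inputs. If $a_j$ is a true witness, then for every $n$ the value $\kappa(\cdots)(n)\geq a_j(n)$ is itself an upper bound of $t\mapsto x(n,t)$; hence $\kappa$ returns a genuine witness. This establishes amalgamability of $\comp{\forall\forall^\infty\forall}$, and Lemma \ref{lem:amalgamable} then yields $\comp{\lor\forall}\not\leq_{\sf m}\comp{\forall\forall^\infty\forall}$.

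The one point worth flagging, by contrast with Proposition \ref{prop:separation1}, is that a $\forall\forall^\infty\forall$-witness is an entire function rather than a single natural number, so one cannot just take a numerical maximum of the inputs. However, because the outermost quantifier is the universal $\forall n$, the amalgamation can be performed coordinatewise, and monotonicity in the $\forall^\infty$ (or $\exists k$) slot makes the verification immediate. I do not anticipate any real obstacle beyond this observation.
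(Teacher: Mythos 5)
Your proposal is correct and matches the paper's own argument: the paper likewise invokes Lemma \ref{lem:amalgamable} and shows amalgamability of $\comp{\forall\forall^\infty\forall}$ by taking the coordinatewise maximum $g(n)=\max_{i\leq\ell}f_i(n)$ of the candidate witness functions, using that any function majorizing a witness is again a witness. Passing through $\forall{\sf Bdd}$ versus the canonical complete formula is an inessential difference.
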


\begin{proof}
By Lemma \ref{lem:amalgamable}, it suffices to show that $\comp{\forall\forall^\infty\forall}$ is amalgamable.
Note that a $\forall\forall^\infty\forall$-complete problem is of the following form:
\[\forall n\exists s\forall t\geq s.\ x_n(t)=0^\infty.\]

A witness for the above formula is of the form $n\mapsto s$, so it is a function $f\colon\om\to\om$.
Moreover, if $f$ is a witness for the above formula, then so is any $g$ majorizing $f$ (i.e., $g(n)\geq f(n)$ for any $n$).
To show amalgamability, assume that at least one of $f_0,\dots,f_\ell$ is a witness for the above formula.
Define $\kappa(f_0,\dots,f_\ell,x)=g$ by $g(n)=\max_{i\leq\ell}f_i(n)$.
Then $g$ majorizes any $f_i$, so $g$ is a witness for the above formula.
This shows that $\comp{\forall\forall^\infty\forall}$ is amalgamable.
\end{proof}

Note that one can easily see $\comp{\lor\forall}\leq_{\sf m}\comp{\exists^\infty\forall}$ by considering $y_{2n}=x_0$ and $y_{2n+1}=x_1$.
Hence, we get $\comp{\exists^\infty\forall}\not\leq_{\sf m}\comp{\forall\forall^\infty\forall}$.

\begin{theorem}\label{thm:separation3}
$\comp{\forall^\infty\exists^\infty}\not\leq_{\sf m}\comp{\forall^\infty\exists}$.
\end{theorem}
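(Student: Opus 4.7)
The plan is to suppose, for contradiction, that a computable reduction $\eta,r_-,r_+$ witnesses $\comp{\forall^\infty\exists^\infty}\leq_{\sf m}\comp{\forall^\infty\exists}$, and to derive a contradiction from the Type-$2$ continuity of the reduction maps. Since both complete problems have simplified witnesses given by single natural numbers (thresholds), $r_+\colon\om\times\om^\om\to\om$ is a total computable function such that, whenever $s$ is a valid $\psi$-witness for $\eta(x)$ (i.e.\ $\forall n\geq s\,\exists t.\,\eta(x)(n,t)\neq 0$), the output $r_+(s,x)$ is a valid $\varphi$-witness for $x$ (i.e.\ $\forall n\geq r_+(s,x)\,\exists^\infty t.\,x(n,t)\neq 0$).

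First I would construct the diagonalizing family. For each $T\in\om$, let $x_T$ be the computable sequence that agrees with the all-ones sequence $\mathbf{1}$ everywhere except $x_T(T,t)=0$ for every $t$ (row $T$ is ``killed''). Then $\varphi(x_T)$ is true and its minimum valid $\varphi$-witness is exactly $T+1$; by correctness of the reduction, $\psi(\eta(x_T))$ is also true and every valid $\psi$-witness $s$ for $\eta(x_T)$ must satisfy $r_+(s,x_T)\geq T+1$. Since $x_T\to\mathbf{1}$ pointwise as $T\to\infty$ and $\eta$ is continuous, $\eta(x_T)\to\eta(\mathbf{1})$ pointwise, and in particular $\psi(\eta(\mathbf{1}))$ is true, so $\eta(\mathbf{1})$ has some minimum valid $\psi$-witness $s^\star$.

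Second, I would exploit the continuity to produce the contradiction. For each $n\geq s^\star$, fix a $t_n$ with $\eta(\mathbf{1})(n,t_n)\neq 0$; the computation of $\eta(\cdot)(n,t_n)$ queries only a finite set $F_n$ of input cells, so if $T$ is not among the (finitely many) first coordinates of cells of $F_n$, then $\eta(x_T)(n,t_n)=\eta(\mathbf{1})(n,t_n)\neq 0$. Writing $B_n$ for this finite ``exceptional'' set of $T$'s and $s^{\min}(T)$ for the minimum valid $\psi$-witness for $\eta(x_T)$, the intended outcome is that some single $S^\star\in\om$ satisfies $s^{\min}(T)\leq S^\star$ for infinitely many $T$. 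Once such $S^\star$ is secured, $S^\star$ is a valid $\psi$-witness on that infinite subsequence, forcing $r_+(S^\star,x_T)\geq T+1\to\infty$; but $r_+(S^\star,\cdot)$ is continuous and $x_T\to\mathbf{1}$, so $r_+(S^\star,x_T)$ stabilises along the subsequence to the finite value $r_+(S^\star,\mathbf{1})$, the desired contradiction.

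The main obstacle is establishing the uniform bound $s^{\min}(T)\leq S^\star$ along a cofinal set of $T$'s. Finiteness of each $B_n$ alone is not enough: the sets $B_n$ for $n\in[s^\star,\infty)$ could, in principle, cover every $T$, allowing $s^{\min}(T)$ to tend to infinity. Handling this honestly requires either replacing the family $\{x_T\}$ with a more delicate family of single-bit or single-column perturbations of $\mathbf{1}$ whose use-sets $F_n$ are controlled by inspecting the computation of $\eta$ in advance, or combining the finiteness of each $B_n$ with a compactness/K\"onig-style selection to extract a computable subsequence of $T$'s on which a uniform $S^\star$ works. This compactness step---choosing the perturbations so that the exceptional $T$'s concentrate rather than spread over all of $\om$---is the technical heart of the argument, and is where the distinction between the $\Pi_2$ interior of $\forall^\infty\exists^\infty$ and the $\Pi_1$ interior of $\forall^\infty\exists$ genuinely enters.
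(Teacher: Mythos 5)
Your proposal stops short of a proof, and the gap you flag at the end is not a technicality that a compactness argument will absorb: it is exactly the point where your choice of perturbations fails. With the family $x_T$ (kill all of row $T$, keep everything else $1^\infty$), nothing prevents the hypothetical reduction from having $s^{\min}(T)\to\infty$. Concretely, a map $\eta$ that simply copies its input, $\eta(x)_n=x_n$, is continuous, and on the subfamily $\{x_T:T\in\om\}\cup\{\mathbf{1}\}$ it is a correct truth-value reduction (each $x_T$ and $\mathbf{1}$ satisfy both sides); for it, the least $\psi$-witness of $\eta(x_T)$ is $T+1$, so no $S^\star$ is a valid witness for infinitely many $T$ and no contradiction can be extracted from this family together with continuity alone. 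Since your argument up to the admitted obstacle uses only correctness on $\{x_T\}\cup\{\mathbf{1}\}$ plus continuity, the obstacle is genuine: whole-row perturbations indexed by an arbitrary $T$ cannot work, and the missing step is not a K\"onig-style selection but a different choice of what to perturb.

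The paper's proof supplies precisely the missing idea: let the realizers choose the row to attack, and kill only its tail beyond a continuity stage. Starting from $x$ with all rows $1^\infty$, compute $a=r_-(0,x)$, then $n=r_+(a,x)$, then $c=r_-(n+1,x)$ (note $n+1$ is still a $\varphi$-witness). Choose $s$ large enough that $r_+(a,x\upto s)=n$, $r_-(n+1,x\upto s)=c$, and $\eta(x\upto s)$ already exhibits a nonzero value in every row $b\in[a,c]$. Now form $x'$ by setting row $n$ to $0$ from stage $s$ on and leaving all other rows $1^\infty$. Then $n+1$ is still a witness for $\comp{\forall^\infty\exists^\infty}(x')$, so $c=r_-(n+1,x')$ is a witness for $\eta(x')$; together with the locked nonzero values on $[a,c]$ this makes $a$ a witness for $\eta(x')$ as well, whence $r_+(a,x')=n$ must be a witness for $\comp{\forall^\infty\exists^\infty}(x')$ --- contradicting the fact that row $n$ of $x'$ has only finitely many nonzero entries. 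If you want to salvage your write-up, replace the family $\{x_T\}$ and the sought uniform bound $S^\star$ by this single, realizer-guided perturbation; the rest of your continuity bookkeeping (use of finite initial segments to freeze $r_\pm$ and finitely many witness cells of $\eta$) then goes through essentially as you describe.
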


\begin{proof}
Assume $\comp{\forall^\infty\exists^\infty}\leq_{\sf m}\comp{\forall^\infty\exists}$ via $\eta,r_-,r_+$.
In particular, for any $x=(x_n)_{n\in\om}$ and $y=\eta(x)$, we have the following:
\[\forall^\infty n\exists^\infty t.\ x_n(t)\not=0\iff \forall^\infty n\exists t.\ y_n(t)\not=0.\]

For each $n$, begin with $x_n=1^\infty$.
Then, for instance, $0$ is a witness for $\comp{\forall^\infty\exists^\infty}(x)$.
Hence, $a=r_-(0,x)$ is a witness for $\comp{\forall^\infty\exists}(y)$.
Then $n=r_+(a,x)$ is a witness for $\comp{\forall^\infty\exists^\infty}(x)$, which means that, for any $m\geq n$, $x_m$ has infinitely many nonzero values.
Thus, $n+1$ is also a witness for $\comp{\forall^\infty\exists^\infty}(x)$.
Hence, $c=r_-(n+1,x)$ is a witness for $\comp{\forall^\infty\exists}(y)$.

Now, since $a$ is a witness for $\comp{\forall^\infty\exists}(y)$, for any $b\geq a$ there is $t$ such that $y_b(t)\not=0$.
By continuity, there is $s$ such that $r_+(a,x\upto s)=n$ and $r_-(n+1,x\upto s)=c$.
Moreover, if $s$ is sufficiently large, then for $y^s=\eta(x\upto s)$, for any $b\in[a,c]$ we already see a value $t<s$ such that $y^s_b(t)\not=0$.
Then, let $x_n'$ be the result of switching all values of $x_n$ after $s$ to $0$; that is, $x_n'\upto s=x\upto s$ and $x'(u)=0$ for any $u\geq s$.
For $m\not=n$, keep $x'_m=x_m=1^\infty$.
Now, let us consider $y'=\eta(x')$.

By our construction, $n$ is a not witness for $\comp{\forall^\infty\exists^\infty}(x')$ since $x'_n$ contains only finitely many nonzero values, but $n+1$ is a witness.
Our choice of $s$ keeps $r_-(n+1,x')=r_-(n+1,x)=c$, which is a witness for $\comp{\forall^\infty\exists}(y')$ by the property of $r_-$.
That is, for any $b\geq c$, there is $t$ such that $y'_b(t)\not=0$.
Moreover, by our choice of $s$, $\eta(x')$ extends $y^s=\eta(x\upto s)$, so for any $b\in[a,c]$, there is $t$ such that $y'_b(t)\not=0$.
Put these together, for any $b\geq a$, we get $y'_b\not=0^\infty$; hence $a$ is a witness for $\comp{\forall^\infty\exists}(y')$.
Again, our choice of $s$ keeps $r_+(a,x')=r_+(a,x)=n$, which is a witness for $\comp{\forall^\infty\exists^\infty}(x')$ by the property of $r_+$.
This means that $x_n'$ contains infinitely many nonzero values, which is not true.
\end{proof}

\begin{theorem}
$\comp{\forall^\infty\forall}\not\leq_{\sf m}\comp{\forall\forall^\infty}$.
\end{theorem}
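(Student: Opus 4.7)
My plan is to adapt the continuity/perturbation technique from the proof of Theorem \ref{thm:separation3}, with care for the fact that witnesses for $\comp{\forall\forall^\infty}$ are \emph{functions} rather than numbers. Assume toward contradiction a reduction $\eta, r_-, r_+$ of $\comp{\forall^\infty\forall}$ to $\comp{\forall\forall^\infty}$. Take $x = 0$ (the all-zero element of $(\om^\om)^\om$); every $m$ is a witness of $x$. Set $\alpha := r_-(0,x)$, $a := r_+(\alpha,x)$; since $a+1$ is also a witness, let $\beta := r_-(a+1,x)$, form the amalgamated witness $\gamma(n) := \max(\alpha(n),\beta(n))$, and define $c := r_+(\gamma,x)$. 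Each of $\alpha, \beta, \gamma$ is a function witness for $\eta(x)$.

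By continuity at these specific arguments, I would fix $N, L$ large enough that (i) $r_+(\gamma, x')$ equals $c$ whenever $x'\upharpoonright L = x\upharpoonright L$ (reading only $\gamma \upharpoonright N$), and (ii) $r_-(a+1, x')(n) = \beta(n)$ for all $n<N$ whenever $x'\upharpoonright L = x \upharpoonright L$. Next, define $x'$ by $x'(n^*, k^*) := 1$ with $n^* := c$ and $k^*$ chosen so $\langle n^*,k^*\rangle > L$, zero elsewhere; the minimum witness of $x'$ is $c+1$.

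The first case to handle is $c \le a$, where $a+1 \ge c+1$ still witnesses $x'$. Setting $\beta' := r_-(a+1,x')$ gives a legitimate witness for $\eta(x')$, and continuity forces $\beta'\upharpoonright N = \beta \upharpoonright N$. Define $\alpha''(n) := \gamma(n)$ for $n<N$ and $\alpha''(n) := \max(\gamma(n), \beta'(n))$ for $n \ge N$. Because $\gamma(n) \ge \beta(n) = \beta'(n)$ on $[0,N)$ and $\alpha''(n) \ge \beta'(n)$ on $[N,\infty)$, $\alpha''$ is a valid witness for $\eta(x')$. Then continuity of $r_+$ yields $r_+(\alpha'',x') = r_+(\gamma,x) = c$, so $c$ must witness $x'$; but $x'(c,k^*) = 1 \ne 0$ forbids it --- contradiction.

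The subtle part --- and the main obstacle --- is the case $c > a$, in which $a+1$ may no longer witness $x'$ and the $\beta'$ above is not guaranteed valid. To deal with this I would iterate: set $K := c+1$, recompute $\beta_K := r_-(K,x)$, $\gamma_K := \max(\alpha, \beta_K)$, $c_K := r_+(\gamma_K, x)$, and rerun the construction with the perturbation placed at $(c_K, k^*)$ and witness $K' \ge c_K + 1$. The hope is that since each iteration monotonically enlarges the amalgamated function while $r_+$ only ever reads a finite prefix, one eventually arrives at some $K$ with $c_K \le K - 1$, at which point the $c\le a$ argument above applies and the contradiction follows. Making this termination argument airtight --- perhaps by extracting from the iteration a function witness $\gamma_\infty := \sup_i \gamma_{K_i}$ still witnessing $\eta(x)$ and using continuity of $r_+$ at $\gamma_\infty$ to bound the iteration --- is the real technical challenge, and is where I would focus the careful verification.
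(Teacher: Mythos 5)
Your case $c\le a$ is handled correctly, and the amalgamation idea (splicing $\gamma\upto N$ with $\max(\gamma,\beta')$ and using the fact that any function majorizing a witness for $\comp{\forall\forall^\infty}(\eta(x'))$ is again a witness) is a clean way to validate the spliced witness without inspecting $\eta(x')$ at all. But the case $c>a$ is a genuine gap, not a technicality. There you have no correct witness for $\comp{\forall^\infty\forall}(x')$ whose $r_-$-output you control on a prefix: continuity of $r_-$ is only available with the first argument fixed, and the only number you fixed in advance is $a+1$, which is wrong for $x'$ when $c>a$. The proposed iteration does not close this: nothing prevents the computable map $K\mapsto c_K=r_+(\max(\alpha,r_-(K,x)),x)$ from dominating the identity, so the loop need never reach $c_K<K$; and the fallback $\gamma_\infty=\sup_i\gamma_{K_i}$ need not even be a total finite-valued function, since for a fixed $n$ the values $r_-(K_i,x)(n)$ can be unbounded in $i$, so "continuity of $r_+$ at $\gamma_\infty$" is not available. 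The obstruction is a real circularity: you need the number fed to $r_-$ to exceed the value that $r_+$ will output on a witness built from that very $r_-$-output.

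The paper breaks this circularity with one extra idea you are missing: use the \emph{pointwise least} witness of $\eta(x)$ as the prefix source, not an $r_-$-output. (The paper first replaces $\comp{\forall^\infty\forall}$ by the equivalent ${\sf Bdd}$, but that is cosmetic.) Since witnesses for $\comp{\forall\forall^\infty}$ have a least element $u$ and every correct $r_-$-output majorizes $u$, one can compute $a:=r_+(u,x)$ \emph{first}, then feed $a+1$ to $r_-$, obtaining $v\ge u$; the perturbation is placed so that $a+1$ stays correct while $a$ becomes incorrect (in ${\sf Bdd}$ the new value is literally $a+1$; in your threshold formulation, make row $a$ nonzero), so no comparison between two different $r_+$-outputs ever arises and your case split disappears. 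The price is that the spliced witness ($u$ on a prefix, $r_-(a+1,x')$ beyond it) is not pointwise above $r_-(a+1,x')$, so your majorization argument no longer certifies it; the paper compensates with one more continuity step, placing the perturbation beyond a stage $m$ so large that $\eta(x')$ agrees with $\eta(x)$ on the finitely many entries $y_n(t)$ with $n<\ell$ and $t\le v_n$, which makes the small prefix values $u_n$ correct thresholds for $\eta(x')$ as well. If you want to salvage your write-up, keep your tail construction but replace $\gamma$ by the least witness as the prefix source and add this finite-agreement step; as written, the proof is incomplete.
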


\begin{proof}
Recall $\comp{\forall^\infty\forall}\equiv_{\sf m}{\sf Bdd}$; hence it suffices to show  ${\sf Bdd}\not\leq_{\sf m}\comp{\forall\forall^\infty}$.
Assume $\comp{\forall^\infty\forall}\leq_{\sf m}\comp{\forall\forall^\infty}$ via $\eta,r_-,r_+$.
In particular, for any $x=(x_n)_{n\in\om}$ and $y=\eta(x)$, we have the following:
\[\exists a\forall n.\ x_n\leq a\iff \forall n\exists s\forall t\geq s.\ y_n(t)=0.\]

For each $n$, begin with $x_n=0$.
Clearly ${\sf Bdd}(x)$ holds, so $\comp{\forall\forall^\infty}(\eta(x))$ also holds.
Let $u=(u_n)_{n\in\om}$ be the least witness for $y=\eta(x)$; that is, $u_n$ is the least such that $y_n(t)=0$ for any $t\geq u_n$.
If $r_-(a+1,x)=(v_n)_{n\in\om}$, by the minimality of $u$, we have $u_n\leq v_n$ for any $n$.
By continuity, if $\ell$ is sufficiently large, then $r_+(u,x\upto\ell)=r_+(u,x)=a$.
Let $m\geq \ell$ be sufficiently large such that $r_-(a+1,x\upto m)$ extends $(v_n)_{n<\ell}$ and $\eta(x\upto m)$ extends $\{y_n(t):n<\ell\mbox{ and }t\leq v_n\}$; that is, for $y^m=\eta(x\upto m)$, the value $y^m_n(t)=y_n(t)$ is determined for any $n<\ell$ and $t\leq v_n$.
Then, let $x'$ be the result of changing some value of $x_n$ after $m$ to $a+1$; that is, for some $t>m$, put $x'_t=a+1$  and $x'_n=0$ for any other $n\not=t$.

Then $a+1$ is still an upper bound for $x'$, so it is a witness for ${\sf Bdd}(x')$, and since $t>m\geq\ell$, $r_-(a+1,x')=(v_n')_{n\in\om}$ extends $(v_n)_{n<\ell}$; that is, $v'_n=v_n$ for any $n<\ell$.
Recall that $y_n(t)=0$ for any $t\geq u_n$, in particular, for any $[u_n,v_n]$.
By our choice of $m$, $y'=\eta(x')$ extends $y^m$, so $y'_n(t)=y^m_n(t)=y_n(t)$ for any $n<\ell$ and $t\leq v_n$.
This implies $y'_n(t)=0$ for any $t\in[u_n,v_n]$.
By the property of $r_-$, $(v_n')_{n\in\om}$ is a witness for $\comp{\forall\forall^\infty}(y')$, and recall that, $(u_n)_{n<\ell}$ is a witness for $(y_n)_{n<\ell}=(y_n')_{n<\ell}$.
Thus, the result $u'$ of replacing the first $\ell$ values of $v'$ with $(u_n)_{n<\ell}$ is also a witness for $\comp{\forall\forall^\infty}(y')$.
By the property of $r_+$, $r_+(u',x')$ must be a witness for ${\sf Bdd}(x')$.
However, as $t>m\geq\ell$, we have $r_+(u',x')=r_+(u',x)=a$, which cannot be a witness for ${\sf Bdd}(x')$ since $x'(t)=a+1$.
\end{proof}

\begin{theorem}\label{thm:separation-unbounded}
$\comp{\forall\forall^\infty\forall}\not\leq_{\sf m}\comp{\exists^\infty\forall}$.
\end{theorem}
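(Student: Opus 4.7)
The plan is to mimic the switch-and-continuity diagonalisation of Theorems 4.12 and 4.13, adapted to the two-dimensional witnesses here. Assume a reduction $\eta,r_-,r_+$ realising $\comp{\forall\forall^\infty\forall}\leq_{\sf m}\comp{\exists^\infty\forall}$. By Observation~\ref{obs:all-bdd-dicomplete} I take the LHS-problem in the form $\forall n\exists k\forall t.\,x(n,t)\leq k$, so that an LHS-witness is a total $h\in\om^\om$ with $h(n)$ an upper bound for $x_n$, while an RHS-witness is a sequence $\beta\in\om^\om$ with $\beta(s)\geq s$ and $y_{\beta(s)}=0^\infty$.

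First I would set $x\equiv 0$ and $h\equiv 0$ (a valid LHS-witness, since each $x_n$ is the constant zero), and compute $\beta=r_-(h,x)$ together with $h^*=r_+(\beta,x)$, so that $h^*(n)\geq 0$ for every $n$. The core idea is to exploit the continuity of the composite $F(h',x'):=r_+(r_-(h',x'),x')$: for each position $n^*$, the value $F(h',x')(n^*)$ depends only on a finite initial segment of $h'$ and on $x'$ restricted to finitely many positions. I would then select a sufficiently large $n^*$ and a late $t^*$ so that perturbing $h$ only at $n^*$ (to $h_1(n^*)=h^*(n^*)+1$, keeping $h_1(n)=0$ otherwise) and perturbing $x$ only at $(n^*,t^*)$ (to the value $h^*(n^*)+1$) falls outside the finite data on which $F(\cdot,\cdot)(n^*)$ depends. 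Then $h_1$ is a valid LHS-witness for the perturbed $x_1$, so $F(h_1,x_1)$ must bound $(x_1)_{n^*}$; but continuity forces $F(h_1,x_1)(n^*)=h^*(n^*)$, which fails to bound $(x_1)(n^*,t^*)=h^*(n^*)+1$, yielding the desired contradiction.

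The main obstacle — absent from Theorems 4.12 and 4.13, where the LHS-witness was a single natural number rather than a total function — is ensuring that the perturbation of $h$ at $n^*$ really does fall outside the finite continuity-support of $F(\cdot,\cdot)(n^*)$; a priori this support could contain $n^*$ itself. For each candidate $n^*$ let $S_{n^*}\subseteq\om$ denote the finite set of $h$-indices on which $F(\cdot,x)(n^*)$ depends; the delicate step is to locate some $n^*\notin S_{n^*}$. If a direct search fails because the reduction behaves self-referentially, I would instead perturb $h$ on a cofinite tail $[N,\infty)$ (setting $h_1(n)=h^*(n)+1$ there) together with a diagonal choice of $n^*\geq N$, so that the cascade $h_1\mapsto \beta_1\mapsto h^{**}$ still agrees with its unperturbed counterpart on the relevant finite data while nonetheless forcing a violation of the bound at $n^*$. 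Once such an $n^*$ is fixed, the contradiction closes exactly as in the preceding paragraph, completing the separation.
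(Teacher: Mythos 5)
There is a genuine gap, and it sits exactly at the point you flag as ``the main obstacle.'' Your core step asks for an $n^*$ such that changing $h$ at $n^*$ (and $x$ at a late $(n^*,t^*)$) leaves $F(h,x)(n^*)=r_+(r_-(h,x),x)(n^*)$ unchanged, i.e.\ $n^*\notin S_{n^*}$. But nothing forces such an $n^*$ to exist: since at $x\equiv 0$ every column of $\eta(x)$ may be $0^\infty$, a reduction can encode the entire witness $h$ into the increasing sequence $r_-(h,x)$ and have $r_+$ decode it, so that $F(h,x)(n)\geq h(n)$ for \emph{every} $h$ and $n$; then no pair $(n^*,t^*)$ produces your contradiction, because any perturbation of $x_{n^*}$ large enough to violate the bound also invalidates the witness $h_1$ unless $h_1(n^*)$ is raised, and raising $h_1(n^*)$ changes exactly the data that $F(\cdot,x)(n^*)$ reads. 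Your fallback (perturbing $h$ on a cofinite tail with ``a diagonal choice of $n^*$'') does not repair this: once $h_1$ differs from $h$ on an infinite set, continuity gives you no control over $r_-(h_1,x)$ beyond an initial segment, and you are still trying to run the whole argument with a \emph{single} left-hand witness, which is precisely what cannot work against a reduction that threads $h(n^*)$ through.

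The paper's proof gets around this with two ingredients your proposal lacks. First, it exploits that $\exists^\infty\forall$-witnesses are \emph{mixable}: any increasing sequence of indices $k$ with $y_k=0^\infty$ is a witness, regardless of which left-hand witness $g$ produced each $k$ via $r_-$. Second, it proves a separate continuity/finiteness claim: calling $k$ ``unbounded at $n$'' if $r_-$ outputs $k$ on witnesses $g$ with arbitrarily large $g(n)$, there must exist one coordinate $n$ at which infinitely many $k$ are unbounded (otherwise one diagonalizes with a $g$ avoiding finitely many values at each coordinate and contradicts continuity of $r_-$). Assembling those $k_i$ into a single witness $\bar k$ fixes $a=r_+(\bar k,x)(n)$ by continuity; each $k_i$ is produced by some $g_i$ with $g_i(n)\geq a+1$, and these $g_i$ \emph{survive} the perturbation $x'(n,t)=a+1$, so $\bar k$ (up to the locked-in segment) still extends to a witness for $\eta(x')$ while $r_+$ still outputs $a<\tilde x'(n)$. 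Without the mixing of witnesses coming from many different $g_i$ and without the ``unbounded at $n$'' claim, the perturbation argument you propose cannot be closed.
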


\begin{proof}
Recall $\comp{\forall\forall^\infty\forall}\equiv_{\sf m}\forall{\sf Bdd}$; hence it suffices to show  $\forall{\sf Bdd}\not\leq_{\sf m}\comp{\exists^\infty\forall}$.
Assume $\forall{\sf Bdd}\leq_{\sf m}\comp{\exists^\infty\forall}$ via $\eta,r_-,r_+$.
In particular, for any $x$ and $\eta(x)=(y_k)_{k\in\om}$, we have the following:
\[\forall n\exists a\forall t.\ x(n,t)\leq a\iff \exists^\infty k.\ y_k=0^\infty.\]

A witness for the left-hand side is a function majorizing $\tilde{x}(n)=\sup_tx(n,t)$, and a witness for the right-hand side is an infinite increasing sequence of elements in $Y=\{k:y_k=0^\infty\}$.
For $n,t$, begin with $x(n,t)=0$.
For such $x$, any $g\in\om^\om$ is a witness for $\forall{\sf Bdd}(x)$, so $r_-(g,x)$ is defined.
A value $k$ is {\em $a$-bounded at $n$} if, for any $g\in\om^\om$, whenever $r_-(g,x)(m)=k$ for some $m$, we have $g(n)\leq a$, and $k$ is {\em unbounded at $n$} if $k$ is not $a$-bounded at $n$ for any $a$.

\begin{claim}
There exists $n$ such that infinitely many values $k$ are unbounded at $n$.
\end{claim}

\begin{proof}
Suppose not.
Then, for any $n$, there are at most finitely many $k$ which are unbounded at $n$.
Let $A_n$ be the finite set consisting of all such $k$'s, and put $A_{\leq n}=\bigcup_{m\leq n}A_m$.
Then each $B_n=A_{n+1}\setminus A_{\leq n}$ is finite, and for any $z\in B_n$, we must have $z\not\in A_n$, so $z$ is $a_z$-bounded at $n$ for some $a_z$.
Put $b_n=\max_{z\in B_n}a_z$.
By definition, if $g(n)>b_n$ then $B_n$ does not intersect with the image of $r_-(g,x)$.
Hence, if $g(n)>b_n$ for any $n$, then the image of $r_-(g,x)$ does not intersect with $\bigcup_nB_n=\bigcup_{n>0}A_n$.
Such a $g$ exists since each $B_n$ is finite, so fix such a $g$.
As mentioned above, $r_-(g,x)$ is defined, say $r_-(g,x)(0)=z$, so by continuity, for a sufficiently large $n>0$, we have $r_-(g\upto n,x)(0)=z$.
Hence, for any $c$, we get $g_c$ extending $g\upto n$ such that $g_c(n)=c$, while keeping $r_-(g_c,x)(0)=z$.
Considering an arbitrarily large $c$, this means that the value $z$ is unbounded at $n$.
Therefore, by the definition of $A_n$, we get $z\in A_n$.
Then the image of $r_-(g,x)$ and $A_n$ have an intersection, but this is impossible.
\end{proof}

By the property of $r_-$, note that $\{r_-(g,x)(m):g\in\om^\om\mbox{ and }m\in\om\}\subseteq Y$.
If $k$ is unbounded at some $n'$ then, in particular, $k$ appears in some increasing sequence $r_-(g,x)$ of elements in $Y$, so we have $k\in Y$.
Let $n$ be as in Claim.
Then there is an increasing sequence $\bar{k}=(k_i)_{i\in\om}$ consisting of values which are unbounded at $n$.
This also gives an increasing sequence of elements in $Y$; that is, $\bar{k}$ is a witness for $\comp{\exists^\infty\forall}(y)$.
Hence, $r_+(\bar{k},x)$ is defined, say $r_+(\bar{k},x)(n)=a$, so by continuity, for a sufficiently large $s_0$, we have $r_+(\bar{k}\upto s_0,x\upto s_0)(n)=a$.
Since $k_i$ is not $a$-bounded at $n$, there exists $g_i$ such that $g_i(n)\geq a+1$ and $r_-(g_i,x)(m_i)=k_i$ for some $m_i$.
By continuity, if $s_1\geq s_0$ is sufficiently large, we have $r_-(g_i\upto s_1,x\upto s_1)(m_i)=k_i$ for each $i<s_0$.
Then define $x'(n,t)=a+1$ for some $t\geq s_1$, and keep other values as $0$, i.e., $x'(m,s)=0$ for any $(m,s)\not=(n,t)$.

Then consider $Y'=\{k:y'_k=0^\infty\}$ for $y'=\eta(x')$.
By our choice of $s_1$, the computation $r_-(g_i,x')(m_i)=k_i$ is maintained for each $i<s_0$,
Moreover, $g_i$ majorizes $\tilde{x}'(n)=\sup_tx'(n,t)$, which means that $g_i$ is still a witness for $\forall{\sf Bdd}(x')$, and so $r_-(g_i,x')(m_i)=k_i\in Y'$ by the property of $r_-$.
Hence, $(k_i)_{i<s_0}$ extends to a witness $\bar{k}'$ for $\comp{\exists^\infty\forall}(y')$.
By the property of $r_+$, $r_+(\bar{k}',x')$ must be a witness for $\forall{\sf Bdd}(x')$, which means that it must majorize $\tilde{x}'$.
However, by our choice of $s_0$, the computation $r_+(\bar{k}',x')(n)=a$ is maintained, so $r_+(\bar{k}',x')(n)=a<x'(n,t)\leq\tilde{x}'(n)$, which leads to a contradiction.
\end{proof}

\begin{theorem}\label{thm:separation-unconcentrated}
$\comp{\forall\exists\forall}\not\leq_{\sf m}\comp{\exists^\infty\forall^\infty\forall}$.
\end{theorem}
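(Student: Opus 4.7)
The plan is to mirror the diagonalization of Theorem~\ref{thm:separation-unbounded}, replacing ``bounded at $n$'' by a pair-wise freeness notion suited to $\forall\exists\forall$-witnesses. Assume for contradiction a reduction $\eta,r_-,r_+$ realizing
\[
\forall n\exists m\forall k.\ x(n,m,k)=0 \iff \exists^\infty n\exists s\forall t\ge s\forall k.\ \eta(x)(n,t,k)=0.
\]
Taking $x=0$, every $g\in\om^\om$ is a witness for the LHS, so $r_-(g,0)$ is always an RHS-witness of $\eta(0)$. Say that a pair $(N,s)$ in the image of $r_-(\cdot,0)$ is \emph{free at $n^*$} if for infinitely many $m$ there is $g\in\om^\om$ with $g(n^*)=m$ and $r_-(g,0)\ni(N,s)$; this plays the role that ``unbounded at $n$'' did in Theorem~\ref{thm:separation-unbounded}.

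The first step is the analogue of the existential Claim: some $n^*$ admits infinitely many pairs free at $n^*$, arranged so that their $N$-coordinates form an unbounded set. The proof is telescoping. If $A_{n^*}$ denotes the set of $n^*$-free pairs and each $A_{n^*}$ were finite, every pair outside $A_{n^*}$ would be $n^*$-rigid to a unique $m$, and collecting those $m$-values for $A_{n^*+1}\setminus A_{\le n^*}$ yields a finite set $F_{n^*}$. Any $g$ with $g(n^*)\notin F_{n^*}$ for every $n^*$ would force the image of $r_-(g,0)$ to be disjoint from $\bigcup_{n^*\ge 1}A_{n^*}$; continuity of $r_-$ nevertheless forces $r_-(g,0)(0)$ to be free at every sufficiently large $n^*$, the desired contradiction. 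A small case analysis handles the possibility that the $N$-coordinates cluster: bounded $N$'s can be used in the initial positions of $\bar w$ (with $N_i\ge i$), with the tail filled in from other valid entries for $\eta(0)$.

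Armed with such $n^*$, I will enumerate an RHS-witness $\bar w=(N_i,s_i)_{i\in\om}$ of $\eta(0)$ whose initial entries are $n^*$-free pairs and with $N_i\ge i$ throughout, and set $m^*=r_+(\bar w,0)(n^*)$. By continuity, $m^*$ is determined by $\bar w\upto L$ and $0\upto L$ for some $L$. For each $i<L$, freeness of $(N_i,s_i)$ lets me pick $m_i\ne m^*$ and an LHS-witness $g_i$ with $g_i(n^*)=m_i$ and $r_-(g_i,0)\ni(N_i,s_i)$; further enlarging $L$, I may assume these outputs are stable under modifications beyond the $L$-th coordinate. Then I define $x'$ to agree with $0$ except $x'(n^*,m^*,k^*)=1$ for some $k^*$ with $\pair{n^*,m^*,k^*}\ge L$. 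The LHS still holds for $x'$ (witnessed by the function sending $n^*\mapsto m^*+1$ and other $n\mapsto 0$), and each $g_i$ remains an LHS-witness of $x'$ since $m_i\ne m^*$.

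By stability, $r_-(g_i,x')\ni(N_i,s_i)$, so $\eta(x')_{N_i}$ is still eventually zero after $s_i$ for every $i<L$; hence $\bar w\upto L$ extends to an RHS-witness $\bar w'$ of $\eta(x')$ (appending any valid tail, which exists because the LHS holds for $x'$). Stability of $r_+$ yields $r_+(\bar w',x')(n^*)=m^*$, yet $m^*\notin\{m:\forall k.\ x'(n^*,m,k)=0\}$ because $x'(n^*,m^*,k^*)=1$, contradicting the fact that $r_+(\bar w',x')$ witnesses the LHS of $x'$. The main technical obstacle is the pair-level strengthening of the first step: freeness of the $N$-coordinate alone would not guarantee that the very same pair $(N_i,s_i)$ is reachable via an alternative $g_i$ avoiding $m^*$, so controlling the $s$-coordinate in the construction of $\bar w$ (rather than merely the $N$-coordinate) is precisely where the argument must depart most carefully from the earlier separation.
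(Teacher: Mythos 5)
Your overall diagonalization skeleton is the same as the paper's, but the load-bearing step --- your first Claim --- is not established in the form you need, and the patch you offer for the problematic case does not work. The telescoping argument (which is sound) only yields: for some $n^*$, infinitely many \emph{pairs} $(N,s)$ are free at $n^*$. It gives no control on the $N$-coordinates: all of these pairs could have bounded (even a single) first coordinate, since distinct free pairs may differ only in $s$. But an $\exists^\infty$-witness must satisfy $N_i\geq i$, and your endgame needs every entry of $\bar w$ below the continuity threshold $L$ to be reproducible by some $g_i$ with $g_i(n^*)\neq m^*$. If the free pairs all have $N\leq B$ while $L>B$ (and $L$ is dictated by $r_+$, not chosen by you), then the positions $B<i<L$ must be filled with valid but non-free entries; for those no alternative $g_i$ avoiding $m^*$ exists, so after passing to $x'$ you can no longer argue that these entries remain valid for $\eta(x')$, hence not that $\bar w\upto L$ extends to a witness for $\eta(x')$, and the contradiction evaporates. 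So the ``small case analysis'' for clustered $N$'s does not repair the argument; you in fact flag exactly this tension in your final sentence, but it is the crux and it is left unresolved. (A smaller issue: ``further enlarging $L$ so that the $r_-$-outputs are stable'' is circular as stated, since enlarging $L$ requires new $g_i$'s and hence a new stability threshold; one needs two thresholds, as in the proof of Theorem \ref{thm:separation-unbounded}.)

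The paper dissolves the difficulty instead of strengthening the claim: (un)concentration is defined on the \emph{first coordinate only} --- $k$ is $a$-concentrated at $n$ if every $g$ whose output contains a pair with first coordinate $k$ satisfies $g(n)=a$ --- so ``infinitely many unconcentrated $k$ at some $n$'' automatically yields an increasing sequence with $k_i\geq i$. The second coordinate is then handled by upward closure: for each unconcentrated $k_i$ one fixes \emph{in advance} two witnesses $g_i^0,g_i^1$ with $g_i^0(n)\neq g_i^1(n)$ producing entries $(k_i,b_i^0)$ and $(k_i,b_i^1)$, and sets $c_i=\max\{b_i^0,b_i^1\}$; whichever of the two values differs from the later-revealed $a$ shows that $(k_i,c_i)$ is still a correct entry for $\eta(x')$, because a larger bound (in your formulation, a larger threshold $s_i=\max\{s_i^0,s_i^1\}$) is still a bound. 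This makes pair-level freeness unnecessary --- two distinct values of $g(n^*)$ suffice, not infinitely many --- and it is precisely the missing idea in your proposal; without it, or without a proof of your strengthened claim that the free pairs can be taken with unbounded $N$-coordinates, the argument as written does not go through.
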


\begin{proof}
Recall $\comp{\exists^\infty\forall^\infty\forall}\equiv_{\sf m}\exists^\infty{\sf Bdd}$; hence it suffices to show $\comp{\forall\exists\forall}\not\leq_{\sf m}\exists^\infty{\sf Bdd}$.
Assume $\comp{\forall\exists\forall}\leq_{\sf m}\exists^\infty{\sf Bdd}$ via $\eta,r_-,r_+$.
In particular, for any $x$ and $\eta(x)=y$, we have the following:
\[\forall n\exists a\forall t.\ x(n,a,t)=0\iff \exists^\infty k\exists b\forall t.\ y(k,t)\leq b.\]

A witness for the right-hand side is of the form $m\mapsto(k_m,b_m)$, where $k_m\geq m$ and $y(k_m,t)\leq b_m$ for any $t$.
For each $n,a,t$, begin with $x(n,a,t)=0$.
For such $x$, any $g\in\om^\om$ is a witness for $\comp{\forall\exists\forall}(x)$, so $r_-(g,x)$ is defined.
A value $k$ is {\em $a$-concentrated at $n$} if, for any $g\in\om^\om$, whenever $r_-(g,x)(m)=(k,b)$ for some $m,b$, we have $g(n)=a$, and $k$ is {\em unconcentrated at $n$} if $k$ not $a$-concentrated at $n$ for any $a$.

\begin{claim}
There exists $n$ such that infinitely many values $k$ are unconcentrated at $n$.
\end{claim}

\begin{proof}
Suppose not.
Then, for any $n$, there are at most finitely many $k$ which are unconcentrated at $n$.
Let $A_n$ be the finite set consisting of all such $k$'s, and put $A_{\leq n}=\bigcup_{m\leq n}A_m$.
Then each $B_n=A_{n+1}\setminus A_{\leq n}$ is finite, and for any $z\in B_n$, we must have $z\not\in A_n$, so $z$ is $a_z$-concentrated at $n$ for some $a_z$.
By definition, if $g(n)\not\in\{a_z:z\in B_n\}$ then $B_n$ does not intersect with the image of $\pi_0\circ r_-(g,x)$.
Hence, if $g(n)\not\in\{a_z:z\in B_n\}$ for any $n$, then the image of $\pi_0\circ r_-(g,x)$ does not intersect with $\bigcup_nB_n=\bigcup_{n>0}A_n$.
Such a $g$ exists since each $B_n$ is finite, so fix such a $g$.
As mentioned above, $r_-(g,x)$ is defined, say $r_-(g,x)(0)=(z,b)$, so by continuity, for a sufficiently large $n>0$, we have $r_-(g\upto n,x)(0)=(z,b)$.
Hence, for any $c$, we get $g_c$ extending $g\upto n$ such that $g_c(n)=c$, while keeping $r_-(g_c,x)(0)=(z,b)$.
Considering an arbitrarily large $c$, this means that the value $z$ is unconcentrated at $n$.
Therefore, by the definition of $A_n$, we get $z\in A_n$.
Then the image of $r_-(g,x)$ and $A_n$ have an intersection, but this is impossible.
\end{proof}

Let $n$ be as in Claim.
Then there is an increasing sequence $(k_i)_{i\in\om}$ consisting of values which are unconcentrated at $n$.
Each $k_i$ is not $a$-concentrated at $n$, so there exist $g_i^0,g_i^1\in\om^\om$ such that $g_i^0(n)\not=g_i^1(n)$, and for each $j<2$, we have $r_-(g_i^j,x)(m_i^j)=(k_i,b_i^j)$ for some $m_i^j$ and $b_i^j$.
As mentioned above, any $g^j_i$ is a witness for $\comp{\forall\exists\forall}(x)$, so $r_-(g_i^j,x)(m_i^j)$ is a witness for $\exists^\infty{\sf Bdd}(y)$.
Then, for $c_i=\max\{b_i^0,b_i^1\}$, one can see that $(\bar{k},\bar{c})=(k_i,c_i)_{i\in\om}$ is also a witness for $\exists^\infty{\sf Bdd}(y)$.
Hence, $r_+((\bar{k},\bar{c}),x)$ is defined, say $r_+((\bar{k},\bar{c}),x)(n)=a$, so by continuity, for a sufficiently large $s_0$, we have $r_+((\bar{k}\upto s_0,\bar{c}\upto s_0),x)(n)=a$.
By continuity, for a sufficiently large $s_1\geq s_0$, we have $r_-(g_i^j,x\upto s_1)(m_i^j)=(k_i,b_i^j)$ for each $i<s_0$ and $j<2$.
Then define $x'(n,a,t)\not=0$ for some $t\geq s_1$, and keep other values as $0$, i.e., $x'(m,b,s)$ for any $(m,b,s)\not=(n,a,t)$.

For each $i<s_0$, since $g_i^0(n)\not=g_i^1(n)$, we get $g_i^j(n)\not=a$ for some $j<2$.
Fix such $j$.
Then $g_i^j$ is a witness for $\comp{\forall\exists\forall}(x')$.
For $y'=\eta(x')$, by our choice of $s_1$, the computation $r_-(g^j_i,x')(m_i^j)=(k_i,b_i^j)$ is maintained for each $i<s_0$.
The chosen $g^j_i$ is still a witness (since $g_i^j(n)\not=a$), so $(k_i,b^j_i)_{i<s_0}$ is extendible to a witness for $\exists^\infty{\sf Bdd}(y')$; thus, $y'(k_i,t)\leq b^j_i\leq c_i$ for any $i<s_0$ and $t$.
Hence, $(k_i,c_i)_{i<s_0}$ is extendible to a witness $(\bar{k}',\bar{c}')$ for $\exists^\infty{\sf Bdd}(y')$.
By our choice of $s_0$, we have $r_+((\bar{k}',\bar{c}'),x')(n)=a$ and $x'(n,a,t)\not=0$.
However, this means that $r_+((\bar{k}',\bar{c}'),x')$ does not give a witness for $\comp{\forall\exists\forall}(x')$, which leads to a contradiction.
\end{proof}

Summarizing the results in Section \ref{sec:separation}, we now see that no more arrows can be added to Figure \ref{fig:quantifier-class}.
For $\Sigma_3$-patterns, Theorem \ref{thm:separation3} and (the comment after) Proposition \ref{prop:separation1} show $\comp{\forall^\infty\exists}<_{\sf m}\comp{\forall^\infty\exists^\infty}<_{\sf m}\comp{\exists\forall\exists}$.
For $\Pi_3$-patterns, (the comment after) Proposition \ref{prop:separation2} and Theorem \ref{thm:separation-unbounded} show $\comp{\forall\forall^\infty}$ and $\comp{\exists^\infty\forall}$ are ${\sf m}$-incomparable.
Combining with Theorem \ref{thm:separation-unconcentrated}, we get $\comp{\forall\forall^\infty}<_{\sf m}\comp{\forall\forall^\infty\forall}\bot_{\sf m}\comp{\exists^\infty\forall}<_{\sf m}\comp{\exists^\infty\forall^\infty\forall}<_{\sf m}\comp{\forall\exists\forall}$, which is exactly the same shape as in Figure \ref{fig:quantifier-class}.
For ${\sf dm}$-reducibility, first note that, since $\qf{P}\not\leq_{\sf m}\qf{Q}$ clearly implies $\qf{P}\not\leq_{\sf dm}\qf{Q}$, the previous observations imply $\comp{\exists^\infty\forall}\not\leq_{\sf dm}\comp{\forall\forall^\infty\forall}$; $\comp{\forall\forall^\infty}<_{\sf dm}\comp{\forall\forall^\infty\forall}$; and $\comp{\exists^\infty\forall}<_{\sf dm}\comp{\exists^\infty\forall^\infty}<_{\sf dm}\comp{\exists^\infty\forall^\infty\forall}$.
By Theorem \ref{thm:formula-class-collapse}, we also have $\comp{\exists^\infty\forall^\infty\forall}<_{\sf m}\comp{\forall\exists\forall}\equiv_{\sf m}\comp{\exists^\infty\exists\forall}$, so $\comp{\exists^\infty\forall^\infty\forall}<_{\sf dm}\comp{\exists^\infty\exists\forall}$.
Moreover, we have $\comp{\forall\forall^\infty}\not\leq_{\sf dm}\comp{\exists^\infty\exists\forall}$ since by Theorem \ref{thm:formula-class-collapse}, the dual of $\comp{\forall\forall^\infty}$ is $\exists\forall\exists$-complete, and the dual of $\comp{\exists^\infty\exists\forall}$ is ${\sf m}$-equivalent to $\comp{\forall^\infty\exists^\infty}<_{\sf m}\comp{\exists\forall\exists}$ by Proposition \ref{prop:separation1}.
Combining the above, we conclude that Figure \ref{fig:quantifier-class} is complete.

\subsection{Other quantifier-petterns}\label{sec:separation-perfectness}
In Section \ref{sec:example-restrict-quantify}, we have dealt with quantifiers other than $\forall,\exists,\forall^\infty,\exists^\infty$.
Here, we consider the strength of the quantifier-pattern $\forall^\to\exists\forall$.
By taking $\gamma$ as a trivial formula, we clearly have $\comp{\forall\exists\forall}\leq_{\sf dm}\comp{\forall^\to\exists\forall}$.
The question is whether these are ${\sf m}$-equivalent.
For the duals, we can see that it holds true.

\begin{prop}
$\comp{\exists^\land\forall\exists}\equiv_{\sf m}\comp{\exists\forall\exists}$.
\end{prop}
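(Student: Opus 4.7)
The plan is to prove the two inequalities separately. The direction $\comp{\exists\forall\exists}\leq_{\sf m}\comp{\exists^\land\forall\exists}$ is immediate, since every $\exists\forall\exists$-formula is a degenerate $\exists^\land\forall\exists$-formula: take the $\forall b$-guard to be a bounded tautology such as $\gamma(a,b,x)\equiv(0=0)$, and the formula is literally unchanged, so the identity realizes the reduction.

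For the nontrivial direction $\comp{\exists^\land\forall\exists}\leq_{\sf m}\comp{\exists\forall\exists}$, I would use a pairing trick at the level of formulas. Given a generic $\exists^\land\forall\exists$-formula
\[
\varphi(x)\equiv\exists a\bigl[\forall b\,\gamma(a,b,x)\,\land\,\forall c\,\exists d\,\theta(a,c,d,x)\bigr],
\]
I would pass to the $\exists\forall\exists$-formula
\[
\psi(x)\equiv\exists a\,\forall e\,\exists d\,\bigl[\gamma(a,\pi_0(e),x)\,\land\,\theta(a,\pi_1(e),d,x)\bigr].
\]
Because $\gamma$ does not involve $d$ and $\theta$ does not involve $b$, distributing the quantifiers over the conjunction yields $\psi(x)\leftrightarrow\varphi(x)$ classically. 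For realizability, a simplified witness for a $\Sigma_3$-type formula whose outermost block is a single $\exists$ can be taken to be just the existential value (the inner $\Pi_2$-block is recovered computably by search); this is the convention already used in the paper, and both $\varphi$ and $\psi$ fit it. Hence the essential witness is $a$ itself, and the identity on $a$ serves as $r_-$ and $r_+$ in both directions. Since $\varphi\mapsto\psi$ is uniformly computable in the description of $\varphi$, the argument from Proposition \ref{prop:Q-complete} and Lemma \ref{lem:quantifier-add-lemma} lifts it to a many-one reduction between the complete problems.

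The only real subtlety is to justify that an $\exists^\land\forall\exists$-formula admits this simplified notion of witness. This is consistent with the dual side $\forall^\to\exists\forall$ as treated in Theorem \ref{thm:perfect-dicomplete}: a witness for ${\sf Perfect}_{\sf bin}$ is a function $\sigma\mapsto(\tau_0,\tau_1)$ defined only on extendible nodes, and upon dualization the $\exists^\land\forall\exists$-side asks only for the outermost existential value. Once this convention is pinned down, the pairing $e\mapsto(\pi_0 e,\pi_1 e)$ is manifestly computable and no further obstacle appears.
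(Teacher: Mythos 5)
Your proof is correct and is essentially the paper's argument in different packaging: the paper builds the instance $y_n(\pair{k,m})$ (equal to $x^n_m$ when $p_n(k)=0$ and $0^\infty$ otherwise), which is exactly your prenexed formula $\exists a\forall e\exists d[\gamma(a,\pi_0(e),x)\land\theta(a,\pi_1(e),d,x)]$ instantiated at the complete problem, with the witness being the outer existential value and identity realizers on both sides. The only cosmetic difference is that you rewrite a generic $\exists^\land\forall\exists$-formula and then invoke completeness of $\comp{\exists\forall\exists}$, whereas the paper directly defines the instance-level reduction $\eta$ between the two fixed complete problems.
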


\begin{proof}
It suffices to show $\comp{\exists^\land\forall\exists}\leq_{\sf m}\comp{\exists\forall\exists}$.
Given $(p,x)$, we construct $y=\eta(p,x)$ such that
\[\exists n\ (p_n=0^\infty\mbox{ and }\forall m.\ x^n_m\not=0^\infty)\iff\exists n\forall m.\ y_n(m)\not=0^\infty\]

Given $(x,p)$, for each $n,k,m$, we check if $p_n(k)=0$.
If yes, put $y_n(\pair{k,m})=x^n_m$; otherwise, put $y_n(\pair{k,m})=0^\infty$.

Let $n$ be a witness for $\comp{\exists^\land\forall\exists}(x,p)$.
Then $p_n=0^\infty$ and $x^n_m\not=0^\infty$ for any $m$.
In this case, $y_n(k,m)=x_n(m)\not=0^\infty$ for any $k,m$.
Hence, $n$ is also a witness for $\comp{\exists\forall\exists}(y)$.

Conversely, let $n$ be a witness for $\comp{\exists\forall\exists}(y)$.
Then, $y_n(\pair{k,m})\not=0^\infty$ for any $k,m$.
In this case, we must have $p_n(k)=0$, and thus $y_n(\pair{k,m})=x^n_m$.
Therefore, we get $p_n=0^\infty$ and $x_n(m)=y_n(0,m)\not=0^\infty$.
Hence, $n$ is also a witness for $\comp{\exists^\land\forall\exists}(x,p)$.
\end{proof}

\begin{theorem}\label{thm:above-AEA}
$\comp{\forall^\to\exists\forall}\not\leq_{\sf m}\comp{\forall\exists\forall}$.
\end{theorem}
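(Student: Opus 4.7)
The plan is to argue by contradiction, diagonalizing against $r_+$ in the spirit of Theorems~\ref{thm:separation-unbounded} and \ref{thm:separation-unconcentrated}. Suppose $\eta,r_-,r_+$ witness a reduction $\comp{\forall^\to\exists\forall}\leq_{\sf m}\comp{\forall\exists\forall}$. I would start from the trivial input $(p^0,x^0)$ with $p^0_n=0^\infty$ and $x^{0,n}_m=0^\infty$ for all $n,m$; every $f\in\om^\om$ is then a valid witness for $\comp{\forall^\to\exists\forall}(p^0,x^0)$. Writing $R(f):=r_+(r_-(f,(p^0,x^0)),(p^0,x^0))$, the first and central goal is to produce $f\in\om^\om$ and $n\in\om$ with $m^*:=R(f)(n)\neq f(n)$.

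Given such a disagreement, the rest of the argument mirrors the late-stage switch used in the proofs of Theorems~\ref{thm:separation-unbounded} and \ref{thm:separation-unconcentrated}. By continuity of $\eta,r_-,r_+$, there is a stage $N$ so that the computation $R(f)(n)=m^*$ depends only on the first $N$ bits of $f$ and of the encoding of $(p^0,x^0)$. I would then construct $(p^1,x^1)$ by copying $(p^0,x^0)$ on its first $N$ bits and flipping $x^{1,n}_{m^*}$ to become nonzero at some stage well beyond $N$, leaving every other coordinate untouched. Since $p^1_n=0^\infty$ and $x^{1,n}_{m'}=0^\infty$ for every $m'\neq m^*$, the statement $\comp{\forall^\to\exists\forall}(p^1,x^1)$ still holds and the same $f$ remains a valid witness, because $f(n)\neq m^*$. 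Continuity then forces $r_+(r_-(f,(p^1,x^1)),(p^1,x^1))(n)=m^*$, yet $m^*$ is no longer a valid witness at $n$, since $p^1_n=0^\infty$ while $x^{1,n}_{m^*}\neq 0^\infty$; this contradicts the soundness of $r_+$.

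The main obstacle is locating the disagreement $(f,n)$. The easy cases are handled by probing constant witnesses $\lambda n.c$: if $R(\lambda n.c)(0)\neq c$ for some $c$, we are done, and in practice a non-trivial $f$ typically exposes a disagreement at some coordinate. The genuinely delicate case is when $R$ happens to act as the identity on all of $\om^\om$, which is consistent with continuity but seems to trivialize the round-trip. To resolve this case, the plan is to exploit the feature distinguishing $\comp{\forall^\to\exists\forall}$ from $\comp{\forall\exists\forall}$, namely that a $\comp{\forall^\to\exists\forall}$-witness is unconstrained at coordinates $n$ where $p_n\neq 0^\infty$: one combines the $x$-modification above with a simultaneous switch of $p^1_n$ to eventually nonzero at a late stage, so that continuity still traps $r_+$ into returning the old value while the altered witness space for $(p^1,x^1)$ forces the real witness to disagree with that value. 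Arranging this combined late switch so that the prefixes of $\eta(p^0,x^0)$ and $\eta(p^1,x^1)$ agree on a long enough segment, and that the modified input still satisfies $\comp{\forall^\to\exists\forall}$ via a specifically chosen witness, is the technical heart of the proof and constitutes the main obstacle.
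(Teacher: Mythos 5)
Your first half is sound: producing a contradiction from any disagreement $R(f)(n)\neq f(n)$ by flipping the single column $x^{n}_{m^*}$ beyond the continuity stage is exactly the first step of the paper's proof (there carried out for the constant witness $c_0$, yielding $r_+(r_-(c_0,(p,x)),(p,x))=c_0$). But the case you yourself flag as the ``main obstacle'' --- when the round trip acts as the identity, i.e.\ in particular $r(0)=0$ --- is where the entire content of the theorem lies, and your sketch for it, a \emph{simultaneous} late switch of $p_n$ together with the $x$-modification, does not work as described. Once $p_n$ is made eventually nonzero, coordinate $n$ of the source formula becomes vacuous, so the old value returned by $r_+$ is perfectly acceptable there and no contradiction arises on the source side; nothing ``forces the real witness to disagree with that value,'' and the target formula still holds with $r_-$ supplying a witness, so nothing breaks on the target side either.

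The paper's resolution is a two-stage, carefully ordered switch rather than a simultaneous one. First it flips only $x^0_0$ after a stage $\ell$ chosen so that both the computation $r(0)=0$ and the prefix $q=r_-(c_0,(p,x)\upto\ell)\upto\ell$ are locked in. This keeps $\comp{\forall^\to\exists\forall}(p,\tilde x)$ true (use $m=1$ at $n=0$) while invalidating the value $0$ at coordinate $0$; consequently $q$ cannot extend to any witness for $\comp{\forall\exists\forall}(\tilde y)$, since any such extension would make $r_+$ output $0$ at coordinate $0$ by continuity. Hence some $n<\ell$ already satisfies $\tilde y_n(q(n))\neq 0^\infty$, and this finite refutation is secured after reading $(p,\tilde x)\upto\ell'$. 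Only \emph{then} does one flip $p_0$ beyond $\ell'$: now $c_0$ is again a legitimate source witness, so $r_-(c_0,(\tilde p,\tilde x))$ must witness $\comp{\forall\exists\forall}(\eta(\tilde p,\tilde x))$; but by continuity it extends $q$, and $\eta(\tilde p,\tilde x)$ agrees with $\tilde y$ far enough to preserve $\tilde y_n(q(n))(u)\neq 0$ --- a contradiction. The missing idea in your proposal is precisely this sequencing: use the intermediate input (only $x$ flipped) to extract, on the target side, a finite refutation of the prefix of $r_-(c_0,\cdot)$, and only afterwards flip $p$ to re-legitimize $c_0$ while continuity traps $r_-$ into the refuted prefix. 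Without this (or a genuine substitute), your argument covers only the easy half of the theorem.
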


\begin{proof}
Suppose $\comp{\forall^\to\exists\forall}\leq_{\sf m}\comp{\forall\exists\forall}$ via $\eta,r_-,r_+$.
In particular, for any $(p,x)$ and $y=\eta(p,x)$, we have the following:
\[\forall n\ (p_n=0^\infty\to\exists m.\ x^n_m=0^\infty)\iff\forall n\exists m.\ y_n(m)=0^\infty.\]

Begin with $x^n_m=0^\infty$ and $p_n=0^\infty$.
Clearly, $n\mapsto c_0(n)=0$ is a witness for $\comp{\forall^\to\exists\forall}(p,x)$.
Then, one can see $r:=r_+(r_-(c_0,(p,x)),(p,x))=c_0$.
Otherwise, $r(n)\not=0$ for some $n$, which is determined after reading a finite initial segment $(p,x)\upto\ell$ of $(p,x)$ by continuity.
By changing only the value $x^n_{r(n)}(t)$ for $t>\ell$, we get some $\tilde{x}$ such that $\tilde{x}^n_{r(n)}\not=0^\infty$ while keeping $\tilde{x}\upto\ell=x\upto\ell$.
As $r(n)\not=0$, $c_0$ is still a witness for $\comp{\forall^\to\exists\forall}(p,\tilde{x})$.
Therefore, $\tilde{r}=r_-(c_0,(p,\tilde{x}))$ is a witness for $\comp{\forall\exists\forall}(\tilde{y})$, where $\tilde{y}=\eta(p,\tilde{x})$.
In particular, $\tilde{x}^n_{\tilde{r}(n)}=0^\infty$.
However, since $\tilde{x}\upto\ell=x\upto\ell$, we must have $\tilde{r}(n)=r(n)$, and thus $0^\infty=\tilde{x}^n_{\tilde{r}(n)}=\tilde{x}^n_{r(n)}\not=0^\infty$, which is impossible.

Now, $r(0)$ is determined after reading some finite initial segment $(p,x)\upto\ell$ by continuity.
Again, by changing only the value $x^0_{r(0)}(t)$ for $t>\ell$, we get some $\tilde{x}$ such that $\tilde{x}^0_{r(0)}\not=0^\infty$ while keeping $\tilde{x}\upto\ell=x\upto\ell$.
Here, we also keep $\tilde{x}_n(m)=0^\infty$ for any $(n,m)\not=(0,0)$.

Then $\comp{\forall^\to\exists\forall}(p,\tilde{x})$ still holds; thus, $\comp{\forall\exists\forall}(\tilde{y})$ also holds, where $\tilde{y}=\eta(p,\tilde{x})$.
If $q=r_-(c_0,(p,x)\upto \ell)\upto \ell$ is extendible to a witness for $\comp{\forall\exists\forall}(\tilde{y})$, then $\tilde{r}=r_+(q,(p,x))$ is also extendible to a witness for $\comp{\forall^\to\exists\forall}(p,\tilde{x})$; that is, $\tilde{x}^n_{\tilde{r}(n)}=0^\infty$; however, we have $\tilde{r}(0)=r(0)$ and $\tilde{x}^0_{r(0)}\not=0^\infty$.
Hence, $q$ is not extendible to a witness for $\comp{\forall\exists\forall}(\tilde{y})$, which means that there is $n<\ell$ such that $\tilde{y}_n(q(n))\not=0^\infty$, so $\tilde{y}_n(q(n))(u)\not=0$ for some $u$.
By continuity, this value $\tilde{y}_n(q(n))(u)\not=0$ is determined after reading some $(p,\tilde{x})\upto\ell'$ for some $\ell'>\ell$.

Then, by changing only the value $p_0(t)$ for $t>\ell'$, we get some $\tilde{p}$ such that $\tilde{p}_0\not=0^\infty$ while keeping $\tilde{p}\upto\ell'=p\upto\ell'$.
Then, $c_0$ is a witness for $\comp{\forall^\to\exists\forall}(\tilde{p},\tilde{x})$; hence $\tilde{q}=r_-(c_0,(\tilde{p},\tilde{x}))$ must be a witness for $\comp{\forall\exists\forall}(y')$, where $y'=\eta(\tilde{p},\tilde{x})$.
Moreover, $\tilde{q}$ extends $q$, so in particular, $\tilde{q}(n)=q(n)$ for any $n<\ell$, and by our choice of $\ell'$, we must have $y'_n(\tilde{q}(n))(u)=\tilde{y}_n(q(n))(u)\not=0$.
However, this means $y'_n(\tilde{q}(n))\not=0^\infty$, so $\tilde{q}$ is not a witness for $\comp{\forall\exists\forall}(y')$.
This leads to a contradiction.
\end{proof}

\begin{cor}
$\pair{\forall\exists\forall}<_{\sf dm}{\sf Perfect}_{\sf bin}$.
\end{cor}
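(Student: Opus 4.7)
The plan is to assemble this corollary directly from two already-proven ingredients: Theorem \ref{thm:perfect-dicomplete}, which identifies ${\sf Perfect}_{\sf bin}$ as $\forall^\to\exists\forall$-dicomplete, and Theorem \ref{thm:above-AEA}, which separates $\comp{\forall^\to\exists\forall}$ from $\comp{\forall\exists\forall}$ even under plain many-one reducibility. The statement $\comp{\forall\exists\forall}<_{\sf dm}{\sf Perfect}_{\sf bin}$ then reduces to checking a reduction in one direction and a nonreduction in the other.

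For the inequality $\comp{\forall\exists\forall}\leq_{\sf dm}{\sf Perfect}_{\sf bin}$, I would first note the observation already made in Section \ref{sec:example-restrict-quantify}: every $\forall\exists\forall$-formula $\forall n\exists m\forall k\,\theta(n,m,k,x)$ is in particular a $\forall^\to\exists\forall$-formula, obtained by taking the restricting premise $\gamma$ to be a trivially true $\forall$-formula such as $\forall b\,(0=0)$. Since adding a vacuous $\gamma$ changes neither the truth value nor the witnesses, this gives $\comp{\forall\exists\forall}\leq_{\sf dm}\comp{\forall^\to\exists\forall}$, and then Theorem \ref{thm:perfect-dicomplete} yields $\comp{\forall\exists\forall}\leq_{\sf dm}{\sf Perfect}_{\sf bin}$.

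For strictness I would argue by contradiction. Suppose ${\sf Perfect}_{\sf bin}\leq_{\sf dm}\comp{\forall\exists\forall}$. Since ${\sf dm}$-reducibility (using a common $\eta$ for both $\varphi$ and $\varphi^{\sf d}$) trivially implies ${\sf m}$-reducibility on the first coordinate, this would give ${\sf Perfect}_{\sf bin}\leq_{\sf m}\comp{\forall\exists\forall}$. On the other hand, ${\sf Perfect}_{\sf bin}$ is $\forall^\to\exists\forall$-dicomplete by Theorem \ref{thm:perfect-dicomplete}, so $\comp{\forall^\to\exists\forall}\leq_{\sf m}{\sf Perfect}_{\sf bin}$. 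Composing these two reductions yields $\comp{\forall^\to\exists\forall}\leq_{\sf m}\comp{\forall\exists\forall}$, contradicting Theorem \ref{thm:above-AEA}.

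Since the heavy lifting is already absorbed into Theorems \ref{thm:perfect-dicomplete} and \ref{thm:above-AEA}, there is no real obstacle in this corollary; the only point to be careful about is that $\leq_{\sf dm}$ is genuinely stronger than $\leq_{\sf m}$, so one may freely transfer a negative ${\sf m}$-separation up to a negative ${\sf dm}$-separation, but not vice versa. In particular, we do \emph{not} need to separately verify that the dual ${\sf Perfect}_{\sf bin}^{\sf d}$ fails to reduce to $\comp{\exists\forall\exists}$ (indeed, by the companion proposition $\comp{\exists^\land\forall\exists}\equiv_{\sf m}\comp{\exists\forall\exists}$ it actually does), because failure on a single coordinate suffices to rule out a common reduction.
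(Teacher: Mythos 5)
Your proposal is correct and follows essentially the same route as the paper, which proves this corollary simply by citing Theorem \ref{thm:perfect-dicomplete} together with Theorem \ref{thm:above-AEA}; your extra details (the trivial-$\gamma$ observation for $\comp{\forall\exists\forall}\leq_{\sf dm}\comp{\forall^\to\exists\forall}$ and the projection of a putative ${\sf dm}$-reduction to its first coordinate to contradict Theorem \ref{thm:above-AEA}) are exactly the implicit steps. Nothing is missing.
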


\begin{proof}
By Theorems \ref{thm:perfect-dicomplete} and \ref{thm:above-AEA}.
\end{proof}

\subsection{Specific problems}\label{sec:separation-concrete-problem}

Next, we perform a detailed analysis of the complexity of the infinite-diameter problem ${\sf InfDiam}={\sf FinDiam}^{\sf d}$ introduced in Section \ref{sec:example-global-boundedness}.
By combining Propositions \ref{prop:findiam-reduction-complete} and \ref{prop:AAinftyA-vs-FinDiam} and Theorem \ref{thm:formula-class-collapse}, we have obtained that ${\sf FinDiam}\equiv_{\sf m}\comp{\forall^\infty\forall\exists}\equiv_{\sf m}\comp{\forall^\infty\exists^\infty}$ and $\comp{\forall\forall^\infty\forall}\leq_{\sf m}{\sf InfDiam}\leq_{\sf dm}\comp{\exists^\infty\exists\forall}$.
Judging from Figure \ref{fig:quantifier-class}, this problem seems to be of intermediate complexity.

Moreover, ${\sf InfDiam}$ is not amalgamable since we have ${\sf DisConn}\leq_{\sf m}{\sf InfDiam}$ by Proposition \ref{prop:DisConn-vs-InfDiam}, and the disconnectedness problem ${\sf DisConn}$ is not amalgamable \cite{Kihara}.

\begin{lemma}[\cite{Kihara}]\label{lem:disconn-vs-amalgamable}
If $\varphi$ is amalgamable, then ${\sf DisConn}\not\leq_{\sf m}\varphi$.
\end{lemma}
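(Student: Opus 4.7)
I propose to argue by contradiction in the style of Lemma~\ref{lem:amalgamable}, but adapted to the combinatorics of graphs. Assume, towards a contradiction, that ${\sf DisConn}\leq_{\sf m}\varphi$ via total computable $\eta,r_-,r_+$ and that $\varphi$ is amalgamable via a partial computable $\kappa$. The plan is to build a graph $G_1$ together with a realizer $\beta_1\Vdash\varphi(\eta(G_1))$ for which $r_+(\beta_1,G_1)$ is forced, by continuity, to name a pair of vertices that actually lie in the same component of $G_1$, contradicting the backward realizability condition.

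The construction proceeds in two stages. First, take $G_0$ to be the graph on $\{v_0,v_1,v_2\}$ with no edges, so that every pair of distinct vertices is a disconnection witness, and compute $\alpha_k:=r_-((v_0,v_k),G_0)$ for $k\in\{1,2\}$, both of which realize $\varphi(\eta(G_0))$. By amalgamability, $\beta_0:=\kappa(\alpha_1,\alpha_2,\eta(G_0))$ realizes $\varphi(\eta(G_0))$, so $(p,q):=r_+(\beta_0,G_0)$ is a disconnection witness for $G_0$ --- i.e., a pair of distinct vertices drawn from $\{v_0,v_1,v_2\}$. Using continuity of the four computable maps $r_-,\eta,\kappa,r_+$, pick $N$ large enough that $(p,q)$ is already determined from $G_0\upto N$. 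Second, let $G_1$ be the graph that agrees with $G_0$ on the first $N$ stages and then adds the single edge between $v_p$ and $v_q$.

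The combinatorial crux is that adding one edge to the empty graph on three vertices invalidates at most one of the two pairs $(v_0,v_1),(v_0,v_2)$ as a disconnection witness; hence at least one of $\alpha'_k:=r_-((v_0,v_k),G_1)$ is a genuine realizer of $\varphi(\eta(G_1))$. By amalgamability, $\beta_1:=\kappa(\alpha'_1,\alpha'_2,\eta(G_1))$ realizes $\varphi(\eta(G_1))$. By continuity of $r_-$ and $\eta$ together with $G_0\upto N=G_1\upto N$, each input to $\kappa$ in forming $\beta_1$ agrees with the corresponding input of $\beta_0$ on arbitrarily long prefixes; continuity of $\kappa$ then gives that $\beta_0$ and $\beta_1$ agree on arbitrarily long prefixes; and finally continuity of $r_+$ forces $r_+(\beta_1,G_1)=(p,q)$. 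But $v_p$ and $v_q$ share a component of $G_1$ by construction, so $(p,q)$ is not a valid disconnection witness --- contradicting the reduction's backward realizability condition applied to $\beta_1\Vdash\varphi(\eta(G_1))$.

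The main subtle point is the legality of invoking $\kappa$ on $(\alpha'_1,\alpha'_2,\eta(G_1))$ when one of the $\alpha'_k$ fails to realize $\varphi(\eta(G_1))$: amalgamability is precisely what tolerates this, since it requires only that some input be a valid realizer. A secondary bookkeeping point is to choose $N$ large enough that all four continuity conditions fire simultaneously, but this is standard because each of $r_-,\eta,\kappa,r_+$ reads only a finite amount of its oracles in order to output the finite prefix of its result that determines the pair $(p,q)$.
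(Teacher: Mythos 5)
Your overall strategy---push the two disconnection witnesses $(v_0,v_1),(v_0,v_2)$ through $r_-$, amalgamate with $\kappa$, pull back through $r_+$ to a pair $(p,q)$, and then connect $p$ and $q$ beyond the use of these computations---is the right diagonalization, but two steps fail as written. First, the graph $G_1$ you describe does not exist under the paper's coding of graphs (Section \ref{sec:countable-structure}): a graph is coded by the characteristic functions of its vertex set and edge relation, so the bit deciding whether $\{v_p,v_q\}$ is an edge sits at one fixed (small) position of the code. Once $N$ is chosen large, you cannot simultaneously ``agree with $G_0$ on the first $N$ stages'' and ``add the single edge between $v_p$ and $v_q$''; there are no stages at which new edges between old vertices appear later in the code. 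The repair is standard: connect $v_p$ and $v_q$ by a path through a fresh vertex $w>N$, so that every modified bit lies beyond position $N$; the remaining original vertex stays isolated, so $G_1$ is still disconnected and your case analysis (at most one of the two fed pairs is destroyed) is unaffected.

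Second, your appeal to amalgamability is not licensed in the case where $(p,q)$ coincides with one of the fed pairs $(v_0,v_k)$. Then $(v_0,v_k)$ is no longer a witness for ${\sf DisConn}(G_1)$, and $r_-$ is only guaranteed to behave---indeed only guaranteed to converge---on realizers (compare the phrasing ``so $r_-(g,x)$ is defined'' in the proof of Theorem \ref{thm:separation-unbounded}); so $\alpha'_k=r_-((v_0,v_k),G_1)$ may be a divergent, partial object. Amalgamability waives the requirement that the inputs $a_0,\dots,a_\ell$ be witnesses, not that they be well-defined total elements, so $\kappa(\alpha'_1,\alpha'_2,\eta(G_1))$ need not be a realizer of $\varphi(\eta(G_1))$, and your contradiction evaporates in exactly the two (out of three) cases where it is needed most. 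The fix is easy and should be stated: since $(p,q)$ is known before $G_1$ is built, feed $\kappa$ the surviving $\alpha'_{k'}$ together with the old total value $\alpha_k$ (or any total extension of the finite prefix of $\alpha'_k$ actually produced); these inputs agree with $(\alpha_1,\alpha_2,\eta(G_0))$ on the prefixes that $\kappa$ reads, so the continuity chain still forces $r_+(\cdot,G_1)=(p,q)$, while amalgamability now applies legitimately because $\alpha'_{k'}$ is a genuine realizer. Relatedly, replace ``agree on arbitrarily long prefixes'' by the usual use-chaining from the outside in: fix the use of $r_+$, then of $\kappa$, then of $r_-$ and $\eta$, and take $N$ above all of them. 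With these repairs your argument goes through.
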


In particular, ${\sf InfDiam}\not\leq_{\sf m}\varphi$ for any amalgamable $\varphi$.
Nevertheless, interestingly, $\comp{\lor\forall}$ is not ${\sf m}$-reducible to ${\sf InfDiam}$.

\begin{theorem}\label{thm:separation-InfDiam}
$\comp{\lor\forall}\not\leq_{\sf m}{\sf InfDiam}$.
\end{theorem}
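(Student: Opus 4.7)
The plan is to assume a reduction $\comp{\lor\forall}\leq_{\sf m}{\sf InfDiam}$ via computable $\eta,r_-,r_+$ and derive a contradiction by a continuity-driven argument in the spirit of the proofs of Theorems \ref{thm:separation3} and \ref{thm:above-AEA}. First I start at the ``double'' positive instance $x^*=(0^\infty,0^\infty)$, for which both $0$ and $1$ are valid $\comp{\lor\forall}$-witnesses, and consider $w_0=r_-(0,x^*)$ and $w_1=r_-(1,x^*)$, which are both valid ${\sf InfDiam}(\eta(x^*))$-witnesses. Approximating $x^*$ by the sequences $(0^\infty,0^n1^\infty)\to x^*$ and $(0^n1^\infty,0^\infty)\to x^*$, on which only $0$ and only $1$ (respectively) are valid $\comp{\lor\forall}$-witnesses, continuity of $r_-$ and $r_+$ pins down $r_+(w_0,x^*)=0$ and $r_+(w_1,x^*)=1$.

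Next I build, for each $M$, the mixed witness $\hat w_M$ defined by $\hat w_M(r)=w_0(r)$ for $r<M$ and $\hat w_M(r)=w_1(r)$ for $r\geq M$; each $\hat w_M$ is a valid ${\sf InfDiam}(\eta(x^*))$-witness, and $\hat w_M\to w_0$ as $M\to\infty$, so by continuity of $r_+$ I fix some $M$ with $r_+(\hat w_M,x^*)=0$. Let $K$ be a continuity modulus for $r_+$ at $(\hat w_M,x^*)$, and pick $L$ large enough that $w_0\upto K$, $w_1\upto K$, and $r_+(\hat w_M,\cdot)\upto K$ are determined by $x^*\upto L$. Then I consider modifications $x'=(x_0',0^\infty)$, where $x_0'$ agrees with $0^\infty$ on the first $L$ positions but contains a single $1$ somewhere past $L$; for such $x'$, $\comp{\lor\forall}(x')$ holds only via the witness $1$. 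I form $\hat w_M'$ by keeping the same pairs $w_0(r)$ for $r<M$ and taking $r_-(1,x')(r)$ for $r\geq M$. By continuity, $\hat w_M'\upto K=\hat w_M\upto K$, so $r_+(\hat w_M',x')=r_+(\hat w_M,x^*)=0$; if $\hat w_M'$ is a valid ${\sf InfDiam}(\eta(x'))$-witness, this contradicts the realizer property of $r_+$ since $0$ is not a valid $\comp{\lor\forall}(x')$-witness.

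The main obstacle, and the genuine technical heart of the proof, is verifying validity of $\hat w_M'$ as an ${\sf InfDiam}(\eta(x'))$-witness. The tail $r\geq M$ is automatic: since $1$ is a valid $\comp{\lor\forall}(x')$-witness, $r_-(1,x')$ is a valid witness for $\eta(x')$. The initial segment reuses the finitely many pairs $(u_r,v_r)=w_0(r)$ for $r<M$, which are at distance $\geq r$ in $\eta(x^*)$; they must remain so in $\eta(x')$. An adversarial $\eta$ could, a priori, add shortcut edges past stage $L$ in response to seeing a $1$ in $x_0$, collapsing distances between these critical pairs. To defeat this, I would exploit the freedom in the choice of the position $p>L$ of the $1$ in $x_0'$: for each of the finitely many critical pairs $(u_r,v_r)$ ($r<M$), the set of positions $p>L$ whose insertion forces a path of length $<r$ in $\eta(x_0',0^\infty)$ between $u_r$ and $v_r$ is controlled by a computable functional of $p$, and a fusion/pigeonhole argument over the finitely many critical pairs and the infinitely many candidate positions should produce a concrete $p$ at which none of the critical distances collapses, completing the contradiction. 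Should any such adversarial $\eta$ instead succeed in uniformly collapsing all $M$ critical pairs for every late perturbation, the plan is to extract an alternative contradiction: in this regime $\eta(x')$ must satisfy a strong local-collapse property which, together with the required $\eta(x')\in{\sf InfDiam}$, forces the realizer $r_-(1,x')$ to already produce pairs that can be substituted for $w_0$'s pairs on the initial segment without disturbing the continuity moduli, again contradicting the case split on $r_+(w_0,x^*)$ versus $r_+(w_1,x^*)$.
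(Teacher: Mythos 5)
Your setup through the construction of the mixed witness $\hat w_M$ and the continuity bookkeeping is essentially sound; in particular the claim $r_+(w_0,x^*)=0$ and $r_+(w_1,x^*)=1$ can be forced by the approximation argument you sketch, since $r_+(r_-(0,x'),x')$ must equal $0$ on $(0^\infty,0^n1^\infty)$ and these values converge to $r_+(w_0,x^*)$. But the step you yourself flag as the heart --- that the finitely many pairs $w_0(r)$, $r<M$, remain at distance $\geq r$ in $\eta(x')$ after a late perturbation of $x_0$ --- is a genuine gap, and the repair you propose does not work. The pigeonhole over the position $p>L$ of the inserted $1$ fails because the adversary's response need not depend on $p$ at all: a hypothetical reduction may be designed so that the appearance of \emph{any} nonzero value in $x_0$, at whatever position, triggers adding edges that collapse \emph{all} pairs previously enumerated by $r_-(0,\cdot)$, while infinite diameter of $\eta(x')$ is maintained by fresh, unrelated vertices (this is perfectly consistent with correctness of the reduction, since after the perturbation only $x_1=0^\infty$ needs to justify membership). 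So the ``bad'' case is the generic case, and your fallback --- substituting pairs produced by $r_-(1,x')$ into the initial segment ``without disturbing the continuity moduli'' --- is not an argument: the value $r_+(\hat w,x')$ depends precisely on the prefix $\hat w\upto K$, so replacing the head changes exactly the data that pinned the output to $0$, and the intended contradiction evaporates.

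What is missing is a mechanism that controls $r_+$ simultaneously on \emph{all} possible re-choices of the critical pairs, fixed before deciding which coordinate of $x$ to perturb. This is what the paper's proof supplies: at each index $3n$ it restricts attention to the four vertices $a^0_{3n},b^0_{3n},a^1_{3n},b^1_{3n}$ coming from \emph{both} witnesses, uses compactness of the product of the (at most six-element) sets of two-element subsets of these vertices to obtain a uniform stage at which, for every selection, either a distance-collapse or the value of $r_+$ is decided, organizes these decisions into a finite tree of ``acting positions'', finds a full-acting node, and labels the edges of the complete graph on the four vertices by the corresponding outputs of $r_+$. The structural fact your linear head/tail mixing cannot exploit is that distance collapse composes along paths: a combinatorial claim shows that for some $i$ the pair $(a^i_{3n},b^i_{3n})$ is joined by a path of length $\leq 3$ all of whose edges are labeled $1-i$, so perturbing $x_{1-i}$ collapses that pair to distance $<3n$, while $i$ remains a valid $\comp{\lor\forall}$-witness and $r_-(i,x')$ still lists $(a^i_{3n},b^i_{3n})$ at index $3n$, giving the contradiction. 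Without an argument of this kind --- quantifying in advance over all candidate substitutions and using the triangle-inequality structure of graph distance --- your proof does not go through.
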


\begin{proof}
Suppose $\comp{\lor\forall}\leq_{\sf m}{\sf InfDiam}$ via $\eta,r_-,r_+$.
Begin with $x_i=0^\infty$.
Then for each $i<2$, $r_-(i,x)=(a^i_n,b^i_n)_{n\in\om}$ is a witness for ${\sf InfDiam}(\eta(x))$; that is, the distance between $(a^i_n,b^i_n)$ is at least $n$.
For each $n$, let $E_n$ be the set of all two-element subsets of $V_n=\{a^0_{3n},b^0_{3n},a^1_{3n},b^1_{3n}\}$.
Counting the number of combinations of selecting two vertices from the set $V_n$ of at most four vertices, we see that the cardinality of $E_n$ is at most $6$.

Now, for any $f\in\prod_nE_n$, if $f$ is a witness for ${\sf InfDiam}(\eta(x))$, then $r_+(f,x)$ gives a witness for $\comp{\lor\forall}(x)$; that is, $x_i=0^\infty$ if $r_+(f,x)=i$.
In particular, either $f$ is not a witness for ${\sf InfDiam}(\eta(x))$ or $r_+(f,x)$ gives a witness for $\comp{\lor\forall}(x)$.
Moreover, which of these holds is determined after reading some finite information of $f$ and $x$.
If the latter holds, this is clear.
If the former holds, there is $n$ such that the distance between $f(n)=\{a,b\}$ is less than $n$; that is, in the graph $\eta(x)$, there is a path of length less than $n$ connecting $a$ and $b$.
After reading some finite information of $x$, it is determined that such a path exists in the graph $\eta(x)$.
Hence, by compactness of $\prod_nE_n$, there is $\ell$ such that either the graph $\eta(x\upto \ell)$ has a path of length less than $n$ connecting two vertices in $f(n)$ for some $n<\ell$ or the value of $r_+(f\upto\ell,x\upto\ell)\in\{0,1\}$ is determined.
Then let $\ell'\geq\ell$ be such that $r_-(i,x\upto\ell')$ extends $(a^i_n,b^i_n)_{n<3\ell}$ for each $i<2$.

Now, consider $T=\prod_{n<\ell}E_n$, and let us think of this as a tree of height $\ell$.
For each leaf $\rho\in T$, we define the {\em acting position} $\alpha(\rho)$ of $\rho$ as follows:
If the graph $\eta(x\upto\ell)$ has a path of length less than $n$ connecting two vertices in $\rho(n)$ for some $n<\ell$, then put $\alpha(\rho)=\rho\upto (n+1)$.
Otherwise, $r_+(\rho,x\upto\ell)$ is defined by our choice of $\ell$, so first, let us label $\rho$ with this value.
Note that if we put $x_i'(t)\not=0$ for a sufficiently large $t\geq \ell'$ (so $x_i'\not=0^\infty$), then for any $i$-labeled leaf $\rho$ there is $n<\ell$ such that the distance of vertices in $\rho(n)$ must be less than $n$ in the graph $\eta(x')$.
Otherwise, $\rho$ is extendible to a witness $f$ for ${\sf InfDiam}(\eta(x'))$, so $r_+(f,x')$ must be a witness for $\comp{\lor\forall}(x')$; however, we have $r_+(f,x')=i$ since $\rho$ is labeled by $i$, and $x_i'\not=0^\infty$, which means that $r_+(f,x')$ is not a correct witness for $\comp{\lor\forall}(x')$.
Hence, we must have $n$ such that the distance between two vertices in $\rho(n)$ is less than $n$ in $\eta(x')$, so for such an $n$, we put $\alpha(\rho)=\rho\upto(n+1)$.

We say that $\sigma\in T$ is a {\em full-acting node} if each of the immediate successor of $\sigma$ in $T$ is the acting position of some leaf.
In other words, for the length $k$ of $\sigma$, for each $a\in E_k$ there is a leaf $\rho\in T$ such that $\alpha(\rho)=\sigma\fr a$.

\begin{claim}
There always exists a full-acting node in $T$.
\end{claim}

\begin{proof}
Suppose that a full-acting node does not exist in $T$.
In particular, the root is not a full-acting node, so by definition, the root has an immediate successor $a_1\in T$ which is not an acting position of any leaf.
By our assumption, $a_1$ is also not a full-acting node, so $a_1$ has an immediate successor $\pair{a_1,a_2}\in T$ which is not an acting position of any leaf.
Repeat this process to obtain a leaf $\rho=\pair{a_1,a_2,\dots a_\ell}$.
Here, $\rho\upto j$ is not an acting position for any $j\geq 1$, so the acting position $\alpha(\rho)$ of $\rho$ must be the root.
However, the acting position of the leaf is always a node of length at least $1$, so this is impossible.
\end{proof}

Now fix a full-acting node $\sigma\in T$, and let $n$ be its length.
Then, for each $a\in E_n$, there is a leaf $\rho_a$ extending $\sigma\fr a$ whose acting position is $\sigma\fr a$.
Now, the value of the label of $a\in E_n$ is set to the value of the label of $\rho_a$, i.e., the value of $r_+(\rho_a,x'\upto\ell)$.
Here, if the label of $\rho_a$ is undefined, choose any value of $0$ or $1$.
In other words, either there is a path of length less than $n$ connecting the two vertices in $a$, or else $a$ is labeled by the value of $r_+(\rho_a,x'\upto\ell)$.

\begin{claim}
$z=\{a^i_{3n},b^i_{3n}\}$ is labeled by $i$.
\end{claim}

\begin{proof}
Suppose not.
Since this $z$ is the $(3n)$th component of $r_-(i,x)$; that is, the $(3n)$th component of a witness for ${\sf InfDiam}(\eta(x))$, the distance between $\{a^i_{3n},b^i_{3n}\}$ is at least $3n$.
Therefore, the label of $z$ must be defined, and its value is $1-i$ by our assumption.
Since $\rho_z$ is labeled by $1-i$, the action position being $\alpha(\rho_z)=\sigma\fr z$ means that the distance between two vertices in $z$ must become less than $n$ in $\eta(x')$ after putting $x_{1-i}'(t)\not=0$.
In this case, $x'_i=0^\infty$ is maintained, so $i$ is a witness for $\comp{\lor\forall}(x')$.
Now, since $t\geq\ell'$, $r_-(i,x')$ extends $(a^i_{n},b^i_{n})_{n<3\ell}$, so in particular the distance between $z=\{a^i_{3n},b^i_{3n}\}$ must be at least $3n$ even in $\eta(x')$.
This is impossible.
\end{proof}

The rest can be done by imitating the proof of $\comp{\lor\forall}\not\leq_{\sf m}{\sf DisConn}$ in \cite{Kihara}, but the notation is slightly different, so we give a complete proof here.
Each edge of the complete graph $G_n=(V_n,E_n)$ with at most $4$ vertices $V_n=\{a^0_{3n},b^0_{3n},a^1_{3n},b^1_{3n}\}$ is labeled in the above way.

\begin{claim}
For some $i<2$, there exists a path $\gamma$ of length at most $3$ connecting $a^i_{3n}$ and $b^i_{3n}$ in $G_n$ such that $\gamma$ consists only of edges with label $1-i$.
\end{claim}

\begin{proof}
Suppose not for $i=0$ (the same argument applies to the case where $i=1$).
For the sake of clarity, edges of a graph are described using the symbol $\to$ below, but note that we consider an undirected graph, so $a\to b$ and $b\to a$ represent the same edge, and the labels are also the same.

First, the vertices $a^0_{3n}$ and $b^0_{3n}$ are connected by the path $a^0_{3n}\to a^1_{3n}\to b^1_{3n}\to b^0_{3n}$ of length $3$ in the complete graph $G_n$.
By our assumption, one of these edges is labeled by $0$.
By the previous claim, the edge $a^1_{3n}\to b^1_{3n}$ is labeled by $1$, so either $a^0_{3n}\to a^1_{3n}$ or $b^1_{3n}\to b^0_{3n}$ must be labeled by $0$.

Case 1.
Assume that the edge $a^0_{3n}\to a^1_{3n}$ is labeled by $0$.
The vertices $a^0_{3n}$ and $b^0_{3n}$ are connected by the path $a^0_{3n}\to b^1_{3n}\to b^0_{3n}$ of length $2$.
By our assumption, either $a^0_{3n}\to b^1_{3n}$ or $b^1_{3n}\to b^0_{3n}$ is labeled by $0$.
In the former case, the path $a^1_{3n}\to a^0_{3n}\to b^1_{3n}$ of length $2$ connects $a^1_{3n}$ and $b^1_{3n}$ and consists only of $0$-labeled edges.
In the latter case, by the above claim, the edge $a^0_{3n}\to b^0_{3n}$ is labeled by $0$, so the path $a^1_{3n}\to a^0_{3n}\to b^0_{3n}\to b^1_{3n}$ of length $3$ connects $a^1_{3n}$ and $b^1_{3n}$ and consists only of $0$-labeled edges.

Case 2.
Assume that the edge $b^1_{3n}\to b^0_{3n}$ is labeled by $0$.
The vertices $a^0_{3n}$ and $b^0_{3n}$ are connected by the path $a^0_{3n}\to a^1_{3n}\to b^0_{3n}$ of length $2$.
By our assumption, either $a^0_{3n}\to a^1_{3n}$ or $a^1_{3n}\to b^0_{3n}$ is labeled by $0$.
In the latter case, the path $a^1_{3n}\to b^0_{3n}\to b^1_{3n}$ of length $2$ connects $a^1_{3n}$ and $b^1_{3n}$ consists only of $0$-labeled edges.
In the former case, by the above claim, the edge $a^0_{3n}\to b^0_{3n}$ is labeled by $0$, so the path $a^1_{3n}\to a^0_{3n}\to b^0_{3n}\to b^1_{3n}$ of length $3$ connects $a^1_{3n}$ and $b^1_{3n}$ and consists only of $0$-labeled edges.
\end{proof}

Fix $i<2$ as in the previous claim.
Then put $x'_{1-i}(t)\not=0$ for a sufficiently large $t$.
For each $a\in E_n$, since the acting position of the leaf $\rho_a\in T$ is $\sigma\fr a$, if $a$ is labeled by $1-i$, then by the definition of the acting position, the distance between two vertices in $\rho_a(n)=a$ is less than $n$ in the graph $\gamma(x')$.
This means that the end points of each $(1-i)$-labeled edge in the complete graph $G_n=(V_n,E_n)$ are connected by a path of length less than $n$ in the graph $\gamma(x')$.
By the previous claim, $a^i_{3n}$ and $b^i_{3n}$ are connected by a $(1-i)$-labeled path of length at most $3$ in $G_n$, so they are connected by a path of length less than $3n$ in $\gamma(x')$.
However, $i$ is still a witness for $\comp{\lor\forall}(x')$, so $r_-(i,x')$ must be a witness for ${\sf InfDiam}(\eta(x'))$, which extends $(a^i_n,b^i_n)_{n<3\ell}$.
In particular, the distance between $(a^i_{3n},b^i_{3n})$ must be at least $3n$ in $\gamma(x')$, which leads to a contradiction.
\end{proof}

\begin{cor}
$\comp{\forall\forall^\infty\forall}<_{\sf m}{\sf InfDiam}<_{\sf m}\comp{\forall\exists\forall}$, and ${\sf InfDiam}<_{\sf dm}\comp{\exists^\infty\exists\forall}$.
\end{cor}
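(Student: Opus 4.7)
The plan is to derive all three strict inequalities by packaging together the reductions already established with Theorem \ref{thm:separation-InfDiam} and the amalgamability obstruction of Lemma \ref{lem:disconn-vs-amalgamable}; no new diagonalization is needed, since the essential separation work is already absorbed into Theorem \ref{thm:separation-InfDiam}.

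For $\comp{\forall\forall^\infty\forall}<_{\sf m}{\sf InfDiam}$, the $\leq_{\sf m}$-direction is precisely Proposition \ref{prop:AAinftyA-vs-FinDiam}. For strictness, I would invoke the proof of Proposition \ref{prop:separation2}, which exhibits $\comp{\forall\forall^\infty\forall}$ as amalgamable (the pointwise maximum of finitely many majorants is again a majorant). Lemma \ref{lem:disconn-vs-amalgamable} then yields ${\sf DisConn}\not\leq_{\sf m}\comp{\forall\forall^\infty\forall}$, while Proposition \ref{prop:DisConn-vs-InfDiam} supplies ${\sf DisConn}\leq_{\sf m}{\sf InfDiam}$. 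Composing these gives ${\sf InfDiam}\not\leq_{\sf m}\comp{\forall\forall^\infty\forall}$.

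For ${\sf InfDiam}<_{\sf m}\comp{\forall\exists\forall}$, the upper bound follows because ${\sf InfDiam}$ is literally an $\exists^\infty\exists\forall$-formula (the outermost $\exists$ in ${\sf FinDiam}$, read in Proposition \ref{prop:findiam-reduction-complete}, can be replaced by $\forall^\infty$ and dualized), while Theorem \ref{thm:formula-class-collapse}(4) identifies $\comp{\exists^\infty\exists\forall}\equiv_{\sf m}\comp{\forall\exists\forall}$. For strictness, observe $\comp{\lor\forall}\leq_{\sf m}\comp{\exists\forall}$ trivially and $\comp{\exists\forall}\leq_{\sf dm}\comp{\forall\exists\forall}$ via the absorption $\exists\forall\to\forall\exists\forall$ (Observation \ref{obs:absorb-bm-relation}); hence $\comp{\forall\exists\forall}\leq_{\sf m}{\sf InfDiam}$ would entail $\comp{\lor\forall}\leq_{\sf m}{\sf InfDiam}$, directly contradicting Theorem \ref{thm:separation-InfDiam}.

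Finally, ${\sf InfDiam}\leq_{\sf dm}\comp{\exists^\infty\exists\forall}$ is the observation already recorded in the paragraph preceding Theorem \ref{thm:separation-InfDiam}: presenting ${\sf InfDiam}$ in its native $\exists^\infty\exists\forall$ form gives a di-reduction via the identity. For strictness, since $\leq_{\sf dm}$ implies $\leq_{\sf m}$, a putative reduction $\comp{\exists^\infty\exists\forall}\leq_{\sf dm}{\sf InfDiam}$ would yield $\comp{\forall\exists\forall}\equiv_{\sf m}\comp{\exists^\infty\exists\forall}\leq_{\sf m}{\sf InfDiam}$, contradicting the strict inequality just established. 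I do not anticipate any genuine technical obstacle in carrying this out; the only mildly delicate point is verifying that the ``trivial'' presentation of ${\sf InfDiam}$ as an $\exists^\infty\exists\forall$-formula indeed provides a \emph{di}-reduction and not merely an ${\sf m}$-reduction, which amounts to checking that the same parser $\eta$ (the identity on codes) converts witnesses both ways between the native formula and the complete one.
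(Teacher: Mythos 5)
Your proposal is correct and follows essentially the same route as the paper: the lower bound from Proposition \ref{prop:AAinftyA-vs-FinDiam} with strictness via amalgamability of $\comp{\forall\forall^\infty\forall}$ (Proposition \ref{prop:separation2}), Lemma \ref{lem:disconn-vs-amalgamable} and Proposition \ref{prop:DisConn-vs-InfDiam}; the upper bounds from the $\exists^\infty\exists\forall$-presentation of ${\sf InfDiam}$ in Proposition \ref{prop:findiam-reduction-complete} together with Theorem \ref{thm:formula-class-collapse}(4); and all remaining strictness funneled through $\comp{\lor\forall}\not\leq_{\sf m}{\sf InfDiam}$ (Theorem \ref{thm:separation-InfDiam}). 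The only cosmetic difference is that the paper derives strictness of the ${\sf dm}$-inequality directly from $\comp{\lor\forall}\leq_{\sf dm}\comp{\exists^\infty\exists\forall}$ rather than via the already-proved ${\sf m}$-inequality, which is an equivalent bookkeeping choice.
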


\begin{proof}
Clearly, $\comp{\lor\forall}\leq_{\sf m}\comp{\forall\exists\forall}$, so Theorem \ref{thm:separation-InfDiam} implies ${\sf InfDiam}<_{\sf m}\comp{\forall\exists\forall}$.
The proof of Proposition \ref{prop:separation2} shows that $\comp{\forall\forall^\infty\forall}$ is amalgamable, so Lemma \ref{lem:disconn-vs-amalgamable} implies ${\sf DisConn}\not\leq_{\sf m}\comp{\forall\forall^\infty\forall}$.
Theorefore, by Proposition \ref{prop:AAinftyA-vs-FinDiam} and \ref{prop:DisConn-vs-InfDiam}, we obtain $\comp{\forall\forall^\infty\forall}<_{\sf m}{\sf InfDiam}$.
For the second assertion, the proof of Proposition \ref{prop:findiam-reduction-complete} shows ${\sf InfDiam}\leq_{\sf dm}\comp{\exists^\infty\exists\forall}$.
Moreover, we clearly have $\comp{\lor\forall}\leq_{\sf dm}\comp{\exists^\infty\exists\forall}$, so by Theorem \ref{thm:separation-InfDiam}, we get ${\sf InfDiam}<_{\sf dm}\comp{\exists^\infty\exists\forall}$.
\end{proof}

As a modification of the infinite-diameter problem, if we consider the problem of determining whether the diameter is at least $r$ for a fixed $r\geq 4$, then the modified problem turns out to be $\exists\forall$-complete.
\[{\sf Diam}_{\geq r}:\quad \exists u,v\in V\forall\gamma\;[(\mbox{$\gamma$ connects $u$ and $v$})\to |\gamma|\geq r].\]

\begin{prop}\label{prop:Diam4-EA-complete}
${\sf Diam}_{\geq 4}$ is $\exists\forall$-complete.
\end{prop}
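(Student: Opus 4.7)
The first direction ${\sf Diam}_{\geq 4}\leq_{\sf m}\comp{\exists\forall}$ is automatic: the formula is already displayed as $\exists u,v\,\forall \gamma\,\theta(u,v,\gamma,G)$ with $\theta$ bounded (the inner $\forall\gamma$ is effectively over finite sequences of vertices and the implication is decidable from the graph code), so one can immediately pass through the standard $\exists\forall$-complete template. The substantive task is $\comp{\exists\forall}\leq_{\sf m}{\sf Diam}_{\geq 4}$, which I will establish by a direct combinatorial reduction.

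My plan is to construct, given $x=(x_n)_{n\in\om}$, a graph $\eta(x)$ built around a ``backbone'' path $\alpha-\beta-\gamma-\delta$ on four distinguished vertices. To each index $n$ I attach a pendant vertex $v_n$ by the single edge $v_n-\alpha$; to ensure the graph code is computable from $x$ (the raw condition $\exists t.\,x_n(t)\neq 0$ is only $\Sigma_1$, so I cannot install a literal conditional edge $v_n-\delta$), I introduce one auxiliary vertex $w_n^t$ for each pair $(n,t)$ with $x_n(t)\neq 0$, together with the edges $v_n-w_n^t$ and $w_n^t-\delta$. Both vertex and edge membership are then decidable from $x$, so $\eta$ is a genuinely computable reduction in the coding of Section \ref{sec:countable-structure}.

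The key distance inventory driving the reduction is this: when $x_n=0^\infty$ no $w_n^t$ is ever present, and the unique route from $\delta$ to $v_n$ is the length-$4$ backbone detour $\delta-\gamma-\beta-\alpha-v_n$; whereas when $x_n\neq 0^\infty$ some $w_n^t$ provides the shortcut $v_n-w_n^t-\delta$ of length $2$. A short case check then shows that every pair in $\eta(x)$ other than $(\delta,v_n)$ has distance at most $3$: two pendants $v_n,v_m$ meet at $\alpha$, two auxiliaries $w_n^t,w_m^s$ meet at $\delta$, a $v_n$-to-$w_m^t$ pair with $n\neq m$ is joined via $v_n-\alpha-v_m-w_m^t$, and pairs within $\{\alpha,\beta,\gamma,\delta\}$ are bounded by the length-$3$ backbone.

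Once this distance inventory is in hand the witness translations are forced. In the forward direction, from $n$ with $x_n=0^\infty$ I output the pair $(\delta,v_n)$, whose distance is exactly $4$; in the backward direction, any witness $(u,v)$ of ${\sf Diam}_{\geq 4}(\eta(x))$ must satisfy $\{u,v\}=\{\delta,v_n\}$ for some $n$ with $x_n=0^\infty$, so I extract this $n$ from the index embedded in $v_n$. The only real design decision was replacing the naive conditional edge $v_n-\delta$ by the auxiliary vertices $w_n^t$ in order to keep the graph code decidable; once that is in place, the distance bookkeeping is entirely routine.
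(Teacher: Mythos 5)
Your proof is correct and takes essentially the same route as the paper's: the easy direction is dismissed because ${\sf Diam}_{\geq 4}$ is itself an $\exists\forall$-formula, and the substantive direction $\comp{\exists\forall}\leq_{\sf m}{\sf Diam}_{\geq 4}$ is a direct computable graph reduction in which a distance-$4$ pair appears exactly when some $x_n=0^\infty$, with the evident witness translations in both directions. The only difference is the gadget -- the paper uses, for each $n$, a length-$4$ path cross-linked to the other paths plus a collapsing vertex $c^n_t$ added when $x_n(t)\neq 0$, whereas you use a single backbone on $\alpha,\beta,\gamma,\delta$ with pendants $v_n$ and conditional shortcut vertices $w^t_n$ -- and your distance inventory checks out (the only pairs at distance at least $4$ are $(\delta,v_n)$ with $x_n=0^\infty$), so both gadgets do the same job.
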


\begin{proof}
Given $x=(x_n)_{n\in\om}$, we construct $\eta(x)=(V,E)$ such that
\[\exists n.\ x_n=0^\infty\iff\mbox{the diameter of $(V,E)$ is at least $4$.}\]

The graph has a path $a^n_0\to a^n_1\to a^n_2\to a^n_3\to a^n_4$ of length $4$ for each $n\in\om$.
Moreover, put $(a^n_i,a^m_i)\in E$ for any $n\not=m$.
If $x_n(t)\not=0$ for some $t$, then add $c^n_t\in V$ for such $t$ and put $(a^n_i,c^n_t)\in E$ for each $i<5$.

The distance between $c^n_t$ and $a^m_j$ is at most $2$ since there is a path $c^n_t\to a^n_j\to a^m_j$ of length $2$.
Hence, if there is a vertex of the form $c^n_t$, then since $a^n_i$ and $c^n_t$ are adjacent, the distance between $a^n_i$ and $a^m_j$ is at most $3$.
If there is no vertex of the form $c^n_t$, the distance between $a^n_0$ and $a^n_4$ is $4$.
This is because, if there is a path of length at most $3$ connecting $a^n_0$ and $a^n_4$, then it is necessary to pass through a vertex of the form $a^m_j$ for some $m\not=n$.
If $|i-j|\geq 2$, the shortest path between $a^n_i$ and $a^n_j$ that passes through such a vertex is $a^n_i\to a^m_i\to c^m_t\to a^m_j\to a^n_j$, whose length is $4$ (even if $c^m_t$ exists).
To summarize this, a vertex of the form $c^n_t$ exists iff, for any $m,i,j$, the distance between $(a^n_i,a^m_j)$ is at most $3$.

Now, if $n$ is a witness for $\comp{\exists\forall}(x)$, then there is no vertex of the form $c^n_t$.
Then the distance between $(a^n_0,a^n_4)$ is at least $4$ as discussed above, so $(a^n_0,a^n_4)$ is a witness for ${\sf Diam}_{\geq 4}(V,E)$.
Conversely, if $(u,v)$ is a witness for ${\sf Diam}_{\geq 4}(V,E)$; that is, the distance between $(u,v)$ is at least $4$, then by the above argument, neither $u$ nor $v$ can be of the form $c^n_t$. 
Hence, $u=a^n_i$ and $v=a^m_j$ for some $n,m,i,j$.
Then, there is no vertex of the form $c^n_t$ as discussed above.
This means that $x_n(t)=0$ for any $t$.
Hence, $n$ is a witness for $\comp{\exists\forall}(x)$.
\end{proof}

\begin{cor}
${\sf Diam}_{\geq 4}\not\leq_{\sf m}{\sf InfDiam}$.
\end{cor}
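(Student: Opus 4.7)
The plan is to observe that this corollary is essentially immediate from the preceding results, so the work consists only of chasing reductions.

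First, I would invoke Proposition \ref{prop:Diam4-EA-complete}, which identifies ${\sf Diam}_{\geq 4}$ as $\exists\forall$-complete. Next, I would note the trivial reduction $\comp{\lor\forall}\leq_{\sf m}\comp{\exists\forall}$ (encode the disjunction $x_0=0^\infty\lor x_1=0^\infty$ as $\exists i<2\,\forall t.\ x_i(t)=0$; an existential witness $i<2$ is in particular a witness for the disjunction, and vice versa). Composing these yields $\comp{\lor\forall}\leq_{\sf m}{\sf Diam}_{\geq 4}$.

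If ${\sf Diam}_{\geq 4}\leq_{\sf m}{\sf InfDiam}$ held, then by transitivity of $\leq_{\sf m}$ we would obtain $\comp{\lor\forall}\leq_{\sf m}{\sf InfDiam}$, directly contradicting Theorem \ref{thm:separation-InfDiam}. Hence ${\sf Diam}_{\geq 4}\not\leq_{\sf m}{\sf InfDiam}$.

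There is no real obstacle here, since the hard work has already been done in proving Theorem \ref{thm:separation-InfDiam} (via the full-acting node / graph-labelling argument) and in the reduction witnessing Proposition \ref{prop:Diam4-EA-complete}. The only thing to double-check is the first step: that $\comp{\lor\forall}$ reduces to a generic $\exists\forall$-complete formula, which is immediate once the translation of $\lor$ into a bounded $\exists i<2$ is made explicit.
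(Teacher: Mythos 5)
Your argument is correct and is essentially identical to the paper's own proof: both use $\comp{\lor\forall}\leq_{\sf m}\comp{\exists\forall}\leq_{\sf m}{\sf Diam}_{\geq 4}$ (via Proposition \ref{prop:Diam4-EA-complete}) together with $\comp{\lor\forall}\not\leq_{\sf m}{\sf InfDiam}$ (Theorem \ref{thm:separation-InfDiam}) and transitivity. No gaps to report.
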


\begin{proof}
By Theorem \ref{thm:separation-InfDiam} we have $\comp{\lor\forall}\not\leq_{\sf m}{\sf InfDiam}$, and by Proposition \ref{prop:Diam4-EA-complete}, we also have $\comp{\lor\forall}\leq_{\sf m}\comp{\exists\forall}\leq_{\sf m}{\sf Diam}_{\geq 4}$.
Combining these, we obtain ${\sf Diam}_{\geq 4}\not\leq_{\sf m}{\sf InfDiam}$.
\end{proof}

\section{Open Question}

The most straightforward but important open problem is the problem of counting the number of equivalence classes of quantifier patterns at each level $n\geq 4$ of the arithmetical hierarchy.
\begin{question}
How many $\equiv_{\sf m}$-equivalence classes of $\Sigma_4$ quantifier-patterns are there?
How many $\equiv_{\sf m}$-equivalence classes of $\Pi_4$ quantifier-patterns are there?
How many $\equiv_{\sf dm}$-equivalence classes of $\Sigma_4$ quantifier-patterns are there?
\end{question}

Of course, this problem can also be considered for general $n$ other than $n=4$.
It would also be useful to have a general method of determining the relationship between two given quantifier-patterns.
This problem is expected to be difficult, but it is important.

\begin{question}
Find an algorithm which decides if $\comp{\qf{P}}\leq_{\sf m}\comp{\qf{Q}}$ for given quantifier-patterns $\qf{P}$ and $\qf{Q}$.
\end{question}

It is also important to reconsider previous research on the classical arithmetical hierarchy from the realizability-theoretic perspective.
For example, it is important to consider the problem of determining the exact arithmetical complexity (the exact quantifier-patterns) of natural decision problems involving groups \cite{BCR20}; see also \cite{Shpi10} for witness/search problems in group theory.

\begin{ack}
The author wishes to thank Gerard Glowacki, Kaito Takayanagi and Kakeru Yokoyama for their active participation in the spring 2024 seminar on this research.
The author's research was partially supported by JSPS KAKENHI (Grant Numbers 21H03392, 22K03401 and 23H03346).
\end{ack}


\bibliographystyle{plain}
\bibliography{references}

\end{document}